\pdfoutput=1
\RequirePackage{ifpdf}
\ifpdf 
\documentclass[pdftex]{sigma}
\else
\documentclass{sigma}
\fi

\usepackage{amstext}
\usepackage{amscd}
\usepackage{enumerate}
\usepackage{mathrsfs}
\usepackage{mathtools}

\usepackage{tikz}
\usepackage{tikz-cd}
\usetikzlibrary{arrows}
\usetikzlibrary{decorations.markings}
\usetikzlibrary{patterns,decorations.pathreplacing}

\numberwithin{equation}{section}
\numberwithin{figure}{section}
\numberwithin{table}{section}

\newtheorem{Theorem}{Theorem}[section]
\newtheorem*{Theorem*}{Theorem}
\newtheorem{Corollary}[Theorem]{Corollary}
\newtheorem{Lemma}[Theorem]{Lemma}
\newtheorem{Proposition}[Theorem]{Proposition}
 { \theoremstyle{definition}
\newtheorem{Definition}[Theorem]{Definition}

\newtheorem{Example}[Theorem]{Example}
\newtheorem{Remark}[Theorem]{Remark}
\newtheorem{observation}[Theorem]{Observation}
\newtheorem{operation}[Theorem]{Operation}
\newtheorem{setting}[Theorem]{Setting}}

\newcommand{\ZZ}{\mathbb{Z}}
\newcommand{\QQ}{\mathbb{Q}}
\newcommand{\RR}{\mathbb{R}}
\newcommand{\CC}{\mathbb{C}}
\newcommand{\PP}{\mathbb{P}}
\newcommand{\TT}{\mathbb{T}}

\newcommand{\rmH}{\mathrm{H}}
\newcommand{\rmI}{\mathrm{I}}
\newcommand{\calP}{\mathcal{P}}
\newcommand{\calQ}{\mathcal{Q}}
\newcommand{\calR}{\mathcal{R}}
\newcommand{\calV}{\mathcal{V}}
\newcommand{\calX}{\mathcal{X}}
\newcommand{\calY}{\mathcal{Y}}
\newcommand{\calZ}{\mathcal{Z}}
\newcommand{\fkX}{\mathfrak{X}}

\newcommand{\pt}{\mathsf{pt}}
\newcommand{\sfh}{\mathsf{h}}

\newcommand{\sfP}{\mathsf{P}}
\newcommand{\sfQ}{\mathsf{Q}}

\newcommand{\bfp}{\mathbf{p}}
\newcommand{\bfq}{\mathbf{q}}

\newcommand{\Hom}{\operatorname{Hom}}
\newcommand{\GL}{\operatorname{GL}}

\newcommand{\Zig}{\mathsf{Zig}}
\newcommand{\Zag}{\mathsf{Zag}}
\newcommand{\zig}{\mathsf{zig}}
\newcommand{\zag}{\mathsf{zag}}
\newcommand{\PM}{\mathsf{PM}}

\newcommand{\Newt}{\mathsf{Newt}}
\newcommand{\mutation}{\mathsf{mut}}
\newcommand{\mut}{\mathsf{mut}}
\newcommand{\width}{\mathsf{width}}

\begin{document}
\allowdisplaybreaks

\newcommand{\arXivNumber}{1903.01636}

\renewcommand{\PaperNumber}{030}

\FirstPageHeading

\ShortArticleName{Deformations of Dimer Models}

\ArticleName{Deformations of Dimer Models}

\Author{Akihiro HIGASHITANI~$^{\rm a}$ and Yusuke NAKAJIMA~$^{\rm b}$}
\AuthorNameForHeading{A.~Higashitani and Y.~Nakajima}

\Address{$^{\rm a)}$~Department of Pure and Applied Mathematics, Graduate School of Information Science\\
\hphantom{$^{\rm a)}$}~and Technology, Osaka University, Osaka 565-0871, Japan} \EmailD{\href{mailto:higashitani@ist.osaka-u.ac.jp}{higashitani@ist.osaka-u.ac.jp}}

\Address{$^{\rm b)}$~Department of Mathematics, Kyoto Sangyo University,\\
\hphantom{$^{\rm b)}$}~Motoyama, Kamigamo, Kita-Ku, Kyoto, 603-8555, Japan}
\EmailD{\href{mailto:ynakaji@cc.kyoto-su.ac.jp}{ynakaji@cc.kyoto-su.ac.jp}}

\ArticleDates{Received August 06, 2021, in final form April 10, 2022; Published online April 16, 2022}

\Abstract{The combinatorial mutation of polygons, which transforms a given lattice po\-ly\-gon into another one, is an important operation to understand mirror partners for two-dimensional Fano manifolds, and the mutation-equivalent polygons give ${\mathbb Q}$-Gorenstein deformation-equivalent toric varieties. On the other hand, for a dimer model, which is a~bipartite graph described on the real two-torus, one can assign a lattice polygon called the perfect matching polygon.
It is known that for each lattice polygon $P$ there exists a dimer model having $P$ as the perfect matching polygon and satisfying certain consistency conditions. Moreover, a dimer model has rich information regarding toric geometry associated with the perfect matching polygon. In this paper, we introduce a set of operations which we call deformations of consistent dimer models, and show that the deformations of consistent dimer models realize the combinatorial mutations of the associated perfect matching polygons.}

\Keywords{dimer models; combinatorial mutation of polygons; mirror symmetry}

\Classification{52B20; 14M25; 14J33}

\tableofcontents

\section{Introduction}\label{sec_intro}

\subsection{Background and motivations}
Fano manifolds are one of the well studied classes in geometry, and the classification of Fano manifolds, which has been done in low dimensions, is a fundamental problem. Here, a \emph{Fano manifold} $X$ is a complex projective manifold such that the anticanonical line bundle $-K_X$ is ample.
Recently, a new approach that uses mirror symmetry has been proposed for classifying Fano manifolds as follows.
First, a Fano manifold is expected to correspond to a certain Laurent polynomial via mirror symmetry (see~\cite{CCGGK}).
That is, a Laurent polynomial $f\in\CC\big[x_1^\pm,\dots,x_n^\pm\big]$ is said to be a \emph{mirror partner} for an $n$-dimensional Fano manifold $X$
if the Taylor expansion of the \emph{classical period} $\pi_f$ of $f$ coincides with a \emph{generating function for Gromov--Witten invariants} of $X$
(see the references quoted above for the details of these terminologies).
Furthermore, if a Fano manifold $X$ is a mirror partner of~$f$, it is expected that~$X$ admits a toric degeneration~$X_P$.
Here, $P\coloneqq \Newt(f)$ is the Newton polytope of $f$, which is defined as the convex hull of exponents of monomials of $f$, and $X_P$ is the toric variety defined by the \emph{spanning fan} of~$P$ (i.e., the fan whose cones are spanned by the faces of $P$).
Thus, Laurent polynomials having the same classical period are considered as mirror partners for the same Fano manifold $X$, and in general there are many Laurent polynomials that are mirror partners for~$X$.
In order to understand the relationship between such Laurent polynomials, an operation called the \emph{mutation} of $f$, which is a birational transformation analogue to a cluster transformation, was introduced in~\cite{GU}.
In particular, it was shown that if $f, g\in\CC\big[x_1^\pm,\dots,x_n^\pm\big]$ are transformed into each other by mutations,
then their classical periods are the same, that is, $\pi_f=\pi_g$~\cite[Lemma~1]{ACGK}.
Moreover, this mutation of Laurent polynomials induces an operation, called the \emph{combinatorial mutation}, which connects
the associated Newton polytopes $P=\Newt(f)$ and $Q=\Newt(g)$ as defined in~\cite{ACGK} (see also Section~\ref{sec_pre_mutation_polygon}).
Also, it was shown in \cite{Ilt} that if $P$ and $Q$ are Fano polytopes and are transformed into each other by combinatorial mutations,
then the associated toric varieties~$X_P$ and~$X_Q$ are related by a \emph{$\QQ$-Gorenstein $(=$~qG$)$ deformation}; that is, there exists a flat family $\fkX\rightarrow\PP^1$ such that the relative canonical divisor is $\QQ$-Cartier and $\fkX_0\cong X_P$, $\fkX_\infty\cong X_Q$, where~$\fkX_p$ is the fiber of $p\in\PP^1$.
Thus, it has been conjectured that there is a bijection between qG-deformation equivalence classes of ``class TG'' Fano manifolds and mutation equivalence classes of Fano polytopes. There have previously been several affirmative results (e.g., \cite{Akhtar_etc,KNP}).

\subsection{Results}
As mentioned above, the combinatorial mutation of polytopes is quite important in mirror symmetry of Fano manifolds.
In this paper, we focus on the two-dimensional case, and the polygons which we are interested in are not necessarily Fano.
First, it is known that any lattice polygon in~$\RR^2$ can be realized as the \emph{perfect matching polygon} $\Delta_\Gamma$ of a dimer model $\Gamma$ satisfying the consistency condition (see Sections~\ref{sec_pre_dimer} and~\ref{sec_zigzag}).
A \emph{dimer model} is a bipartite graph on the real two-torus (see Section~\ref{sec_pre_dimer} for more details), which was first introduced in the field of statistical mechanics.
Since the 2000s, string theorists have been using dimer models to study quiver gauge theories (e.g., \cite{FHK,HV,Kenn,Keny} and references therein).
As a result, relationships between dimer models and many branches of mathematics have been discovered, e.g., see \cite{Boc_abc} and references therein.
(Note that a bipartite graph described on other surfaces, which is also called a dimer model, has also been studied actively.
For example, dimer models on the disk are related to Grassmannians and their mirror symmetry, e.g., see \cite{HN, RW}.)
Based on this background, we expect that there is a certain operation on a consistent dimer model that realizes
the combinatorial mutation of the associated perfect matching polygon.
In this paper, we introduce a concept called the \emph{deformations of consistent dimer models}.
Note that there is another operation on dimer models, called \emph{mutation of dimer models} (see Appendix~\ref{app_mutationdimer}).
Although a mutation changes the shape of a dimer model, it does not change the associated perfect matching polygon.
Thus, we need a~new operation different from the mutation to realize our expectation.\looseness=-1

To explain our main theorem, we briefly recall the combinatorial mutations of polygons (see Section~\ref{sec_pre_mutation_polygon} for a more precise definition).
Let $N\cong\ZZ^2$ be a rank two lattice and $M\coloneqq\Hom_\ZZ(N,\ZZ)\cong\ZZ^2$.
First, we consider a lattice polygon $P$ in $N_\RR\coloneqq N\otimes_\ZZ \RR$ and choose an edge~$E$ of~$P$.
We then take a primitive inner normal vector $w\in M$ for $E$, and consider the linear map $\langle w,-\rangle\colon N_\RR\rightarrow \RR$.
Using these notions, we determine the \emph{height} $\langle w,u\rangle$ of each point $u\in P$.
In particular, a primitive lattice element $u_E\in N$ satisfying $\langle w,u_E\rangle=0$ plays an important role in defining the combinatorial mutation.
Such an element $u_E$ is determined uniquely up to sign; thus we fix one of them.
Then, we define the line segment $F\coloneqq\operatorname{conv}\{{\bf 0},u_E\}$, which is called a~\emph{factor} of~$P$ with respect to~$w$.
Using these data, we have the lattice polygon $\mut_w(P,F)$, which is called the \emph{combinatorial mutation} of $P$ given by the vector $w$ and the factor $F$, as defined in Definition~\ref{def_mutation_polygon}.
In addition, we also define another combinatorial mutation $\mut_w(P,-F)$ in a similar way.
We note that although $\mut_w(P,F)$ looks different from $\mut_w(P,-F)$, they are transformed into each other by a $\GL(2,\ZZ)$-transformation.
On the other hand, for the given lattice polygon $P$ there exists a consistent dimer model $\Gamma$ such that $P=\Delta_\Gamma$.

Based on this background, the deformation of a consistent dimer model is compatible with the above combinatorial mutations in the following sense.
Let $\Gamma$ be the consistent dimer model.
The deformations of $\Gamma$ are defined for a certain set of ``\emph{zigzag paths}" $\{z_1,\dots,z_r\}$ on $\Gamma$ corresponding to the vector $-w$ (see Section~\ref{sec_zigzag} concerning zigzag paths),
and there are two kinds of deformations which we call the \emph{zig-deformation} and the \emph{zag-deformation},
which are respectively denoted by $\nu_\bfp^\zig(\Gamma,\{z_1,\dots,z_r\})$ and $\nu_\bfp^\zag(\Gamma,\{z_1,\dots,z_r\})$; see Definitions~\ref{def_deformation_zig} and~\ref{def_deformation_zag}.
For some cases, these deformations are compatible with the combinatorial mutations of the perfect matchings as follows.\looseness=-1

\begin{Theorem}[{see Proposition~\ref{prop_preserve_consistent1} and Theorem~\ref{mutation=deformation1} for more details}]\label{thm_main_intro}
In addition to the above settings, we also assume that either one of the following conditions is satisfied:
\begin{itemize}\itemsep=0pt
\item[$(i)$] $r=1$ or
\item[$(ii)$] $\Gamma$ is a hexagonal or rectangular dimer model $($see Definition~{\rm \ref{def_hexagonal_square})}.
\end{itemize}
Then the deformations $\nu_\bfp^\zig(\Gamma,\{z_1,\dots,z_r\})$ and $\nu_\bfp^\zag(\Gamma,\{z_1,\dots,z_r\})$ are consistent dimer models,
and we have
\begin{gather*}\mutation_{w}(\Delta_\Gamma,F)=\Delta_{\nu^\zig_\bfp(\Gamma,\{z_1,\dots,z_r\})},\qquad
\mutation_{w}(\Delta_\Gamma,-F)=\Delta_{\nu^\zag_\bfp(\Gamma,\{z_1,\dots,z_r\})},
\end{gather*}
where $\Delta_{\nu^\zig_\bfp(\Gamma,\{z_1,\dots,z_r\})}$ and $\Delta_{\nu^\zag_\bfp(\Gamma,\{z_1,\dots,z_r\})}$ are the perfect matching polygons of
$\nu_\bfp^\zig(\Gamma,\{z_1,\dots,\allowbreak z_r\})$ and $\nu_\bfp^\zag(\Gamma,\{z_1,\dots,z_r\})$, respectively.
\end{Theorem}

Note that the combinatorial mutations $\mutation_{w}(\Delta_\Gamma,\pm F)$ are defined for the perfect matching polygon $\Delta_\Gamma$,
whereas the deformations $\nu^{\zig/\zag}_\bfp(\Gamma,\{z_1,\dots,z_r\})$ are defined independently of $\Delta_\Gamma$.
When we drop the both conditions~$(i)$ and $(ii)$ in Theorem~\ref{thm_main_intro}, the deformed dimer model does not necessarily realize the combinatorial mutation.
In order to obtain the same results for a~general situation, we have to add some algorithmic operations to the definitions of deformations.
We formulate such operations
as the \emph{extended zig-deformation} $\nu_\calX^\zig(\Gamma,\{z_1,\dots,z_r\})$ and \emph{extended zag-deformation} $\nu_\calY^\zag(\Gamma,\{z_1,\dots,z_r\})$ as in Definitions~\ref{def_exdeformation_zig} and \ref{def_exdeformation_zag}.
Then we can prove the statement in Theorem~\ref{thm_main_intro} for the extended deformations (see Proposition~\ref{prop_make_consistent} and Theorem~\ref{mutation=deformation}).
By these results, the perfect matching polygons of the (extended) deformation of dimer models satisfy
the properties which are exactly the same as the combinatorial mutation of a polygon (see Section~\ref{mutation_vs_deformation}).

Since dimer models are related with many branches of mathematics and physics, we considered that it would be of interest to compare certain objects (e.g., perfect matchings, the associated toric variety) related with
a consistent dimer model $\Gamma$ to those of $\nu_\calX^\zig(\Gamma,\{z_1,\dots,z_r\})$ (or $\nu_\calY^\zag(\Gamma,\{z_1,\dots,z_r\})$).

The structure of this paper is as follows.
In Section~\ref{sec_pre_dimer}, we introduce dimer models and related concepts. In particular, the notion of the perfect matching polygon introduced in this section is one of main concepts in this paper.
In Section~\ref{sec_zigzag}, we introduce the notion of zigzag paths, which are special paths on a dimer model.
We then define the consistency condition using zigzag paths, and discuss the relationships between perfect matchings and zigzag paths on a~consistent dimer model.
Next, we focus specifically on type I zigzag paths, which are zigzag paths having typical properties.
Using these type~I zigzag paths, we introduce the deformations of consistent dimer models in Section~\ref{sec_def_deform}.
In Section~\ref{sec_mutation_polygon1}, we compare the perfect matching polygon of the deformed dimer model with
the combinatorial mutation of the perfect matching polygon of the original dimer model.
In particular, we see that these polygons coincide in some cases as in Theorem~\ref{thm_main_intro} (see Theorem~\ref{mutation=deformation1}).
In Section~\ref{sec_def_exdeform}, we consider some algorithmic operations and define the extended deformations of consistent dimer models.
In Section~\ref{sec_proof}, we show the fundamental properties of the extended deformation of consistent dimer models.
In particular, we prove the compatibility of the extended deformations and the combinatorial mutations for general situations as in Theorem~\ref{mutation=deformation}, and this induces Theorem~\ref{mutation=deformation1}.
As we will mention in Remark~\ref{rem_def_deform_not_unique}, the extended deformation of a consistent dimer model
depends on the choice of some of the data used in the processes (zig-5), (zag-5), as explained in
Definitions~\ref{def_exdeformation_zig} and~\ref{def_exdeformation_zag} (see also Operation~\ref{operation_BIU}),
whereas the perfect matching polygon of the extended deformation of a consistent dimer model is determined uniquely.
This ambiguity is caused by the fact that there are several consistent dimer models giving the same perfect matching polygon.
However, it has been conjectured that such consistent dimer models are transformed into each other by \emph{mutations of dimer models}, and hence ``conjecturally" our deformation of a consistent dimer model is determined uniquely up to the mutation.
We include a survey of this mutation of dimer models in Appendix~\ref{app_mutationdimer}.
In Appendix~\ref{app_large_example}, we provide an additional example of the extended deformation, the explanation of which is too lengthy to include in the main body of this paper.

\section{Dimer models and perfect matching polygons}\label{sec_pre_dimer}

\subsection{What is a dimer model?}
We first introduce dimer models. Some ideas and concepts contained in this subsection are originally derived from theoretical physics (e.g., \cite{FHK,HV}).

A \emph{dimer model} (or \emph{brane tiling}) $\Gamma$ is a finite bipartite graph on the real two-torus~$\TT\coloneqq\RR^2/\ZZ^2$; that is, the set~$\Gamma_0$ of nodes is divided into two parts~$\Gamma_0^+$,~$\Gamma_0^-$, and the set~$\Gamma_1$ of edges consists of the edges connecting nodes in~$\Gamma_0^+$ with those in~$\Gamma_0^-$.
In order to make the situation clear, we color the nodes in~$\Gamma_0^+$ white, and those in~$\Gamma_0^-$ black.
A connected component of $\TT{\setminus}\Gamma_1$ is called a \emph{face} of $\Gamma$, and we denote by~$\Gamma_2$ the set of faces.
We also obtain the bipartite graph $\widetilde{\Gamma}$ on $\RR^2$ induced via the universal cover $\RR^2\rightarrow\TT$.
We call $\widetilde{\Gamma}$ the universal cover of the dimer model~$\Gamma$.
For example, the bipartite graph shown on the left side of Figure~\ref{ex_dimer_4b} is a dimer model
where the outer frame is the fundamental domain of~$\TT$.

As the dual of a dimer model $\Gamma$, we define the quiver $Q_\Gamma$ associated with $\Gamma$.
Namely, we assign a vertex dual to each face in $\Gamma_2$, and an arrow dual to each edge in $\Gamma_1$.
The orientation of arrows is determined so that the white node is on the right of the arrow.
For example, the right side of Figure~\ref{ex_dimer_4b} is the quiver associated with the dimer model on the left.
Sometimes we simply denote the quiver $Q_\Gamma$ by $Q$.

\newcommand{\basicdimerB}{
\coordinate (W1) at (5,1); \coordinate (W2) at (1,3); \coordinate (W3) at (3,5);
\coordinate (B1) at (3,1); \coordinate (B2) at (1,5); \coordinate (B3) at (5,3);
\draw[line width=\edgewidth] (0,0) rectangle (6,6);
\draw[line width=\edgewidth] (W2)--(B1)--(W1)--(B3)--(W3)--(B2)--(W2); \draw[line width=\edgewidth] (B1)--(W3);
\draw[line width=\edgewidth] (0,3)--(W2); \draw[line width=\edgewidth] (3,0)--(B1); \draw[line width=\edgewidth] (6,0)--(W1);
\draw[line width=\edgewidth] (6,3)--(B3); \draw[line width=\edgewidth] (3,6)--(W3); \draw[line width=\edgewidth] (0,6)--(B2);
\draw [line width=\nodewidth, fill=black] (B1) circle [radius=\noderad] ;
\draw [line width=\nodewidth, fill=black] (B2) circle [radius=\noderad] ;
\draw [line width=\nodewidth, fill=black] (B3) circle [radius=\noderad] ;
\draw [line width=\nodewidth, fill=white] (W1) circle [radius=\noderad] ;
\draw [line width=\nodewidth, fill=white] (W2) circle [radius=\noderad] ;
\draw [line width=\nodewidth, fill=white] (W3) circle [radius=\noderad] ;
}

\begin{figure}[h!]\centering
\newcommand{\edgewidth}{0.07cm}
\newcommand{\nodewidth}{0.07cm}
\newcommand{\noderad}{0.24} 
\newcommand{\arrowwidth}{0.08cm}
\begin{tikzpicture}
\node (DM) at (0,0)
{\scalebox{0.45}{
\begin{tikzpicture}
\basicdimerB
\end{tikzpicture}
} };

\node (QV) at (5,0)
{\scalebox{0.45}{
\begin{tikzpicture}[sarrow/.style={black, -latex, very thick}]
\node (Q1) at (2,3.5){{\LARGE$1$}}; \node (Q0) at (4,2.5){{\LARGE$0$}};
\node (Q3) at (5,5){{\LARGE$3$}}; \node (Q2) at (1,1){{\LARGE$2$}};
\draw[line width=\edgewidth] (0,0) rectangle (6,6);

\draw[lightgray, line width=\edgewidth] (W2)--(B1)--(W1)--(B3)--(W3)--(B2)--(W2); \draw[lightgray, line width=\edgewidth] (B1)--(W3);
\draw[lightgray, line width=\edgewidth] (0,3)--(W2); \draw[lightgray, line width=\edgewidth] (3,0)--(B1);
\draw[lightgray, line width=\edgewidth] (6,0)--(W1); \draw[lightgray, line width=\edgewidth] (6,3)--(B3);
\draw[lightgray, line width=\edgewidth] (3,6)--(W3); \draw[lightgray, line width=\edgewidth] (0,6)--(B2);

\draw [line width=\nodewidth, draw=lightgray, fill=lightgray] (B1) circle [radius=\noderad] ;
\draw [line width=\nodewidth, draw=lightgray, fill=lightgray] (B2) circle [radius=\noderad] ;
\draw [line width=\nodewidth, draw=lightgray, fill=lightgray] (B3) circle [radius=\noderad] ;
\draw [line width=\nodewidth, draw=lightgray, fill=white] (W1) circle [radius=\noderad] ;
\draw [line width=\nodewidth, draw=lightgray, fill=white] (W2) circle [radius=\noderad] ;
\draw [line width=\nodewidth, draw=lightgray, fill=white] (W3) circle [radius=\noderad] ;

\draw[sarrow, line width=\arrowwidth] (Q0)--(Q1); \draw[sarrow, line width=\arrowwidth] (Q3)--(Q0); \draw[sarrow, line width=\arrowwidth] (Q1)--(Q2);
\draw[sarrow, line width=\arrowwidth] (Q2)--(3,0); \draw[sarrow, line width=\arrowwidth] (3,6)--(Q3); \draw[sarrow, line width=\arrowwidth] (Q3)--(4.7,6);
\draw[sarrow, line width=\arrowwidth] (4.7,0)--(Q0); \draw[sarrow, line width=\arrowwidth] (Q0)--(6,1.5); \draw[sarrow, line width=\arrowwidth] (0,1.5)--(Q2);
\draw[sarrow, line width=\arrowwidth] (Q2)--(0,3); \draw[sarrow, line width=\arrowwidth] (6,3)--(Q3); \draw[sarrow, line width=\arrowwidth] (Q3)--(6,4.5);
\draw[sarrow, line width=\arrowwidth] (0,4.5)--(Q1);
\draw[sarrow, line width=\arrowwidth] (1.3,0)--(Q2); \draw[sarrow, line width=\arrowwidth] (Q1)--(1.3,6); \draw[sarrow, line width=\arrowwidth] (Q2)--(0,0);
\draw[sarrow, line width=\arrowwidth] (6,6)--(Q3);
\end{tikzpicture}
} };

\end{tikzpicture}
\caption{A dimer model and the associated quiver.}\label{ex_dimer_4b}
\end{figure}

The \emph{valency} of a node is the number of edges incident to that node.
We say that a~node on a~dimer model is \emph{$n$-valent} if its valency is $n$.
We then define several operations on a dimer model.
The \emph{join move} is the operation removing a $2$-valent node and joining the two distinct nodes connected to it as shown in Figure~\ref{split_join}.
Thus, using join moves we obtain a dimer model having no $2$-valent nodes.
We say that a dimer model is \emph{reduced} if it has no $2$-valent nodes.
We can see that the quiver associated with a reduced dimer model contains no $2$-cycles.
On the other hand, there is the operation called the \emph{split move}, which inserts a $2$-valent node (see Figure~\ref{split_join}).

We say that two reduced dimer models $\Gamma$, $\Gamma^\prime$ are \emph{isomorphic}, which is denoted by $\Gamma\cong\Gamma^\prime$, if their underlying cell decompositions of $\TT$ are homotopy-equivalent.

\begin{figure}[h!]\centering
{\scalebox{1}{
\begin{tikzpicture}

\node at (3.8,0.5) {join move} ; \node at (3.8,-0.5) {split move} ;
\draw[->, line width=0.03cm] (2.6,0.15)--(5,0.15);
\draw[<-, line width=0.03cm] (2.6,-0.15)--(5,-0.15);

\node (Dimer_a) at (0,0)
{\scalebox{0.7}{
\begin{tikzpicture}
\coordinate (b1) at (-1,0); \coordinate (b2) at (1,0);
\coordinate (w1) at (-2,1); \coordinate (w2) at (-2.5,0); \coordinate (w3) at (-2,-1);
\coordinate (w4) at (0,0); \coordinate (w5) at (2,1); \coordinate (w6) at (2,-1);

\draw[line width=0.05cm] (w1)--(b1); \draw[line width=0.05cm] (w2)--(b1);
\draw[line width=0.05cm] (w3)--(b1); \draw[line width=0.05cm] (b1)--(w4); \draw[line width=0.05cm] (w4)--(b2);
\draw[line width=0.05cm] (w5)--(b2); \draw[line width=0.05cm] (w6)--(b2);
\filldraw [line width=0.05cm, fill=black] (b1) circle [radius=0.16] ; \filldraw [line width=0.05cm, fill=black] (b2) circle [radius=0.16] ;
\draw [line width=0.05cm, fill=white] (w4) circle [radius=0.16] ;
\end{tikzpicture} }} ;

\node (Dimer_b) at (7,0)
{\scalebox{0.7}{
\begin{tikzpicture}
\coordinate (b1) at (0,0);
\coordinate (w1) at (-1,1); \coordinate (w2) at (-1.5,0); \coordinate (w3) at (-1,-1);
\coordinate (w5) at (1,1); \coordinate (w6) at (1,-1);
\draw[line width=0.05cm] (w1)--(b1); \draw[line width=0.05cm] (w2)--(b1);
\draw[line width=0.05cm] (w3)--(b1); \draw[line width=0.05cm] (w5)--(b1); \draw[line width=0.05cm] (w6)--(b1);
\filldraw [line width=0.05cm, fill=black] (b1) circle [radius=0.16] ;
\end{tikzpicture} }} ;

\end{tikzpicture}
}}

\caption{An example of the join and split move.}\label{split_join}
\end{figure}

We sometimes focus on the following type of dimer models.

\begin{Definition}\label{def_hexagonal_square}
We say that a dimer model $\Gamma$ is \emph{hexagonal} (resp.\ \emph{rectangular})
if any face of~$\Gamma$ is hexagon (resp.\ rectangle) and any node of $\Gamma$ is $3$-valent (resp.\ 4-valent).
In particular, a~hexagonal (resp.\ rectangular) dimer model is homotopy-equivalent to a dimer model whose faces are all regular hexagons (resp.\ squares).
\end{Definition}

\subsection{Perfect matchings and the perfect matching polygon}\label{subsection_PM}

Next, we assign a lattice polygon to each dimer model.
For this purpose, we will introduce the notion of perfect matchings, and we construct a polygon called the \emph{perfect matching polygon}.

\begin{Definition}A \emph{perfect matching} (or \emph{dimer configuration}) on a dimer model $\Gamma$ is a subset $\sfP$ of $\Gamma_1$ such that each node is
the end point of precisely one edge in $\sfP$.
\end{Definition}

In general, not every dimer model necessarily has a perfect matching.
In this paper, we will mainly discuss consistent dimer models, and such dimer models have perfect matchings.
Moreover, we can extend a perfect matching $\sfP$ to one on $\widetilde{\Gamma}$ via the universal cover $\RR^2\rightarrow\TT$.
We call this a \emph{perfect matching} on $\widetilde{\Gamma}$, and use the same notation $\sfP$.
For example, some perfect matchings on the dimer model given in Figure~\ref{ex_dimer_4b} are shown in Figure~\ref{pm_4b}.
(This dimer model has eight perfect matchings in total.)

\begin{figure}[h!]\centering
\newcommand{\edgewidth}{0.07cm}
\newcommand{\nodewidth}{0.07cm}
\newcommand{\noderad}{0.24} 

\newcommand{\pmwidth}{0.5cm}
\newcommand{\pmcolor}{red}

\begin{tikzpicture}
\node at (0,-1.6) {$\sfP_0$};
\node at (3.2,-1.6) {$\sfP_1$}; \node at (6.4,-1.6) {$\sfP_2$}; \node at (9.6,-1.6) {$\sfP_3$}; \node at (12.8,-1.6) {$\sfP_4$};

\node (PM0) at (0,0)
{\scalebox{0.35}{
\begin{tikzpicture}
\draw[line width=\pmwidth, color=\pmcolor] (W1)--(B3); \draw[line width=\pmwidth, color=\pmcolor] (W3)--(B1);
\draw[line width=\pmwidth, color=\pmcolor] (W2)--(B2);
\basicdimerB
\end{tikzpicture} }};

\node (PM1) at (3.2,0)
{\scalebox{0.35}{
\begin{tikzpicture}
\draw[line width=\pmwidth, color=\pmcolor] (W3)--(B1);
\draw[line width=\pmwidth, color=\pmcolor] (B3)--(6,3); \draw[line width=\pmwidth, color=\pmcolor] (W2)--(0,3);
\draw[line width=\pmwidth, color=\pmcolor] (W1)--(6,0); \draw[line width=\pmwidth, color=\pmcolor] (B2)--(0,6);
\basicdimerB
\end{tikzpicture} }};

\node (PM2) at (6.4,0)
{\scalebox{0.35}{
\begin{tikzpicture}
\draw[line width=\pmwidth, color=\pmcolor] (W2)--(B1); \draw[line width=\pmwidth, color=\pmcolor] (W3)--(B3);
\draw[line width=\pmwidth, color=\pmcolor] (W1)--(6,0); \draw[line width=\pmwidth, color=\pmcolor] (B2)--(0,6);
\basicdimerB
\end{tikzpicture} }};

\node (PM3) at (9.6,0)
{\scalebox{0.35}{
\begin{tikzpicture}
\draw[line width=\pmwidth, color=\pmcolor] (W2)--(B2); \draw[line width=\pmwidth, color=\pmcolor] (W1)--(B3);
\draw[line width=\pmwidth, color=\pmcolor] (W3)--(3,6); \draw[line width=\pmwidth, color=\pmcolor] (B1)--(3,0);
\basicdimerB
\end{tikzpicture} }};

\node (PM4) at (12.8,0)
{\scalebox{0.35}{
\begin{tikzpicture}
\draw[line width=\pmwidth, color=\pmcolor] (W1)--(B1); \draw[line width=\pmwidth, color=\pmcolor] (W3)--(B2);
\draw[line width=\pmwidth, color=\pmcolor] (B3)--(6,3); \draw[line width=\pmwidth, color=\pmcolor] (W2)--(0,3);
\basicdimerB
\end{tikzpicture} }};

\end{tikzpicture}
\caption{Some perfect matchings on the dimer model given in Figure~\ref{ex_dimer_4b}.}\label{pm_4b}
\end{figure}

We say that a dimer model is \emph{non-degenerate} if every edge is contained in some perfect matchings.
It is known that this non-degeneracy condition is equivalent to the \emph{strong marriage condition}; that is, the dimer model has equal numbers of black and white nodes and every proper subset of the black nodes of size $n$ is connected to at least $n+1$ white nodes (e.g., \cite[Remark~2.12]{Bro}).

Following \cite[Section~5]{IU2}, we next define the perfect matching polygon.
We first fix a perfect matching $\sfP_0$, and call this the \emph{reference perfect matching}.
For any perfect matching $\sfP$, we consider the connected components of the universal cover $\RR^2$ divided by $\sfP\cup\sfP_0$.
Then, we consider the \emph{height function} $\sfh_{\sfP,\sfP_0}$, which is a locally constant function on $\RR^2{\setminus}(\sfP\cup\sfP_0)$, defined as follows.
First, we choose a connected component of $\RR^2{\setminus}(\sfP\cup\sfP_0)$, and define the value of $\sfh_{\sfP,\sfP_0}$ as $0$.
Then, this function increases by $1$ when we cross
\begin{itemize}\itemsep=0pt
\item[--] an edge $e\in\sfP$ with the black node on the right, or
\item[--] an edge $e\in\sfP_0$ with the white node on the right,
\end{itemize}
and decreases by $1$
when we cross
\begin{itemize}\itemsep=0pt
\item[--] an edge $e\in\sfP$ with the white node on the right, or
\item[--] an edge $e\in\sfP_0$ with the black node on the right.
\end{itemize}
This function is determined up to a choice of a connected component of value $0$.
For example, Figure~\ref{fig_heightchnge} shows the height function $\sfh_{\sfP_2,\sfP_3}$ on the dimer model given in Figure~\ref{ex_dimer_4b},
where the red square stands for a fundamental domain of $\TT$, the edges in $\sfP_2$ (resp.~$\sfP_3$) are colored blue (resp.\ green),
and the number filled in each component is the value of $\sfh_{\sfP_2,\sfP_3}$.

\begin{figure}[h!]\centering
\newcommand{\edgewidth}{0.07cm}
\newcommand{\nodewidth}{0.07cm}
\newcommand{\noderad}{0.24} 
\newcommand{\arrowwidth}{0.08cm}
\newcommand{\pmwidth}{0.5cm}

\scalebox{0.33}{
\begin{tikzpicture}
\coordinate (W1a) at (5,1); \coordinate (W2a) at (1,3); \coordinate (W3a) at (3,5);
\coordinate (B1a) at (3,1); \coordinate (B2a) at (1,5); \coordinate (B3a) at (5,3);
\coordinate (W1b) at (5,7); \coordinate (W2b) at (1,9); \coordinate (W3b) at (3,11);
\coordinate (B1b) at (3,7); \coordinate (B2b) at (1,11); \coordinate (B3b) at (5,9);
\coordinate (W1c) at (11,1); \coordinate (W2c) at (7,3); \coordinate (W3c) at (9,5);
\coordinate (B1c) at (9,1); \coordinate (B2c) at (7,5); \coordinate (B3c) at (11,3);
\coordinate (W1d) at (11,7); \coordinate (W2d) at (7,9); \coordinate (W3d) at (9,11);
\coordinate (B1d) at (9,7); \coordinate (B2d) at (7,11); \coordinate (B3d) at (11,9);

\draw[line width=\pmwidth, color=blue] (W2a)--(B1a); \draw[line width=\pmwidth, color=blue] (W3a)--(B3a);
\draw[line width=\pmwidth, color=blue] (W1a)--(6,0); \draw[line width=\pmwidth, color=blue] (B2a)--(0,6);
\draw[line width=\pmwidth, color=blue] (W2b)--(B1b); \draw[line width=\pmwidth, color=blue] (W3b)--(B3b);
\draw[line width=\pmwidth, color=blue] (W1b)--(B2c); \draw[line width=\pmwidth, color=blue] (B2b)--(0,12);
\draw[line width=\pmwidth, color=blue] (W2c)--(B1c); \draw[line width=\pmwidth, color=blue] (W3c)--(B3c);
\draw[line width=\pmwidth, color=blue] (W1c)--(12,0); \draw[line width=\pmwidth, color=blue] (B2c)--(6,6);
\draw[line width=\pmwidth, color=blue] (W2d)--(B1d); \draw[line width=\pmwidth, color=blue] (W3d)--(B3d);
\draw[line width=\pmwidth, color=blue] (W1d)--(12,6); \draw[line width=\pmwidth, color=blue] (B2d)--(6,12);

\draw[line width=\pmwidth, color=green] (W2a)--(B2a); \draw[line width=\pmwidth, color=green] (W1a)--(B3a);
\draw[line width=\pmwidth, color=green] (W3a)--(B1b); \draw[line width=\pmwidth, color=green] (B1a)--(3,0);
\draw[line width=\pmwidth, color=green] (W2b)--(B2b); \draw[line width=\pmwidth, color=green] (W1b)--(B3b);
\draw[line width=\pmwidth, color=green] (W3b)--(3,12);
\draw[line width=\pmwidth, color=green] (W2c)--(B2c); \draw[line width=\pmwidth, color=green] (W1c)--(B3c);
\draw[line width=\pmwidth, color=green] (W3c)--(B1d); \draw[line width=\pmwidth, color=green] (B1c)--(9,0);
\draw[line width=\pmwidth, color=green] (W2d)--(B2d); \draw[line width=\pmwidth, color=green] (W1d)--(B3d);
\draw[line width=\pmwidth, color=green] (W3d)--(9,12);

\draw[line width=\edgewidth] (W2a)--(B1a)--(W1a)--(B3a)--(W3a)--(B2a)--(W2a); \draw[line width=\edgewidth] (B1a)--(W3a);
\draw[line width=\edgewidth] (0,3)--(W2a); \draw[line width=\edgewidth] (3,0)--(B1a); \draw[line width=\edgewidth] (6,0)--(W1a);
\draw[line width=\edgewidth] (0,6)--(B2a);
\draw[line width=\edgewidth] (W2b)--(B1b)--(W1b)--(B3b)--(W3b)--(B2b)--(W2b); \draw[line width=\edgewidth] (B1b)--(W3b);
\draw[line width=\edgewidth] (0,9)--(W2b); \draw[line width=\edgewidth] (0,12)--(B2b); \draw[line width=\edgewidth] (3,12)--(W3b);
\draw[line width=\edgewidth] (W2c)--(B1c)--(W1c)--(B3c)--(W3c)--(B2c)--(W2c); \draw[line width=\edgewidth] (B1c)--(W3c);
\draw[line width=\edgewidth] (9,0)--(B1c); \draw[line width=\edgewidth] (12,0)--(W1c); \draw[line width=\edgewidth] (12,3)--(B3c);
\draw[line width=\edgewidth] (W2d)--(B1d)--(W1d)--(B3d)--(W3d)--(B2d)--(W2d); \draw[line width=\edgewidth] (B1d)--(W3d);
\draw[line width=\edgewidth] (12,6)--(W1d);\draw[line width=\edgewidth] (6,12)--(B2d);
\draw[line width=\edgewidth] (12,9)--(B3d); \draw[line width=\edgewidth] (9,12)--(W3d);
\draw[line width=\edgewidth] (W3a)--(B1b); \draw[line width=\edgewidth] (B3a)--(W2c);
\draw[line width=\edgewidth] (B3b)--(W2d); \draw[line width=\edgewidth] (B2c)--(W1b);
\draw[line width=\edgewidth] (W3c)--(B1d);

\draw [line width=\nodewidth, fill=black] (B1a) circle [radius=\noderad] ;
\draw [line width=\nodewidth, fill=black] (B2a) circle [radius=\noderad] ;
\draw [line width=\nodewidth, fill=black] (B3a) circle [radius=\noderad] ;
\draw [line width=\nodewidth, fill=black] (B1b) circle [radius=\noderad] ;
\draw [line width=\nodewidth, fill=black] (B2b) circle [radius=\noderad] ;
\draw [line width=\nodewidth, fill=black] (B3b) circle [radius=\noderad] ;
\draw [line width=\nodewidth, fill=black] (B1c) circle [radius=\noderad] ;
\draw [line width=\nodewidth, fill=black] (B2c) circle [radius=\noderad] ;
\draw [line width=\nodewidth, fill=black] (B3c) circle [radius=\noderad] ;
\draw [line width=\nodewidth, fill=black] (B1d) circle [radius=\noderad] ;
\draw [line width=\nodewidth, fill=black] (B2d) circle [radius=\noderad] ;
\draw [line width=\nodewidth, fill=black] (B3d) circle [radius=\noderad] ;
\draw [line width=\nodewidth, fill=white] (W1a) circle [radius=\noderad] ;
\draw [line width=\nodewidth, fill=white] (W2a) circle [radius=\noderad] ;
\draw [line width=\nodewidth, fill=white] (W3a) circle [radius=\noderad] ;
\draw [line width=\nodewidth, fill=white] (W1b) circle [radius=\noderad] ;
\draw [line width=\nodewidth, fill=white] (W2b) circle [radius=\noderad] ;
\draw [line width=\nodewidth, fill=white] (W3b) circle [radius=\noderad] ;
\draw [line width=\nodewidth, fill=white] (W1c) circle [radius=\noderad] ;
\draw [line width=\nodewidth, fill=white] (W2c) circle [radius=\noderad] ;
\draw [line width=\nodewidth, fill=white] (W3c) circle [radius=\noderad] ;
\draw [line width=\nodewidth, fill=white] (W1d) circle [radius=\noderad] ;
\draw [line width=\nodewidth, fill=white] (W2d) circle [radius=\noderad] ;
\draw [line width=\nodewidth, fill=white] (W3d) circle [radius=\noderad] ;

\node at (1,1) {\Huge$\mathchar`-1$}; \node at (0.35,4) {\Huge$\mathchar`-1$};
\node at (2,3.5) {\Huge$0$}; \node at (4,2.5) {\Huge$0$}; \node at (4,0.5) {\Huge$0$}; \node at (1,7) {\Huge$0$}; \node at (0.35,10) {\Huge$0$};
\node at (5,5) {\Huge$1$}; \node at (7,1) {\Huge$1$}; \node at (4,8.5) {\Huge$1$}; \node at (2,9.5) {\Huge$1$}; \node at (2,11.65) {\Huge$1$};
\node at (10,0.5) {\Huge$2$}; \node at (8,3.5) {\Huge$2$}; \node at (10,2.5) {\Huge$2$}; \node at (7,7) {\Huge$2$}; \node at (5,11) {\Huge$2$};
\node at (11.65,2) {\Huge$3$}; \node at (11,5) {\Huge$3$}; \node at (8,9.5) {\Huge$3$}; \node at (10,8.5) {\Huge$3$}; \node at (8,11.65) {\Huge$3$};
\node at (11,11) {\Huge$4$}; \node at (11.65,8) {\Huge$4$};

\draw[line width=0.12cm, red] (0,0) rectangle (6,6);
\end{tikzpicture}
}
\caption{The height function $\sfh_{\sfP_2,\sfP_3}$.}\label{fig_heightchnge}
\end{figure}

We then take a point $\pt\in\RR^2{\setminus}(\sfP\cup\sfP_0)$, and define the \emph{height change}
\begin{displaymath}
h(\sfP,\sfP_0)=(h_x(\sfP,\sfP_0),h_y(\sfP,\sfP_0))\in\ZZ^2
\end{displaymath}
of $\sfP$ with respect to $\sfP_0$ as the differences of the height function:
\begin{gather*}
h_x(\sfP,\sfP_0)=\sfh_{\sfP,\sfP_0}(\pt+(1,0))-\sfh_{\sfP,\sfP_0}(\pt),
\\
h_y(\sfP,\sfP_0)=\sfh_{\sfP,\sfP_0}(\pt+(0,1))-\sfh_{\sfP,\sfP_0}(\pt).
\end{gather*}
We remark that this does not depend on the choice of $\pt\in\RR^2{\setminus}(\sfP\cup\sfP_0)$.
We then consider the height change $h(\sfP,\sfP^\prime)$ for any pair of perfect matchings $\sfP$, $\sfP^\prime$,
but since we have
\begin{equation*}
h(\sfP,\sfP^\prime)=h(\sfP,\sfP_0)-h(\sfP^\prime,\sfP_0),
\end{equation*}
it is enough to consider height changes with respect to the reference perfect matching $\sfP_0$.
Then, the \emph{perfect matching} (= \emph{PM}) \emph{polygon} (or \emph{characteristic polygon}) $\Delta_\Gamma\subset\RR^2$ of a dimer model $\Gamma$ is defined
as the convex hull of $\big\{h(\sfP,\sfP_0)\in\ZZ^2\,|\, \sfP\in\PM(\Gamma)\big\}$ where $\PM(\Gamma)$ is the set of perfect matchings on~$\Gamma$.

\begin{Remark}\label{rem_polygon}
The description of height changes depends on the choice of the coordinate system fixed in $\TT$
(i.e., the choice of a fundamental domain).
A change of a coordinate system induces a $\GL(2,\ZZ)$-action on the PM polygon, and this action does not affect our problem.
In the following, we say that two polygons $P$ and $Q$ are \emph{$\GL(2,\ZZ)$-equivalent} if they are transformed into each other by $\GL(2,\ZZ)$-transformations, in which case we denote $P\cong Q$.
Thus, we may fix some fundamental domain of~$\TT$.
Also, we remark that the description of the polygon $\Delta_\Gamma$ depends on the choice of a reference perfect matching, but it is determined up to translations.
\end{Remark}

\begin{Definition}\label{def_corner}
Fix a perfect matching $\sfP_0$ and let $\Delta_\Gamma$ be the perfect matching polygon.
We say that a perfect matching $\sfP$ is
\begin{itemize}\itemsep=0pt
\item a \emph{corner} (or \emph{extremal}) \emph{perfect matching} if $h(\sfP,\sfP_0)$ is a vertex of $\Delta_\Gamma$,
\item a \emph{boundary} (or \emph{external}) \emph{perfect matching} if $h(\sfP,\sfP_0)$ is a lattice point on an edge of $\Delta_\Gamma$
(in particular, corner perfect matchings are boundary perfect matchings),
\item an \emph{internal} \emph{perfect matching} if $h(\sfP,\sfP_0)$ is an interior lattice point of $\Delta_\Gamma$.
\end{itemize}
\end{Definition}

In the next subsection, we will introduce consistent dimer models (see Definition~\ref{def_consistent}), which have several nice properties.
If a dimer model is consistent, then there exists a unique corner perfect matching corresponding to each vertex of $\Delta_\Gamma$
(e.g., \cite[Corollary~4.27]{Bro}, \cite[Proposition~9.2]{IU2}).
Thus, we can give a cyclic order to corner perfect matchings along the corresponding vertices of $\Delta_\Gamma$ in the anti-clockwise direction.
We say that two corner perfect matchings are \emph{adjacent} if they are adjacent with respect to the given cyclic order.

\begin{Example}
We consider the dimer model given in Figure~\ref{ex_dimer_4b}, which is consistent as we will see in the next subsection.
Fix the perfect matching $\sfP_0$ shown in Figure~\ref{pm_4b} as the reference one.
Then, we see that the perfect matchings $\sfP_1,\dots,\sfP_4$ correspond to lattice points $(1,0)$, $(1,1)$, $(-1,0)$, $(0,-1)$, respectively.
Since $\sfP_0$ is the reference perfect matching, it corresponds to $(0,0)$.
In addition, this dimer model has three perfect matchings that are not listed in Figure~\ref{pm_4b},
and such perfect matchings also correspond to $(0,0)$.
Thus, the PM polygon takes the form shown in Figure~\ref{polygon_4b}, and hence $\sfP_1,\dots,\sfP_4$ are corner perfect matchings.

\begin{figure}[h!]\centering
\scalebox{0.75}{
\begin{tikzpicture}
\coordinate (00) at (0,0); \coordinate (10) at (1,0); \coordinate (01) at (0,1); \coordinate (-10) at (-1,0); \coordinate (11) at (1,1);
\coordinate (0-1) at (0,-1);

\draw [step=1, gray] (-2.3,-2.3) grid (2.3,2.3);
\draw [line width=0.06cm] (10)--(11) ; \draw [line width=0.06cm] (11)--(-10) ;
\draw [line width=0.06cm] (-10)--(0-1) ; \draw [line width=0.06cm] (0-1)--(10) ;

\draw [line width=0.05cm, fill=black] (00) circle [radius=0.1] ; \draw [line width=0.05cm, fill=black] (10) circle [radius=0.1] ;
\draw [line width=0.05cm, fill=black] (11) circle [radius=0.1] ;
\draw [line width=0.05cm, fill=black] (-10) circle [radius=0.1] ; \draw [line width=0.05cm, fill=black] (0-1) circle [radius=0.1] ;

\node [red] at (1.5,0) {$\sfP_1$} ;
\node [red] at (1.5,1) {$\sfP_2$} ; \node [red] at (-1.5,0) {$\sfP_3$} ;
\node [red] at (0,-1.5) {$\sfP_4$} ;

\end{tikzpicture}
}
\caption{The PM polygon of the dimer model given in Figure~\ref{ex_dimer_4b}.}\label{polygon_4b}
\end{figure}
\end{Example}

In this way, we can obtain the PM polygon from a dimer model.
On the other hand, it is known that any lattice polygon can be obtained as the PM polygon of a certain dimer model.

\begin{Theorem}[\cite{Gul,IU2}]\label{existence_dimer}
For any lattice polygon $\Delta$ in $\RR^2$, there exists a dimer model $\Gamma$ giving $\Delta$ as the PM polygon $\Delta_\Gamma$.
Furthermore, we can take this $\Gamma$ as it satisfies the consistency condition $($see Definition~{\rm \ref{def_consistent})}.
\end{Theorem}

Thus, for a given lattice polygon $\Delta$, we say that $\Gamma$ is a \emph{dimer model associated with $\Delta$} if the PM polygon of~$\Gamma$ coincides with~$\Delta$.
We remark that for a given polygon $\Delta$, the associated consistent dimer model is not unique in general.

\section{Zigzag paths and their properties}\label{sec_zigzag}

\subsection{Consistency conditions}\label{subsec_consistency}

In this subsection, we introduce the consistency condition.
In order to define this condition, we first introduce the notion of zigzag paths.
These paths are also the main ingredients for introducing deformations of dimer models.

\begin{Definition}We say that a path on a dimer model is a \emph{zigzag path} if it makes a maximum turn to the right on a white node and a maximum turn to the left on a black node.
Also, we say that a zigzag path is \emph{reduced} if it does not pass through $2$-valent nodes.
(We remark that we can make a zigzag path reduced using the join moves. In particular, any zigzag path on a~reduced dimer model is reduced.)
\end{Definition}

Since a dimer model has only finitely many edges, we see that all zigzag paths are periodic.
For a zigzag path $z$ on $\Gamma$, we define the length of~$z$, which is denoted by~$\ell(z)$, as the number of edges of~$\Gamma$ constituting~$z$.
In particular, we see that $\ell(z)$ is an even integer.
Thus, edges on a~zigzag path are indexed by elements in $\ZZ/(2n)\ZZ$ for some integer $\ell(z)/2=n\ge 1$.
Fix a black node on a zigzag path $z$ as the starting point of~$z$, and denote~$z$ as a sequence of edges starting from the fixed black node:
$z=z[1]z[2]\cdots z[2n-1]z[2n]$.

\begin{center}
{\scalebox{0.7}{
\begin{tikzpicture}
\newcommand{\edgewidth}{0.05cm} 
\newcommand{\nodewidth}{0.05cm} 
\newcommand{\noderad}{0.16} 

\coordinate (B1) at (0,0); \coordinate (B2) at (3,0); \coordinate (B3) at (6,0);
\coordinate (W1) at (1.5,1.5); \coordinate (W2) at (4.5,1.5); \coordinate (W3) at (7.5,1.5);

\path (B1) ++(135:1.5cm) coordinate (B1w); \path (B1) ++(225:0.7cm) coordinate (B1s); \path (B1) ++(315:0.7cm) coordinate (B1e);
\path (W1) ++(45:0.7cm) coordinate (W1w); \path (W1) ++(135:0.7cm) coordinate (W1s);
\path (B2) ++(225:0.7cm) coordinate (B2s); \path (B2) ++(315:0.7cm) coordinate (B2e);
\path (W2) ++(45:0.7cm) coordinate (W2w); \path (W2) ++(135:0.7cm) coordinate (W2s);
\path (B3) ++(225:0.7cm) coordinate (B3s); \path (B3) ++(315:0.7cm) coordinate (B3e);
\path (W3) ++(315:1.5cm) coordinate (W3n); \path (W3) ++(45:0.7cm) coordinate (W3w); \path (W3) ++(135:0.7cm) coordinate (W3s);

\path (B1) ++(245:0.5cm) coordinate (B1ss); \path (B1) ++(295:0.5cm) coordinate (B1ee);
\path (W1) ++(65:0.5cm) coordinate (W1ww); \path (W1) ++(115:0.5cm) coordinate (W1ss);
\path (B2) ++(245:0.5cm) coordinate (B2ss); \path (B2) ++(295:0.5cm) coordinate (B2ee);
\path (W2) ++(65:0.5cm) coordinate (W2ww); \path (W2) ++(115:0.5cm) coordinate (W2ss);
\path (B3) ++(245:0.5cm) coordinate (B3ss); \path (B3) ++(295:0.5cm) coordinate (B3ee);
\path (W3) ++(65:0.5cm) coordinate (W3ww); \path (W3) ++(115:0.5cm) coordinate (W3ss);
\draw [line width=\edgewidth] (B1w)--(B1)--(W1)--(B2)--(W2)--(B3)--(W3)--(W3n);
\draw [line width=\edgewidth] (B1s)--(B1)--(B1e); \draw[line width=\edgewidth] (B2s)--(B2)--(B2e); \draw [line width=\edgewidth] (B3s)--(B3)--(B3e);
\draw [line width=\edgewidth] (W1w)--(W1)--(W1s); \draw[line width=\edgewidth] (W2w)--(W2)--(W2s); \draw [line width=\edgewidth] (W3w)--(W3)--(W3s);

\draw [line width=\edgewidth,line cap=round, dash pattern=on 0pt off 2.5\pgflinewidth] (B1ss)--(B1ee);
\draw [line width=\edgewidth,line cap=round, dash pattern=on 0pt off 2.5\pgflinewidth] (W1ww)--(W1ss);
\draw [line width=\edgewidth,line cap=round, dash pattern=on 0pt off 2.5\pgflinewidth] (B2ss)--(B2ee);
\draw [line width=\edgewidth,line cap=round, dash pattern=on 0pt off 2.5\pgflinewidth] (W2ww)--(W2ss);
\draw [line width=\edgewidth,line cap=round, dash pattern=on 0pt off 2.5\pgflinewidth] (B3ss)--(B3ee);
\draw [line width=\edgewidth,line cap=round, dash pattern=on 0pt off 2.5\pgflinewidth] (W3ww)--(W3ss);
\draw [line width=\nodewidth, fill=black] (B1) circle [radius=\noderad] ;
\draw [line width=\nodewidth, fill=black] (B2) circle [radius=\noderad] ;
\draw [line width=\nodewidth, fill=black] (B3) circle [radius=\noderad] ;
\draw [line width=\nodewidth, fill=white] (W1) circle [radius=\noderad] ;
\draw [line width=\nodewidth, fill=white] (W2) circle [radius=\noderad] ;
\draw [line width=\nodewidth, fill=white] (W3) circle [radius=\noderad] ;
\path (B1w) ++(90:0.2cm) coordinate (B1w+);
\path (B1) ++(90:0.2cm) coordinate (B1+); \path (W1) ++(90:0.2cm) coordinate (W1+);
\path (B2) ++(90:0.2cm) coordinate (B2+); \path (W2) ++(90:0.2cm) coordinate (W2+);
\path (B3) ++(90:0.2cm) coordinate (B3+); \path (W3) ++(90:0.2cm) coordinate (W3+);
\path (W3n) ++(90:0.2cm) coordinate (W3n+);
\draw [->, rounded corners, line width=0.08cm, red] (B1w+)--(B1+)--(W1+)--(B2+)--(W2+)--(B3+)--(W3+)--(W3n+) ;
\draw [line width=0.05cm, red, line cap=round, dash pattern=on 0pt off 2.5\pgflinewidth] (-1.8,0.8)--(-2.4,0.8) ;
\draw [line width=0.05cm, red, line cap=round, dash pattern=on 0pt off 2.5\pgflinewidth] (9.3,0.8)--(9.9,0.8) ;
\node[red] at (0.5,1.3) {\large$z[1]$} ; \node[red] at (2,0.5) {\large$z[2]$} ;
\node[red] at (3.5,1.3) {\large$z[3]$} ; \node[red] at (5,0.5) {\large$z[4]$} ;
\node[red] at (6.5,1.3) {\large$z[5]$} ; \node[red] at (8,0.5) {\large$z[6]$} ;
\node[red] at (-1.1,0.5) {\large$z[2n]$} ;
\end{tikzpicture}
} }
\end{center}

An edge in a zigzag path $z$ is called a \emph{zig} (resp.\ \emph{zag}) of $z$ if it is indexed by an odd (resp.~even) integer.
We denote by $\Zig(z)$ (resp.~$\Zag(z)$) the set of zigs (resp.~zags) appearing in a~zigzag path~$z$, which is a~finite set.
Two zigzag paths are said to \emph{intersect} if they share an edge (not a node).
We note that if $z$ does not have a self-intersection, $\Zig(z)$ and $\Zag(z)$ are disjoint sets.
For any edge~$e$ of a dimer model, we can consider the zigzag path containing $e$ as a zig and the zigzag path containing $e$ as a zag.
Thus, any edge $e$ is contained in at most two zigzag paths.
If such zigzag paths do not have a self-intersection, $e$ is contained in exactly two zigzag paths.
For example, zigzag paths on the dimer model given in the left of Figure~\ref{ex_dimer_4b} are shown in Figure~\ref{zigzag_4b}.

\begin{figure}[h!]
\centering

\newcommand{\edgewidth}{0.07cm}
\newcommand{\nodewidth}{0.07cm}
\newcommand{\noderad}{0.24} 

{\scalebox{1}{
\begin{tikzpicture}

\node at (0,-1.6) {$z_1$}; \node at (3.5,-1.6) {$z_2$}; \node at (7,-1.6) {$z_3$}; \node at (10.5,-1.6) {$z_4$};

\node (ZZ1) at (0,0)
{\scalebox{0.4}{
\begin{tikzpicture}
\basicdimerB
\draw[->, line width=0.2cm, rounded corners, color=red] (0,3)--(W2)--(B1)--(W3)--(B3)--(6,3);
\end{tikzpicture} }};

\node (ZZ2) at (3.5,0)
{\scalebox{0.4}{
\begin{tikzpicture}
\basicdimerB
\draw[->, line width=0.2cm, rounded corners, color=red] (3,0)--(B1)--(W2)--(B2)--(0,6);
\draw[->, line width=0.2cm, rounded corners, color=red] (6,0)--(W1)--(B3)--(W3)--(3,6);
\end{tikzpicture} }} ;

\node (ZZ3) at (7,0)
{\scalebox{0.4}{
\begin{tikzpicture}
\basicdimerB
\draw[->, line width=0.2cm, rounded corners, color=red] (3,6)--(W3)--(B2)--(W2)--(0,3);
\draw[->, line width=0.2cm, rounded corners, color=red] (6,3)--(B3)--(W1)--(B1)--(3,0);
\end{tikzpicture} }};

\node (ZZ4) at (10.5,0)
{\scalebox{0.4}{
\begin{tikzpicture}
\basicdimerB
\draw[->, line width=0.2cm, rounded corners, color=red] (0,6)--(B2)--(W3)--(B1)--(W1)--(6,0);
\end{tikzpicture} }} ;

\end{tikzpicture}
}}
\caption{All zigzag paths on the dimer model given in Figure~\ref{ex_dimer_4b}.}\label{zigzag_4b}
\end{figure}

For a zigzag path $z$ on a dimer model $\Gamma$, we also consider the lift of $z$ to the universal cover~$\widetilde{\Gamma}$.
Let $\widetilde{z}{(\alpha)}$ denote a zigzag path on $\widetilde{\Gamma}$ whose projection on $\Gamma$ is $z$ where $\alpha\in\ZZ$.
When we do not need to specify these, we simply denote each of them by~$\widetilde{z}$.
Then, we see that a zigzag path on~$\widetilde{\Gamma}$ is either periodic or infinite in both directions.
Using these notions, we introduce the consistency condition.

\begin{Definition}[{see \cite[Definition~3.5]{IU1}}]
\label{def_consistent}
We say that a dimer model is (\emph{zigzag}) \emph{consistent} if it satisfies the following conditions:
\begin{itemize}\itemsep=0pt
\item [(1)] there is no homologically trivial zigzag path,
\item [(2)] no zigzag path on the universal cover has a self-intersection,
\item [(3)] no pair of zigzag paths on the universal cover intersect each other in the same direction more than once.
That is, if a pair of zigzag paths $(\widetilde{z},\widetilde{w})$ on the universal cover has two intersected edges $a_1$, $a_2$
and $\widetilde{z}$ points from $a_1$ to $a_2$, then $\widetilde{w}$ points from $a_2$ to $a_1$.
\end{itemize}
\end{Definition}

In the literature, there are several conditions that are equivalent to Definition~\ref{def_consistent} (for more details, see \cite{Boc1,IU1}),
and it is known that a consistent dimer model is non-degenerate (e.g., \cite[Proposition~8.1]{IU2}).
For example, we see that the dimer model given in Figure~\ref{ex_dimer_4b} is consistent by checking all zigzag paths, which are shown in Figure~\ref{zigzag_4b}.
We also remark that this dimer model satisfies the stronger condition called \emph{isoradial} (see Definition~\ref{def_isoradial}).

In this paper, we also use another condition called \emph{properly ordered}.
To explain the proper ordering, we prepare several notation.
First, considering a zigzag path $z$ as a $1$-cycle on $\TT$, we have the homology class $[z]\in\rmH_1(\TT)\cong\ZZ^2$. We call this element $[z]\in\ZZ^2$ the \emph{slope} of $z$.
We remark that even if we apply the join and split moves to nodes contained in a zigzag path, such operations do not change the slope.
If a zigzag path does not have any self-intersection, the slope of each zigzag path is primitive.
Now, we consider slopes $(a,b)\in\ZZ^2$ of zigzag paths that are not homologically trivial.
The set of such slopes has a natural cyclic order by considering~$(a,b)$ as an element of the unit circle:
\begin{displaymath}
\frac{(a,b)}{\sqrt{a^2+b^2}}\in S^1.
\end{displaymath}
We say that two zigzag paths are \emph{adjacent} if their slopes are adjacent with respect to the above cyclic order.
Using this cyclic order, we define a properly ordered dimer model below.
In particular, it is known that a dimer model is consistent in the sense of Definition~\ref{def_consistent} if and only if it is properly ordered (see \cite[Proposition~4.4]{IU1}).

\begin{Definition}[{see \cite[Section~3.1]{Gul}}]\label{def_properly}
A dimer model is said to be \emph{properly ordered} if
\begin{itemize}\itemsep=0pt
\item [(1)] there is no homologically trivial zigzag path,
\item [(2)] no zigzag path on the universal cover has a self-intersection,
\item [(3)] no pair of zigzag paths with the same slope have a common node,
\item [(4)] for any node on the dimer model, the natural cyclic order on the set of zigzag paths incident to that node
coincides with the cyclic order determined by their slopes.
\end{itemize}
\end{Definition}

We also introduce isoradial dimer models which are stronger than consistent ones.
The dimer model given in Figure~\ref{ex_dimer_4b} is isoradial in particular.

\begin{Definition}[{\cite[Theorem~5.1]{KS}; see also \cite{Duf,Mer}}]
\label{def_isoradial}
We say that a dimer model $\Gamma$ is \emph{isoradial} (or \emph{geometrically consistent}) if
\begin{itemize}\itemsep=0pt
\item [(1)] every zigzag path is a simple closed curve,
\item [(2)] any pair of zigzag paths on the universal cover share at most one edge.
\end{itemize}
\end{Definition}

\subsection{Relationships between perfect matchings and zigzag paths}\label{subsec_relation_zigzag_pm}

We can now discuss the relationship between perfect matchings and zigzag paths.
The following proposition is essential throughout this paper.

\begin{Proposition}[{see \cite[Theorem~3.3 and Corollary~3.8]{Gul}, \cite[Proposition~9.2 and Corollary~9.3]{IU2}}]\label{zigzag_sidepolygon}
There exists a one-to-one correspondence between the set of slopes of zigzag paths on a consistent dimer model $\Gamma$ and
the set of primitive side segments of the PM polygon $\Delta_\Gamma$. More precisely, each slope of a zigzag path is the primitive outer normal vector for a primitive side segment of~$\Delta_\Gamma$.

Moreover, zigzag paths having the same slope arise as the difference of two adjacent corner perfect matchings $\sfP$, $\sfP^\prime$ $($i.e., the edges in $\sfP\cup\sfP^\prime{\setminus}\sfP\cap\sfP^\prime$ form zigzag paths$)$. Thus, any corner perfect matching intersects with half of the edges constituting a certain zigzag path.
\end{Proposition}

For example, the zigzag path $z_1$ shown in Figure~\ref{zigzag_4b} is obtained from the pair of adjacent corner perfect matchings $(\sfP_1,\sfP_2)$ given in Figure~\ref{pm_4b}. Also, the zigzag paths $z_2$, $z_3$, $z_4$ are obtained by pairs $(\sfP_2,\sfP_3)$, $(\sfP_3,\sfP_4)$, $(\sfP_4,\sfP_1)$, respectively.

By Proposition~\ref{zigzag_sidepolygon}, we can assign each edge of the PM polygon to a zigzag path~$z$;
thus we will call this the edge corresponding to~$z$.
In particular, the edges corresponding to zigzag paths having the same slope are all the same.

Let $\sfP$, $\sfP^\prime$ be adjacent corner perfect matchings on a consistent dimer model, and $z_1,\dots, z_r$ be the zigzag paths arising from $\sfP$ and $\sfP^\prime$ as in Proposition~\ref{zigzag_sidepolygon}.
In particular, these zigzag paths have the same slope.
We see that $\sfP\cap z_i=\Zig(z_i)$ and $\sfP^\prime\cap z_i=\Zag(z_i)$ (or $\sfP\cap z_i=\Zag(z_i)$ and $\sfP^\prime\cap z_i=\Zig(z_i)$) for any $i=1,\dots,r$. Here, $\sfP\cap z_i$ denotes the subset of edges in $\sfP$ contained in~$z_i$.
Then, we have the description of boundary perfect matchings using the corner ones.

\begin{Proposition}[{e.g., \cite[Proposition~4.35]{Bro}, \cite[Corollary~3.8]{Gul}}]\label{char_bound}
Let $\sfP$, $\sfP^\prime$ and $z_1,\dots, z_r$ be as above.
Let $E$ be the edge of the PM polygon of $\Gamma$ corresponding to $z_1,\dots, z_r$.
We assume that $\sfP\cap z_i=\Zig(z_i)$ and $\sfP^\prime\cap z_i=\Zag(z_i)$.
Then, any boundary perfect matching corresponding to a~lattice point on $E$ can be described as
\begin{displaymath}
\bigg(\sfP{\setminus}\bigcup_{i\in I}\Zig(z_i)\bigg)\cup\bigcup_{i\in I}\Zag(z_i) \qquad or \qquad \bigg(\sfP^\prime{\setminus}\bigcup_{i\in I}\Zag(z_i)\bigg)\cup\bigcup_{i\in I}\Zig(z_i),
\end{displaymath}
where $I$ is a subset of $\{1, \dots, r\}$.
In particular, the number of perfect matchings corresponding to a~lattice point~$\mathsf{q}$ on~$E$ is~$\dbinom{r}{m}$,
where~$m$ is the number of primitive side segments of~$E$ between~$\mathsf{q}$ and one of the endpoints of~$E$.
\end{Proposition}

We then observe the relationships between zigzag paths and height changes of perfect matchings.
Some of these relationships are well-known to experts, but we will consider them in detail because these statements are quite important
when we define the deformation of consistent dimer models in Section~\ref{sec_def_deform}, and also for the self-containedness.

\begin{observation}[{cf.~\cite[Section~5.3]{IU2}}]\label{height_zigzag}
Let $\Gamma$ be a consistent dimer model.
For any zigzag path~$z$, the slope $[z]$ is an element in $\rmH_1(\TT)$.
On the other hand, we can consider height changes as elements in the cohomology group $\rmH^1(\TT)\cong\ZZ^2$.
We consider a pairing \mbox{$\langle{-},{-}\rangle\colon \! \rmH^1(\TT)\!\times\!\rmH_1(\TT)\!\rightarrow\!\ZZ$}.
By Propositions~\ref{zigzag_sidepolygon} and \ref{char_bound}, there is a perfect matching $\sfP^\prime$ that intersects half of the edges constituting~$z$.
Then, for any perfect matching $\sfP$, we have $\langle h(\sfP,\sfP^\prime),[z]\rangle\le 0$.
In fact, we first replace~$z$ by the path~$p_z$ on the quiver $Q_\Gamma$ going along the left side of~$z$ (see the figure below).

\begin{center}
{\scalebox{0.6}{
\begin{tikzpicture}[sarrow/.style={black, -latex, very thick},tarrow/.style={black, latex-, very thick}]
\newcommand{\edgewidth}{0.05cm} 
\newcommand{\nodewidth}{0.05cm} 
\newcommand{\noderad}{0.16} 

\coordinate (B1) at (0,0); \coordinate (B2) at (3,0); \coordinate (B3) at (6,0);
\coordinate (W1) at (1.5,1.5); \coordinate (W2) at (4.5,1.5); \coordinate (W3) at (7.5,1.5);

\path (B1) ++(135:1.5cm) coordinate (B1w); \path (B1) ++(225:0.7cm) coordinate (B1s); \path (B1) ++(315:0.7cm) coordinate (B1e);
\path (W1) ++(45:1.5cm) coordinate (W1w); \path (W1) ++(135:1.5cm) coordinate (W1s);
\path (B2) ++(225:0.7cm) coordinate (B2s); \path (B2) ++(315:0.7cm) coordinate (B2e);
\path (W2) ++(45:1.5cm) coordinate (W2w); \path (W2) ++(135:1.5cm) coordinate (W2s);
\path (B3) ++(225:0.7cm) coordinate (B3s); \path (B3) ++(315:0.7cm) coordinate (B3e);
\path (W3) ++(315:1.5cm) coordinate (W3n); \path (W3) ++(45:1.5cm) coordinate (W3w); \path (W3) ++(135:1.5cm) coordinate (W3s);

\path (B1) ++(245:0.5cm) coordinate (B1ss); \path (B1) ++(295:0.5cm) coordinate (B1ee);
\path (W1) ++(65:0.5cm) coordinate (W1ww); \path (W1) ++(115:0.5cm) coordinate (W1ss);
\path (B2) ++(245:0.5cm) coordinate (B2ss); \path (B2) ++(295:0.5cm) coordinate (B2ee);
\path (W2) ++(65:0.5cm) coordinate (W2ww); \path (W2) ++(115:0.5cm) coordinate (W2ss);
\path (B3) ++(245:0.5cm) coordinate (B3ss); \path (B3) ++(295:0.5cm) coordinate (B3ee);
\path (W3) ++(65:0.5cm) coordinate (W3ww); \path (W3) ++(115:0.5cm) coordinate (W3ss);
\draw [line width=\edgewidth] (B1w)--(B1)--(W1)--(B2)--(W2)--(B3)--(W3)--(W3n);
\draw [line width=\edgewidth] (B1s)--(B1)--(B1e); \draw[line width=\edgewidth] (B2s)--(B2)--(B2e); \draw [line width=\edgewidth] (B3s)--(B3)--(B3e);
\draw [line width=\edgewidth] (W1w)--(W1)--(W1s); \draw[line width=\edgewidth] (W2w)--(W2)--(W2s); \draw [line width=\edgewidth] (W3w)--(W3)--(W3s);

\draw [line width=\edgewidth,line cap=round, dash pattern=on 0pt off 2.5\pgflinewidth] (B1ss)--(B1ee);
\draw [line width=\edgewidth,line cap=round, dash pattern=on 0pt off 2.5\pgflinewidth] (W1ww)--(W1ss);
\draw [line width=\edgewidth,line cap=round, dash pattern=on 0pt off 2.5\pgflinewidth] (B2ss)--(B2ee);
\draw [line width=\edgewidth,line cap=round, dash pattern=on 0pt off 2.5\pgflinewidth] (W2ww)--(W2ss);
\draw [line width=\edgewidth,line cap=round, dash pattern=on 0pt off 2.5\pgflinewidth] (B3ss)--(B3ee);
\draw [line width=\edgewidth,line cap=round, dash pattern=on 0pt off 2.5\pgflinewidth] (W3ww)--(W3ss);
\draw [line width=\nodewidth, fill=black] (B1) circle [radius=\noderad] ;
\draw [line width=\nodewidth, fill=black] (B2) circle [radius=\noderad] ;
\draw [line width=\nodewidth, fill=black] (B3) circle [radius=\noderad] ;
\draw [line width=\nodewidth, fill=white] (W1) circle [radius=\noderad] ;
\draw [line width=\nodewidth, fill=white] (W2) circle [radius=\noderad] ;
\draw [line width=\nodewidth, fill=white] (W3) circle [radius=\noderad] ;
\path (B1w) ++(-90:0.2cm) coordinate (B1w+);
\path (B1) ++(-90:0.2cm) coordinate (B1+); \path (W1) ++(-90:0.2cm) coordinate (W1+);
\path (B2) ++(-90:0.2cm) coordinate (B2+); \path (W2) ++(-90:0.2cm) coordinate (W2+);
\path (B3) ++(-90:0.2cm) coordinate (B3+); \path (W3) ++(-90:0.2cm) coordinate (W3+);
\path (W3n) ++(-90:0.2cm) coordinate (W3n+);
\draw [->, rounded corners, line width=0.08cm, red] (B1w+)--(B1+)--(W1+)--(B2+)--(W2+)--(B3+)--(W3+)--(W3n+) ;

\path (B1)++(90:1.5cm) coordinate (V1);
\path (B2)++(90:1.5cm) coordinate (V2);
\path (B3)++(90:1.5cm) coordinate (V3);
\path (9,0)++(90:1.5cm) coordinate (V4);
\draw [sarrow, line width=0.08cm, blue] (V1)-- ++(45:1.7cm);
\draw [sarrow, line width=0.08cm, blue] (V2)-- ++(45:1.7cm);
\draw [tarrow, line width=0.08cm, blue] (V2)-- ++(135:1.7cm);
\draw [sarrow, line width=0.08cm, blue] (V3)-- ++(45:1.7cm);
\draw [tarrow, line width=0.08cm, blue] (V3)-- ++(135:1.7cm);
\draw [tarrow, line width=0.08cm, blue] (V4)-- ++(135:1.7cm);

\path (W1) ++(75:1cm) coordinate (W1www); \path (W1) ++(105:1cm) coordinate (W1sss);
\path (W2) ++(75:1cm) coordinate (W2www); \path (W2) ++(105:1cm) coordinate (W2sss);
\path (W3) ++(75:1cm) coordinate (W3www); \path (W3) ++(105:1cm) coordinate (W3sss);
\draw [line width=\edgewidth,line cap=round, dash pattern=on 0pt off 2.5\pgflinewidth,blue] (W1www)--(W1sss);
\draw [line width=\edgewidth,line cap=round, dash pattern=on 0pt off 2.5\pgflinewidth,blue] (W2www)--(W2sss);
\draw [line width=\edgewidth,line cap=round, dash pattern=on 0pt off 2.5\pgflinewidth,blue] (W3www)--(W3sss);
\node[red] at (9.3,0.35) {\LARGE $z$} ; \node[blue] at (9.3,1.35) {\LARGE $p_z$} ;
\end{tikzpicture}
} }
\end{center}

Then, considering this path $p_z$ as the element $[p_z]\in\rmH_1(\TT)$, we have $[z]=[p_z]$.
By a choice of $\sfP^\prime$, this $p_z$ does not cross any edge in $\sfP^\prime$, and if $p_z$ crosses an edge in $\sfP$, we can see the white node on the right by the definition of $Q_\Gamma$. Thus, we have the desired inequality.
\end{observation}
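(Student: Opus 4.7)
The plan is to make the inline sketch precise by explicitly constructing the quiver path $p_z$ promised in the statement and then analyzing the sign contribution of each crossing to the height function.

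First I would construct $p_z$ and verify $[p_z]=[z]$. Since $z$ turns maximally right at every white node and maximally left at every black node, all edges of $\Gamma$ that are incident to $z$ but not contained in $z$ lie on the \emph{left} of $z$ when incident to a white node of $z$, and on the \emph{right} of $z$ when incident to a black node of $z$. The faces of $\Gamma$ immediately to the left of $z$ can therefore be chained together by crossing, at each white node $W$ of $z$, the ``extra'' edges at $W$ in their cyclic order; at each black node there is no extra edge to cross and consecutive left-of-$z$ faces simply meet at that node. I would define $p_z$ to be the resulting closed walk in $Q_\Gamma$, oriented in the same direction as $z$. Because $p_z$ lives in a tubular neighborhood of $z$ --- which is legitimate since consistency (Definition~\ref{def_consistent}) rules out self-intersections of $z$ on the universal cover, so one can verify the construction upstairs on $\widetilde{\Gamma}$ and then push down --- it is freely homotopic to $z$ on $\TT$, whence $[p_z]=[z]$ in $\rmH_1(\TT)$.

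Second, I would use Proposition~\ref{char_bound} to show that $p_z$ crosses no edge of $\sfP^\prime$. The perfect matching $\sfP^\prime$ is chosen so that $\sfP^\prime\cap z$ equals either $\Zig(z)$ or $\Zag(z)$; in either case, at every white node $W$ of $z$, the unique $\sfP^\prime$-edge at $W$ is one of the two $z$-edges at $W$. Hence none of the extra edges at white nodes of $z$ belong to $\sfP^\prime$, and $p_z$ does not cross $\sfP^\prime$ at all.

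Third, I would carry out the sign analysis. By the defining convention of $Q_\Gamma$, every arrow $a$ of $Q_\Gamma$ is oriented so that the white endpoint of its dual edge lies on the right of $a$; therefore each time $p_z$ traverses an arrow in its native direction, the white endpoint of the crossed edge $e^\prime$ lies on the right of the direction of travel. Combined with the previous step, the height-function rules give a contribution of $-1$ when $e^\prime\in\sfP$ and $0$ when $e^\prime\notin\sfP$. Summing around the closed walk $p_z$ and identifying the result with the pairing on $\rmH^1\times\rmH_1$ yields
\[
\langle h(\sfP,\sfP^\prime),[z]\rangle=\langle h(\sfP,\sfP^\prime),[p_z]\rangle=-\#\{\,e^\prime\in\sfP \text{ such that } p_z \text{ crosses } e^\prime\,\}\le 0,
\]
which is the desired inequality. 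The delicate part is the first step, namely justifying that the sequence of left-of-$z$ faces really does assemble into a well-defined closed walk on $Q_\Gamma$ which is genuinely parallel to $z$; once this local picture is pinned down, steps two and three are direct applications of Proposition~\ref{char_bound} and of the quiver convention, respectively.
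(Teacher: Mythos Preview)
Your proposal is correct and follows essentially the same approach as the paper's own argument: construct the quiver path $p_z$ along the left of $z$, observe $[p_z]=[z]$, check that $p_z$ avoids $\sfP'$ because at every white node of $z$ the $\sfP'$-edge lies on $z$ itself, and then read off the sign from the quiver orientation convention. Your write-up simply makes explicit the local picture that the paper leaves to the figure, in particular why the left-of-$z$ faces chain up by crossing only the extra edges at white nodes.
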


For a perfect matching $\sfP$ and a zigzag path $z$ on a dimer model $\Gamma$, we denote by $|\sfP\cap z|$ the number of edges in $\sfP\cap z$.
Since the number of perfect matchings is finite, the maximum (resp.\ minimum) number $\omega_{\max}(z)$ (resp.\ $\omega_{\min}(z)$) of $|\sfP\cap z|$ exists for each zigzag path $z$.
For a~consistent dimer model, $z$ can be obtained as the difference of adjacent perfect matchings (see Proposition~\ref{char_bound});
thus we clearly have $\ell(z)/2=\omega_{\max}(z)$.
We set
\begin{gather*}
\PM_{\max}(z)=\{\sfP\in\PM(\Gamma) \,|\, |\sfP\cap z|=\omega_{\max}(z)\},
\\
\PM_{\min}(z)=\{\sfP\in\PM(\Gamma) \,|\, |\sfP\cap z|=\omega_{\min}(z)\}.
\end{gather*}
In particular, if $\sfP$, $\sfP^\prime$ are adjacent corner perfect matchings on a consistent dimer model $\Gamma$, and $z$ is one of the zigzag paths obtained by $\sfP$, $\sfP^\prime$, then we have $\sfP\cap z=\Zig(z)$ and $\sfP^\prime\cap z=\Zag(z)$ (or $\sfP\cap z=\Zag(z)$ and $\sfP^\prime\cap z=\Zig(z)$),
and hence the next lemma easily follows from Propositions~\ref{zigzag_sidepolygon} and~\ref{char_bound}.

\begin{Lemma}\label{number_zigzag_corner}
Let $z$ be a zigzag path on a consistent dimer model $\Gamma$, and $E$ be the edge of the PM polygon of~$\Gamma$ corresponding to~$z$.
If $\sfP_1,\dots,\sfP_s$ are boundary perfect matchings corresponding to lattice points on~$E$, then we have
$\{\sfP_1,\dots,\sfP_s\}=\PM_{\max}(z)$, and $\omega_{\max}(z)=|\sfP_i\cap z|=\ell(z)/2$ for any $i=1,\dots,s$.
\end{Lemma}

Next, we prepare several lemmas, which play crucial roles to define deformations of consistent dimer models.

\begin{Lemma}\label{zigzag_lem1}
Let the notation be the same as Lemma~{\rm \ref{number_zigzag_corner}}.
For any perfect matching~$\sfP$, we have
\begin{displaymath}
|\sfP\cap z|=\ell(z)/2-\langle h(\sfP,\sfP_i),-[z]\rangle.
\end{displaymath}
In particular, we have
\begin{displaymath}
\langle h(\sfP,\sfP_i),-[z]\rangle\le \omega_{\max}(z)-\omega_{\min}(z),
\end{displaymath}
and the equality holds for $\sfP\in \PM_{\min}(z)$.
\end{Lemma}

\begin{proof}
First, the maximum number of $|\sfP\cap z|$ is $\ell(z)/2$, in which case $\sfP=\sfP_i$ by Lemma~\ref{number_zigzag_corner}.
If the path $p_z$ as in Observation~\ref{height_zigzag} crosses an edge $e$ in $\sfP$, it means that $e$ is not an edge constituting $z$, and thus any edge sharing the same white node as $e$ is not contained in $\sfP$.
By Observation~\ref{height_zigzag}, we see that for any perfect matching $\sfP$ the number of edges in $\sfP$ intersecting with $p_z$ coincides with $-\langle h(\sfP,\sfP_i),[z]\rangle$, and thus we have the first equation.

The second assertion follows from the first equation and Lemma~\ref{number_zigzag_corner}.
\end{proof}

By this lemma, we see that $\sfP\in\PM_{\min}(z)$ if and only if $\langle h(\sfP,\sfP_i),[z]\rangle{\le} \langle h(\sfP^\prime,\sfP_i),[z]\rangle$ for any $\sfP^\prime\in\PM(\Gamma)$.
Thus, we see that $\sfP\in\PM_{\min}(z)$ lies either on a vertex of the PM polygon $\Delta_\Gamma$ or an edge of $\Delta_\Gamma$.
Also, even if two zigzag paths $z_j$ and $z_k$ have the same slope, $\ell(z_j)\neq\ell(z_k)$ and $|\sfP\cap z_j|\neq|\sfP\cap z_k|$ in general,
but the difference of these values is the same in the following sense:

\begin{Lemma}\label{zigzag_lem2}
Let $\Gamma$ be a consistent dimer model, and $z$, $z^\prime$ be zigzag paths on $\Gamma$ having the same slope.
Then, for any perfect matching $\sfP$, we have
\begin{displaymath}
\ell(z)/2-|\sfP\cap z|=\ell(z^\prime)/2-|\sfP\cap z^\prime|.
\end{displaymath}
In particular, we have
\begin{displaymath}
\ell(z)/2-\omega_{\min}(z)=\ell(z^\prime)/2-\omega_{\min}(z^\prime).
\end{displaymath}
\end{Lemma}

\begin{proof}Since $[z]=[z^\prime]$, the first equation follows from Lemma~\ref{zigzag_lem1}.
Considering a perfect matching $\sfP$ such that the value of $\langle h(\sfP,\sfP_i),-[z]\rangle=\langle h(\sfP,\sfP_i),-[z^\prime]\rangle$ is maximal, we have the second equation.
\end{proof}

We then divide zigzag paths on a consistent dimer model into the following two types. Note that type I zigzag paths are used to define the deformation of consistent dimer models.

\begin{Definition}\label{def_zigzag_type}
Let $\Gamma$ be a dimer model, and $z$ be a zigzag path on $\Gamma$.
\begin{itemize}\itemsep=0pt
\item[(1)] We say that $z$ is \emph{type I} if $z$ is reduced and $\widetilde{z}$ intersects with any other zigzag path on the universal cover $\widetilde{\Gamma}$ at most once.
\item[(2)] We say that $z$ is \emph{type II} if $z$ is reduced and there exists a zigzag path $\widetilde{w}$ on the universal cover $\widetilde{\Gamma}$ such that $\widetilde{w}$ intersects with $\widetilde{z}$ in the opposite direction more than once.
\end{itemize}
We note that any zigzag path on a reduced consistent dimer model is either type~I or type~II.
In particular, if $\Gamma$ is isoradial, then all zigzag paths are type~I (see Definition~\ref{def_isoradial}).
\end{Definition}

As the following lemmas show, the properties of type I zigzag paths are particularly nice.

\begin{Lemma}\label{lem_existence_pm}
Let $z$ be a type I zigzag path on a consistent dimer model $\Gamma$.
Then, there exists a perfect matching $\sfP$ on $\Gamma$ satisfying $|\sfP\cap z|=0$, which means that $\sfP$ is in $\PM_{\min}(z)$.
\end{Lemma}

\begin{proof}
In order to find a perfect matching $\sfP$, we will use the method discussed in \cite[Section~3]{Gul}, \cite[Section~4]{Bro}.
For this, we first prepare some notation.

We consider the sequence $[z_1],\dots,[z_n]$ of slopes of zigzag paths on~$\Gamma$.
Since $\Gamma$ is consistent, it is properly ordered, and thus we can assume that the slopes are ordered cyclically with this order.
We note that some of the slopes may coincide.
Then, we define the normal fan in $\rmH_1(\TT)\otimes_\ZZ\RR$ whose rays are slopes $[z_1],\dots,[z_n]$.
In particular, each two-dimensional cone $\sigma$ is generated by different adjacent slopes.
We denote by $\theta_i$ the angle formed by $[z_i]$. Here, we suppose that $z=z_k$.
Let $\calR$ be a ray whose angle is $\theta_k+\pi+\epsilon$ where $\epsilon>0$ is a sufficiently small angle satisfying the condition that $\theta_k+\pi+\epsilon$ does not coincide with any $\theta_i$.

Then, for each node $v\in\Gamma_0$, we define the fan $\xi(v)$ generated by the slopes of zigzag paths factoring through $v$.
In this fan $\xi(v)$, we can find the zigzag path $z^\prime_v$ whose slope makes the smallest clockwise angle with $\calR$, and the zigzag path $z^{\prime\prime}_v$ whose slope makes the smallest anti-clockwise angle with~$\calR$.
Since $\Gamma$ is properly ordered, these zigzag paths are consecutive around~$v$.
The intersection of~$z^\prime_v$ and $z^{\prime\prime}_v$ is the edge~$e(v)$ which has~$v$ as an endpoint.

\begin{center}
{\scalebox{0.7}{
\begin{tikzpicture}
\newcommand{\edgewidth}{0.05cm} 
\newcommand{\nodewidth}{0.05cm} 
\newcommand{\noderad}{0.16} 

\coordinate (W1) at (0,0); \coordinate (B1) at (3,0);
\path (W1) ++(135:1.5cm) coordinate (W1a); \path (W1) ++(225:1.5cm) coordinate (W1b);
\path (B1) ++(45:1.5cm) coordinate (B1a); \path (B1) ++(315:1.5cm) coordinate (B1b);

\draw [line width=\edgewidth] (W1)--(B1); \draw [line width=\edgewidth] (W1)--(W1a); \draw [line width=\edgewidth] (W1)--(W1b);
\draw [line width=\edgewidth] (B1)--(B1a); \draw [line width=\edgewidth] (B1)--(B1b);
\draw [line width=\edgewidth,line cap=round, dash pattern=on 0pt off 2.5\pgflinewidth] (-1,0.35)--(-1,-0.35);
\draw [line width=\edgewidth,line cap=round, dash pattern=on 0pt off 2.5\pgflinewidth] (4,0.35)--(4,-0.35);
\draw [line width=\nodewidth, fill=white] (W1) circle [radius=\noderad] ;
\draw [line width=\nodewidth, fill=black] (B1) circle [radius=\noderad] ;

\path (W1) ++(270:0.3cm) coordinate (W1-); \path (W1) ++(90:0.3cm) coordinate (W1+);
\path (B1) ++(270:0.3cm) coordinate (B1-); \path (B1) ++(90:0.3cm) coordinate (B1+);
\path (W1-) ++(225:1.7cm) coordinate (W1--); \path (W1+) ++(135:1.7cm) coordinate (W1++);
\path (B1-) ++(315:1.7cm) coordinate (B1--); \path (B1+) ++(45:1.7cm) coordinate (B1++);
\draw [->, rounded corners, line width=0.08cm, red] (W1--)--(W1-)--(B1+)--(B1++) ;
\draw [->, rounded corners, line width=0.08cm, blue] (B1--)--(B1-)--(W1+)--(W1++) ;

\node at (-0.5,0) {\large$v$} ; \node at (3.5,0.1) {\large$v^\prime$} ;
\node at (1.5,-0.7) {\large$e(v)=e(v^\prime)$} ;
\node[red] at (4.5,1.7) {\large$z_v^\prime$}; \node[blue] at (-1.5,1.7) {\large$z_v^{\prime\prime}$};

\end{tikzpicture}
} }
\end{center}

We then apply the same argument to the node $v^\prime$ which is the other endpoint of $e(v)$.
The proper ordering on $\Gamma$ leads to the conclusion that $e(v)=e(v^\prime)$ (see the above figure).
We repeat these arguments for any node, but it is enough to consider $e(v)$'s for any $v\in\Gamma^+_0$ (or $v\in\Gamma^-_0$).
By \cite[Section~3.2]{Gul} or \cite[Lemma~4.19]{Bro}, we see that the subset of edges $e(v)$ for all $v\in\Gamma^+_0$ forms a perfect matching.
Furthermore, since $z=z_k$ is type I, there exists a zigzag path whose slope is located at an angle less than $\pi$ in an anti-clockwise (resp.\ clockwise) direction from~$[z]$ in~$\xi(v)$ by \cite[Lemma~4.11 and its proof]{Bro}, in which case such a slope is located between $[z]$ and $[z^\prime_v]$ (resp.~$[z^{\prime\prime}_v]$) or coincides with $[z^\prime_v]$ (resp.~$[z^{\prime\prime}_v]$). Thus, we have $z\neq z^\prime_v$ and $z\neq z^{\prime\prime}_v$ by the definition of the ray $\calR$.
Therefore, in this case $e(v)$ is not contained in~$z$ by the above construction.
Also, we see that if a node~$v$ does not lie on~$z$, $e(v)$ is not contained in~$z$.
Thus, the perfect matching constructed in the above fashion satisfies the desired condition.
\end{proof}

\begin{Lemma}\label{lem_same_length}
Let $z$ be a type I zigzag path on a consistent dimer model.
Then, we have $\omega_{\min}(z)=0$, and hence $\ell(z)$ is the same for all type I zigzag paths having the same slope.
\end{Lemma}

\begin{proof}
This follows from Lemmas~\ref{zigzag_lem2} and \ref{lem_existence_pm}.
\end{proof}

For zigzag paths $z$, $w$ on a dimer model $\Gamma$, we denote by $z\cap w$ the subset of edges that are intersections of $z$ and $w$ on $\Gamma$.
We remark that if $z$ is type I then the number of intersections of~$\widetilde{z}$ and~$\widetilde{w}$ on $\widetilde{\Gamma}$ is less than or equal to one, but there are possibly more intersections of~$z$ and~$w$ if we consider them on~$\Gamma$.

\begin{Lemma}\label{intersect_zigorzag}
Let $z$ be a type I zigzag path on a reduced consistent dimer model $\Gamma$.
We suppose that a zigzag path $w$ has intersections with~$z$ on~$\Gamma$.
Then, we see that $z\cap w\subset \Zig(z)$ or $z\cap w\subset \Zag(z)$.
\end{Lemma}

\begin{proof}
Let $e_1$, $e_2$ be edges of $\Gamma$, and we assume that $w$ intersects with $z$ at $e_1$ and~$e_2$.
If~$e_i$ is a~zig of~$z$, then it is a~zag of~$w$, and vice versa. We assume that $e_1$ is a zig of~$z$ and~$e_2$ is a~zag of~$z$.

We then lift these edges on the universal cover $\widetilde{\Gamma}$. Let $\widetilde{e_1}$, $\widetilde{e_2}$ be edges of $\widetilde{\Gamma}$ whose restrictions on $\Gamma$ are $e_1$, $e_2$, respectively. In particular, $\widetilde{e_1}$ is a~zig of~$\widetilde{z}$ and~$\widetilde{e_2}$ is a~zag of~$\widetilde{z}$.
Then, there exist zigzag paths $\widetilde{w}_1$, $\widetilde{w}_2$ on~$\widetilde{\Gamma}$ whose restriction on~$\Gamma$ is just~$w$,
and $\widetilde{w}_1$ (resp.~$\widetilde{w}_2$) intersects with~$\widetilde{z}$ at~$\widetilde{e_1}$ (resp.~$\widetilde{e_2}$).
The zigzag path $\widetilde{z}$ splits $\RR^2$ into two pieces, and~$\widetilde{w}_1$ (resp.~$\widetilde{w}_2$) intersects with~$\widetilde{z}$ from left to right (resp.\ right to left) by the definition of zigzag paths (see the figure below).

\begin{center}
\newcommand{\edgewidth}{0.05cm} 
\newcommand{\nodewidth}{0.05cm} 
\newcommand{\noderad}{0.16} 
\scalebox{0.6}{
\begin{tikzpicture}
\coordinate (B1) at (1,2); \coordinate (B2) at (3,2); \coordinate (B3) at (3,4);
\coordinate (B4) at (6,2); \coordinate (B5) at (6,4); \coordinate (B6) at (8,2);

\coordinate (W1) at (2,1); \coordinate (W2) at (2,3); \coordinate (W3) at (4,3);
\coordinate (W4) at (7,1); \coordinate (W5) at (7,3);
\draw [line width=\edgewidth] (2.5,0.5)--(W1)--(B2)--(W2)--(B3)--(2.5,4.5) ;
\draw [line width=\edgewidth,line cap=round, dash pattern=on 0pt off 2.5\pgflinewidth] (2.6,0.4)--(3,0);
\draw [line width=\edgewidth,line cap=round, dash pattern=on 0pt off 2.5\pgflinewidth] (2.4,4.6)--(2,5);
\draw [line width=\edgewidth] (0.5,2.5)--(B1)--(W2)--(B2)--(W3)--(4.5,2.75) ;
\draw [line width=\edgewidth,line cap=round, dash pattern=on 0pt off 2.5\pgflinewidth] (0.4,2.6)--(0,3);
\draw [line width=\edgewidth,line cap=round, dash pattern=on 0pt off 2.5\pgflinewidth] (4.7,2.65)--(5.4,2.3);
\draw [line width=\edgewidth] (5.5,2.25)--(B4)--(W5)--(B6)--(8.5,2.5) ;
\draw [line width=\edgewidth,line cap=round, dash pattern=on 0pt off 2.5\pgflinewidth] (8.6,2.6)--(9,3);
\draw [line width=\edgewidth] (6.5,0.5)--(W4)--(B4)--(W5)--(B5)--(6.5,4.5) ;
\draw [line width=\edgewidth,line cap=round, dash pattern=on 0pt off 2.5\pgflinewidth] (6.4,0.4)--(6,0);
\draw [line width=\edgewidth,line cap=round, dash pattern=on 0pt off 2.5\pgflinewidth] (6.6,4.6)--(7,5);

\draw [line width=\nodewidth, fill=black] (B1) circle [radius=\noderad] ; \draw [line width=\nodewidth, fill=black] (B2) circle [radius=\noderad] ;
\draw [line width=\nodewidth, fill=black] (B3) circle [radius=\noderad] ;
\draw [line width=\nodewidth, fill=black] (B4) circle [radius=\noderad] ; \draw [line width=\nodewidth, fill=black] (B5) circle [radius=\noderad] ;
\draw [line width=\nodewidth, fill=black] (B6) circle [radius=\noderad] ;
\draw [line width=\nodewidth, fill=white] (W1) circle [radius=\noderad] ; \draw [line width=\nodewidth, fill=white] (W2) circle [radius=\noderad] ;
\draw [line width=\nodewidth, fill=white] (W3) circle [radius=\noderad] ;
\draw [line width=\nodewidth, fill=white] (W4) circle [radius=\noderad] ; \draw [line width=\nodewidth, fill=white] (W5) circle [radius=\noderad] ;

\draw [->, rounded corners, line width=0.08cm, blue] (0,3.25)--(1,2.25)--(2,3.25)--(3,2.25)--(4,3.25)--(6,2.25)--(7,3.25)--(8,2.25)--(9,3.25);
\node[blue] at (9.5,3.25) {\Large$\widetilde{z}$} ;
\draw [->, rounded corners, line width=0.08cm, red] (2.75,0)--(1.75,1)--(2.75,2)--(1.75,3)--(2.75,4)--(1.75,5);
\node[red] at (1.75,5.5) {\Large$\widetilde{w}_2$} ;
\draw [<-, rounded corners, line width=0.08cm, red] (6.25,0)--(7.25,1)--(6.25,2)--(7.25,3)--(6.25,4)--(7.25,5);
\node[red] at (6.25,-0.5) {\Large$\widetilde{w}_1$} ;

\node at (4.5,1) {\Large right of $\widetilde{z}$} ; \node at (4.5,4.5) {\Large left of $\widetilde{z}$} ;
\end{tikzpicture}
}
\end{center}

Since $z$ is type I, there is no intersections of $\widetilde{z}$ and $\widetilde{w}_i$ except $\widetilde{e_i}$ where $i=1,2$.
Thus, we can not superimpose $\widetilde{w}_1$ and $\widetilde{w}_2$ using translations.
This contradicts a choice of $\widetilde{w}_1$, $\widetilde{w}_2$.
\end{proof}

The following lemma follows from the argument in the proof of \cite[Proposition~3.12]{Bro}.

\begin{Lemma}\label{slope_linearly_independent}
Let $z$, $w$ be zigzag paths on a consistent dimer model~$\Gamma$.
We assume that $\widetilde{z}$ intersects with $\widetilde{w}$ on the universal cover $\widetilde{\Gamma}$ at most once.
Then, the slopes $[z]$, $[w]$ are linearly independent if and only if~$\widetilde{z}$ and~$\widetilde{w}$ intersect on precisely one edge.
\end{Lemma}

\begin{Lemma}\label{intersect_zigorzag2}
Let $z_1,\dots,z_r$ be type~I zigzag paths on a reduced consistent dimer model $\Gamma$ having the same slope.
We suppose that a zigzag path~$w$ has intersections with $z_j$ for some $j$.
Then, $w$~intersects with every $z_i$ $(i=1,\dots,r)$, and the intersections $w\cap z_1,\dots, w\cap z_r$ are all in $\Zig(w)$ or all in $\Zag(w)$.
Moreover, we have $|w\cap z_1|=\cdots=|w\cap z_r|$.
\end{Lemma}

\begin{proof}
Since $z_1,\dots,z_r$ are type I, $\widetilde{w}$ intersects with each $\widetilde{z_i}$ at most once.
Thus, each pair of zigzag paths $(\widetilde{w},\widetilde{z_i})$ for $i=1,\dots,r$ satisfies the assumption in Lemma~\ref{slope_linearly_independent}.
Since~$w$ has intersections with~$z_j$, $\widetilde{w}$ intersects with $\widetilde{z_j}$ precisely once on the universal cover.
By Lemma~\ref{slope_linearly_independent}, we see that $[w]$ and $[z_j]$ are linearly independent, and hence $[w]$ and $[z_i]$ are linearly independent for any~$i$.
Thus, $\widetilde{w}$ intersects with $\widetilde{z_i}$ precisely once for any $i=1,\dots,r$.

The latter assertion follows from a similar argument as in the proof of Lemma~\ref{intersect_zigorzag}.
More precisely, since $\widetilde{w}$ intersects with $\widetilde{z_i}$ precisely once for any $i=1,\dots,r$,
if $\widetilde{w}$ intersects with $\widetilde{z_i}$ from right to left (resp.\ left to right), then so do zigzag paths having the same slope.
This means that intersections are all in $\Zig(w)$ (resp.\ all in $\Zag(w))$.

Also, let $\widetilde{z_1}$ and $\widetilde{z_1}^\prime$ be zigzag paths on $\widetilde{\Gamma}$ that are projected onto the zigzag path $z_1$ on $\Gamma$.
We assume that there is no zigzag path projected onto $z_1$ between $\widetilde{z_1}$ and $\widetilde{z_1}^\prime$.
Also, we assume that~$\widetilde{w}$ first intersects with~$\widetilde{z_1}$, then intersects with $\widetilde{z_1}^\prime$.
Then, for all $i=2,\dots,r$ we can find a~unique zigzag path~$\widetilde{z_i}$ on $\widetilde{\Gamma}$ such that it is projected onto $z_i$ and is located between~$\widetilde{z_1}$ and~$\widetilde{z_1}^\prime$.
Thus, after $\widetilde{w}$ intersects with $\widetilde{z_1}$, it intersects with $\widetilde{z_2}, \dots, \widetilde{z_r}$ precisely once and then arrives at~$\widetilde{z_1}^\prime$.
We can apply the same arguments for any pair $(\widetilde{z_1},\widetilde{z_1}^\prime)$ of zigzag paths on~$\widetilde{\Gamma}$ satisfying the above properties. Thus, projecting onto~$\Gamma$ we have $|w\cap z_1|=\cdots=|w\cap z_r|$.
\end{proof}

\section{Deformations of dimer models}\label{sec_def_deform}

In this section, we will introduce the concept of a deformation of consistent dimer models.
This operation is defined for type I zigzag paths on a consistent dimer model, and there are two kinds of deformations,
which we call the zig-deformation (see Definition~\ref{def_deformation_zig}) and the zag-deformation (see Definition~\ref{def_deformation_zag}).
For some special cases, these deformations preserve the consistency condition (see Proposition~\ref{prop_preserve_consistent1}),
but they change the associated PM polygon, whereas we will see that the PM polygon of the deformed dimer model is exactly the combinatorial mutation of a~polygon (see Theorem~\ref{mutation=deformation1}).

\subsection{Definition of deformations of dimer models}\label{subsec_def_deform}

Let $\Gamma$ be a reduced consistent dimer model.
In particular, any slope of a zigzag path on $\Gamma$ is primitive.
Let $\calZ_v(\Gamma)$ be the subset of zigzag paths on $\Gamma$ whose slopes are the primitive vector~$v\in\ZZ^2$,
and $\calZ_v^\rmI(\Gamma)$ be the subset of $\calZ_v(\Gamma)$ consisting of type I zigzag paths.
We first prepare the \emph{deformation data}.

\begin{Definition}[deformation data]\label{def_deformation_data}
Let $\Gamma$ be a reduced consistent dimer model. In order to define the deformation of $\Gamma$, we fix the following data.
\begin{itemize}\itemsep=0pt
\item[(1)] We choose a type I zigzag path $z$, and let $2n\coloneqq\ell(z)$ and $v\coloneqq [z]$.
\item[(2)] We then fix a positive integer $r$ such that $r\le |\calZ_v^\rmI(\Gamma)|$, and let $h=n-r$.
\item[(3)] We take a subset $\{z_1,\dots,z_r\}\subset\calZ_v^\rmI(\Gamma)$ of type I zigzag paths, in which case we have $2n=\ell(z_1)=\cdots=\ell(z_r)$ by Lemma~\ref{lem_same_length}.
Therefore, each $z_i$ can be described as
\begin{displaymath} z_i=z_i[1]z_i[2]\cdots z_i[2n-1]z_i[2n]. \end{displaymath}
\item[(4)]
We consider non-negative integers $p_1,\dots,p_r\in\ZZ_{\ge0}$ such that $p_1+\cdots+p_r=n-r=h$.
We call $\bfp\coloneqq(p_1,\dots,p_r)$ a \emph{deformation weight} of $\{z_1, \dots, z_r\}$.
\end{itemize}
\end{Definition}

\begin{Remark}\label{rem_typeI_to_typeII}
To define the deformation data, we need a type I zigzag path.
If a dimer model is isoradial, then any zigzag path is type I (see Definition~\ref{def_isoradial}), and hence $\big|\calZ_v^\rmI(\Gamma)\big|=|\calZ_v(\Gamma)|$.
Also, even if $\Gamma$ contains no type I zigzag paths, we sometimes change a type II zigzag path into type I by using the \emph{mutations of dimer models}
(see Appendix~\ref{app_mutationdimer}, especially Example~\ref{mutation_typeII_to_I}).
\end{Remark}

\begin{Definition}[zig-deformation]\label{def_deformation_zig}
Let the notation be the same as in Definition~\ref{def_deformation_data}.
For a~deformation weight $\bfp=(p_1,\dots,p_r)$ of $\{z_1, \dots, z_r\}$, we consider the following procedures:
\begin{enumerate}[(\text{zig}-1)]
\setlength{\parskip}{0pt}\itemsep=0pt
\item Using split moves, we insert $p_i$ white nodes and $p_i$ black nodes in each zig of $z_i$.

\noindent{\bf [Notation]}
\begin{itemize}\itemsep=0pt
\item For any zig $z_i[2m-1]$ of $z_i$ where $m=1,\dots,n$ and $i=1,\dots,r$, we denote by $b_i[2m-1]$ (resp.~$w_i[2m-1]$) the black (resp.\ white) node that is the endpoint of $z_i[2m-1]$.
\item We denote the white nodes added in the zig $z_i[2m-1]$ by $w_{i,1}[2m-1],\dots,w_{i,p_i}[2m-1]$, and the black ones by $b_{i,1}[2m-1],\dots,b_{i,p_i}[2m-1]$.
Here, the subscripts increase in the direction from $b_i[2m-1]$ to $w_i[2m-1]$.
\end{itemize}
\item We remove all zags of $z_i$ for every $i=1,\dots,r$.
\item If $p_i\neq 0$, then we connect the white node $w_{i,j}[2m-1]$ to the black node $b_{i,j}[2m+1]$ where $j=1,\dots,p_i$ and $m=1,\dots,n$.
(Note that $w_{i,j}[2n-1]$ is connected to $b_{i,j}[2n+1]\coloneqq b_{i,j}[1]$.)
We denote by $z_{i,j}$ the new $1$-cycle, which will be a zigzag path on the deformed dimer model, obtained by connecting
\begin{displaymath}
w_{i,j}[2n-1], b_{i,j}[2n-1], w_{i,j}[2n-3], b_{i,j}[2n_i-3],\dots,w_{i,j}[1], b_{i,j}[1]
\end{displaymath}
cyclically ($i=1,\dots,r$ and $j=1,\dots,p_i$).
\item[(join)] If there exist $2$-valent nodes, then we apply the join moves to the dimer model obtained by the above procedures and make it reduced.
\end{enumerate}
We denote the resulting dimer model by $\nu^\zig_\bfp(\Gamma, \{z_1,\dots,z_r\})$,
and call it the \emph{zig-deformation of~$\Gamma$ at $\{z_1,\dots,z_r\}$ with respect to the weight $\bfp$}.
If a situation is clear, we simply denote this by~$\nu^\zig_\bfp(\Gamma)$.
\end{Definition}

\begin{figure}[h!]\centering
\scalebox{0.55}{
\begin{tikzpicture}
\newcommand{\edgewidth}{0.05cm} 
\newcommand{\nodewidth}{0.05cm} 
\newcommand{\noderad}{0.16} 
\coordinate (W1) at (0,1.2); \coordinate (W2) at (0,3.6);
\path (W1) ++(-20:4cm) coordinate (B1); \path (W2) ++(-20:4cm) coordinate (B2);
\path (B1) ++(200:2cm) coordinate (B0); \path (W2) ++(20:2cm) coordinate (W3);

\path (W1) ++(-20:0.8cm) coordinate (B1-1); \path (W1) ++(-20:1.6cm) coordinate (W1-1);
\path (W1) ++(-20:2.4cm) coordinate (B1-2); \path (W1) ++(-20:3.2cm) coordinate (W1-2);
\path (W2) ++(-20:0.8cm) coordinate (B2-1); \path (W2) ++(-20:1.6cm) coordinate (W2-1);
\path (W2) ++(-20:2.4cm) coordinate (B2-2); \path (W2) ++(-20:3.2cm) coordinate (W2-2);

\path (B1) ++(30:1cm) coordinate (B1e); \path (B1) ++(330:1cm) coordinate (B1s);
\path (W1) ++(150:1cm) coordinate (W1w); \path (W1) ++(210:1cm) coordinate (W1s);
\path (B2) ++(30:1cm) coordinate (B2e); \path (B2) ++(330:1cm) coordinate (B2s);
\path (W2) ++(150:1cm) coordinate (W2w); \path (W2) ++(210:1cm) coordinate (W2s);

\path (W1) ++(160:0.7cm) coordinate (W1+); \path (W1) ++(200:0.7cm) coordinate (W1-);
\path (W2) ++(160:0.7cm) coordinate (W2+); \path (W2) ++(200:0.7cm) coordinate (W2-);
\path (B1) ++(20:0.7cm) coordinate (B1+); \path (B1) ++(340:0.7cm) coordinate (B1-);
\path (B2) ++(20:0.7cm) coordinate (B2+); \path (B2) ++(340:0.7cm) coordinate (B2-);

\node (original) at (0,0) {
\begin{tikzpicture}
\draw [line width=\edgewidth] (B1)--(W1); \draw [line width=\edgewidth] (B2)--(W1) ; \draw [line width=\edgewidth] (B2)--(W2) ;
\draw [line width=\edgewidth] (B1)--(B0) ; \draw [line width=\edgewidth] (W2)--(W3) ;
\draw [line width=\edgewidth] (B1)--(B1e); \draw [line width=\edgewidth] (B1)--(B1s);
\draw [line width=\edgewidth] (W1)--(W1w); \draw [line width=\edgewidth] (W1)--(W1s);
\draw [line width=\edgewidth] (B2)--(B2e); \draw [line width=\edgewidth] (B2)--(B2s);
\draw [line width=\edgewidth] (W2)--(W2w); \draw [line width=\edgewidth] (W2)--(W2s);

\draw [line width=\edgewidth,line cap=round, dash pattern=on 0pt off 2.5\pgflinewidth] (W1+)--(W1-);
\draw [line width=\edgewidth,line cap=round, dash pattern=on 0pt off 2.5\pgflinewidth] (W2+)--(W2-);
\draw [line width=\edgewidth,line cap=round, dash pattern=on 0pt off 2.5\pgflinewidth] (B1+)--(B1-);
\draw [line width=\edgewidth,line cap=round, dash pattern=on 0pt off 2.5\pgflinewidth] (B2+)--(B2-);
\draw [line width=\nodewidth, fill=black] (B1) circle [radius=\noderad] ; \draw [line width=\nodewidth, fill=black] (B2) circle [radius=\noderad] ;
\draw [line width=\nodewidth, fill=white] (W1) circle [radius=\noderad] ; \draw [line width=\nodewidth, fill=white] (W2) circle [radius=\noderad] ;

\coordinate (W1z) at (0,1.2); \coordinate (W2z) at (0,3.6);
\path (W1) ++(90:0.2cm) coordinate (W1z); \path (W2) ++(90:0.2cm) coordinate (W2z);
\path (W1z) ++(-20:4cm) coordinate (B1z); \path (W2z) ++(-20:4cm) coordinate (B2z);
\path (B1z) ++(200:2cm) coordinate (B0z); \path (W2z) ++(20:2cm) coordinate (W3z);
\path (B1z) ++(200:2cm) coordinate (B0z); \path (W2z) ++(20:2.2cm) coordinate (W3z);

\path (W1) ++(-90:0.2cm) coordinate (W1zz); \path (W1zz) ++(150:1.5cm) coordinate (W1wzz);
\draw [->, rounded corners, line width=0.08cm, red] (B0z)--(B1z)--(W1z)--(B2z)--(W2z)--(W3z);

\node at (2.5,3.5) {\color{red}\large$z_i[2m+1]$};
\node at (1.2,2.2) {\color{red}\large$z_i[2m]$};
\node at (2.5,1.1) {\color{red}\large$z_i[2m-1]$};
\end{tikzpicture}};

\node (original_insert) at (10,0) {
\begin{tikzpicture}
\draw [line width=\edgewidth] (B1)--(W1); \draw [line width=\edgewidth] (B2)--(W1) ; \draw [line width=\edgewidth] (B2)--(W2) ;
\draw [line width=\edgewidth] (B1)--(B0) ; \draw [line width=\edgewidth] (W2)--(W3) ;
\draw [line width=\edgewidth] (B1)--(B1e); \draw [line width=\edgewidth] (B1)--(B1s);
\draw [line width=\edgewidth] (W1)--(W1w); \draw [line width=\edgewidth] (W1)--(W1s);
\draw [line width=\edgewidth] (B2)--(B2e); \draw [line width=\edgewidth] (B2)--(B2s);
\draw [line width=\edgewidth] (W2)--(W2w); \draw [line width=\edgewidth] (W2)--(W2s);

\draw [line width=\edgewidth,line cap=round, dash pattern=on 0pt off 2.5\pgflinewidth] (W1+)--(W1-);
\draw [line width=\edgewidth,line cap=round, dash pattern=on 0pt off 2.5\pgflinewidth] (W2+)--(W2-);
\draw [line width=\edgewidth,line cap=round, dash pattern=on 0pt off 2.5\pgflinewidth] (B1+)--(B1-);
\draw [line width=\edgewidth,line cap=round, dash pattern=on 0pt off 2.5\pgflinewidth] (B2+)--(B2-);
\draw [line width=\nodewidth, fill=black] (B1) circle [radius=\noderad] ; \draw [line width=\nodewidth, fill=black] (B2) circle [radius=\noderad] ;
\draw [line width=\nodewidth, fill=black] (B1-1) circle [radius=\noderad] ; \draw [line width=\nodewidth, fill=black] (B1-2) circle [radius=\noderad] ;
\draw [line width=\nodewidth, fill=black] (B2-1) circle [radius=\noderad] ; \draw [line width=\nodewidth, fill=black] (B2-2) circle [radius=\noderad] ;
\draw [line width=\nodewidth, fill=white] (W1) circle [radius=\noderad] ; \draw [line width=\nodewidth, fill=white] (W2) circle [radius=\noderad] ;
\draw [line width=\nodewidth, fill=white] (W1-1) circle [radius=\noderad] ; \draw [line width=\nodewidth, fill=white] (W1-2) circle [radius=\noderad] ;
\draw [line width=\nodewidth, fill=white] (W2-1) circle [radius=\noderad] ; \draw [line width=\nodewidth, fill=white] (W2-2) circle [radius=\noderad] ;
\end{tikzpicture}};

\node (original_remove) at (10,-6.5) {
\begin{tikzpicture}
\draw [line width=\edgewidth] (B1)--(W1); \draw [line width=\edgewidth] (B2)--(W2);
\draw [line width=\edgewidth] (B1)--(B1e); \draw [line width=\edgewidth] (B1)--(B1s);
\draw [line width=\edgewidth] (W1)--(W1w); \draw [line width=\edgewidth] (W1)--(W1s);
\draw [line width=\edgewidth] (B2)--(B2e); \draw [line width=\edgewidth] (B2)--(B2s);
\draw [line width=\edgewidth] (W2)--(W2w); \draw [line width=\edgewidth] (W2)--(W2s);

\draw [line width=\edgewidth,line cap=round, dash pattern=on 0pt off 2.5\pgflinewidth] (W1+)--(W1-);
\draw [line width=\edgewidth,line cap=round, dash pattern=on 0pt off 2.5\pgflinewidth] (W2+)--(W2-);
\draw [line width=\edgewidth,line cap=round, dash pattern=on 0pt off 2.5\pgflinewidth] (B1+)--(B1-);
\draw [line width=\edgewidth,line cap=round, dash pattern=on 0pt off 2.5\pgflinewidth] (B2+)--(B2-);
\draw [line width=\nodewidth, fill=black] (B1) circle [radius=\noderad] ; \draw [line width=\nodewidth, fill=black] (B2) circle [radius=\noderad] ;
\draw [line width=\nodewidth, fill=black] (B1-1) circle [radius=\noderad] ; \draw [line width=\nodewidth, fill=black] (B1-2) circle [radius=\noderad] ;
\draw [line width=\nodewidth, fill=black] (B2-1) circle [radius=\noderad] ; \draw [line width=\nodewidth, fill=black] (B2-2) circle [radius=\noderad] ;
\draw [line width=\nodewidth, fill=white] (W1) circle [radius=\noderad] ; \draw [line width=\nodewidth, fill=white] (W2) circle [radius=\noderad] ;
\draw [line width=\nodewidth, fill=white] (W1-1) circle [radius=\noderad] ; \draw [line width=\nodewidth, fill=white] (W1-2) circle [radius=\noderad] ;
\draw [line width=\nodewidth, fill=white] (W2-1) circle [radius=\noderad] ; \draw [line width=\nodewidth, fill=white] (W2-2) circle [radius=\noderad] ;
\end{tikzpicture}};

\node (before) at (20,-6.5) {
\begin{tikzpicture}
\draw [line width=\edgewidth] (B1)--(W1); \draw [line width=\edgewidth] (B2)--(W2) ;
\draw [line width=\edgewidth] (B1)--(B1e); \draw [line width=\edgewidth] (B1)--(B1s);
\draw [line width=\edgewidth] (W1)--(W1w); \draw [line width=\edgewidth] (W1)--(W1s);
\draw [line width=\edgewidth] (B2)--(B2e); \draw [line width=\edgewidth] (B2)--(B2s);
\draw [line width=\edgewidth] (W2)--(W2w); \draw [line width=\edgewidth] (W2)--(W2s);

\draw [line width=\edgewidth,line cap=round, dash pattern=on 0pt off 2.5\pgflinewidth] (W1+)--(W1-);
\draw [line width=\edgewidth,line cap=round, dash pattern=on 0pt off 2.5\pgflinewidth] (W2+)--(W2-);
\draw [line width=\edgewidth,line cap=round, dash pattern=on 0pt off 2.5\pgflinewidth] (B1+)--(B1-);
\draw [line width=\edgewidth,line cap=round, dash pattern=on 0pt off 2.5\pgflinewidth] (B2+)--(B2-);

\draw [line width=\edgewidth] (B2-1)--(W1-1); \draw [line width=\edgewidth] (B2-2)--(W1-2);
\path (B1-1) ++(-70:1.5cm) coordinate (B1-1-); \draw [line width=\edgewidth] (B1-1)--(B1-1-);
\path (B1-2) ++(-70:1.5cm) coordinate (B1-2-); \draw [line width=\edgewidth] (B1-2)--(B1-2-);
\path (W2-1) ++(110:1.5cm) coordinate (W2-1-); \draw [line width=\edgewidth] (W2-1)--(W2-1-);
\path (W2-2) ++(110:1.5cm) coordinate (W2-2-); \draw [line width=\edgewidth] (W2-2)--(W2-2-);
\draw [line width=\nodewidth, fill=black] (B1) circle [radius=\noderad] ; \draw [line width=\nodewidth, fill=black] (B2) circle [radius=\noderad] ;
\draw [line width=\nodewidth, fill=black] (B1-1) circle [radius=\noderad] ; \draw [line width=\nodewidth, fill=black] (B1-2) circle [radius=\noderad] ;
\draw [line width=\nodewidth, fill=black] (B2-1) circle [radius=\noderad] ; \draw [line width=\nodewidth, fill=black] (B2-2) circle [radius=\noderad] ;
\draw [line width=\nodewidth, fill=white] (W1) circle [radius=\noderad] ; \draw [line width=\nodewidth, fill=white] (W2) circle [radius=\noderad] ;
\draw [line width=\nodewidth, fill=white] (W1-1) circle [radius=\noderad] ; \draw [line width=\nodewidth, fill=white] (W1-2) circle [radius=\noderad] ;
\draw [line width=\nodewidth, fill=white] (W2-1) circle [radius=\noderad] ; \draw [line width=\nodewidth, fill=white] (W2-2) circle [radius=\noderad] ;
\end{tikzpicture}};

\path (original) ++(0:3.5cm) coordinate (original+); \path (original_insert) ++(180:3.5cm) coordinate (original_insert+);
\draw [->, line width=\edgewidth] (original+)--(original_insert+) node[midway,xshift=0cm,yshift=0.5cm] {\LARGE(zig-1)} ;

\path (original_insert) ++(0:3.5cm) coordinate (original_insert-); \path (original_remove) ++(180:3.5cm) coordinate (original_remove+);
\path (original_remove) ++(0:3.5cm) coordinate (original_remove++);
\path (before) ++(180:3.5cm) coordinate (before-); \path (0,-6.5) ++(0:3.5cm) coordinate (before--);

\draw [->, line width=\edgewidth] (before--)--(original_remove+) node[midway,xshift=0cm,yshift=0.5cm] {\LARGE(zig-2)} ;
\draw [->, line width=\edgewidth] (original_remove++)--(before-) node[midway,xshift=0cm,yshift=0.5cm] {\LARGE(zig-3)} ;

\end{tikzpicture}
}
\caption{The zig-deformation at $z_i$ with $p_i=2$.}\label{fig_zigdeform_p2}
\end{figure}

Similarly, we can define the ``zag version'' of this deformation as follows.

\begin{Definition}[zag-deformation]\label{def_deformation_zag}
Let the notation be the same as Definition~\ref{def_deformation_data}.
For a deformation weight $\bfp=(p_1,\dots,p_r)$ of $\{z_1, \dots, z_r\}$, we consider the following procedures:
\begin{enumerate}[(\text{zag}-1)]
\setlength{\parskip}{0pt}
\itemsep=0pt
\item Using split moves, we insert $p_i$ white nodes and $p_i$ black nodes in each zag of $z_i$.

\noindent{\bf [Notation]}
\begin{itemize}
\setlength{\parskip}{0pt}
\setlength{\itemsep}{3pt}
\item For any zag $z_i[2m]$ of $z_i$ where $m=1,\dots,n$ and $i=1,\dots,r$, we denote by $w_i[2m]$ (resp.~$b_i[2m]$) the white (resp.\ black) node that is the endpoint of $z_i[2m]$.
\item We denote the white nodes added in the zag $z_i[2m]$ by $w_{i,1}[2m],\dots,w_{i,p_i}[2m]$, and the black ones by $b_{i,1}[2m],\dots,b_{i,p_i}[2m]$.
Here, the subscripts increase in the direction from $w_i[2m]$ to $b_i[2m]$.
\end{itemize}
\item We remove all zigs of $z_i$ for every $i=1,\dots,r$.
\item If $p_i\neq 0$, then we connect the black node $b_{i,j}[2m]$ to the white node $w_{i,j}[2m+2]$ where $j=1,\dots,p_i$ and $m=1,\dots,n$.
(Note that $b_{i,j}[2n]$ is connected to $w_{i,j}[2n+2]\coloneqq w_{i,j}[2]$.)
We denote by $z_{i,j}$ the new $1$-cycle, which will be a zigzag path on the deformed dimer model, obtained by connecting
\begin{displaymath}
b_{i,j}[2n], w_{i,j}[2n], b_{i,j}[2n-2], w_{i,j}[2n-2],\dots,b_{i,j}[2], w_{i,j}[2]
\end{displaymath}
cyclically ($i=1,\dots,r$ and $j=1,\dots,p_i$).
\item[(join)] If there exist $2$-valent nodes, then we apply the join moves to the dimer model obtained by the above procedures and make it reduced.
\end{enumerate}
We denote the resulting dimer model by $\nu^\zag_\bfp(\Gamma, \{z_1,\dots,z_r\})$,
and call it the \emph{zag-deformation of~$\Gamma$ at $\{z_1,\dots,z_r\}$ with respect to the weight~$\bfp$}.
If a situation is clear, we simply denote this by~$\nu^\zag_\bfp(\Gamma)$.
\end{Definition}

\begin{figure}[h!]\centering
\scalebox{0.55}{
\begin{tikzpicture}
\newcommand{\edgewidth}{0.05cm} 
\newcommand{\nodewidth}{0.05cm} 
\newcommand{\noderad}{0.16} 
\coordinate (W1) at (0,0); \coordinate (W2) at (0,2.4);
\path (W1) ++(20:4cm) coordinate (B1); \path (W2) ++(20:4cm) coordinate (B2);
\path (W1) ++(340:2cm) coordinate (W0); \path (B2) ++(160:2cm) coordinate (B3);

\path (W1) ++(20:0.8cm) coordinate (B1-1); \path (W1) ++(20:1.6cm) coordinate (W1-1);
\path (W1) ++(20:2.4cm) coordinate (B1-2); \path (W1) ++(20:3.2cm) coordinate (W1-2);
\path (W2) ++(20:0.8cm) coordinate (B2-1); \path (W2) ++(20:1.6cm) coordinate (W2-1);
\path (W2) ++(20:2.4cm) coordinate (B2-2); \path (W2) ++(20:3.2cm) coordinate (W2-2);

\path (B1) ++(30:1cm) coordinate (B1e); \path (B1) ++(330:1cm) coordinate (B1s);
\path (W1) ++(150:1cm) coordinate (W1w); \path (W1) ++(210:1cm) coordinate (W1s);
\path (B2) ++(30:1cm) coordinate (B2e); \path (B2) ++(330:1cm) coordinate (B2s);
\path (W2) ++(150:1cm) coordinate (W2w); \path (W2) ++(210:1cm) coordinate (W2s);

\path (W1) ++(160:0.7cm) coordinate (W1+); \path (W1) ++(200:0.7cm) coordinate (W1-);
\path (W2) ++(160:0.7cm) coordinate (W2+); \path (W2) ++(200:0.7cm) coordinate (W2-);
\path (B1) ++(20:0.7cm) coordinate (B1+); \path (B1) ++(340:0.7cm) coordinate (B1-);
\path (B2) ++(20:0.7cm) coordinate (B2+); \path (B2) ++(340:0.7cm) coordinate (B2-);

\node (original) at (0,0) {
\begin{tikzpicture}
\draw [line width=\edgewidth] (W1)--(B1); \draw [line width=\edgewidth] (B1)--(W2) ; \draw [line width=\edgewidth] (W2)--(B2) ;
\draw [line width=\edgewidth] (W0)--(W1) ; \draw [line width=\edgewidth] (B2)--(B3) ;
\draw [line width=\edgewidth] (B1)--(B1e); \draw [line width=\edgewidth] (B1)--(B1s);
\draw [line width=\edgewidth] (W1)--(W1w); \draw [line width=\edgewidth] (W1)--(W1s);
\draw [line width=\edgewidth] (B2)--(B2e); \draw [line width=\edgewidth] (B2)--(B2s);
\draw [line width=\edgewidth] (W2)--(W2w); \draw [line width=\edgewidth] (W2)--(W2s);

\draw [line width=\edgewidth,line cap=round, dash pattern=on 0pt off 2.5\pgflinewidth] (W1+)--(W1-);
\draw [line width=\edgewidth,line cap=round, dash pattern=on 0pt off 2.5\pgflinewidth] (W2+)--(W2-);
\draw [line width=\edgewidth,line cap=round, dash pattern=on 0pt off 2.5\pgflinewidth] (B1+)--(B1-);
\draw [line width=\edgewidth,line cap=round, dash pattern=on 0pt off 2.5\pgflinewidth] (B2+)--(B2-);
\draw [line width=\nodewidth, fill=black] (B1) circle [radius=\noderad] ; \draw [line width=\nodewidth, fill=black] (B2) circle [radius=\noderad] ;
\draw [line width=\nodewidth, fill=white] (W1) circle [radius=\noderad] ; \draw [line width=\nodewidth, fill=white] (W2) circle [radius=\noderad] ;

\coordinate (W1z) at (0,1.2); \coordinate (W2z) at (0,3.6);
\path (W1) ++(90:0.2cm) coordinate (W1z); \path (W1z) ++(-20:2cm) coordinate (W0z);
\path (B1) ++(90:0.2cm) coordinate (B1z); \path (W2) ++(90:0.2cm) coordinate (W2z);
\path (B2) ++(90:0.2cm) coordinate (B2z); \path (B2z) ++(160:2.2cm) coordinate (B3z);

\path (W2) ++(-90:0.2cm) coordinate (W2zz); \path (W2zz) ++(210:1cm) coordinate (W2wzz);

\draw [->, rounded corners, line width=0.08cm, red] (W0z)--(W1z)--(B1z)--(W2z)--(B2z)--(B3z);

\node at (1.4,3.7) {\color{red}\large$z_i[2m+2]$};
\node at (2.4,2.4) {\color{red}\large$z_i[2m+1]$};
\node at (1.4,1.2) {\color{red}\large$z_i[2m]$};
\end{tikzpicture}};

\node (original_insert) at (10,0) {
\begin{tikzpicture}
\draw [line width=\edgewidth] (W1)--(B1); \draw [line width=\edgewidth] (B1)--(W2) ; \draw [line width=\edgewidth] (W2)--(B2) ;
\draw [line width=\edgewidth] (W0)--(W1) ; \draw [line width=\edgewidth] (B2)--(B3) ;
\draw [line width=\edgewidth] (B1)--(B1e); \draw [line width=\edgewidth] (B1)--(B1s);
\draw [line width=\edgewidth] (W1)--(W1w); \draw [line width=\edgewidth] (W1)--(W1s);
\draw [line width=\edgewidth] (B2)--(B2e); \draw [line width=\edgewidth] (B2)--(B2s);
\draw [line width=\edgewidth] (W2)--(W2w); \draw [line width=\edgewidth] (W2)--(W2s);

\draw [line width=\edgewidth,line cap=round, dash pattern=on 0pt off 2.5\pgflinewidth] (W1+)--(W1-);
\draw [line width=\edgewidth,line cap=round, dash pattern=on 0pt off 2.5\pgflinewidth] (W2+)--(W2-);
\draw [line width=\edgewidth,line cap=round, dash pattern=on 0pt off 2.5\pgflinewidth] (B1+)--(B1-);
\draw [line width=\edgewidth,line cap=round, dash pattern=on 0pt off 2.5\pgflinewidth] (B2+)--(B2-);
\draw [line width=\nodewidth, fill=black] (B1) circle [radius=\noderad] ; \draw [line width=\nodewidth, fill=black] (B2) circle [radius=\noderad] ;
\draw [line width=\nodewidth, fill=black] (B1-1) circle [radius=\noderad] ; \draw [line width=\nodewidth, fill=black] (B1-2) circle [radius=\noderad] ;
\draw [line width=\nodewidth, fill=black] (B2-1) circle [radius=\noderad] ; \draw [line width=\nodewidth, fill=black] (B2-2) circle [radius=\noderad] ;
\draw [line width=\nodewidth, fill=white] (W1) circle [radius=\noderad] ; \draw [line width=\nodewidth, fill=white] (W2) circle [radius=\noderad] ;
\draw [line width=\nodewidth, fill=white] (W1-1) circle [radius=\noderad] ; \draw [line width=\nodewidth, fill=white] (W1-2) circle [radius=\noderad] ;
\draw [line width=\nodewidth, fill=white] (W2-1) circle [radius=\noderad] ; \draw [line width=\nodewidth, fill=white] (W2-2) circle [radius=\noderad] ;
\end{tikzpicture}};

\node (original_remove) at (10,-6.5) {
\begin{tikzpicture}
\draw [line width=\edgewidth] (W1)--(B1); 
\draw [line width=\edgewidth] (W2)--(B2) ;
\draw [line width=\edgewidth] (B1)--(B1e); \draw [line width=\edgewidth] (B1)--(B1s);
\draw [line width=\edgewidth] (W1)--(W1w); \draw [line width=\edgewidth] (W1)--(W1s);
\draw [line width=\edgewidth] (B2)--(B2e); \draw [line width=\edgewidth] (B2)--(B2s);
\draw [line width=\edgewidth] (W2)--(W2w); \draw [line width=\edgewidth] (W2)--(W2s);

\draw [line width=\edgewidth,line cap=round, dash pattern=on 0pt off 2.5\pgflinewidth] (W1+)--(W1-);
\draw [line width=\edgewidth,line cap=round, dash pattern=on 0pt off 2.5\pgflinewidth] (W2+)--(W2-);
\draw [line width=\edgewidth,line cap=round, dash pattern=on 0pt off 2.5\pgflinewidth] (B1+)--(B1-);
\draw [line width=\edgewidth,line cap=round, dash pattern=on 0pt off 2.5\pgflinewidth] (B2+)--(B2-);
\draw [line width=\nodewidth, fill=black] (B1) circle [radius=\noderad] ; \draw [line width=\nodewidth, fill=black] (B2) circle [radius=\noderad] ;
\draw [line width=\nodewidth, fill=black] (B1-1) circle [radius=\noderad] ; \draw [line width=\nodewidth, fill=black] (B1-2) circle [radius=\noderad] ;
\draw [line width=\nodewidth, fill=black] (B2-1) circle [radius=\noderad] ; \draw [line width=\nodewidth, fill=black] (B2-2) circle [radius=\noderad] ;
\draw [line width=\nodewidth, fill=white] (W1) circle [radius=\noderad] ; \draw [line width=\nodewidth, fill=white] (W2) circle [radius=\noderad] ;
\draw [line width=\nodewidth, fill=white] (W1-1) circle [radius=\noderad] ; \draw [line width=\nodewidth, fill=white] (W1-2) circle [radius=\noderad] ;
\draw [line width=\nodewidth, fill=white] (W2-1) circle [radius=\noderad] ; \draw [line width=\nodewidth, fill=white] (W2-2) circle [radius=\noderad] ;
\end{tikzpicture}};

\node (before) at (20,-6.5) {
\begin{tikzpicture}
\draw [line width=\edgewidth] (W1)--(B1); \draw [line width=\edgewidth] (W2)--(B2) ;
\draw [line width=\edgewidth] (B1)--(B1e); \draw [line width=\edgewidth] (B1)--(B1s);
\draw [line width=\edgewidth] (W1)--(W1w); \draw [line width=\edgewidth] (W1)--(W1s);
\draw [line width=\edgewidth] (B2)--(B2e); \draw [line width=\edgewidth] (B2)--(B2s);
\draw [line width=\edgewidth] (W2)--(W2w); \draw [line width=\edgewidth] (W2)--(W2s);

\draw [line width=\edgewidth] (B1-1)--(W2-1); \draw [line width=\edgewidth] (B1-2)--(W2-2);

\path (B2-1) ++(70:1.5cm) coordinate (B2-1-); \draw [line width=\edgewidth] (B2-1)--(B2-1-);
\path (B2-2) ++(70:1.5cm) coordinate (B2-2-); \draw [line width=\edgewidth] (B2-2)--(B2-2-);
\path (W1-1) ++(-110:1.5cm) coordinate (W1-1-); \draw [line width=\edgewidth] (W1-1)--(W1-1-);
\path (W1-2) ++(-110:1.5cm) coordinate (W1-2-); \draw [line width=\edgewidth] (W1-2)--(W1-2-);

\draw [line width=\edgewidth,line cap=round, dash pattern=on 0pt off 2.5\pgflinewidth] (W1+)--(W1-);
\draw [line width=\edgewidth,line cap=round, dash pattern=on 0pt off 2.5\pgflinewidth] (W2+)--(W2-);
\draw [line width=\edgewidth,line cap=round, dash pattern=on 0pt off 2.5\pgflinewidth] (B1+)--(B1-);
\draw [line width=\edgewidth,line cap=round, dash pattern=on 0pt off 2.5\pgflinewidth] (B2+)--(B2-);
\draw [line width=\nodewidth, fill=black] (B1) circle [radius=\noderad] ; \draw [line width=\nodewidth, fill=black] (B2) circle [radius=\noderad] ;
\draw [line width=\nodewidth, fill=black] (B1-1) circle [radius=\noderad] ; \draw [line width=\nodewidth, fill=black] (B1-2) circle [radius=\noderad] ;
\draw [line width=\nodewidth, fill=black] (B2-1) circle [radius=\noderad] ; \draw [line width=\nodewidth, fill=black] (B2-2) circle [radius=\noderad] ;
\draw [line width=\nodewidth, fill=white] (W1) circle [radius=\noderad] ; \draw [line width=\nodewidth, fill=white] (W2) circle [radius=\noderad] ;
\draw [line width=\nodewidth, fill=white] (W1-1) circle [radius=\noderad] ; \draw [line width=\nodewidth, fill=white] (W1-2) circle [radius=\noderad] ;
\draw [line width=\nodewidth, fill=white] (W2-1) circle [radius=\noderad] ; \draw [line width=\nodewidth, fill=white] (W2-2) circle [radius=\noderad] ;
\end{tikzpicture}};

\path (original) ++(0:3.5cm) coordinate (original+); \path (original_insert) ++(180:3.5cm) coordinate (original_insert+);
\draw [->, line width=\edgewidth] (original+)--(original_insert+) node[midway,xshift=0cm,yshift=0.5cm] {\LARGE(zag-1)} ;

\path (original_insert) ++(0:3.5cm) coordinate (original_insert-); \path (original_remove) ++(180:3.5cm) coordinate (original_remove+);
\path (original_remove) ++(0:3.5cm) coordinate (original_remove++);
\path (before) ++(180:3.5cm) coordinate (before-); \path (0,-6.5) ++(0:3.5cm) coordinate (before--);

\draw [->, line width=\edgewidth] (before--)--(original_remove+) node[midway,xshift=0cm,yshift=0.5cm] {\LARGE(zag-2)} ;
\draw [->, line width=\edgewidth] (original_remove++)--(before-) node[midway,xshift=0cm,yshift=0.5cm] {\LARGE(zag-3)} ;

\end{tikzpicture}
}
\caption{The zag-deformation at $z_i$ with $p_i=2$.}\label{fig_zagdeform_p2}
\end{figure}

For some special cases, these deformations preserve the consistency condition as in Proposition~\ref{prop_preserve_consistent1} below. Note that in Section~\ref{sec_def_exdeform} we will introduce extended deformations of dimer models which always preserve the consistency condition (see Proposition~\ref{prop_make_consistent}).

\begin{Proposition}\label{prop_preserve_consistent1}
Let the notation be the same as Definitions~{\rm \ref{def_deformation_data}}, {\rm \ref{def_deformation_zig}} and {\rm \ref{def_deformation_zag}}.
We assume that either one of the following conditions is satisfied:
\begin{itemize}\itemsep=0pt
\item[$(i)$] $r=1$ or
\item[$(ii)$] $\Gamma$ is a hexagonal or rectangular dimer model $($see Definition~{\rm \ref{def_hexagonal_square})}.
\end{itemize}
Then, the dimer models $\nu^\zig_\bfp(\Gamma, \{z_1,\dots,z_r\})$ and $\nu^\zag_\bfp(\Gamma, \{z_1,\dots,z_r\})$ are consistent.
\end{Proposition}

\begin{proof}This follows from Proposition~\ref{prop_make_consistent} (see also Remark~\ref{r=1case} and Proposition~\ref{skip_hexagonal_square}).
\end{proof}

We note that for some cases the zig-deformation and zag-deformation are mutually inverse operations
on the level of dimer models as in Proposition~\ref{prop_mut_inversecase} below.
To observe this, let $\nu^\zig_\bfp(\Gamma, \{z_1,\dots,z_r\})$ be the zig-deformed dimer model as in Definition~\ref{def_deformation_zig}.
We assume that $p_i=1$ for some $i$ and consider the zigzag path $z_{i,p_i}=z_{i,1}$ on~$\nu^\zig_\bfp(\Gamma)$ created in place of~$z_i$ on~$\Gamma$ via the zig-deformation. Note that~$z_{i,1}$ is type~I (see Proposition~\ref{deform_vector}).
Then, we take a new deformation data~$\bfq$ so that~$z_{i,1}$ is chosen in the data with the weight~$1$,
and consider the zag-deformation of $\nu^\zig_\bfp(\Gamma)$ at some set of zigzag paths with respected to~$\bfq$.
Through this operation, $z_{i,1}$~changes into the original zigzag path~$z_i$ as in Figure~\ref{fig_zigzag_recover}.

\begin{figure}[h!]\centering
\scalebox{0.55}{
\begin{tikzpicture}
\newcommand{\edgewidth}{0.05cm} 
\newcommand{\nodewidth}{0.05cm} 
\newcommand{\noderad}{0.16} 
\coordinate (W1) at (0,1.2); \coordinate (W2) at (0,3.6);
\path (W1) ++(-20:4cm) coordinate (B1); \path (W2) ++(-20:4cm) coordinate (B2);
\path (B1) ++(200:2cm) coordinate (B0); \path (W2) ++(20:2cm) coordinate (W3);

\path (W1) ++(-20:0.8cm) coordinate (B1-1); \path (W1) ++(-20:1.6cm) coordinate (W1-1);
\path (W1) ++(-20:2.4cm) coordinate (B1-2); \path (W1) ++(-20:3.2cm) coordinate (W1-2);
\path (W2) ++(-20:0.8cm) coordinate (B2-1); \path (W2) ++(-20:1.6cm) coordinate (W2-1);
\path (W2) ++(-20:2.4cm) coordinate (B2-2); \path (W2) ++(-20:3.2cm) coordinate (W2-2);

\path (B1) ++(30:1cm) coordinate (B1e); \path (B1) ++(330:1cm) coordinate (B1s);
\path (W1) ++(150:1cm) coordinate (W1w); \path (W1) ++(210:1cm) coordinate (W1s);
\path (B2) ++(30:1cm) coordinate (B2e); \path (B2) ++(330:1cm) coordinate (B2s);
\path (W2) ++(150:1cm) coordinate (W2w); \path (W2) ++(210:1cm) coordinate (W2s);

\path (W1) ++(160:0.7cm) coordinate (W1+); \path (W1) ++(200:0.7cm) coordinate (W1-);
\path (W2) ++(160:0.7cm) coordinate (W2+); \path (W2) ++(200:0.7cm) coordinate (W2-);
\path (B1) ++(20:0.7cm) coordinate (B1+); \path (B1) ++(340:0.7cm) coordinate (B1-);
\path (B2) ++(20:0.7cm) coordinate (B2+); \path (B2) ++(340:0.7cm) coordinate (B2-);

\node (original) at (0,0) {
\begin{tikzpicture}
\draw [line width=\edgewidth] (B1)--(W1); \draw [line width=\edgewidth] (B2)--(W1) ; \draw [line width=\edgewidth] (B2)--(W2) ;
\draw [line width=\edgewidth] (B1)--(B0) ; \draw [line width=\edgewidth] (W2)--(W3) ;
\draw [line width=\edgewidth] (B1)--(B1e); \draw [line width=\edgewidth] (B1)--(B1s);
\draw [line width=\edgewidth] (W1)--(W1w); \draw [line width=\edgewidth] (W1)--(W1s);
\draw [line width=\edgewidth] (B2)--(B2e); \draw [line width=\edgewidth] (B2)--(B2s);
\draw [line width=\edgewidth] (W2)--(W2w); \draw [line width=\edgewidth] (W2)--(W2s);

\draw [line width=\edgewidth,line cap=round, dash pattern=on 0pt off 2.5\pgflinewidth] (W1+)--(W1-);
\draw [line width=\edgewidth,line cap=round, dash pattern=on 0pt off 2.5\pgflinewidth] (W2+)--(W2-);
\draw [line width=\edgewidth,line cap=round, dash pattern=on 0pt off 2.5\pgflinewidth] (B1+)--(B1-);
\draw [line width=\edgewidth,line cap=round, dash pattern=on 0pt off 2.5\pgflinewidth] (B2+)--(B2-);
\draw [line width=\nodewidth, fill=black] (B1) circle [radius=\noderad] ; \draw [line width=\nodewidth, fill=black] (B2) circle [radius=\noderad] ;
\draw [line width=\nodewidth, fill=white] (W1) circle [radius=\noderad] ; \draw [line width=\nodewidth, fill=white] (W2) circle [radius=\noderad] ;

\path (W1) ++(90:0.2cm) coordinate (W1z); \path (W2) ++(90:0.2cm) coordinate (W2z);
\path (W1z) ++(-20:4cm) coordinate (B1z); \path (W2z) ++(-20:4cm) coordinate (B2z);
\path (B1z) ++(200:2cm) coordinate (B0z); \path (W2z) ++(20:2cm) coordinate (W3z);
\path (B1z) ++(200:2cm) coordinate (B0z); \path (W2z) ++(20:2.2cm) coordinate (W3z);
\draw [->, rounded corners, line width=0.08cm, red] (B0z)--(B1z)--(W1z)--(B2z)--(W2z)--(W3z);
\node at (1.8,2.3) {\color{red}\LARGE$z_i$};
\end{tikzpicture}};

\node at (9,0) {
\begin{tikzpicture}
\path (W1) ++(-20:1.2cm) coordinate (B1-3); \path (W1) ++(-20:2.8cm) coordinate (W1-3);
\path (W2) ++(-20:1.2cm) coordinate (B2-3); \path (W2) ++(-20:2.8cm) coordinate (W2-3);
\draw [line width=\edgewidth] (B1)--(W1); \draw [line width=\edgewidth] (B2)--(W2) ;
\draw [line width=\edgewidth] (B1)--(B1e); \draw [line width=\edgewidth] (B1)--(B1s);
\draw [line width=\edgewidth] (W1)--(W1w); \draw [line width=\edgewidth] (W1)--(W1s);
\draw [line width=\edgewidth] (B2)--(B2e); \draw [line width=\edgewidth] (B2)--(B2s);
\draw [line width=\edgewidth] (W2)--(W2w); \draw [line width=\edgewidth] (W2)--(W2s);

\draw [line width=\edgewidth,line cap=round, dash pattern=on 0pt off 2.5\pgflinewidth] (W1+)--(W1-);
\draw [line width=\edgewidth,line cap=round, dash pattern=on 0pt off 2.5\pgflinewidth] (W2+)--(W2-);
\draw [line width=\edgewidth,line cap=round, dash pattern=on 0pt off 2.5\pgflinewidth] (B1+)--(B1-);
\draw [line width=\edgewidth,line cap=round, dash pattern=on 0pt off 2.5\pgflinewidth] (B2+)--(B2-);

\draw [line width=\edgewidth] (B2-3)--(W1-3);
\path (B1-3) ++(-70:1.8cm) coordinate (B1-3-); \draw [line width=\edgewidth] (B1-3)--(B1-3-);
\path (W2-3) ++(110:1.8cm) coordinate (W2-3-); \draw [line width=\edgewidth] (W2-3)--(W2-3-);
\draw [line width=\nodewidth, fill=black] (B1) circle [radius=\noderad] ; \draw [line width=\nodewidth, fill=black] (B2) circle [radius=\noderad] ;
\draw [line width=\nodewidth, fill=black] (B1-3) circle [radius=\noderad] ; \draw [line width=\nodewidth, fill=black] (B2-3) circle [radius=\noderad] ;
\draw [line width=\nodewidth, fill=white] (W1) circle [radius=\noderad] ; \draw [line width=\nodewidth, fill=white] (W2) circle [radius=\noderad] ;
\draw [line width=\nodewidth, fill=white] (W1-3) circle [radius=\noderad] ; \draw [line width=\nodewidth, fill=white] (W2-3) circle [radius=\noderad] ;

\path (W1-3) ++(240:0.22cm) coordinate (W1-3z); \path (W2-3) ++(240:0.22cm) coordinate (W2-3z);
\path (W1-3z) ++(160:1.6cm) coordinate (B1-3z); \path (W2-3z) ++(160:1.6cm) coordinate (B2-3z);
\path (W2-3z) ++(110:2cm) coordinate (W2-3z+); \path (B1-3z) ++(-70:2cm) coordinate (B1-3z-);
\draw [->, rounded corners, line width=0.08cm, blue] (W2-3z+)--(W2-3z)--(B2-3z)--(W1-3z)--(B1-3z)--(B1-3z-);

\node at (2.6,1.5) {\color{blue}\LARGE$z_{i,1}$};
\end{tikzpicture}};

\node at (18,0) {
\begin{tikzpicture}
\draw [line width=\edgewidth] (B1)--(W1); \draw [line width=\edgewidth] (B2)--(W2) ;
\draw [line width=\edgewidth] (B1)--(B1e); \draw [line width=\edgewidth] (B1)--(B1s);
\draw [line width=\edgewidth] (W1)--(W1w); \draw [line width=\edgewidth] (W1)--(W1s);
\draw [line width=\edgewidth] (B2)--(B2e); \draw [line width=\edgewidth] (B2)--(B2s);
\draw [line width=\edgewidth] (W2)--(W2w); \draw [line width=\edgewidth] (W2)--(W2s);

\draw [line width=\edgewidth,line cap=round, dash pattern=on 0pt off 2.5\pgflinewidth] (W1+)--(W1-);
\draw [line width=\edgewidth,line cap=round, dash pattern=on 0pt off 2.5\pgflinewidth] (W2+)--(W2-);
\draw [line width=\edgewidth,line cap=round, dash pattern=on 0pt off 2.5\pgflinewidth] (B1+)--(B1-);
\draw [line width=\edgewidth,line cap=round, dash pattern=on 0pt off 2.5\pgflinewidth] (B2+)--(B2-);

\draw [line width=\edgewidth] (W1-1)--(B2-2);
\path (B1-2) ++(250:1.5cm) coordinate (B1-2-); \draw [line width=\edgewidth] (B1-2)--(B1-2-);
\path (W2-1) ++(70:1.5cm) coordinate (W2-1-); \draw [line width=\edgewidth] (W2-1)--(W2-1-);

\draw [line width=\nodewidth, fill=black] (B1) circle [radius=\noderad] ; \draw [line width=\nodewidth, fill=black] (B2) circle [radius=\noderad] ;
\draw [line width=\nodewidth, fill=black] (B1-1) circle [radius=\noderad] ; \draw [line width=\nodewidth, fill=black] (B1-2) circle [radius=\noderad] ;
\draw [line width=\nodewidth, fill=black] (B2-1) circle [radius=\noderad] ; \draw [line width=\nodewidth, fill=black] (B2-2) circle [radius=\noderad] ;
\draw [line width=\nodewidth, fill=white] (W1) circle [radius=\noderad] ; \draw [line width=\nodewidth, fill=white] (W2) circle [radius=\noderad] ;
\draw [line width=\nodewidth, fill=white] (W1-1) circle [radius=\noderad] ; \draw [line width=\nodewidth, fill=white] (W1-2) circle [radius=\noderad] ;
\draw [line width=\nodewidth, fill=white] (W2-1) circle [radius=\noderad] ; \draw [line width=\nodewidth, fill=white] (W2-2) circle [radius=\noderad] ;

\path (W1-1) ++(210:0.26cm) coordinate (W1-1z); \path (W2-1) ++(210:0.26cm) coordinate (W2-1z);
\path (W1-1z) ++(-20:0.8cm) coordinate (B1-2z); \path (W2-1z) ++(-20:0.8cm) coordinate (B2-2z);
\path (B1-2z) ++(250:1.3cm) coordinate (B1-2-z); \path (W2-1z) ++(70:2cm) coordinate (W2-1+z);
\draw [->, rounded corners, line width=0.08cm, red] (B1-2-z)--(B1-2z)--(W1-1z)--(B2-2z)--(W2-1z)--(W2-1+z);
\end{tikzpicture}};

\draw [->, line width=\edgewidth] (3.5,0)--(5.5,0) node[midway,xshift=0cm,yshift=0.6cm] {\LARGE $\nu^\zig_\bfp$} ;
\draw [->, line width=\edgewidth] (12.5,0)--(14.5,0) node[midway,xshift=0cm,yshift=0.6cm] {\LARGE $\nu^\zag_\bfq$} ;
\end{tikzpicture}
}
\caption{Transitions of $z_i$ via the composition of the zig-deformation and zag-deformation. (In the rightmost figure, we still do not apply (join). The zigzag path in this figure coincides with $z_i$ after applying (join).)}\label{fig_zigzag_recover}
\end{figure}

The next proposition follows from these observations.

\begin{Proposition}\label{prop_mut_inversecase}
We consider the situation as in Definition~{\rm \ref{def_deformation_data}}, and assume that $p_1=\cdots=p_r=1$,
in which case $r=h$.
Let $z_{i,p_i}=z_{i,1}$ $(i=1,\dots,r)$ be the zigzag path of $\nu^\zig_\bfp(\Gamma, \{z_1,\dots,z_r\})$ $($resp.\ $\nu^\zag_\bfp(\Gamma, \{z_1,\dots,z_r\})$$)$
created by modifying~$z_i$ as in Definition~{\rm \ref{def_deformation_zig}} $($resp.\ Definition~{\rm \ref{def_deformation_zag})}.
We take $\bfq=(1,\dots,1)$ as a deformation weight of $\{z_{1,1}\,,\dots,\,z_{r,1}\}$. Then, we respectively have
\begin{gather*}
\nu^\zag_\bfq\big(\nu^\zig_\bfp(\Gamma, \{z_1,\dots,z_r\}),\{z_{1,1}\,,\dots,\,z_{r,1}\}\big)=\Gamma,
\\
\nu^\zig_\bfq\big(\nu^\zag_\bfp(\Gamma, \{z_1,\dots,z_r\}),\{z_{1,1}\,,\dots,\,z_{r,1}\}\big)=\Gamma.
\end{gather*}
\end{Proposition}

This property depends on a choice of a deformation data,
thus in general these deformations are not mutually inverse as in Example~\ref{zigzag_counterEX} given in the next subsection.
Whereas, on the level of the PM polygons, they are mutually inverse as we will see in Corollary~\ref{cor_mut_inverse_on_polygon}.

\subsection{Examples of deformations of dimer models}\label{subsec_ex_deformation}

In this subsection we give several examples for the case of $r=1$,
in which the deformed dimer model is consistent (see Proposition~\ref{prop_preserve_consistent1}).

\begin{Example}\label{ex_deformation_4b}
Let $\Gamma$ be the dimer model given in Figure~\ref{ex_dimer_4b}.
We consider zigzag paths on $\Gamma$ given in Figure~\ref{zigzag_4b}.
We first collect the deformation data (see Definition~\ref{def_deformation_data}).
Let us choose the zigzag path $z_3$, and denote it by $z$. We see that $\ell(z)=6$, $v\coloneqq[z]=(-1,-1)$, and $\big|\calZ_v^\rmI(\Gamma)\big|=1$.
Since $\big|\calZ_v^\rmI(\Gamma)\big|=1$, we can take only $r=1$, in which case $h=\ell(z)/2-r=2$.
Thus, we consider the deformation weight $p=h=2$.
The zig-deformation $\nu^\zig_p(\Gamma, z)$ of $\Gamma$ at $z$ with $p=2$ is shown in Figure~\ref{ex_def_zig}.

\begin{figure}[h!]\centering

\begin{tikzpicture}
\node at (0,0)
{\scalebox{0.45}{
\begin{tikzpicture}
\newcommand{\edgewidth}{0.07cm}
\newcommand{\nodewidth}{0.07cm}
\newcommand{\noderad}{0.24} 
\newcommand{\arrowwidth}{0.08cm}
\basicdimerB

\draw[->, line width=0.15cm, rounded corners, color=red] (3,6)--(W3)--(B2)--(W2)--(0,3);
\draw[->, line width=0.15cm, rounded corners, color=red] (6,3)--(B3)--(W1)--(B1)--(3,0);
\end{tikzpicture}
} };

\draw[->,line width=0.03cm] (2,0)--(3.5,0);
\node at (2.4,0.85) {(zig-1)}; \node at (3,0.35) {-- (zig-3)};

\node at (5.5,0)
{\scalebox{0.45}{
\begin{tikzpicture}
\newcommand{\edgewidth}{0.065cm}
\newcommand{\nodewidth}{0.05cm}
\newcommand{\noderad}{0.18} 
\newcommand{\arrowwidth}{0.08cm}
\coordinate (W1) at (5,1); \coordinate (W2) at (1,2); \coordinate (W3) at (3,4);
\coordinate (B1) at (3,2); \coordinate (B2) at (1,5); \coordinate (B3) at (5,4);
\draw[line width=\edgewidth] (0,0) rectangle (6,6);
\draw[line width=\edgewidth] (B2)--(W2)--(B1)--(W3)--(B3)--(W1); \draw[line width=\edgewidth] (B1)--(W3);
\draw[line width=\edgewidth] (3,0)--(B1); \draw[line width=\edgewidth] (6,0)--(W1);
\draw[line width=\edgewidth] (3,6)--(W3); \draw[line width=\edgewidth] (0,6)--(B2);
\draw [line width=\nodewidth, fill=black] (B1) circle [radius=\noderad] ;
\draw [line width=\nodewidth, fill=black] (B2) circle [radius=\noderad] ;
\draw [line width=\nodewidth, fill=black] (B3) circle [radius=\noderad] ;
\draw [line width=\nodewidth, fill=white] (W1) circle [radius=\noderad] ;
\draw [line width=\nodewidth, fill=white] (W2) circle [radius=\noderad] ;
\draw [line width=\nodewidth, fill=white] (W3) circle [radius=\noderad] ;

\coordinate (Wa1) at (3,5.6); \coordinate (Wa2) at (3,1.2); \coordinate (Ba1) at (3,4.8); \coordinate (Ba2) at (3,0.4);
\coordinate (Wb1) at (1,3.2); \coordinate (Wb2) at (1,4.4); \coordinate (Bb1) at (1,2.6); \coordinate (Bb2) at (1,3.8);
\coordinate (Wc1) at (5,2.2); \coordinate (Wc2) at (5,3.4); \coordinate (Bc1) at (5,1.6); \coordinate (Bc2) at (5,2.8);

\draw[line width=\edgewidth] (Wa1)--(Bb1);
\draw[line width=\edgewidth] (Wc2)--(Ba2);

\draw[line width=\edgewidth] (Bb2)--(2.3,6);
\draw[line width=\edgewidth] (2.3,0)--(Wa2);

\draw[line width=\edgewidth] (Wc1)--(3.7,0); \draw[line width=\edgewidth] (Ba1)--(3.7,6);

\draw[line width=\edgewidth] (Wb1)--(0,2.4); \draw[line width=\edgewidth] (Bc1)--(6,2.4);
\draw[line width=\edgewidth] (Wb2)--(0,3.6); \draw[line width=\edgewidth] (Bc2)--(6,3.6);

\draw [line width=\nodewidth, fill=white] (Wa1) circle [radius=\noderad] ; \draw [line width=\nodewidth, fill=white] (Wa2) circle [radius=\noderad] ;
\draw [line width=\nodewidth, fill=black] (Ba1) circle [radius=\noderad] ; \draw [line width=\nodewidth, fill=black] (Ba2) circle [radius=\noderad] ;
\draw [line width=\nodewidth, fill=white] (Wb1) circle [radius=\noderad] ; \draw [line width=\nodewidth, fill=white] (Wb2) circle [radius=\noderad] ;
\draw [line width=\nodewidth, fill=black] (Bb1) circle [radius=\noderad] ; \draw [line width=\nodewidth, fill=black] (Bb2) circle [radius=\noderad] ;
\draw [line width=\nodewidth, fill=white] (Wc1) circle [radius=\noderad] ; \draw [line width=\nodewidth, fill=white] (Wc2) circle [radius=\noderad] ;
\draw [line width=\nodewidth, fill=black] (Bc1) circle [radius=\noderad] ; \draw [line width=\nodewidth, fill=black] (Bc2) circle [radius=\noderad] ;
\end{tikzpicture}
} };

\draw[->,line width=0.03cm] (7.5,0)--(9,0);
\node at (8.25,0.35) {(join)};

\node at (11,0)
{\scalebox{0.45}{
\begin{tikzpicture}
\newcommand{\edgewidth}{0.07cm}
\newcommand{\nodewidth}{0.06cm}
\newcommand{\noderad}{0.2} 
\newcommand{\arrowwidth}{0.08cm}
\coordinate (W1) at (2,1); \coordinate (W2) at (5,2); \coordinate (W3) at (1,3);
\coordinate (W4) at (4,3); \coordinate (W5) at (0.5,5); \coordinate (W6) at (3,5);
\coordinate (B1) at (3,1); \coordinate (B2) at (5.5,1); \coordinate (B3) at (2,2.5);
\coordinate (B4) at (5,3); \coordinate (B5) at (1.25,4.5); \coordinate (B6) at (4,5);
\draw[line width=\edgewidth] (0,0) rectangle (6,6);
\draw[line width=\edgewidth] (W1)--(B1); \draw[line width=\edgewidth] (W1)--(B3);
\draw[line width=\edgewidth] (W2)--(B2); \draw[line width=\edgewidth] (W2)--(B4);
\draw[line width=\edgewidth] (W3)--(B3); \draw[line width=\edgewidth] (W3)--(B5);
\draw[line width=\edgewidth] (W4)--(B1); \draw[line width=\edgewidth] (W4)--(B3);
\draw[line width=\edgewidth] (W4)--(B4); \draw[line width=\edgewidth] (W4)--(B6);
\draw[line width=\edgewidth] (W5)--(B5); \draw[line width=\edgewidth] (W6)--(B3);
\draw[line width=\edgewidth] (W6)--(B6);
\draw[line width=\edgewidth] (B1)--(3,0); \draw[line width=\edgewidth] (W6)--(3,6);
\draw[line width=\edgewidth] (B2)--(6,0); \draw[line width=\edgewidth] (W5)--(0,6);
\draw[line width=\edgewidth] (B2)--(6,1.666); \draw[line width=\edgewidth] (W3)--(0,1.666);
\draw[line width=\edgewidth] (B4)--(6,4.333); \draw[line width=\edgewidth] (W5)--(0,4.333);
\draw[line width=\edgewidth] (B5)--(1.7,6); \draw[line width=\edgewidth] (W1)--(1.7,0);
\draw[line width=\edgewidth] (B6)--(4.333,6); \draw[line width=\edgewidth] (W2)--(4.333,0);

\draw [line width=\nodewidth, fill=black] (B1) circle [radius=\noderad] ; \draw [line width=\nodewidth, fill=black] (B2) circle [radius=\noderad] ;
\draw [line width=\nodewidth, fill=black] (B3) circle [radius=\noderad] ;
\draw [line width=\nodewidth, fill=black] (B4) circle [radius=\noderad] ; \draw [line width=\nodewidth, fill=black] (B5) circle [radius=\noderad] ;
\draw [line width=\nodewidth, fill=black] (B6) circle [radius=\noderad] ;
\draw [line width=\nodewidth, fill=white] (W1) circle [radius=\noderad] ; \draw [line width=\nodewidth, fill=white] (W2) circle [radius=\noderad] ;
\draw [line width=\nodewidth, fill=white] (W3) circle [radius=\noderad] ;
\draw [line width=\nodewidth, fill=white] (W4) circle [radius=\noderad] ; \draw [line width=\nodewidth, fill=white] (W5) circle [radius=\noderad] ;
\draw [line width=\nodewidth, fill=white] (W6) circle [radius=\noderad] ;
\end{tikzpicture}
} };

\end{tikzpicture}
\caption{The zig-deformation $\nu^\zig_p(\Gamma, z)$ of $\Gamma$ at $z$.}\label{ex_def_zig}
\end{figure}

We give the zag-deformation $\nu^\zag_p(\Gamma, z)$ of $\Gamma$ at $z$ with $p=2$ as in Figure~\ref{ex_def_zag}.

\begin{figure}[h!]\centering

\begin{tikzpicture}
\node at (0,0)
{\scalebox{0.45}{
\begin{tikzpicture}
\newcommand{\edgewidth}{0.07cm}
\newcommand{\nodewidth}{0.07cm}
\newcommand{\noderad}{0.24} 
\newcommand{\arrowwidth}{0.08cm}
\basicdimerB

\draw[->, line width=0.15cm, rounded corners, color=red] (3,6)--(W3)--(B2)--(W2)--(0,3);
\draw[->, line width=0.15cm, rounded corners, color=red] (6,3)--(B3)--(W1)--(B1)--(3,0);
\end{tikzpicture}
} };

\draw[->,line width=0.03cm] (2,0)--(3.5,0);
\node at (2.4,0.85) {(zag-1)}; \node at (3,0.35) {-- (zag-3)};

\node at (5.5,0)
{\scalebox{0.45}{
\begin{tikzpicture}
\newcommand{\edgewidth}{0.065cm}
\newcommand{\nodewidth}{0.05cm}
\newcommand{\noderad}{0.18} 
\newcommand{\arrowwidth}{0.08cm}
\coordinate (W1) at (5,1); \coordinate (W2) at (4,5); \coordinate (W3) at (2,3);
\coordinate (B1) at (2,1); \coordinate (B2) at (1,5); \coordinate (B3) at (4,3);
\draw[line width=\edgewidth] (0,0) rectangle (6,6);

\draw[line width=\edgewidth] (B1)--(W2); \draw[line width=\edgewidth] (B2)--(W2); \draw[line width=\edgewidth] (B3)--(W2);
\draw[line width=\edgewidth] (B1)--(W3); \draw[line width=\edgewidth] (B1)--(W1);
\draw[line width=\edgewidth] (0,6)--(B2); \draw[line width=\edgewidth] (6,3)--(B3);
\draw[line width=\edgewidth] (0,3)--(W3); \draw[line width=\edgewidth] (6,0)--(W1);
\draw [line width=\nodewidth, fill=black] (B1) circle [radius=\noderad] ;
\draw [line width=\nodewidth, fill=black] (B2) circle [radius=\noderad] ;
\draw [line width=\nodewidth, fill=black] (B3) circle [radius=\noderad] ;
\draw [line width=\nodewidth, fill=white] (W1) circle [radius=\noderad] ;
\draw [line width=\nodewidth, fill=white] (W2) circle [radius=\noderad] ;
\draw [line width=\nodewidth, fill=white] (W3) circle [radius=\noderad] ;

\coordinate (W1a) at (2.6,1); \coordinate (W1b) at (3.8,1); \coordinate (B1a) at (3.2,1); \coordinate (B1b) at (4.4,1);
\coordinate (W2a) at (1.6,5); \coordinate (W2b) at (2.8,5); \coordinate (B2a) at (2.2,5); \coordinate (B2b) at (3.4,5);
\coordinate (W3a) at (0.4,3); \coordinate (W3b) at (4.8,3); \coordinate (B3a) at (1.2,3); \coordinate (B3b) at (5.6,3);
\draw[line width=\edgewidth] (B2b)--(W3a); \draw[line width=\edgewidth] (W1a)--(B3b);
\draw[line width=\edgewidth] (W2a)--(2.4,6); \draw[line width=\edgewidth] (B1a)--(2.4,0);
\draw[line width=\edgewidth] (W2b)--(3.6,6); \draw[line width=\edgewidth] (B1b)--(3.6,0);
\draw[line width=\edgewidth] (B3a)--(0,2.295); \draw[line width=\edgewidth] (W1b)--(6,2.295);
\draw[line width=\edgewidth] (B2a)--(0,3.706); \draw[line width=\edgewidth] (W3b)--(6,3.706);

\draw [line width=\nodewidth, fill=white] (W1a) circle [radius=\noderad] ; \draw [line width=\nodewidth, fill=white] (W1b) circle [radius=\noderad] ;
\draw [line width=\nodewidth, fill=black] (B1a) circle [radius=\noderad] ; \draw [line width=\nodewidth, fill=black] (B1b) circle [radius=\noderad] ;
\draw [line width=\nodewidth, fill=white] (W2a) circle [radius=\noderad] ; \draw [line width=\nodewidth, fill=white] (W2b) circle [radius=\noderad] ;
\draw [line width=\nodewidth, fill=black] (B2a) circle [radius=\noderad] ; \draw [line width=\nodewidth, fill=black] (B2b) circle [radius=\noderad] ;
\draw [line width=\nodewidth, fill=white] (W3a) circle [radius=\noderad] ; \draw [line width=\nodewidth, fill=white] (W3b) circle [radius=\noderad] ;
\draw [line width=\nodewidth, fill=black] (B3a) circle [radius=\noderad] ; \draw [line width=\nodewidth, fill=black] (B3b) circle [radius=\noderad] ;
\end{tikzpicture}
} };

\draw[->,line width=0.03cm] (7.5,0)--(9,0);
\node at (8.25,0.35) {(join)};

\node at (11,0)
{\scalebox{0.45}{
\begin{tikzpicture}
\newcommand{\edgewidth}{0.07cm}
\newcommand{\nodewidth}{0.06cm}
\newcommand{\noderad}{0.2} 
\newcommand{\arrowwidth}{0.08cm}
\coordinate (W1) at (5,1); \coordinate (W2) at (3.5,2); \coordinate (W3) at (1,3);
\coordinate (W4) at (4,4); \coordinate (W5) at (2.5,5); \coordinate (W6) at (0.5,5.5);
\coordinate (B1) at (5.5,0.5); \coordinate (B2) at (3.5,1); \coordinate (B3) at (2,2);
\coordinate (B4) at (5,3); \coordinate (B5) at (2.5,4); \coordinate (B6) at (1,5);
\draw[line width=\edgewidth] (0,0) rectangle (6,6);

\draw[line width=\edgewidth] (W1)--(B1); \draw[line width=\edgewidth] (W1)--(B2);
\draw[line width=\edgewidth] (W2)--(B2); \draw[line width=\edgewidth] (W2)--(B3);
\draw[line width=\edgewidth] (W2)--(B4);
\draw[line width=\edgewidth] (W3)--(B3); \draw[line width=\edgewidth] (W3)--(B5);
\draw[line width=\edgewidth] (W4)--(B3); \draw[line width=\edgewidth] (W4)--(B4); \draw[line width=\edgewidth] (W4)--(B5);
\draw[line width=\edgewidth] (W5)--(B5); \draw[line width=\edgewidth] (W5)--(B6); \draw[line width=\edgewidth] (W6)--(B6);
\draw[line width=\edgewidth] (W1)--(6,1.333); \draw[line width=\edgewidth] (B3)--(0,1.333);
\draw[line width=\edgewidth] (W3)--(0,3); \draw[line width=\edgewidth] (B4)--(6,3);
\draw[line width=\edgewidth] (W4)--(6,4.666); \draw[line width=\edgewidth] (B6)--(0,4.666);
\draw[line width=\edgewidth] (W5)--(4.5,6); \draw[line width=\edgewidth] (B1)--(4.5,0);
\draw[line width=\edgewidth] (W6)--(1.5,6); \draw[line width=\edgewidth] (B2)--(1.5,0);
\draw[line width=\edgewidth] (W6)--(0,6); \draw[line width=\edgewidth] (B1)--(6,0);
\draw [line width=\nodewidth, fill=black] (B1) circle [radius=\noderad] ; \draw [line width=\nodewidth, fill=black] (B2) circle [radius=\noderad] ;
\draw [line width=\nodewidth, fill=black] (B3) circle [radius=\noderad] ;
\draw [line width=\nodewidth, fill=black] (B4) circle [radius=\noderad] ; \draw [line width=\nodewidth, fill=black] (B5) circle [radius=\noderad] ;
\draw [line width=\nodewidth, fill=black] (B6) circle [radius=\noderad] ;
\draw [line width=\nodewidth, fill=white] (W1) circle [radius=\noderad] ; \draw [line width=\nodewidth, fill=white] (W2) circle [radius=\noderad] ;
\draw [line width=\nodewidth, fill=white] (W3) circle [radius=\noderad] ;
\draw [line width=\nodewidth, fill=white] (W4) circle [radius=\noderad] ; \draw [line width=\nodewidth, fill=white] (W5) circle [radius=\noderad] ;
\draw [line width=\nodewidth, fill=white] (W6) circle [radius=\noderad] ;
\end{tikzpicture}
} };

\end{tikzpicture}
\caption{The zag-deformation $\nu^\zag_p(\Gamma, z)$ of $\Gamma$ at $z$.}\label{ex_def_zag}
\end{figure}
\end{Example}

\begin{Example}\label{ex_not_isoradial}
We remark that even if $\Gamma$ is an isoradial dimer model, the deformed ones are not necessarily isoradial.
For example, the leftmost dimer model $\Gamma$ in Figure~\ref{ex_isoradial_defoem2} is isoradial.
We choose a type~I zigzag path $z$ whose slope is $v=(-1,0)$, in which case $\big|\calZ^\rmI_v(\Gamma)\big|=2$ and $\ell(z)=4$.
We fix $r=1$, and hence $h=\ell(z)/2-r=1$.
Applying the zig-deformation at~$z$ with $p=1$ to~$\Gamma$, we have the rightmost one in Figure~\ref{ex_isoradial_defoem2}.
One can check that the deformed dimer model is consistent but not isoradial.

\begin{figure}[h!]\centering
\begin{tikzpicture}
\node at (0,0)
{\scalebox{0.675}{
\begin{tikzpicture}
\newcommand{\edgewidth}{0.043cm} 
\newcommand{\nodewidth}{0.045cm} 
\newcommand{\noderad}{0.14} 
\coordinate (B1) at (0.5,1.5); \coordinate (B2) at (2.5,1.5); \coordinate (B3) at (1.5,3.5); \coordinate (B4) at (3.5,3.5);
\coordinate (W1) at (1.5,0.5); \coordinate (W2) at (3.5,0.5); \coordinate (W3) at (0.5,2.5); \coordinate (W4) at (2.5,2.5);
\draw[line width=\edgewidth] (0,0) rectangle (4,4);
\draw[line width=\edgewidth] (B1)--(W1); \draw[line width=\edgewidth] (B1)--(W3);
\draw[line width=\edgewidth] (B2)--(W1); \draw[line width=\edgewidth] (B2)--(W2);
\draw[line width=\edgewidth] (B2)--(W3); \draw[line width=\edgewidth] (B2)--(W4);
\draw[line width=\edgewidth] (B3)--(W3); \draw[line width=\edgewidth] (B3)--(W4); \draw[line width=\edgewidth] (B4)--(W4);
\draw[line width=\edgewidth] (W1)--(1.5,0); \draw[line width=\edgewidth] (B3)--(1.5,4);
\draw[line width=\edgewidth] (W2)--(3.5,0); \draw[line width=\edgewidth] (B4)--(3.5,4);
\draw[line width=\edgewidth] (W2)--(4,1); \draw[line width=\edgewidth] (B1)--(0,1);
\draw[line width=\edgewidth] (W2)--(2.5,0); \draw[line width=\edgewidth] (B3)--(2.5,4);
\draw[line width=\edgewidth] (W3)--(0,3); \draw[line width=\edgewidth] (B4)--(4,3);
\draw [line width=\nodewidth, fill=black] (B1) circle [radius=\noderad] ; \draw [line width=\nodewidth, fill=black] (B2) circle [radius=\noderad] ;
\draw [line width=\nodewidth, fill=black] (B3) circle [radius=\noderad] ; \draw [line width=\nodewidth, fill=black] (B4) circle [radius=\noderad] ;
\draw [line width=\nodewidth, fill=white] (W1) circle [radius=\noderad] ; \draw [line width=\nodewidth, fill=white] (W2) circle [radius=\noderad] ;
\draw [line width=\nodewidth, fill=white] (W3) circle [radius=\noderad] ; \draw [line width=\nodewidth, fill=white] (W4) circle [radius=\noderad] ;
\path (4,3) ++(90:0.2cm) coordinate (start); \path (0,3) ++(90:0.2cm) coordinate (target);
\path (W3) ++(90:0.2cm) coordinate (W3+); \path (W4) ++(90:0.2cm) coordinate (W4+);
\path (B3) ++(90:0.2cm) coordinate (B3+); \path (B4) ++(90:0.2cm) coordinate (B4+);
\draw [->, rounded corners, line width=0.1cm, red] (start)--(B4+)--(W4+)--(B3+)--(W3+)--(target) ;
\node [red] at (2.3,3.3) {\Large$z$};
\end{tikzpicture}
} };

\draw[->,line width=0.03cm] (2,0)--(3.5,0);
\node at (2.4,0.85) {(zig-1)}; \node at (3,0.35) {-- (zig-3)};

\node at (5.5,0)
{\scalebox{0.675}{
\begin{tikzpicture}
\newcommand{\edgewidth}{0.05cm} 
\newcommand{\nodewidth}{0.035cm} 
\newcommand{\noderad}{0.12} 
\coordinate (B5) at (0.83,2.83); \coordinate (B6) at (2.83,2.83);
\coordinate (W5) at (1.17,3.17); \coordinate (W6) at (3.17,3.17);
\draw[line width=\edgewidth] (0,0) rectangle (4,4);
\draw[line width=\edgewidth] (B1)--(W1); \draw[line width=\edgewidth] (B1)--(W3);
\draw[line width=\edgewidth] (B2)--(W1); \draw[line width=\edgewidth] (B2)--(W2);
\draw[line width=\edgewidth] (B2)--(W3); \draw[line width=\edgewidth] (B2)--(W4);
\draw[line width=\edgewidth] (B3)--(W3); \draw[line width=\edgewidth] (B4)--(W4);
\draw[line width=\edgewidth] (W1)--(1.5,0); \draw[line width=\edgewidth] (B3)--(1.5,4);
\draw[line width=\edgewidth] (W2)--(3.5,0); \draw[line width=\edgewidth] (B4)--(3.5,4);
\draw[line width=\edgewidth] (W2)--(4,1); \draw[line width=\edgewidth] (B1)--(0,1);
\draw[line width=\edgewidth] (W2)--(2.5,0); \draw[line width=\edgewidth] (B3)--(2.5,4);
\draw[line width=\edgewidth] (B5)--(W6); \draw[line width=\edgewidth] (W5)--(0,3); \draw[line width=\edgewidth] (B6)--(4,3);

\draw [line width=\nodewidth, fill=black] (B1) circle [radius=\noderad] ; \draw [line width=\nodewidth, fill=black] (B2) circle [radius=\noderad] ;
\draw [line width=\nodewidth, fill=black] (B3) circle [radius=\noderad] ; \draw [line width=\nodewidth, fill=black] (B4) circle [radius=\noderad] ;
\draw [line width=\nodewidth, fill=black] (B5) circle [radius=\noderad] ; \draw [line width=\nodewidth, fill=black] (B6) circle [radius=\noderad] ;
\draw [line width=\nodewidth, fill=white] (W1) circle [radius=\noderad] ; \draw [line width=\nodewidth, fill=white] (W2) circle [radius=\noderad] ;
\draw [line width=\nodewidth, fill=white] (W3) circle [radius=\noderad] ; \draw [line width=\nodewidth, fill=white] (W4) circle [radius=\noderad] ;
\draw [line width=\nodewidth, fill=white] (W5) circle [radius=\noderad] ; \draw [line width=\nodewidth, fill=white] (W6) circle [radius=\noderad] ;
\end{tikzpicture}
} };

\draw[->,line width=0.03cm] (7.5,0)--(9,0);
\node at (8.25,0.35) {(join)};

\node at (11,0)
{\scalebox{0.675}{
\begin{tikzpicture}
\newcommand{\edgewidth}{0.05cm} 
\newcommand{\nodewidth}{0.045cm} 
\newcommand{\noderad}{0.14} 
\coordinate (B1) at (1,1); \coordinate (B2) at (3,1); \coordinate (B3) at (1.5,2.5); \coordinate (B4) at (1.5,3.5);
\coordinate (W1) at (1.5,0.5); \coordinate (W2) at (1.5,1.5); \coordinate (W3) at (1,3); \coordinate (W4) at (3,3);
\draw[line width=\edgewidth] (0,0) rectangle (4,4);
\draw[line width=\edgewidth] (B1)--(W1); \draw[line width=\edgewidth] (B1)--(W2);
\draw[line width=\edgewidth] (B2)--(W1); \draw[line width=\edgewidth] (B2)--(W2); \draw[line width=\edgewidth] (B2)--(W4);
\draw[line width=\edgewidth] (B3)--(W2); \draw[line width=\edgewidth] (B3)--(W3); \draw[line width=\edgewidth] (B3)--(W4);
\draw[line width=\edgewidth] (B4)--(W3); \draw[line width=\edgewidth] (B4)--(W4);
\draw[line width=\edgewidth] (W1)--(1.5,0); \draw[line width=\edgewidth] (B4)--(1.5,4);
\draw[line width=\edgewidth] (W4)--(3,4); \draw[line width=\edgewidth] (B2)--(3,0);
\draw[line width=\edgewidth] (W4)--(4,4); \draw[line width=\edgewidth] (B1)--(0,0);
\draw[line width=\edgewidth] (W3)--(0,2); \draw[line width=\edgewidth] (B2)--(4,2);
\draw [line width=\nodewidth, fill=black] (B1) circle [radius=\noderad] ; \draw [line width=\nodewidth, fill=black] (B2) circle [radius=\noderad] ;
\draw [line width=\nodewidth, fill=black] (B3) circle [radius=\noderad] ; \draw [line width=\nodewidth, fill=black] (B4) circle [radius=\noderad] ;
\draw [line width=\nodewidth, fill=white] (W1) circle [radius=\noderad] ; \draw [line width=\nodewidth, fill=white] (W2) circle [radius=\noderad] ;
\draw [line width=\nodewidth, fill=white] (W3) circle [radius=\noderad] ; \draw [line width=\nodewidth, fill=white] (W4) circle [radius=\noderad] ;
\end{tikzpicture}
} };
\end{tikzpicture}
\caption{An example of the deformed dimer model that is not isoradial.}\label{ex_isoradial_defoem2}
\end{figure}
\end{Example}

\begin{Example}\label{zigzag_counterEX}
We here give an example showing that the zig-deformation and zag-deformation are not mutually inverse operations on the level of dimer models in general.

We consider the consistent dimer model $\Gamma$ that takes the form as in the top-left of Figure~\ref{ex_zigzag_non_involution}, and
the type I zigzag path $z$ on $\Gamma$. We see that $[z]=(-1,0)$ and $\ell(z)=4$. Let $r=1$, and hence $h=\ell(z)/2-r=1$.
Then, the zig-deformation $\nu_p^\zig(\Gamma,z)$ of~$\Gamma$ at $z$ with the deformation weight $p=h=1$ is the top-right of
Figure~\ref{ex_zigzag_non_involution}.
Through this process, the new zigzag path on $\nu_p^\zig(\Gamma,z)$ whose slope is~$(1,0)$ has been created.
If we apply the zag-deformation with the deformation weight $q=1$ to this new zigzag path, then we recover the original dimer model $\Gamma$
as we saw in Proposition~\ref{prop_mut_inversecase}.
On the other hand, there is another type I zigzag path on $\nu_p^\zig(\Gamma,z)$ whose slope is $(1,0)$, which is denoted by $z^\prime$.
We see that $\ell(z^\prime)=4$, and consider $r=1$, and hence $h=\ell(z^\prime)/2-r=1$.
Applying the zag deformation at $z^\prime$ with the deformation weight $q=h=1$ to $\nu_p^\zig(\Gamma,z)$,
we have the dimer model $\nu_q^\zag\big(\nu_p^\zig(\Gamma,z),z^\prime\big)$ shown in the bottom-right of Figure~\ref{ex_zigzag_non_involution},
which is not isomorphic to~$\Gamma$.
Nevertheless, $\Gamma$ and $\nu_q^\zag\big(\nu_p^\zig(\Gamma,z),z^\prime\big)$ are ``mutation-equivalent'' (see, e.g., \cite[Section~5.11]{Nak})
and hence their PM polygons are the same, see Appendix~\ref{app_mutationdimer} for more details of the mutation.

\begin{figure}[h!]\centering
\begin{tikzpicture}
\newcommand{\edgewidth}{0.07cm}
\newcommand{\nodewidth}{0.06cm}
\newcommand{\noderad}{0.26} 
\newcommand{\arrowwidth}{0.08cm}

\node at (-5,0) {};
\node at (-1.8,0) {$\Gamma$ : };
\node at (9.1,-4) {: $\nu_q^\zag\big(\nu_p^\zig(\Gamma,z),z^\prime\big)$};

\node at (0,0)
{\scalebox{0.35}{
\begin{tikzpicture}
\draw[line width=\edgewidth] (0,0) rectangle (8,8);

\foreach \n/\a/\b in {1/3/1,2/7/1,3/3/5,4/7/5} {
\coordinate (B\n) at (\a,\b);
};
\foreach \m\a/\b in {1/1/3,2/5/3,3/1/7,4/5/7} {
\coordinate (W\m) at (\a,\b);
};
\foreach \o\a/\b in {1/2/0,2/4/0,3/6/0,4/8/0,5/8/2,6/8/4,7/8/6,8/6/8,9/4/8,10/2/8,11/0/8,12/0/6,13/0/4,14/0/2} {
\coordinate (E\o) at (\a,\b);
};

\foreach \n/\m in {1/1,1/2,2/2,3/1,3/2,3/3,3/4,4/2,4/4} {
\draw[line width=\edgewidth] (B\n)--(W\m); };
\foreach \n/\o in {1/1,1/2,2/3,2/4,2/5,4/6,4/7} {
\draw[line width=\edgewidth] (B\n)--(E\o); };
\foreach \m/\o in {4/8,4/9,3/10,3/11,3/12,1/13,1/14} {
\draw[line width=\edgewidth] (W\m)--(E\o); };

\foreach \n in {1,2,3,4} {
\draw [line width=\nodewidth, fill=black] (B\n) circle [radius=\noderad] ;
};
\foreach \m in {1,2,3,4} {
\draw [line width=\nodewidth, fill=white] (W\m) circle [radius=\noderad] ;
};

\foreach \n in {3,4} {
\path (B\n) ++(-90:0.3cm) coordinate (B\n+); };
\foreach \n in {1,2} {
\path (W\n) ++(-90:0.3cm) coordinate (W\n+); };
\draw[->, line width=0.15cm, rounded corners, color=red] (8,3.7)--(B4+)--(W2+)--(B3+)--(W1+)--(0,3.7);
\node [red] at (4.2,4.5) {\scalebox{2.5}{$z$}};
\end{tikzpicture}
} };

\draw[->,line width=0.03cm] (2,0)--(4,0);
\node at (3,0.3) {\small$\nu^\zig_p(-,z)$};

\node at (6,0)
{\scalebox{0.35}{
\begin{tikzpicture}
\draw[line width=\edgewidth] (0,0) rectangle (8,8);

\foreach \n/\a/\b in {1/3/1,2/7/1,3/3/5,4/7/5, 5/1.666/3.666, 6/5.666/3.666} {
\coordinate (B\n) at (\a,\b);
};
\foreach \m\a/\b in {1/1/3,2/5/3,3/1/7,4/5/7, 5/2.333/4.333, 6/6.333/4.333} {
\coordinate (W\m) at (\a,\b);
};

\foreach \o\a/\b in {1/2/0,2/4/0,3/6/0,4/8/0,5/8/2,6/8/4,7/8/6,8/6/8,9/4/8,10/2/8,11/0/8,12/0/6,13/0/4,14/0/2} {
\coordinate (E\o) at (\a,\b);
};

\foreach \n/\m in {1/1,1/2,2/2,3/1,3/3,3/4,4/2,4/4,5/6} {
\draw[line width=\edgewidth] (B\n)--(W\m); };
\foreach \n/\o in {1/1,1/2,2/3,2/4,2/5,4/7,6/6} {
\draw[line width=\edgewidth] (B\n)--(E\o); };
\foreach \m/\o in {4/8,4/9,3/10,3/11,3/12,1/14,5/13} {
\draw[line width=\edgewidth] (W\m)--(E\o); };

\foreach \n in {1,2,3,4,5,6} {
\draw [line width=\nodewidth, fill=black] (B\n) circle [radius=\noderad] ;
};
\foreach \m in {1,2,3,4,5,6} {
\draw [line width=\nodewidth, fill=white] (W\m) circle [radius=\noderad] ;
};

\foreach \n in {1,2} {
\path (B\n) ++(-90:0.3cm) coordinate (B\n-); };
\foreach \n in {1,2} {
\path (W\n) ++(-90:0.3cm) coordinate (W\n-); };
\draw[->, line width=0.15cm, rounded corners, color=blue] (0,1.7)--(W1-)--(B1-)--(W2-)--(B2-)--(8,1.7);
\node [blue] at (4,2.8) {\scalebox{2.5}{$z^\prime$}};
\end{tikzpicture}
} };

\draw[->,line width=0.03cm] (4,-1)--(2,-3);
\filldraw[white] (2,-1.6) rectangle (3.8,-2.4);
\node at (2.6,-1.8) {\small (zag-1)}; \node at (3.1,-2.2) {\small -- (zag-3)};

\node at (0,-4)
{\scalebox{0.35}{
\begin{tikzpicture}
\draw[line width=\edgewidth] (0,0) rectangle (8,8);

\foreach \n/\a/\b in {1/3/1,2/7/1,3/3/5,4/7/5, 5/1.666/3.666, 6/5.666/3.666, 7/1.666/2.333, 8/5.666/2.333} {
\coordinate (B\n) at (\a,\b);
};
\foreach \m\a/\b in {1/1/3,2/5/3,3/1/7,4/5/7, 5/2.333/4.333, 6/6.333/4.333, 7/2.333/1.666, 8/6.333/1.666} {
\coordinate (W\m) at (\a,\b);
};

\foreach \o\a/\b in {1/2/0,2/4/0,3/6/0,4/8/0,5/8/2,6/8/4,7/8/6,8/6/8,9/4/8,10/2/8,11/0/8,12/0/6,13/0/4,14/0/2} {
\coordinate (E\o) at (\a,\b);
};

\foreach \n/\m in {1/1,2/2,3/1,3/3,3/4,4/2,4/4,5/6,7/8} {
\draw[line width=\edgewidth] (B\n)--(W\m); };
\foreach \n/\o in {1/1,1/2,2/3,2/4,4/7,6/6,8/5} {
\draw[line width=\edgewidth] (B\n)--(E\o); };
\foreach \m/\o in {4/8,4/9,3/10,3/11,3/12,5/13,7/14} {
\draw[line width=\edgewidth] (W\m)--(E\o); };

\foreach \n in {1,2,3,4,5,6,7,8} {
\draw [line width=\nodewidth, fill=black] (B\n) circle [radius=\noderad] ;
};
\foreach \m in {1,2,3,4,5,6,7,8} {
\draw [line width=\nodewidth, fill=white] (W\m) circle [radius=\noderad] ;
};

\end{tikzpicture}
} };

\draw[->,line width=0.03cm] (2,-4)--(4,-4);
\node at (3,-3.7) {\small (join)};

\node at (6,-4)
{\scalebox{0.35}{
\begin{tikzpicture}
\draw[line width=\edgewidth] (0,0) rectangle (8,8);

\foreach \n/\a/\b in {1/3/0.666,2/7/0.666,3/3/3.333,4/7/3.333,5/3/6, 6/7/6} {
\coordinate (B\n) at (\a,\b);
};
\foreach \m\a/\b in {1/1/2,2/5/2,3/1/4.666,4/5/4.666,5/1/7.333,6/5/7.333} {
\coordinate (W\m) at (\a,\b);
};
\foreach \o\a/\b in {1/2/0,2/4/0,3/6/0,4/8/0,5/8/2.666,6/8/4,7/8/6.666,8/6/8,9/4/8,10/2/8,11/0/8,12/0/6.666,13/0/4,14/0/2.666} {
\coordinate (E\o) at (\a,\b);
};

\foreach \n/\m in {1/1,2/2,3/1,3/2,3/3,3/4,4/2,4/4,5/3,5/5,5/6,6/4,6/6} {
\draw[line width=\edgewidth] (B\n)--(W\m); };
\foreach \n/\o in {1/1,1/2,2/3,2/4,4/5,4/6,6/7} {
\draw[line width=\edgewidth] (B\n)--(E\o); };
\foreach \m/\o in {6/8,6/9,5/10,5/11,5/12,3/13,1/14} {
\draw[line width=\edgewidth] (W\m)--(E\o); };

\foreach \n in {1,2,3,4,5,6} {
\draw [line width=\nodewidth, fill=black] (B\n) circle [radius=\noderad] ;
};
\foreach \m in {1,2,3,4,5,6} {
\draw [line width=\nodewidth, fill=white] (W\m) circle [radius=\noderad] ;
};
\end{tikzpicture}
} };
\end{tikzpicture}
\caption{An example showing that the zig-deformation and zag-deformation are not mutually inverse operations.}
\label{ex_zigzag_non_involution}
\end{figure}
\end{Example}

\section{Combinatorial mutations of perfect matching polygons}\label{sec_mutation_polygon1}
In this section, we discuss a relationship between deformations of consistent dimer models and combinatorial mutations of polygons.
We first define combinatorial mutations for lattice polytopes of any dimension, and then we mainly discuss the case of polygons.

\subsection{Preliminaries on combinatorial mutations of polytopes}\label{sec_pre_mutation_polygon}

Following \cite{ACGK}, we introduce combinatorial mutations of polytopes.
Let $N\cong\ZZ^d$ be a lattice of rank $d$, and $P\subset N_\RR\coloneqq N\otimes_\ZZ\RR$ be a convex lattice polytope.
We assume that $P$ contains the origin ${\bf 0}$.
We denote by $\calV(P)$ the set of vertices of $P$.
We say that two polytopes $P,Q\subset N_\RR$ are \emph{isomorphic} if they are transformed into each other by $\GL(d,\ZZ)$-transformations, in which case we denote $P\cong Q$.

We first prepare some notions used in the definition of combinatorial mutations of polytopes.

\begin{Definition}[mutation data]\label{def_mutation_data}
Let $P$ be a convex lattice polytope as above.
In order to define a combinatorial mutation of $P$, we fix the data consisting of a vector $w$ and the associated integers $h_{\max}(P,w)$,
$h_{\min}(P,w)$ and $\width(P,w)$ defined as follows.

Let $w\in M\coloneqq\Hom_\ZZ(N,\ZZ)\cong\ZZ^d$ be a primitive lattice vector.
The element $w\in M$ determines the linear map $\langle w,-\rangle\colon N_\RR\rightarrow \RR$, where $\langle-,- \rangle$ is the natural inner product.
We set
\begin{displaymath}
h_{\max}(P,w)\coloneqq{\max}\{\langle w,u\rangle \,|\, u\in P\} \qquad\text{and}\qquad
h_{\min}(P,w)\coloneqq{\min}\{\langle w,u\rangle \,|\, u\in P\},
\end{displaymath}
which are integers since $P$ is a lattice polytope.
When the situation is clear, we simply denote these by $h_{\max}$ and $h_{\min}$, respectively.
We define the \emph{width} of $P$ with respect to $w$ as
\begin{displaymath}
\width(P,w)\coloneqq h_{\max}(P,w)-h_{\min}(P,w).
\end{displaymath}
\end{Definition}

We note that if the origin ${\bf 0}$ is contained in the strict interior $P^\circ$ of $P$, then $h_{\min}<0$ and $h_{\max}>0$, in which case we have $\width(P,w)\ge2$.
We say that a lattice point $u\in N$ (resp.\ a~subset $F\in N_\RR$) is at \emph{height} $m$ with respect to $w$ if $\langle w,u\rangle=m$
(resp.\ $\langle w,u\rangle=m$ for any $u\in F$).

For each height $h\in\ZZ$, we let
\begin{displaymath}
w_h(P)\coloneqq\operatorname{conv}\{u\in P\cap N \,|\, \langle w,u\rangle=h\},
\end{displaymath}
which is the (possibly empty) convex hull of all lattice points in $P$ at height $h$.
By definition, $w_{h_{\min}}(P)$ and $w_{h_{\max}}(P)$ are faces of $P$.
Using these notions, a combinatorial mutation of $P$ is defined as follows.

\begin{Definition}\label{def_mutation_polygon}
Let the notation be the same as above.
We assume that there exists a lattice polytope $F\subset N_\RR$ such that $\langle w,u\rangle=0$ for any $u\in F$, and
for each negative height $h_{\min}\le h<0$ there exists a possibly empty lattice polytope $G_h\subset N_\RR$ satisfying
\begin{equation}\label{condition_Gh}
\{u\in\calV(P)\,|\, \langle w,u \rangle=h\}\subseteq G_h+(-h)F\subseteq w_h(P),
\end{equation}
where $+$ means the Minkowski sum, and we especially define $Q+\varnothing=\varnothing$ for any polytope $Q$.
We call $F$ a \emph{factor} of $P$ with respect to~$w$.
Then, we define the \emph{combinatorial mutation} of $P$ given by the vector $w$, factor $F$ and polytopes $\{G_h\}$ as
\begin{displaymath}
\mutation_w(P,F)\coloneqq\operatorname{conv}\bigg(\bigcup_{h=h_{\min}}^{-1}G_h\cup\bigcup_{h=0}^{h_{\max}}(w_h(P)+hF)\bigg).
\end{displaymath}
\end{Definition}

We note that the combinatorial mutation is independent of the choice of the polytopes $\{G_h\}$ (see \cite[Proposition~1]{ACGK}).
Also, a translation of the factor $F$ does not affect the combinatorial mutation; that is,
for any $u\in N$ with $\langle w,u\rangle=0$ we have $\mutation_w(P,F)\cong\mutation_w(P,u+F)$; see~\cite{ACGK} for more details.

\begin{Remark}[the combinatorial mutation for the case of $d=2$]\label{rem_def_mutation}
When $d=2$, we choose an edge $E$ of a lattice polygon~$P$, and take $w\in M\cong\ZZ^2$ as a primitive inner normal vector for~$E$.
By the choice of $w$, we see that $w_{h_{\min}}(P)=E$ and $w_{h_{\max}}(P)$ is either a vertex or an edge of~$P$.
Then, we take a primitive lattice element $u_E\in N$ satisfying $\langle w,u_E \rangle=0$, and define the line segment $F\coloneqq\operatorname{conv}\{{\bf 0},u_E\}$,
which is parallel to $E$ at height $0$ and has unit lattice length. Since $u_E$ is uniquely determined up to sign, so is $F$.
In this case, $P$ admits a combinatorial mutation with respect to $w$ (equivalently, there exit polytopes $\{G_h\}$ satisfying (\ref{condition_Gh})) if and only if $|E\cap N|-1\ge -h_{\min}$, see \cite[Lemma~1]{KNP}.
We note that the combinatorial mutation does not depend on the choice of $u_E$ (and hence $F$). That is,
$\mutation_w(P,F)\cong\mutation_w(P,-F)$, which means they are $\GL(2,\ZZ)$-equivalent.
\end{Remark}

\begin{Example}
\label{ex_mutation_4b_polygon}
We consider the polygon $P$ given in the left of Figure~\ref{ex_mut_poly_4b} below,
which coincides with the PM polygon of the dimer model given in Figure~\ref{ex_dimer_4b} (see also Figure~\ref{polygon_4b}).
Here, the double circle stands for the origin ${\bf 0}$.
We consider the edge $E$ whose primitive inner normal vector is $w=(1,1)$, in which case $h_{\min}=-1$ and $h_{\max}=2$.
We take $u_E=(1,-1)\in N$, which satisfies $\langle w,u_E\rangle=0$, and consider the line segment $F=\operatorname{conv}\{{\bf 0},u_E\}$.
The combinatorial mutation $\mut_w(P,F)$ of the polygon $P$ is shown in the upper part of Figure~\ref{ex_mut_poly_4b}.
On the other hand, if we consider the line segment $-F=\operatorname{conv}\{{\bf 0},-u_E\}$, the combinatorial mutation $\mut_w(P,-F)$ is as shown in the lower part of Figure~\ref{ex_mut_poly_4b}.

\begin{figure}[h!]\centering
\begin{tikzpicture}
\node at (0,0)
{\scalebox{0.5}{
\begin{tikzpicture}
\coordinate (00) at (0,0); \coordinate (10) at (1,0); \coordinate (01) at (0,1); \coordinate (-10) at (-1,0); \coordinate (11) at (1,1);
\coordinate (0-1) at (0,-1);

\draw [step=1, gray] (-2.3,-2.3) grid (2.3,2.3);
\draw [line width=0.06cm] (10)--(11) ; \draw [line width=0.06cm] (11)--(-10) ;
\draw [line width=0.06cm] (-10)--(0-1) ; \draw [line width=0.06cm] (0-1)--(10) ;

\draw [line width=0.025cm] (00) circle [radius=0.25] ;
\draw [line width=0.05cm, fill=black] (00) circle [radius=0.1] ; \draw [line width=0.05cm, fill=black] (10) circle [radius=0.1] ;
\draw [line width=0.05cm, fill=black] (11) circle [radius=0.1] ;
\draw [line width=0.05cm, fill=black] (-10) circle [radius=0.1] ; \draw [line width=0.05cm, fill=black] (0-1) circle [radius=0.1] ;
\end{tikzpicture}
}};

\draw[->,line width=0.03cm] (2,0)--(4,0);
\node at (3,0.5) {$\mut_w(-,F)$};

\node at (6.5,0)
{\scalebox{0.5}{
\begin{tikzpicture}
\coordinate (00) at (0,0); \coordinate (10) at (1,0); \coordinate (20) at (2,0);
\coordinate (11) at (1,1); \coordinate (2-1) at (2,-1); \coordinate (3-1) at (3,-1);
\coordinate (-10) at (-1,0);

\draw [step=1, gray] (-2.3,-2.3) grid (4.3,2.3);
\draw [line width=0.06cm] (-10)--(11) ; \draw [line width=0.06cm] (-10)--(2-1) ;
\draw [line width=0.06cm] (2-1)--(3-1) ; \draw [line width=0.06cm] (11)--(3-1) ;

\draw [line width=0.025cm] (00) circle [radius=0.25] ;
\draw [line width=0.05cm, fill=black] (00) circle [radius=0.1] ; \draw [line width=0.05cm, fill=black] (10) circle [radius=0.1] ;
\draw [line width=0.05cm, fill=black] (20) circle [radius=0.1] ;
\draw [line width=0.05cm, fill=black] (11) circle [radius=0.1] ; \draw [line width=0.05cm, fill=black] (2-1) circle [radius=0.1] ;
\draw [line width=0.05cm, fill=black] (3-1) circle [radius=0.1] ; \draw [line width=0.05cm, fill=black] (-10) circle [radius=0.1] ;
\end{tikzpicture}
}};

\node at (0,-3.5)
{\scalebox{0.5}{
\begin{tikzpicture}
\coordinate (00) at (0,0); \coordinate (10) at (1,0); \coordinate (01) at (0,1); \coordinate (-10) at (-1,0); \coordinate (11) at (1,1);
\coordinate (0-1) at (0,-1);

\draw [step=1, gray] (-2.3,-2.3) grid (2.3,2.3);
\draw [line width=0.06cm] (10)--(11) ; \draw [line width=0.06cm] (11)--(-10) ;
\draw [line width=0.06cm] (-10)--(0-1) ; \draw [line width=0.06cm] (0-1)--(10) ;

\draw [line width=0.025cm] (00) circle [radius=0.25] ;
\draw [line width=0.05cm, fill=black] (00) circle [radius=0.1] ; \draw [line width=0.05cm, fill=black] (10) circle [radius=0.1] ;
\draw [line width=0.05cm, fill=black] (11) circle [radius=0.1] ;
\draw [line width=0.05cm, fill=black] (-10) circle [radius=0.1] ; \draw [line width=0.05cm, fill=black] (0-1) circle [radius=0.1] ;
\end{tikzpicture}
}};

\draw[->,line width=0.03cm] (2,-3.5)--(4,-3.5);
\node at (3,-3) {$\mut_w(-,-F)$};

\node at (6.5,-3.5)
{\scalebox{0.5}{
\begin{tikzpicture}
\coordinate (00) at (0,0); \coordinate (10) at (1,0); \coordinate (0-1) at (0,-1);
\coordinate (11) at (1,1); \coordinate (01) at (0,1); \coordinate (02) at (0,2);
\coordinate (-13) at (-1,3);

\draw [step=1, gray] (-2.3,-2.3) grid (2.3,4.3);
\draw [line width=0.06cm] (0-1)--(10) ; \draw [line width=0.06cm] (10)--(11) ;
\draw [line width=0.06cm] (11)--(-13) ; \draw [line width=0.06cm] (-13)--(0-1) ;

\draw [line width=0.025cm] (00) circle [radius=0.25] ;
\draw [line width=0.05cm, fill=black] (00) circle [radius=0.1] ; \draw [line width=0.05cm, fill=black] (10) circle [radius=0.1] ;
\draw [line width=0.05cm, fill=black] (0-1) circle [radius=0.1] ;
\draw [line width=0.05cm, fill=black] (11) circle [radius=0.1] ; \draw [line width=0.05cm, fill=black] (01) circle [radius=0.1] ;
\draw [line width=0.05cm, fill=black] (02) circle [radius=0.1] ; \draw [line width=0.05cm, fill=black] (-13) circle [radius=0.1] ;
\end{tikzpicture}
}};
\end{tikzpicture}
\caption{Examples of the combinatorial mutations of $P$.}\label{ex_mut_poly_4b}
\end{figure}
\end{Example}

Now, we collect fundamental properties of this combinatorial mutation.

\begin{Proposition}[{see \cite[Lemma~2 and Proposition~2]{ACGK}}]\label{properties_mutation_polygon}
Let the notation be the same as above.
\begin{itemize}\itemsep=0pt
\item[\rm (1)] If $Q\coloneqq\mutation_w(P,F)$, then we have $P=\mutation_{(-w)}(Q,F)$.
\item[\rm (2)] $P$ is a Fano polytope if and only if $\mutation_w(P,F)$ is a Fano polytope.
\end{itemize}
\end{Proposition}

Here, we recall that a convex lattice polytope $P\subset N_\RR$ with $\dim P=d$ is called \emph{Fano} if the origin is contained in the strict interior of $P$,
and the vertices $\calV(P)$ of $P$ are primitive lattice points of $N$.

Then, we consider the combinatorial mutation of a lattice polytope $P$ in terms of the polar dual $P^*$ of $P$ in $M$.
To do this, we must first discuss the polar dual $P^*$ for a polyhedron $P$.
We consider the family of polyhedra (not necessarily convex polytopes) which are of the following form:
\begin{displaymath}
\calP^d \coloneqq \bigg\{ \bigcap_{v \in S} H_{v,\geq -k_v} \cap \bigcap_{v' \in T}H_{v',\geq 0} \subset N_\RR \,|\, S,T \subset M, \; |S|, |T| < \infty, \; k_v \in \ZZ_{>0} \bigg\},
\end{displaymath}
where $H_{v,\geq k}=\{u \in N_\RR \,|\, \langle v,u \rangle \geq k\}$ for $v \in M$ and $k \in \RR$.
We note that any lattice polytope containing the origin of~$N_\RR$ belongs to $\calP^d$ but the ones not containing the origin do not belong to~$\calP^d$
since one of the supporting hyperplanes of such a polytope is of the form $H_{v, \geq k}$ for some $v \in M$ and some positive integer~$k$.
Similarly, we define~$\calQ^d$ by swapping the roles of~$N_\RR$ and~$M_\RR$.

For a given $P \in \calP^d$, we consider the \emph{polar dual} $P^* \subset M_\RR$ of $P$ defined as
\begin{displaymath}
P^* \coloneqq \{v \in M_\RR\,|\,\langle v,u\rangle\ge-1 \text{ for all } u\in P\}\subset M_\RR.
\end{displaymath}
Then, we have the following statements.

\begin{Proposition}\label{prop_dual_polytope}
Let the notation be the same as above. Then, for any $P \in \calP^d$ we have
\begin{itemize}\itemsep=0pt
\item[$(i)$] $P^* \in \calQ^d$,
\item[$(ii)$] $(P^*)^*=P$.
\end{itemize}
\end{Proposition}

\begin{proof}(i) Let $P \in \calP^d$. Since $P$ is a polyhedron, there exist a polytope~$Q$ and a polyhedral cone~$C$ such that
$P=Q+C$, where $+$ denotes the Minkowski sum (see \cite[Corollary~7.1b]{Sch}).
Let $Q=\operatorname{conv}\big(\big\{\frac{1}{k_1}u_1,\dots,\frac{1}{k_p}u_p\big\}\big)$ and let $C=\operatorname{cone}(\{u_1',\dots,u_q'\})$.
Note that we can choose~$u_i$,~$u_j'$ from $N$ and $k_i \in \ZZ_{>0}$ because of the form of~$P$.
In what follows, we will show that
\begin{displaymath}
P^*=\bigcap_{i=1}^p H_{u_i,\geq -k_i} \cap \bigcap_{j=1}^qH_{u_j',\geq 0}.
\end{displaymath}

First, we take $v \in P^*$. Then, we have $\langle v,u \rangle \geq -1$ for any $u \in P$.
Since $\frac{1}{k_i}u_i \in Q+{\bf 0} \subset P$, where ${\bf 0} \in C$ denotes the origin,
we see that $\langle v, \frac{1}{k_i}u_i \rangle \geq -1$, i.e., $\langle v, u_i \rangle \geq -k_i$ for each $i$.
If there is $j$ with $\langle v,u_j' \rangle<0$, then $\langle v, u'+ru_j' \rangle < -1$ for some $u' \in Q$ and some sufficiently large~$r$.
Moreover, we have $u'+ru_j' \in Q+C=P$. This contradicts $v \in P^*$; thus $\langle v,u_j' \rangle\geq 0$ for each~$j$.
Therefore, $v \in \bigcap_{i=1}^p H_{u_i,\geq -k_i} \cap \bigcap_{j=1}^qH_{u_j',\geq 0}$.

On the other hand, we take $v \in \bigcap_{i=1}^p H_{u_i,\geq -k_i} \cap \bigcap_{j=1}^qH_{u_j',\geq 0}$.
For any $u \in P$, as mentioned above, there exist $u' \in Q$ and $u'' \in C$ such that $u=u'+u''$.
Let $u'=\sum_{i=1}^p \frac{r_i}{k_i}u_i$, where $r_i \geq 0$ with $\sum_{i=1}^pr_i=1$, and let $u''=\sum_{j=1}^q s_ju_j'$, where $s_j \geq 0$.
By using these expressions together with the inequalities $\langle v,u_i \rangle \geq -k_i$ for each $i$ and $\langle v,u_j' \rangle \geq 0$ for each $j$, we see that
\begin{align*}
\langle v,u \rangle = \langle v,u' \rangle+\langle v,u'' \rangle =
\sum_{i=1}^p \frac{r_i}{k_i}\langle v,u_i \rangle+\sum_{j=1}^q s_j\langle v,u_j' \rangle \geq -\sum_{i=1}^pr_i=-1,
\end{align*}
and thus we have $v \in P^*$.

(ii) For any $u \in P$, we have $\langle v,u\rangle\geq -1$ for any $v \in P^*$, which means that $P \subset (P^*)^*$.
For the other inclusion, we take $u \in N_\RR \setminus P$.
Let $P=\bigcap_{v \in S} H_{v,\geq -k_v} \cap \bigcap_{v' \in T}H_{v',\geq 0}$.
Then either $\langle v,u \rangle <-k_v$ for some $v \in S$ or $\langle v',u \rangle < 0$ for some $v' \in T$ holds.
In the former case, since $\langle v,u' \rangle \geq -k_v$ for any $u' \in P$, we have $\frac{1}{k_v}v \in P^*$.
This means that there is $v''\coloneqq\frac{1}{k_v}v \in P^*$ such that $\langle v'',u \rangle<-1$, and hence $u \not\in (P^*)^*$.
Similarly, in the latter case, since $\langle rv',u' \rangle \geq 0 \geq -1$ for any $u' \in P$ and $r \geq 0$, we have $rv' \in P^*$.
This implies that $u \not\in (P^*)^*$ for sufficiently large $r$, and hence $u \not\in (P^*)^*$. Therefore, we obtain $(P^*)^* \subset P$, as required.
\end{proof}

We next define a map
\[
\varphi_{w,F}\colon \ M_\RR\rightarrow M_\RR \qquad \text{as} \quad \varphi_{w,F}(v)\coloneqq v-v_{\min}w,
\]
where \mbox{$v_{\min}\coloneqq {\min}\{\langle v, u\rangle\,|\, u\in F\}$}.
In particular, when $d=2$ (see Remark~\ref{rem_def_mutation}), this map can be described as
\begin{equation}\label{mutation_M}
\varphi_{w,F}(v)=
\begin{cases}
v & \text{if $\langle v,u_E\rangle\ge0$}, \\
v-\langle v,u_E\rangle w&\text{if $\langle v,u_E\rangle<0$},
\end{cases}
\end{equation}
with $F=\operatorname{conv}\{{\bf 0},u_E\}$.
The next proposition is crucial to prove our main result Theorem~\ref{mutation=deformation}.

\begin{Proposition}
\label{prop_mutation_dual_polytope}
For any $P \in \calP^d$, we have
\begin{displaymath}\varphi_{w,F}(P^*)=\mut_w(P,F)^*.
\end{displaymath}
\end{Proposition}

\begin{proof}
Although this equality essentially follows from \cite[Proposition~4]{ACGK} and the discussions in \cite[p.~12]{ACGK}, we give a precise proof for completeness.

Let $\varphi=\varphi_{w,F}$ and $Q=\mut_w(P,F)$.
To show $\varphi(P^*)\subset Q^*$, we take $v \in P^*$ arbitrarily and consider $\varphi(v)=v - v_{\min}w \in \varphi(P^*)$.
We will show that $\langle v - v_{\min}w,u \rangle \geq -1$ for any $u \in Q$.
It suffices to show this for each vertex $u\in\calV(Q)$.
\begin{itemize}\itemsep=0pt
\item Let $u\in\calV(Q)$ with $\langle w,u \rangle \geq 0$. Then we can write $u=u_P+\langle w,u_P \rangle u_F$ for some $u_P \in \calV(P)$ and $u_F \in \calV(F)$.
In particular, we have
\begin{displaymath}
\langle v-v_{\min}w, u\rangle = \langle v,u_P \rangle + \langle w,u_P \rangle(\langle v,u_F \rangle-v_{\min}) \geq \langle v,u_P \rangle \geq -1.
\end{displaymath}

\item Let $u\in\calV(Q)$ with $\langle w,u \rangle < 0$. For any $u_F \in \calV(F)$, we have $u-\langle w,u \rangle u_F \in P$.
Hence, $\langle v,u-\langle w,u \rangle u_F \rangle \geq -1$. In particular, $\langle v,u \rangle \geq -1 + v_{\min} \langle w,u \rangle$. Thus, we see that
\begin{displaymath}
\langle v-v_{\min}w, u\rangle =\langle v, u\rangle - v_{\min} \langle w, u\rangle \geq -1.
\end{displaymath}
\end{itemize}

To show $Q^*\subset\varphi(P^*)$, we will show that for any $v \in Q^*$ there is $v' \in P^*$ such that $v=\varphi(v')$.
Let $\Delta_F$ be the normal fan of $F$ in $M_\RR$ and let $\sigma \in \Delta_F$ be a maximal cone in $\Delta_F$.
The discussions in \cite[p.~12]{ACGK} say that there exists $M_\sigma \in \GL_d(\ZZ)$ such that the map $\varphi$ is equal to $M_\sigma$,
i.e., $\varphi(v)=vM_\sigma$. Thus, we conclude that $\varphi(v')=v$ for $v'=vM_\sigma^{-1}$.
\end{proof}

\begin{Example}\label{ex_mutation_4b_polygon_dual}
Let $P$ be the polygon used in Example~\ref{ex_mutation_4b_polygon}.
The polar dual~$P^*$ takes the form as in the left of Figure~\ref{fig_mut_poly_4b_dual} below.
We take $w=(1,1)$, $u_E=(1,-1)$ and consider the line segment $F=\operatorname{conv}\{{\bf 0},u_E\}$ just like Example~\ref{ex_mutation_4b_polygon}.
Then, applying the piecewise-linear map \eqref{mutation_M} with $u_E$ (resp.~$-u_E$) to the polar dual~$P^*$,
we have the new polygon shown in the upper (resp.\ lower) part of Figure~\ref{fig_mut_poly_4b_dual}.
In this figure, the red line imply the points $v\in\RR^2$ satisfying $\langle v,u_E \rangle =0$.
We see that the resulting polygons are the polar dual of~$\mut_w(P,F)$ and~$\mut_w(P,-F)$ given in Example~\ref{ex_mutation_4b_polygon}
as shown in Proposition~\ref{prop_mutation_dual_polytope}.
\end{Example}

\begin{figure}[h!]\centering
\begin{tikzpicture}
\node at (0,0)
{\scalebox{0.5}{
\begin{tikzpicture}
\foreach \n/\a/\b in {00/0/0,11/1/1,10/1/0,1-1/1/-1,1-2/1/-2,0-1/0/-1,-10/-1/0,-11/-1/1,01/0/1} {
\coordinate (V\n) at (\a,\b);
};

\draw [step=1, gray] (-2.3,-3.3) grid (2.3,2.3);
\draw [line width=0.06cm, red] (-2,-2)--(2,2);
\draw [line width=0.06cm] (V11)--(V1-2)--(V-10)--(V-11)--(V11) ;

\draw [line width=0.025cm] (00) circle [radius=0.25] ;
\foreach \n in {00,11,10,1-1,1-2,0-1,-10,-11,01}{
\draw [line width=0.05cm, fill=black] (V\n) circle [radius=0.1] ;
};
\end{tikzpicture}
}};

\draw[->,line width=0.03cm] (2,0)--(4,0);
\node at (3,0.4) {$\varphi_{w,F}$};

\node at (6.5,0)
{\scalebox{0.5}{
\begin{tikzpicture}
\foreach \n/\a/\b in {00/0/0,13/1/3,12/1/2,11/1/1,10/1/0,1-1/1/-1,1-2/1/-2,0-1/0/-1,01/0/1} {
\coordinate (V\n) at (\a,\b);
};

\draw [step=1, gray] (-2.3,-3.3) grid (2.3,4.3);
\draw [line width=0.06cm, red] (-2,-2)--(2,2);
\draw [line width=0.06cm] (V13)--(V1-2)--(-1/2,-1/2)--(V01)--(V13);

\draw [line width=0.025cm] (00) circle [radius=0.25] ;
\foreach \n in {00,13,12,11,10,1-1,1-2,0-1,01}{
\draw [line width=0.05cm, fill=black] (V\n) circle [radius=0.1] ;
};

\end{tikzpicture}
}};
\node at (0,-3.5)
{\scalebox{0.5}{
\begin{tikzpicture}
\foreach \n/\a/\b in {00/0/0,11/1/1,10/1/0,1-1/1/-1,1-2/1/-2,0-1/0/-1,-10/-1/0,-11/-1/1,01/0/1} {
\coordinate (V\n) at (\a,\b);
};

\draw [step=1, gray] (-2.3,-3.3) grid (2.3,2.3);
\draw [line width=0.06cm, red] (-2,-2)--(2,2);
\draw [line width=0.06cm] (V11)--(V1-2)--(V-10)--(V-11)--(V11) ;

\draw [line width=0.025cm] (00) circle [radius=0.25] ;
\foreach \n in {00,11,10,1-1,1-2,0-1,-10,-11,01}{
\draw [line width=0.05cm, fill=black] (V\n) circle [radius=0.1] ;
};
\end{tikzpicture}
}};

\draw[->,line width=0.03cm] (2,-3.5)--(4,-3.5);
\node at (3,-3.1) {$\varphi_{w,-F}$};

\node at (6.5,-3.5)
{\scalebox{0.5}{
\begin{tikzpicture}

\foreach \n/\a/\b in {00/0/0,10/1/0,-10/-1/0,41/4/1,31/3/1,21/2/1,11/1/1,01/0/1,-11/-1/1} {
\coordinate (V\n) at (\a,\b);
};

\draw [step=1, gray] (-2.3,-1.3) grid (5.3,2.3);
\draw [line width=0.06cm, red] (-1,-1)--(2,2);
\draw [line width=0.06cm] (V41)--(V10)--(-1/2,-1/2)--(V-10)--(V-11)--(V41) ;

\draw [line width=0.025cm] (00) circle [radius=0.25] ;
\foreach \n in {00,10,-10,41,31,21,11,01,-11}{
\draw [line width=0.05cm, fill=black] (V\n) circle [radius=0.1] ;
};

\end{tikzpicture}
}};
\end{tikzpicture}
\caption{Examples of the image of $P^*$ under the piecewise-linear map \eqref{mutation_M}.}
\label{fig_mut_poly_4b_dual}
\end{figure}

\subsection{The perfect matching polygons of deformed dimer models}\label{subsec_behave_PMpolygon}

We here consider the PM polygons of deformed dimer models.

\begin{Example}\label{ex_deformation_4b_polygon}
Let $\Gamma$ be the consistent dimer model given in Figure~\ref{ex_dimer_4b}, and let
$\nu^\zig_p(\Gamma, z)$ (resp.~$\nu^\zag_p(\Gamma, z)$) be the zig-deformed (resp.\ zag-deformed) dimer model at~$z$ as shown in
Figure~\ref{ex_def_zig} (resp.\ Figure~\ref{ex_def_zag}) in Example~\ref{ex_deformation_4b}.
Here, we note the changes of zigzag paths on $\Gamma$ under these deformations.
First, we recall the zigzag paths on $\Gamma$ given in Figure~\ref{zigzag_4b}.

\begin{center}
\newcommand{\edgewidth}{0.07cm}
\newcommand{\nodewidth}{0.07cm}
\newcommand{\noderad}{0.24} 

{\scalebox{1}{
\begin{tikzpicture}

\node at (0,-1.6) {$z_1$}; \node at (3.5,-1.6) {$z_2$}; \node at (7,-1.6) {$z=z_3$}; \node at (10.5,-1.6) {$z_4$};

\node (ZZ1) at (0,0)
{\scalebox{0.4}{
\begin{tikzpicture}
\basicdimerB
\draw[->, line width=0.2cm, rounded corners, color=green] (0,3)--(W2)--(B1)--(W3)--(B3)--(6,3);
\end{tikzpicture} }};

\node (ZZ2) at (3.5,0)
{\scalebox{0.4}{
\begin{tikzpicture}
\basicdimerB
\draw[->, line width=0.2cm, rounded corners, color=blue] (3,0)--(B1)--(W2)--(B2)--(0,6);
\draw[->, line width=0.2cm, rounded corners, color=blue] (6,0)--(W1)--(B3)--(W3)--(3,6);
\end{tikzpicture} }} ;

\node (ZZ3) at (7,0)
{\scalebox{0.4}{
\begin{tikzpicture}
\basicdimerB
\draw[->, line width=0.2cm, rounded corners, color=red] (3,6)--(W3)--(B2)--(W2)--(0,3);
\draw[->, line width=0.2cm, rounded corners, color=red] (6,3)--(B3)--(W1)--(B1)--(3,0);
\end{tikzpicture} }};

\node (ZZ4) at (10.5,0)
{\scalebox{0.4}{
\begin{tikzpicture}
\basicdimerB
\draw[->, line width=0.2cm, rounded corners, color=orange] (0,6)--(B2)--(W3)--(B1)--(W1)--(6,0);
\end{tikzpicture} }} ;

\end{tikzpicture}
}}
\end{center}

Then, we observe zigzag paths on $\nu^\zig_p(\Gamma, z)$ and $\nu^\zag_p(\Gamma, z)$,
but it would be more convenient to see the zigzag paths on the dimer model shown in the middle of Figures~\ref{ex_def_zig} and~\ref{ex_def_zag}
for observing the changes of zigzag paths as in Figures~\ref{fig_change_zzpath} and~\ref{fig_change_zzpath2}.
In both cases, the zigzag path $z=z_3$ on $\Gamma$ is removed and the new zigzag paths, which are colored by red, are created.
The slopes of these new zigzag paths are~$-[z]$.
Oh the other hand, some slopes of zigzag paths on $\Gamma$ are preserved even if we apply the deformations.
For example, we have the zigzag path whose slope is~$[z_2]$ on~$\nu^\zig_p(\Gamma, z)$ which is colored by blue in Figure~\ref{fig_change_zzpath}.
Also, we have the zigzag paths whose slopes are~$[z_1]$,~$[z_4]$ on~$\nu^\zag_p(\Gamma, z)$ which are respectively colored by green and orange in Figure~\ref{fig_change_zzpath2}.
We will see these phenomena for more general situations in Propositions~\ref{deform_vector} and \ref{zigzag_afterdeform1}.

\begin{figure}[h!]\centering

\begin{tikzpicture}
\newcommand{\edgewidth}{0.065cm}
\newcommand{\nodewidth}{0.05cm}
\newcommand{\noderad}{0.18} 
\newcommand{\arrowwidth}{0.08cm}
\node at (0,0)
{\scalebox{0.4}{
\begin{tikzpicture}
\coordinate (W1) at (5,1); \coordinate (W2) at (1,2); \coordinate (W3) at (3,4);
\coordinate (B1) at (3,2); \coordinate (B2) at (1,5); \coordinate (B3) at (5,4);
\draw[line width=\edgewidth] (0,0) rectangle (6,6);
\draw[line width=\edgewidth] (B2)--(W2)--(B1)--(W3)--(B3)--(W1); \draw[line width=\edgewidth] (B1)--(W3);
\draw[line width=\edgewidth] (3,0)--(B1); \draw[line width=\edgewidth] (6,0)--(W1);
\draw[line width=\edgewidth] (3,6)--(W3); \draw[line width=\edgewidth] (0,6)--(B2);
\draw [line width=\nodewidth, fill=black] (B1) circle [radius=\noderad] ;
\draw [line width=\nodewidth, fill=black] (B2) circle [radius=\noderad] ;
\draw [line width=\nodewidth, fill=black] (B3) circle [radius=\noderad] ;
\draw [line width=\nodewidth, fill=white] (W1) circle [radius=\noderad] ;
\draw [line width=\nodewidth, fill=white] (W2) circle [radius=\noderad] ;
\draw [line width=\nodewidth, fill=white] (W3) circle [radius=\noderad] ;

\coordinate (Wa1) at (3,5.6); \coordinate (Wa2) at (3,1.2); \coordinate (Ba1) at (3,4.8); \coordinate (Ba2) at (3,0.4);
\coordinate (Wb1) at (1,3.2); \coordinate (Wb2) at (1,4.4); \coordinate (Bb1) at (1,2.6); \coordinate (Bb2) at (1,3.8);
\coordinate (Wc1) at (5,2.2); \coordinate (Wc2) at (5,3.4); \coordinate (Bc1) at (5,1.6); \coordinate (Bc2) at (5,2.8);

\draw[line width=\edgewidth] (Wa1)--(Bb1); \draw[line width=\edgewidth] (Wc2)--(Ba2);
\draw[line width=\edgewidth] (Bb2)--(2.3,6); \draw[line width=\edgewidth] (2.3,0)--(Wa2);
\draw[line width=\edgewidth] (Wc1)--(3.7,0); \draw[line width=\edgewidth] (Ba1)--(3.7,6);
\draw[line width=\edgewidth] (Wb1)--(0,2.4); \draw[line width=\edgewidth] (Bc1)--(6,2.4);
\draw[line width=\edgewidth] (Wb2)--(0,3.6); \draw[line width=\edgewidth] (Bc2)--(6,3.6);

\draw [line width=\nodewidth, fill=white] (Wa1) circle [radius=\noderad] ; \draw [line width=\nodewidth, fill=white] (Wa2) circle [radius=\noderad] ;
\draw [line width=\nodewidth, fill=black] (Ba1) circle [radius=\noderad] ; \draw [line width=\nodewidth, fill=black] (Ba2) circle [radius=\noderad] ;
\draw [line width=\nodewidth, fill=white] (Wb1) circle [radius=\noderad] ; \draw [line width=\nodewidth, fill=white] (Wb2) circle [radius=\noderad] ;
\draw [line width=\nodewidth, fill=black] (Bb1) circle [radius=\noderad] ; \draw [line width=\nodewidth, fill=black] (Bb2) circle [radius=\noderad] ;
\draw [line width=\nodewidth, fill=white] (Wc1) circle [radius=\noderad] ; \draw [line width=\nodewidth, fill=white] (Wc2) circle [radius=\noderad] ;
\draw [line width=\nodewidth, fill=black] (Bc1) circle [radius=\noderad] ; \draw [line width=\nodewidth, fill=black] (Bc2) circle [radius=\noderad] ;
\draw[->, line width=0.17cm, rounded corners, color=blue] (3,0)--(B1)--(W2)--(B2)--(0,6);
\draw[->, line width=0.17cm, rounded corners, color=blue] (6,0)--(W1)--(B3)--(W3)--(3,6);
\end{tikzpicture}
} };

\node at (4,0)
{\scalebox{0.4}{
\begin{tikzpicture}
\coordinate (W1) at (5,1); \coordinate (W2) at (1,2); \coordinate (W3) at (3,4);
\coordinate (B1) at (3,2); \coordinate (B2) at (1,5); \coordinate (B3) at (5,4);
\draw[line width=\edgewidth] (0,0) rectangle (6,6);
\draw[line width=\edgewidth] (B2)--(W2)--(B1)--(W3)--(B3)--(W1); \draw[line width=\edgewidth] (B1)--(W3);
\draw[line width=\edgewidth] (3,0)--(B1); \draw[line width=\edgewidth] (6,0)--(W1);
\draw[line width=\edgewidth] (3,6)--(W3); \draw[line width=\edgewidth] (0,6)--(B2);
\draw [line width=\nodewidth, fill=black] (B1) circle [radius=\noderad] ;
\draw [line width=\nodewidth, fill=black] (B2) circle [radius=\noderad] ;
\draw [line width=\nodewidth, fill=black] (B3) circle [radius=\noderad] ;
\draw [line width=\nodewidth, fill=white] (W1) circle [radius=\noderad] ;
\draw [line width=\nodewidth, fill=white] (W2) circle [radius=\noderad] ;
\draw [line width=\nodewidth, fill=white] (W3) circle [radius=\noderad] ;

\coordinate (Wa1) at (3,5.6); \coordinate (Wa2) at (3,1.2); \coordinate (Ba1) at (3,4.8); \coordinate (Ba2) at (3,0.4);
\coordinate (Wb1) at (1,3.2); \coordinate (Wb2) at (1,4.4); \coordinate (Bb1) at (1,2.6); \coordinate (Bb2) at (1,3.8);
\coordinate (Wc1) at (5,2.2); \coordinate (Wc2) at (5,3.4); \coordinate (Bc1) at (5,1.6); \coordinate (Bc2) at (5,2.8);

\draw[line width=\edgewidth] (Wa1)--(Bb1); \draw[line width=\edgewidth] (Wc2)--(Ba2);
\draw[line width=\edgewidth] (Bb2)--(2.3,6); \draw[line width=\edgewidth] (2.3,0)--(Wa2);
\draw[line width=\edgewidth] (Wc1)--(3.7,0); \draw[line width=\edgewidth] (Ba1)--(3.7,6);
\draw[line width=\edgewidth] (Wb1)--(0,2.4); \draw[line width=\edgewidth] (Bc1)--(6,2.4);
\draw[line width=\edgewidth] (Wb2)--(0,3.6); \draw[line width=\edgewidth] (Bc2)--(6,3.6);

\draw [line width=\nodewidth, fill=white] (Wa1) circle [radius=\noderad] ; \draw [line width=\nodewidth, fill=white] (Wa2) circle [radius=\noderad] ;
\draw [line width=\nodewidth, fill=black] (Ba1) circle [radius=\noderad] ; \draw [line width=\nodewidth, fill=black] (Ba2) circle [radius=\noderad] ;
\draw [line width=\nodewidth, fill=white] (Wb1) circle [radius=\noderad] ; \draw [line width=\nodewidth, fill=white] (Wb2) circle [radius=\noderad] ;
\draw [line width=\nodewidth, fill=black] (Bb1) circle [radius=\noderad] ; \draw [line width=\nodewidth, fill=black] (Bb2) circle [radius=\noderad] ;
\draw [line width=\nodewidth, fill=white] (Wc1) circle [radius=\noderad] ; \draw [line width=\nodewidth, fill=white] (Wc2) circle [radius=\noderad] ;
\draw [line width=\nodewidth, fill=black] (Bc1) circle [radius=\noderad] ; \draw [line width=\nodewidth, fill=black] (Bc2) circle [radius=\noderad] ;
\draw[->, line width=0.17cm, rounded corners, color=red] (0,2.4)--(Wb1)--(Bb1)--(Wa1)--(Ba1)--(3.7,6);
\draw[->, line width=0.17cm, rounded corners, color=red] (3.7,0)--(Wc1)--(Bc1)--(6,2.4);
\draw[->, line width=0.17cm, rounded corners, color=red] (0,3.6)--(Wb2)--(Bb2)--(2.3,6);
\draw[->, line width=0.17cm, rounded corners, color=red] (2.3,0)--(Wa2)--(Ba2)--(Wc2)--(Bc2)--(6,3.6);
\end{tikzpicture}
} };
\end{tikzpicture}
\caption{Some zigzag paths on the dimer model shown in the middle of Figure~\ref{ex_def_zig} which can be reduced to $\nu^\zig_p(\Gamma, z)$.}
\label{fig_change_zzpath}
\end{figure}

\begin{figure}[h!]\centering

\begin{tikzpicture}
\newcommand{\edgewidth}{0.065cm}
\newcommand{\nodewidth}{0.05cm}
\newcommand{\noderad}{0.18} 
\newcommand{\arrowwidth}{0.08cm}
\node at (0,0)
{\scalebox{0.4}{
\begin{tikzpicture}
\coordinate (W1) at (5,1); \coordinate (W2) at (4,5); \coordinate (W3) at (2,3);
\coordinate (B1) at (2,1); \coordinate (B2) at (1,5); \coordinate (B3) at (4,3);
\draw[line width=\edgewidth] (0,0) rectangle (6,6);

\draw[line width=\edgewidth] (B1)--(W2); \draw[line width=\edgewidth] (B2)--(W2); \draw[line width=\edgewidth] (B3)--(W2);
\draw[line width=\edgewidth] (B1)--(W3); \draw[line width=\edgewidth] (B1)--(W1);
\draw[line width=\edgewidth] (0,6)--(B2); \draw[line width=\edgewidth] (6,3)--(B3);
\draw[line width=\edgewidth] (0,3)--(W3); \draw[line width=\edgewidth] (6,0)--(W1);
\draw [line width=\nodewidth, fill=black] (B1) circle [radius=\noderad] ;
\draw [line width=\nodewidth, fill=black] (B2) circle [radius=\noderad] ;
\draw [line width=\nodewidth, fill=black] (B3) circle [radius=\noderad] ;
\draw [line width=\nodewidth, fill=white] (W1) circle [radius=\noderad] ;
\draw [line width=\nodewidth, fill=white] (W2) circle [radius=\noderad] ;
\draw [line width=\nodewidth, fill=white] (W3) circle [radius=\noderad] ;

\coordinate (W1a) at (2.6,1); \coordinate (W1b) at (3.8,1); \coordinate (B1a) at (3.2,1); \coordinate (B1b) at (4.4,1);
\coordinate (W2a) at (1.6,5); \coordinate (W2b) at (2.8,5); \coordinate (B2a) at (2.2,5); \coordinate (B2b) at (3.4,5);
\coordinate (W3a) at (0.4,3); \coordinate (W3b) at (4.8,3); \coordinate (B3a) at (1.2,3); \coordinate (B3b) at (5.6,3);
\draw[line width=\edgewidth] (B2b)--(W3a); \draw[line width=\edgewidth] (W1a)--(B3b);
\draw[line width=\edgewidth] (W2a)--(2.4,6); \draw[line width=\edgewidth] (B1a)--(2.4,0);
\draw[line width=\edgewidth] (W2b)--(3.6,6); \draw[line width=\edgewidth] (B1b)--(3.6,0);
\draw[line width=\edgewidth] (B3a)--(0,2.295); \draw[line width=\edgewidth] (W1b)--(6,2.295);
\draw[line width=\edgewidth] (B2a)--(0,3.706); \draw[line width=\edgewidth] (W3b)--(6,3.706);

\draw [line width=\nodewidth, fill=white] (W1a) circle [radius=\noderad] ; \draw [line width=\nodewidth, fill=white] (W1b) circle [radius=\noderad] ;
\draw [line width=\nodewidth, fill=black] (B1a) circle [radius=\noderad] ; \draw [line width=\nodewidth, fill=black] (B1b) circle [radius=\noderad] ;
\draw [line width=\nodewidth, fill=white] (W2a) circle [radius=\noderad] ; \draw [line width=\nodewidth, fill=white] (W2b) circle [radius=\noderad] ;
\draw [line width=\nodewidth, fill=black] (B2a) circle [radius=\noderad] ; \draw [line width=\nodewidth, fill=black] (B2b) circle [radius=\noderad] ;
\draw [line width=\nodewidth, fill=white] (W3a) circle [radius=\noderad] ; \draw [line width=\nodewidth, fill=white] (W3b) circle [radius=\noderad] ;
\draw [line width=\nodewidth, fill=black] (B3a) circle [radius=\noderad] ; \draw [line width=\nodewidth, fill=black] (B3b) circle [radius=\noderad] ;
\draw[->, line width=0.17cm, rounded corners, color=green] (0,3)--(W3)--(B1)--(W2)--(B3)--(6,3);
\end{tikzpicture}
} };

\node at (4,0)
{\scalebox{0.4}{
\begin{tikzpicture}
\coordinate (W1) at (5,1); \coordinate (W2) at (4,5); \coordinate (W3) at (2,3);
\coordinate (B1) at (2,1); \coordinate (B2) at (1,5); \coordinate (B3) at (4,3);
\draw[line width=\edgewidth] (0,0) rectangle (6,6);

\draw[line width=\edgewidth] (B1)--(W2); \draw[line width=\edgewidth] (B2)--(W2); \draw[line width=\edgewidth] (B3)--(W2);
\draw[line width=\edgewidth] (B1)--(W3); \draw[line width=\edgewidth] (B1)--(W1);
\draw[line width=\edgewidth] (0,6)--(B2); \draw[line width=\edgewidth] (6,3)--(B3);
\draw[line width=\edgewidth] (0,3)--(W3); \draw[line width=\edgewidth] (6,0)--(W1);
\draw [line width=\nodewidth, fill=black] (B1) circle [radius=\noderad] ;
\draw [line width=\nodewidth, fill=black] (B2) circle [radius=\noderad] ;
\draw [line width=\nodewidth, fill=black] (B3) circle [radius=\noderad] ;
\draw [line width=\nodewidth, fill=white] (W1) circle [radius=\noderad] ;
\draw [line width=\nodewidth, fill=white] (W2) circle [radius=\noderad] ;
\draw [line width=\nodewidth, fill=white] (W3) circle [radius=\noderad] ;

\coordinate (W1a) at (2.6,1); \coordinate (W1b) at (3.8,1); \coordinate (B1a) at (3.2,1); \coordinate (B1b) at (4.4,1);
\coordinate (W2a) at (1.6,5); \coordinate (W2b) at (2.8,5); \coordinate (B2a) at (2.2,5); \coordinate (B2b) at (3.4,5);
\coordinate (W3a) at (0.4,3); \coordinate (W3b) at (4.8,3); \coordinate (B3a) at (1.2,3); \coordinate (B3b) at (5.6,3);
\draw[line width=\edgewidth] (B2b)--(W3a); \draw[line width=\edgewidth] (W1a)--(B3b);
\draw[line width=\edgewidth] (W2a)--(2.4,6); \draw[line width=\edgewidth] (B1a)--(2.4,0);
\draw[line width=\edgewidth] (W2b)--(3.6,6); \draw[line width=\edgewidth] (B1b)--(3.6,0);
\draw[line width=\edgewidth] (B3a)--(0,2.295); \draw[line width=\edgewidth] (W1b)--(6,2.295);
\draw[line width=\edgewidth] (B2a)--(0,3.706); \draw[line width=\edgewidth] (W3b)--(6,3.706);

\draw [line width=\nodewidth, fill=white] (W1a) circle [radius=\noderad] ; \draw [line width=\nodewidth, fill=white] (W1b) circle [radius=\noderad] ;
\draw [line width=\nodewidth, fill=black] (B1a) circle [radius=\noderad] ; \draw [line width=\nodewidth, fill=black] (B1b) circle [radius=\noderad] ;
\draw [line width=\nodewidth, fill=white] (W2a) circle [radius=\noderad] ; \draw [line width=\nodewidth, fill=white] (W2b) circle [radius=\noderad] ;
\draw [line width=\nodewidth, fill=black] (B2a) circle [radius=\noderad] ; \draw [line width=\nodewidth, fill=black] (B2b) circle [radius=\noderad] ;
\draw [line width=\nodewidth, fill=white] (W3a) circle [radius=\noderad] ; \draw [line width=\nodewidth, fill=white] (W3b) circle [radius=\noderad] ;
\draw [line width=\nodewidth, fill=black] (B3a) circle [radius=\noderad] ; \draw [line width=\nodewidth, fill=black] (B3b) circle [radius=\noderad] ;
\draw[->, line width=0.17cm, rounded corners, color=red] (0,2.295)--(B3a)--(W3a)--(B2b)--(W2b)--(3.6,6);
\draw[->, line width=0.17cm, rounded corners, color=red] (3.6,0)--(B1b)--(W1b)--(6,2.295);
\draw[->, line width=0.17cm, rounded corners, color=red] (0,3.706)--(B2a)--(W2a)--(2.4,6);
\draw[->, line width=0.17cm, rounded corners, color=red] (2.4,0)--(B1a)--(W1a)--(B3b)--(W3b)--(6,3.706);
\end{tikzpicture}
} };

\node at (8,0)
{\scalebox{0.4}{
\begin{tikzpicture}
\coordinate (W1) at (5,1); \coordinate (W2) at (4,5); \coordinate (W3) at (2,3);
\coordinate (B1) at (2,1); \coordinate (B2) at (1,5); \coordinate (B3) at (4,3);
\draw[line width=\edgewidth] (0,0) rectangle (6,6);

\draw[line width=\edgewidth] (B1)--(W2); \draw[line width=\edgewidth] (B2)--(W2); \draw[line width=\edgewidth] (B3)--(W2);
\draw[line width=\edgewidth] (B1)--(W3); \draw[line width=\edgewidth] (B1)--(W1);
\draw[line width=\edgewidth] (0,6)--(B2); \draw[line width=\edgewidth] (6,3)--(B3);
\draw[line width=\edgewidth] (0,3)--(W3); \draw[line width=\edgewidth] (6,0)--(W1);
\draw [line width=\nodewidth, fill=black] (B1) circle [radius=\noderad] ;
\draw [line width=\nodewidth, fill=black] (B2) circle [radius=\noderad] ;
\draw [line width=\nodewidth, fill=black] (B3) circle [radius=\noderad] ;
\draw [line width=\nodewidth, fill=white] (W1) circle [radius=\noderad] ;
\draw [line width=\nodewidth, fill=white] (W2) circle [radius=\noderad] ;
\draw [line width=\nodewidth, fill=white] (W3) circle [radius=\noderad] ;

\coordinate (W1a) at (2.6,1); \coordinate (W1b) at (3.8,1); \coordinate (B1a) at (3.2,1); \coordinate (B1b) at (4.4,1);
\coordinate (W2a) at (1.6,5); \coordinate (W2b) at (2.8,5); \coordinate (B2a) at (2.2,5); \coordinate (B2b) at (3.4,5);
\coordinate (W3a) at (0.4,3); \coordinate (W3b) at (4.8,3); \coordinate (B3a) at (1.2,3); \coordinate (B3b) at (5.6,3);
\draw[line width=\edgewidth] (B2b)--(W3a); \draw[line width=\edgewidth] (W1a)--(B3b);
\draw[line width=\edgewidth] (W2a)--(2.4,6); \draw[line width=\edgewidth] (B1a)--(2.4,0);
\draw[line width=\edgewidth] (W2b)--(3.6,6); \draw[line width=\edgewidth] (B1b)--(3.6,0);
\draw[line width=\edgewidth] (B3a)--(0,2.295); \draw[line width=\edgewidth] (W1b)--(6,2.295);
\draw[line width=\edgewidth] (B2a)--(0,3.706); \draw[line width=\edgewidth] (W3b)--(6,3.706);

\draw [line width=\nodewidth, fill=white] (W1a) circle [radius=\noderad] ; \draw [line width=\nodewidth, fill=white] (W1b) circle [radius=\noderad] ;
\draw [line width=\nodewidth, fill=black] (B1a) circle [radius=\noderad] ; \draw [line width=\nodewidth, fill=black] (B1b) circle [radius=\noderad] ;
\draw [line width=\nodewidth, fill=white] (W2a) circle [radius=\noderad] ; \draw [line width=\nodewidth, fill=white] (W2b) circle [radius=\noderad] ;
\draw [line width=\nodewidth, fill=black] (B2a) circle [radius=\noderad] ; \draw [line width=\nodewidth, fill=black] (B2b) circle [radius=\noderad] ;
\draw [line width=\nodewidth, fill=white] (W3a) circle [radius=\noderad] ; \draw [line width=\nodewidth, fill=white] (W3b) circle [radius=\noderad] ;
\draw [line width=\nodewidth, fill=black] (B3a) circle [radius=\noderad] ; \draw [line width=\nodewidth, fill=black] (B3b) circle [radius=\noderad] ;
\draw[->, line width=0.17cm, rounded corners, color=orange] (0,6)--(B2)--(W2)--(B1)--(W1)--(6,0);
\end{tikzpicture}
} };

\end{tikzpicture}
\caption{Some zigzag paths on the dimer model shown in the middle of Figure~\ref{ex_def_zag} which can be reduced to $\nu^\zag_p(\Gamma, z)$.}
\label{fig_change_zzpath2}
\end{figure}

Then, we respectively have the PM polygons as in Figure~\ref{change_PM_zigzag}.
We see that the shape of the PM polygon $\Delta_{\nu^\zig_p(\Gamma, z)}$ (resp.~$\Delta_{\nu^\zag_p(\Gamma, z)}$) coincides with
the combinatorial mutation $\mut_w(P,F)$ (resp.~$\mut_w(P,-F)$) given in Example~\ref{ex_mutation_4b_polygon}.

\begin{figure}[h!]\centering
\begin{tikzpicture}[sarrow/.style={-latex, very thick}]
\node at (0,0)
{\scalebox{0.5}{
\begin{tikzpicture}
\coordinate (00) at (0,0); \coordinate (10) at (1,0); \coordinate (01) at (0,1); \coordinate (-10) at (-1,0); \coordinate (11) at (1,1);
\coordinate (0-1) at (0,-1);

\draw [step=1, gray] (-2.3,-2.3) grid (2.3,2.3);
\draw [line width=0.06cm] (10)--(11) ; \draw [line width=0.06cm] (11)--(-10) ;
\draw [line width=0.06cm] (-10)--(0-1) ; \draw [line width=0.06cm] (0-1)--(10) ;

\draw [line width=0.05cm, fill=black] (00) circle [radius=0.1] ; \draw [line width=0.05cm, fill=black] (10) circle [radius=0.1] ;
\draw [line width=0.05cm, fill=black] (11) circle [radius=0.1] ;
\draw [line width=0.05cm, fill=black] (-10) circle [radius=0.1] ; \draw [line width=0.05cm, fill=black] (0-1) circle [radius=0.1] ;
\draw [sarrow, line width=0.08cm, red] (-0.53,-0.53)--(-1.5,-1.5); \node[red] at (-0.6,-1.4) {\Large $[z]$};
\draw[sarrow, line width=0.08cm, blue] (-0.03,0.53)--(-1,2.5); \node[blue] at (-1,1) {\Large $[z_2]$};
\end{tikzpicture}
}};

\draw[->,line width=0.03cm] (2,0)--(4,0);
\node at (3,0.5) {$\nu^\zig_p$};

\node at (6.5,0)
{\scalebox{0.5}{
\begin{tikzpicture}
\coordinate (00) at (0,0); \coordinate (10) at (1,0); \coordinate (20) at (2,0);
\coordinate (11) at (1,1); \coordinate (2-1) at (2,-1); \coordinate (3-1) at (3,-1);
\coordinate (-10) at (-1,0);

\draw [step=1, gray] (-2.3,-2.3) grid (4.3,2.3);
\draw [line width=0.06cm] (-10)--(11) ; \draw [line width=0.06cm] (-10)--(2-1) ;
\draw [line width=0.06cm] (2-1)--(3-1) ; \draw [line width=0.06cm] (11)--(3-1) ;

\draw [line width=0.05cm, fill=black] (00) circle [radius=0.1] ; \draw [line width=0.05cm, fill=black] (10) circle [radius=0.1] ;
\draw [line width=0.05cm, fill=black] (20) circle [radius=0.1] ;
\draw [line width=0.05cm, fill=black] (11) circle [radius=0.1] ; \draw [line width=0.05cm, fill=black] (2-1) circle [radius=0.1] ;
\draw [line width=0.05cm, fill=black] (3-1) circle [radius=0.1] ; \draw [line width=0.05cm, fill=black] (-10) circle [radius=0.1] ;
\draw [sarrow, line width=0.08cm, red] (1.53,0.53)--(2.5,1.5);
\draw [sarrow, line width=0.08cm, red] (2.53,-0.47)--(3.5,0.5); \node[red] at (2.5,0.55) {\Large $-[z]$};
\draw[sarrow, line width=0.08cm, blue] (-0.03,0.53)--(-1,2.5); \node[blue] at (-1,1) {\Large $[z_2]$};
\end{tikzpicture}
}};

\node at (0,-3.5)
{\scalebox{0.5}{
\begin{tikzpicture}
\coordinate (00) at (0,0); \coordinate (10) at (1,0); \coordinate (01) at (0,1); \coordinate (-10) at (-1,0); \coordinate (11) at (1,1);
\coordinate (0-1) at (0,-1);

\draw [step=1, gray] (-2.3,-2.3) grid (2.3,2.3);
\draw [line width=0.06cm] (10)--(11) ; \draw [line width=0.06cm] (11)--(-10) ;
\draw [line width=0.06cm] (-10)--(0-1) ; \draw [line width=0.06cm] (0-1)--(10) ;

\draw [line width=0.05cm, fill=black] (00) circle [radius=0.1] ; \draw [line width=0.05cm, fill=black] (10) circle [radius=0.1] ;
\draw [line width=0.05cm, fill=black] (11) circle [radius=0.1] ;
\draw [line width=0.05cm, fill=black] (-10) circle [radius=0.1] ; \draw [line width=0.05cm, fill=black] (0-1) circle [radius=0.1] ;
\draw [sarrow, line width=0.08cm, red] (-0.53,-0.53)--(-1.5,-1.5); \node[red] at (-0.6,-1.4) {\Large $[z]$};
\draw [sarrow, line width=0.08cm, green] (1.03,0.5)--(2,0.5); \node[green] at (1.6,0.9) {\Large $[z_1]$};
\draw [sarrow, line width=0.08cm, orange] (0.53,-0.53)--(1.5,-1.5); \node[orange] at (1.4,-0.6) {\Large $[z_4]$};
\end{tikzpicture}
}};

\draw[->,line width=0.03cm] (2,-3.5)--(4,-3.5);
\node at (3,-3) {$\nu^\zag_p$};

\node at (6.5,-3.5)
{\scalebox{0.5}{
\begin{tikzpicture}
\coordinate (00) at (0,0); \coordinate (10) at (1,0); \coordinate (0-1) at (0,-1);
\coordinate (11) at (1,1); \coordinate (01) at (0,1); \coordinate (02) at (0,2);
\coordinate (-13) at (-1,3);

\draw [step=1, gray] (-2.3,-2.3) grid (2.3,4.3);
\draw [line width=0.06cm] (0-1)--(10) ; \draw [line width=0.06cm] (10)--(11) ;
\draw [line width=0.06cm] (11)--(-13) ; \draw [line width=0.06cm] (-13)--(0-1) ;

\draw [line width=0.05cm, fill=black] (00) circle [radius=0.1] ; \draw [line width=0.05cm, fill=black] (10) circle [radius=0.1] ;
\draw [line width=0.05cm, fill=black] (0-1) circle [radius=0.1] ;
\draw [line width=0.05cm, fill=black] (11) circle [radius=0.1] ; \draw [line width=0.05cm, fill=black] (01) circle [radius=0.1] ;
\draw [line width=0.05cm, fill=black] (02) circle [radius=0.1] ; \draw [line width=0.05cm, fill=black] (-13) circle [radius=0.1] ;

\draw [sarrow, line width=0.08cm, red] (-0.47,2.53)--(0.5,3.5);
\draw [sarrow, line width=0.08cm, red] (0.53,1.53)--(1.5,2.5); \node[red] at (0.5,2.5) {\Large $-[z]$};
\draw [sarrow, line width=0.08cm, green] (1.03,0.5)--(2,0.5); \node[green] at (1.6,0.9) {\Large $[z_1]$};
\draw [sarrow, line width=0.08cm, orange] (0.53,-0.53)--(1.5,-1.5); \node[orange] at (1.4,-0.6) {\Large $[z_4]$};
\end{tikzpicture}
}};
\end{tikzpicture}

\caption{The changes of the associated PM polygon via the zig-deformation (upper) and the zag-deformation (lower).}\label{change_PM_zigzag}
\end{figure}
\end{Example}

Example~\ref{ex_deformation_4b_polygon} indicates a close connection between
the PM polygon of a deformed dimer model and the combinatorial mutation of the PM polygon of the original dimer model.
In Section~\ref{mutation_vs_deformation}, we will show this phenomenon for a general situation
by considering the \emph{extended deformations} introduced in Section~\ref{sec_def_exdeform}.
In particular, as a special case of Theorem~\ref{mutation=deformation},
we have Theorem~\ref{mutation=deformation1} below (see also Remark~\ref{r=1case} and Proposition~\ref{skip_hexagonal_square}),
and this explains the phenomenon observed in Example~\ref{ex_deformation_4b_polygon}.

\begin{Theorem}\label{mutation=deformation1}
Let the notation be the same as in Definitions {\rm \ref{def_deformation_data}}, {\rm \ref{def_deformation_zig}} and {\rm \ref{def_deformation_zag}}.
We assume that either one of the following conditions is satisfied:
\begin{itemize}\itemsep=0pt
\item[$(i)$] $r=1$ or
\item[$(ii)$] $\Gamma$ is a hexagonal or rectangular dimer model $($see Definition~{\rm \ref{def_hexagonal_square})}.
\end{itemize}
Then we have
\begin{gather*}
\mutation_{w}(\Delta_\Gamma,F)=\Delta_{\nu^\zig_\bfp(\Gamma,\{z_1,\dots,z_r\})},
\\
\mutation_{w}(\Delta_\Gamma,-F)=\Delta_{\nu^\zag_\bfp(\Gamma,\{z_1,\dots,z_r\})}
\end{gather*}
for certain choices of the deformation data and the mutation data $($see Setting~{\rm \ref{mutation_deformation_setting})}.
\end{Theorem}

Combining this result with some properties of the combinatorial mutation given in Section~\ref{sec_pre_mutation_polygon},
we have the following corollary, which will be generalized in Section~\ref{mutation_vs_deformation}
(see Corollaries~\ref{cor_unimodular_deformedDM} and~\ref{cor_Fano_deformedDM}).

\begin{Corollary}Using the same setting as in Theorem~{\rm \ref{mutation=deformation1}}, we see that
\begin{itemize}\itemsep=0pt
\item $\Delta_{\nu^\zig_\bfp(\Gamma,\{z_1,\dots,z_r\})}\cong \Delta_{\nu^\zag_\bfp(\Gamma,\{z_1,\dots,z_r\})}$
\item $\Delta_\Gamma$ is Fano if and only if $\Delta_{\nu^\zig_\bfp(\Gamma,\{z_1,\dots,z_r\})}$
$($resp.\ $\Delta_{\nu^\zag_\bfp(\Gamma,\{z_1,\dots,z_r\})}$$)$ is Fano.
\end{itemize}
\end{Corollary}

\section{Extended deformations of dimer models}\label{sec_def_exdeform}

In this section, we add some procedures to the deformations given in Definitions~\ref{def_deformation_zig} and \ref{def_deformation_zag}
to construct a consistent dimer model having the same properties as Theorem~\ref{mutation=deformation1} in a more general situation.
The operations which will be introduced in this section might be complicated, thus we refer the reader to Figures~\ref{def_leftright}--\ref{fig_observation_zigzag4}
in the next section and Appendix~\ref{app_large_example} for recognizing their points.

\begin{setting}\label{def_deformation_exdata}
Let $\Gamma$ be a reduced consistent dimer model.
As in Definition~\ref{def_deformation_data}, we choose a~type~I zigzag path $z$, and we let $2n\coloneqq\ell(z)$ and $v\coloneqq [z]$.
We fix positive integers $r, h$ such that $r\le \big|\calZ_v^\rmI(\Gamma)\big|$ and $n=r+h$,
and take a subset $\{z_1,\dots,z_r\}\subset\calZ_v^\rmI(\Gamma)$ of type I zigzag paths.
We recall that we have $2n=\ell(z_1)=\cdots=\ell(z_r)$ by Lemma~\ref{lem_same_length}, and hence we write $z_i$ as
\begin{displaymath}
z_i=z_i[1]z_i[2]\cdots z_i[2n-1]z_i[2n].
\end{displaymath}

Then we prepare the additional data as follows.
\begin{itemize}\itemsep=0pt
\item[(1)] We consider all zigzag paths $x_1,\dots, x_s$ (resp.~$y_1,\dots, y_t$) intersecting with $z$ at some zags (resp.\ zigs) of $z$.
In this case, each of $x_1,\dots, x_s$ (resp.\ $y_1,\dots, y_t$) intersects with any $z_i$ at some zags (resp.\ zigs) of $z_i$ for all $i=1,\dots,r$ by Lemma~\ref{intersect_zigorzag2}.
We may assume that $z_1,\dots,z_r$ are ordered cyclically in the sense that if $x_j$ (resp.~$y_k$) intersects with $z_i$,
then it also intersects with $z_{i-1}$ (resp.~$z_{i+1}$).
\item[(2)]
We recall that $|x_j\cap z_i|$ (resp.~$|y_k\cap z_i|$) is the same number for any $i=1,\dots,r$ by Lemma~\ref{intersect_zigorzag2}.
Thus, for $j\in\{1,\dots, s\}$ let $m_j\coloneqq|x_j\cap z_i|$ be this constant, and for $k\in\{1,\dots, t\}$ let $m^\prime_k\coloneqq|y_k\cap z_i|$.
We note that $n=m_1+\cdots+m_s=m^\prime_1+\cdots+m^\prime_t$.
\item[(3)] Then, we divide each zigzag path $x_j$ into $m_j$ parts $x_j^{(1)},\dots,x_j^{(m_j)}$ as follows.
We first fix one of the intersections of $z_r$ and $x_j$ as the starting edge of $x_j^{(1)}$,
and tracing along $x_j$ we will arrive at another intersection of~$z_r$ and~$x_j$.
We consider the edge of $x_j$ just before this intersection as the ending edge of $x_j^{(1)}$, and consider the second intersection as the starting edge of $x_j^{(2)}$.
Repeating this procedure, we obtain $x_j^{(1)},\dots,x_j^{(m_j)}$ and the set of sub-zigzag paths:
\begin{equation}\label{subzigzag_x}
\big\{x_1^{(1)},\dots,x_1^{(m_1)}, x_2^{(1)},\dots,x_2^{(m_2)}, \dots, x_s^{(1)},\dots,x_s^{(m_s)}\big\}.
\end{equation}

Similarly, we also divide each zigzag path $y_k$ into $m^\prime_k$ parts $y_k^{(1)},\dots,y_k^{(m^\prime_k)}$
by considering one of the intersections of $z_1$ and $y_k$ as the starting edge of $y_k^{(1)}$, and obtain the set of sub-zigzag paths:
\begin{equation}\label{subzigzag_y}
\big\{y_1^{(1)},\dots,y_1^{(m^\prime_1)}, y_2^{(1)},\dots,y_2^{(m^\prime_2)}, \dots, y_t^{(1)},\dots,y_t^{(m^\prime_t)}\big\}.
\end{equation}
\item[(4)]
We then assign one of $\{z_1, \dots, z_r\}$ to $x_j^{(a_j)}$ for $j=1, \dots, s$ and $a_j=1, \dots, m_j$.
Then, we define $X_i$ as the set of edges consisting of the intersections between $z_i$ and the sub-zigzag paths in~(\ref{subzigzag_x}) that are assigned with~$z_i$.
We can make this assignment so that $|X_i|\ge1$, and then set $p_i\coloneqq|X_i|-1$ for $i=1,\dots,r$.
We call $\calX\coloneqq\{X_1,\dots,X_r\}$ a \emph{zig-deformation parameter} with respect to $z_1,\dots,z_r$ and call the tuple $\bfp=(p_1,\dots,p_r)\in\ZZ^r_{\ge0}$ the \emph{weight} of $\calX$.
Similarly, we assign one of $\{z_1, \dots, z_r\}$ to $y_k^{(b_k)}$ for $k=1, \dots, t$ and $b_k=1, \dots, m^\prime_k$.
Then, we define $Y_i$ as the set of edges consisting of the intersections between $z_i$ and the sub-zigzag paths in~(\ref{subzigzag_y}) that are assigned with~$z_i$.
We can make this assignment so that $|Y_i|\ge1$, and then set $q_i\coloneqq|Y_i|-1$ for $i=1,\dots,r$.
We call $\calY\coloneqq\{Y_1,\dots,Y_r\}$ a \emph{zag-deformation parameter} with respect to $z_1,\dots,z_r$ and call the tuple $\bfq=(q_1,\dots,q_r)\in\ZZ^r_{\ge0}$ the \emph{weight} of $\calY$.
We remark that $p_1+\cdots+p_r=m_1+\cdots+m_s-r=n-r=h$, and also that $q_1+\cdots+q_r=h$.
\end{itemize}
\end{setting}

\begin{Definition}[extended zig-deformation]\label{def_exdeformation_zig}
Let the notation be the same as in Setting~\ref{def_deformation_exdata}.
For a zig-deformation parameter $\calX=\{X_1,\dots,X_r\}$ of weight $\bfp=(p_1,\dots,p_r)$, we consider
the operations (zig-1), (zig-2), (zig-3) defined in Definition~\ref{def_deformation_zig} and use the same notation.
Then we conduct the following procedures:
\begin{itemize}\itemsep=0pt
\setlength{\parskip}{0pt}
\setlength{\leftskip}{0.3cm}
\item[(\text{zig}-4)]
For $m=1,\dots,n$ and $i=1, \dots, r$, if the zag $z_i[2m]$ of the original zigzag path~$z_i$ on~$\Gamma$ is not contained in~$X_i$,
then we add edges, which we call \emph{bypasses}
(since these edges provide a new route that connect edges of the zigzag path $x_j$ contained in non-deformed parts, see Figure~\ref{fig_observation_zigzag4}),
connecting the following pairs of black and white nodes:
\begin{align*}
(w_{i,1}[2m-1], &\, b_i[2m+1]), \,(w_{i,2}[2m-1],b_{i,1}[2m+1]), \\ &\dots, (w_{i,p_i}[2m-1],b_{i,p_i-1}[2m+1]),\, (w_i[2m-1],b_{i,p_i}[2m+1]).
\end{align*}
We denote the resulting dimer mode by $\overline{\nu}^\zig_\calX(\Gamma, \{z_1,\dots,z_r\})$.

We note that $\overline{\nu}^\zig_\calX(\Gamma, \{z_1,\dots,z_r\})$ is non-degenerate by Proposition~\ref{prop_nondegenerate}.
\item[(\text{zig}-5)]
Then, we make the dimer model $\overline{\nu}^\zig_\calX(\Gamma, \{z_1,\dots,z_r\})$ consistent using the method given in the proof of \cite[Theorem~1.1]{BIU} (see Operation~\ref{operation_BIU} and Proposition~\ref{prop_make_consistent}).
\end{itemize}

At the end, we perform the operation (join) if the resulting dimer model contains $2$-valent nodes.
We denote the resulting dimer model by $\nu^\zig_\calX(\Gamma, \{z_1,\dots,z_r\})$,
and call it the \emph{extended zig-deformation of $\Gamma$ at $\{z_1,\dots,z_r\}$ with respect to the zig-deformation parameter $\calX$}.
If the situation is clear, we simply denote this by $\nu^\zig_\calX(\Gamma)$.
\end{Definition}

\begin{figure}[h!]\centering\vspace*{-10mm}
\scalebox{0.55}{
\begin{tikzpicture}
\newcommand{\edgewidth}{0.05cm} 
\newcommand{\nodewidth}{0.05cm} 
\newcommand{\noderad}{0.16} 
\coordinate (W1) at (0,1.2); \coordinate (W2) at (0,3.6);
\path (W1) ++(-20:4cm) coordinate (B1); \path (W2) ++(-20:4cm) coordinate (B2);
\path (B1) ++(200:2cm) coordinate (B0); \path (W2) ++(20:2cm) coordinate (W3);

\path (W1) ++(-20:0.8cm) coordinate (B1-1); \path (W1) ++(-20:1.6cm) coordinate (W1-1);
\path (W1) ++(-20:2.4cm) coordinate (B1-2); \path (W1) ++(-20:3.2cm) coordinate (W1-2);
\path (W2) ++(-20:0.8cm) coordinate (B2-1); \path (W2) ++(-20:1.6cm) coordinate (W2-1);
\path (W2) ++(-20:2.4cm) coordinate (B2-2); \path (W2) ++(-20:3.2cm) coordinate (W2-2);

\path (B1) ++(30:1cm) coordinate (B1e); \path (B1) ++(330:1cm) coordinate (B1s);
\path (W1) ++(150:1cm) coordinate (W1w); \path (W1) ++(210:1cm) coordinate (W1s);
\path (B2) ++(30:1cm) coordinate (B2e); \path (B2) ++(330:1cm) coordinate (B2s);
\path (W2) ++(150:1cm) coordinate (W2w); \path (W2) ++(210:1cm) coordinate (W2s);

\path (W1) ++(160:0.7cm) coordinate (W1+); \path (W1) ++(200:0.7cm) coordinate (W1-);
\path (W2) ++(160:0.7cm) coordinate (W2+); \path (W2) ++(200:0.7cm) coordinate (W2-);
\path (B1) ++(20:0.7cm) coordinate (B1+); \path (B1) ++(340:0.7cm) coordinate (B1-);
\path (B2) ++(20:0.7cm) coordinate (B2+); \path (B2) ++(340:0.7cm) coordinate (B2-);

\node (original) at (0,0) {
\begin{tikzpicture}
\draw [line width=\edgewidth] (B1)--(W1); \draw [line width=\edgewidth] (B2)--(W1) ; \draw [line width=\edgewidth] (B2)--(W2) ;
\draw [line width=\edgewidth] (B1)--(B0) ; \draw [line width=\edgewidth] (W2)--(W3) ;
\draw [line width=\edgewidth] (B1)--(B1e); \draw [line width=\edgewidth] (B1)--(B1s);
\draw [line width=\edgewidth] (W1)--(W1w); \draw [line width=\edgewidth] (W1)--(W1s);
\draw [line width=\edgewidth] (B2)--(B2e); \draw [line width=\edgewidth] (B2)--(B2s);
\draw [line width=\edgewidth] (W2)--(W2w); \draw [line width=\edgewidth] (W2)--(W2s);

\draw [line width=\edgewidth,line cap=round, dash pattern=on 0pt off 2.5\pgflinewidth] (W1+)--(W1-);
\draw [line width=\edgewidth,line cap=round, dash pattern=on 0pt off 2.5\pgflinewidth] (W2+)--(W2-);
\draw [line width=\edgewidth,line cap=round, dash pattern=on 0pt off 2.5\pgflinewidth] (B1+)--(B1-);
\draw [line width=\edgewidth,line cap=round, dash pattern=on 0pt off 2.5\pgflinewidth] (B2+)--(B2-);
\draw [line width=\nodewidth, fill=black] (B1) circle [radius=\noderad] ; \draw [line width=\nodewidth, fill=black] (B2) circle [radius=\noderad] ;
\draw [line width=\nodewidth, fill=white] (W1) circle [radius=\noderad] ; \draw [line width=\nodewidth, fill=white] (W2) circle [radius=\noderad] ;

\coordinate (W1z) at (0,1.2); \coordinate (W2z) at (0,3.6);
\path (W1) ++(90:0.2cm) coordinate (W1z); \path (W2) ++(90:0.2cm) coordinate (W2z);
\path (W1z) ++(-20:4cm) coordinate (B1z); \path (W2z) ++(-20:4cm) coordinate (B2z);
\path (B1z) ++(200:2cm) coordinate (B0z); \path (W2z) ++(20:2cm) coordinate (W3z);
\path (B1z) ++(200:2cm) coordinate (B0z); \path (W2z) ++(20:2.2cm) coordinate (W3z);

\path (B2) ++(-90:0.2cm) coordinate (B2zz); \path (B2zz) ++(330:1cm) coordinate (B2szz);
\path (W1) ++(-90:0.2cm) coordinate (W1zz); \path (W1zz) ++(150:1.5cm) coordinate (W1wzz);

\draw [->, rounded corners, line width=0.08cm, blue] (B2szz)--(B2zz)--(W1zz)--(W1wzz);
\draw [->, rounded corners, line width=0.08cm, red] (B0z)--(B1z)--(W1z)--(B2z)--(W2z)--(W3z);

\node at (2.5,3.5) {\color{red}\large$z_i[2m+1]$};
\node at (1.2,2.2) {\color{red}\large$z_i[2m]$};
\node at (2.5,1.1) {\color{red}\large$z_i[2m-1]$};
\node at (-1.2,2.2) {\color{blue}\large$x_j$};
\end{tikzpicture}};

\node (before) at (10,0) {
\begin{tikzpicture}
\draw [line width=\edgewidth] (B1)--(W1); \draw [line width=\edgewidth] (B2)--(W2) ;
\draw [line width=\edgewidth] (B1)--(B1e); \draw [line width=\edgewidth] (B1)--(B1s);
\draw [line width=\edgewidth] (W1)--(W1w); \draw [line width=\edgewidth] (W1)--(W1s);
\draw [line width=\edgewidth] (B2)--(B2e); \draw [line width=\edgewidth] (B2)--(B2s);
\draw [line width=\edgewidth] (W2)--(W2w); \draw [line width=\edgewidth] (W2)--(W2s);

\draw [line width=\edgewidth,line cap=round, dash pattern=on 0pt off 2.5\pgflinewidth] (W1+)--(W1-);
\draw [line width=\edgewidth,line cap=round, dash pattern=on 0pt off 2.5\pgflinewidth] (W2+)--(W2-);
\draw [line width=\edgewidth,line cap=round, dash pattern=on 0pt off 2.5\pgflinewidth] (B1+)--(B1-);
\draw [line width=\edgewidth,line cap=round, dash pattern=on 0pt off 2.5\pgflinewidth] (B2+)--(B2-);

\draw [line width=\edgewidth] (B2-1)--(W1-1); \draw [line width=\edgewidth] (B2-2)--(W1-2);
\path (B1-1) ++(-70:1.5cm) coordinate (B1-1-); \draw [line width=\edgewidth] (B1-1)--(B1-1-);
\path (B1-2) ++(-70:1.5cm) coordinate (B1-2-); \draw [line width=\edgewidth] (B1-2)--(B1-2-);
\path (W2-1) ++(110:1.5cm) coordinate (W2-1-); \draw [line width=\edgewidth] (W2-1)--(W2-1-);
\path (W2-2) ++(110:1.5cm) coordinate (W2-2-); \draw [line width=\edgewidth] (W2-2)--(W2-2-);
\draw [line width=\nodewidth, fill=black] (B1) circle [radius=\noderad] ; \draw [line width=\nodewidth, fill=black] (B2) circle [radius=\noderad] ;
\draw [line width=\nodewidth, fill=black] (B1-1) circle [radius=\noderad] ; \draw [line width=\nodewidth, fill=black] (B1-2) circle [radius=\noderad] ;
\draw [line width=\nodewidth, fill=black] (B2-1) circle [radius=\noderad] ; \draw [line width=\nodewidth, fill=black] (B2-2) circle [radius=\noderad] ;
\draw [line width=\nodewidth, fill=white] (W1) circle [radius=\noderad] ; \draw [line width=\nodewidth, fill=white] (W2) circle [radius=\noderad] ;
\draw [line width=\nodewidth, fill=white] (W1-1) circle [radius=\noderad] ; \draw [line width=\nodewidth, fill=white] (W1-2) circle [radius=\noderad] ;
\draw [line width=\nodewidth, fill=white] (W2-1) circle [radius=\noderad] ; \draw [line width=\nodewidth, fill=white] (W2-2) circle [radius=\noderad] ;
\end{tikzpicture}};

\node (after) at (20,0) {
\begin{tikzpicture}
\draw [line width=\edgewidth] (B1)--(W1); \draw [line width=\edgewidth] (B2)--(W2) ;
\draw [line width=\edgewidth] (B1)--(B1e); \draw [line width=\edgewidth] (B1)--(B1s);
\draw [line width=\edgewidth] (W1)--(W1w); \draw [line width=\edgewidth] (W1)--(W1s);
\draw [line width=\edgewidth] (B2)--(B2e); \draw [line width=\edgewidth] (B2)--(B2s);
\draw [line width=\edgewidth] (W2)--(W2w); \draw [line width=\edgewidth] (W2)--(W2s);

\draw [line width=\edgewidth,line cap=round, dash pattern=on 0pt off 2.5\pgflinewidth] (W1+)--(W1-);
\draw [line width=\edgewidth,line cap=round, dash pattern=on 0pt off 2.5\pgflinewidth] (W2+)--(W2-);
\draw [line width=\edgewidth,line cap=round, dash pattern=on 0pt off 2.5\pgflinewidth] (B1+)--(B1-);
\draw [line width=\edgewidth,line cap=round, dash pattern=on 0pt off 2.5\pgflinewidth] (B2+)--(B2-);

\draw [line width=\edgewidth] (B2-1)--(W1-1); \draw [line width=\edgewidth] (B2-2)--(W1-2);
\path (B1-1) ++(-70:1.5cm) coordinate (B1-1-); \draw [line width=\edgewidth] (B1-1)--(B1-1-);
\path (B1-2) ++(-70:1.5cm) coordinate (B1-2-); \draw [line width=\edgewidth] (B1-2)--(B1-2-);
\path (W2-1) ++(110:1.5cm) coordinate (W2-1-); \draw [line width=\edgewidth] (W2-1)--(W2-1-);
\path (W2-2) ++(110:1.5cm) coordinate (W2-2-); \draw [line width=\edgewidth] (W2-2)--(W2-2-);

\draw [line width=\edgewidth] (B2-1)--(W1); \draw [line width=\edgewidth] (B2-2)--(W1-1); \draw [line width=\edgewidth] (B2)--(W1-2);
\draw [line width=\nodewidth, fill=black] (B1) circle [radius=\noderad] ; \draw [line width=\nodewidth, fill=black] (B2) circle [radius=\noderad] ;
\draw [line width=\nodewidth, fill=black] (B1-1) circle [radius=\noderad] ; \draw [line width=\nodewidth, fill=black] (B1-2) circle [radius=\noderad] ;
\draw [line width=\nodewidth, fill=black] (B2-1) circle [radius=\noderad] ; \draw [line width=\nodewidth, fill=black] (B2-2) circle [radius=\noderad] ;
\draw [line width=\nodewidth, fill=white] (W1) circle [radius=\noderad] ; \draw [line width=\nodewidth, fill=white] (W2) circle [radius=\noderad] ;
\draw [line width=\nodewidth, fill=white] (W1-1) circle [radius=\noderad] ; \draw [line width=\nodewidth, fill=white] (W1-2) circle [radius=\noderad] ;
\draw [line width=\nodewidth, fill=white] (W2-1) circle [radius=\noderad] ; \draw [line width=\nodewidth, fill=white] (W2-2) circle [radius=\noderad] ;
\end{tikzpicture}};

\path (original) ++(0:3.5cm) coordinate (original+); \path (before) ++(180:3.5cm) coordinate (before+);
\draw [->, line width=\edgewidth] (original+)--(before+) node[midway,xshift=-0.5cm,yshift=1.2cm] {\LARGE(zig-1)}
node[midway,xshift=0.5cm,yshift=0.5cm] {\LARGE--\,(zig-3)};

\path (before) ++(0:3.5cm) coordinate (before-); \path (after) ++(180:3.5cm) coordinate (after+);
\draw [->, line width=\edgewidth] (before-)--(after+) node[midway,xshift=0cm,yshift=0.5cm] {\LARGE(zig-4)} ;

\end{tikzpicture}
}
\caption{The operations (zig-1)--(zig-4) at $z_i$ for the case $p_i=|X_i|-1=2$ and $z_i[m]\not\in X_i$.}\label{fig_exzigdeform_p2}
\end{figure}

Similarly, we can define the ``zag version'' of this extended deformation as follows.

\begin{Definition}[extended zag-deformation]\label{def_exdeformation_zag}
Let the notation be the same as in Setting~\ref{def_deformation_exdata}.
For a zag-deformation parameter $\calY=\{Y_1,\dots,Y_r\}$ of weight $\bfq=(q_1,\dots,q_r)$, we consider
the operations (zag-1), (zag-2), (zag-3) defined in Definition~\ref{def_deformation_zag} and use the same notation.
Then we conduct the following procedures:

\begin{itemize}\itemsep=0pt
\setlength{\parskip}{0pt}
\setlength{\leftskip}{0.4cm}
\item[(\text{zag}-4)]
For $m=1,\dots,n$ and $i=1, \dots, r$, if the zig $z_i[2m-1]$ of the original zigzag path $z_i$ on~$\Gamma$ is not contained in $Y_i$,
then we add edges, which we call \emph{bypasses}, connecting the following pairs of black and white nodes:
\begin{align*}
(b_{i,1}[2m],w_i[2m+2]), \,& (b_{i,2}[2m],w_{i,1}[2m+2]), \\ &\dots, (b_{i,q_i}[2m],w_{i,q_i-1}[2m+2]), \, (b_i[2m],w_{i,q_i}[2m+2]).
\end{align*}
We denote the resulting dimer mode by $\overline{\nu}^\zag_\calY(\Gamma, \{z_1,\dots,z_r\})$.

We note that $\overline{\nu}^\zag_\calY(\Gamma, \{z_1,\dots,z_r\})$ is non-degenerate by Proposition~\ref{prop_nondegenerate}.
\item[(\text{zag}-5)]
Then, we make the dimer model $\overline{\nu}^\zag_\calY(\Gamma, \{z_1,\dots,z_r\})$ consistent using the method given in the proof of \cite[Theorem~1.1]{BIU} (see Operation~\ref{operation_BIU} and Proposition~\ref{prop_make_consistent}).
\end{itemize}

At the end, we perform the operation (join) if the resulting dimer model contains $2$-valent nodes.
We denote the resulting dimer model by $\nu^\zag_\calY(\Gamma, \{z_1,\dots,z_r\})$, and call it the \emph{extended zag-deformation of $\Gamma$ at $\{z_1,\dots,z_r\}$ with respect to the zag-deformation parameter $\calY$}.
If the situation is clear, we simply denote this by $\nu^\zag_\calY(\Gamma)$.
\end{Definition}

\begin{figure}[h!]\centering\vspace*{-8mm}
\scalebox{0.55}{
\begin{tikzpicture}
\newcommand{\edgewidth}{0.05cm} 
\newcommand{\nodewidth}{0.05cm} 
\newcommand{\noderad}{0.16} 
\coordinate (W1) at (0,0); \coordinate (W2) at (0,2.4);
\path (W1) ++(20:4cm) coordinate (B1); \path (W2) ++(20:4cm) coordinate (B2);
\path (W1) ++(340:2cm) coordinate (W0); \path (B2) ++(160:2cm) coordinate (B3);

\path (W1) ++(20:0.8cm) coordinate (B1-1); \path (W1) ++(20:1.6cm) coordinate (W1-1);
\path (W1) ++(20:2.4cm) coordinate (B1-2); \path (W1) ++(20:3.2cm) coordinate (W1-2);
\path (W2) ++(20:0.8cm) coordinate (B2-1); \path (W2) ++(20:1.6cm) coordinate (W2-1);
\path (W2) ++(20:2.4cm) coordinate (B2-2); \path (W2) ++(20:3.2cm) coordinate (W2-2);

\path (B1) ++(30:1cm) coordinate (B1e); \path (B1) ++(330:1cm) coordinate (B1s);
\path (W1) ++(150:1cm) coordinate (W1w); \path (W1) ++(210:1cm) coordinate (W1s);
\path (B2) ++(30:1cm) coordinate (B2e); \path (B2) ++(330:1cm) coordinate (B2s);
\path (W2) ++(150:1cm) coordinate (W2w); \path (W2) ++(210:1cm) coordinate (W2s);

\path (W1) ++(160:0.7cm) coordinate (W1+); \path (W1) ++(200:0.7cm) coordinate (W1-);
\path (W2) ++(160:0.7cm) coordinate (W2+); \path (W2) ++(200:0.7cm) coordinate (W2-);
\path (B1) ++(20:0.7cm) coordinate (B1+); \path (B1) ++(340:0.7cm) coordinate (B1-);
\path (B2) ++(20:0.7cm) coordinate (B2+); \path (B2) ++(340:0.7cm) coordinate (B2-);

\node (original) at (0,0) {
\begin{tikzpicture}
\draw [line width=\edgewidth] (W1)--(B1); \draw [line width=\edgewidth] (B1)--(W2) ; \draw [line width=\edgewidth] (W2)--(B2) ;
\draw [line width=\edgewidth] (W0)--(W1) ; \draw [line width=\edgewidth] (B2)--(B3) ;
\draw [line width=\edgewidth] (B1)--(B1e); \draw [line width=\edgewidth] (B1)--(B1s);
\draw [line width=\edgewidth] (W1)--(W1w); \draw [line width=\edgewidth] (W1)--(W1s);
\draw [line width=\edgewidth] (B2)--(B2e); \draw [line width=\edgewidth] (B2)--(B2s);
\draw [line width=\edgewidth] (W2)--(W2w); \draw [line width=\edgewidth] (W2)--(W2s);

\draw [line width=\edgewidth,line cap=round, dash pattern=on 0pt off 2.5\pgflinewidth] (W1+)--(W1-);
\draw [line width=\edgewidth,line cap=round, dash pattern=on 0pt off 2.5\pgflinewidth] (W2+)--(W2-);
\draw [line width=\edgewidth,line cap=round, dash pattern=on 0pt off 2.5\pgflinewidth] (B1+)--(B1-);
\draw [line width=\edgewidth,line cap=round, dash pattern=on 0pt off 2.5\pgflinewidth] (B2+)--(B2-);
\draw [line width=\nodewidth, fill=black] (B1) circle [radius=\noderad] ; \draw [line width=\nodewidth, fill=black] (B2) circle [radius=\noderad] ;
\draw [line width=\nodewidth, fill=white] (W1) circle [radius=\noderad] ; \draw [line width=\nodewidth, fill=white] (W2) circle [radius=\noderad] ;

\coordinate (W1z) at (0,1.2); \coordinate (W2z) at (0,3.6);
\path (W1) ++(90:0.2cm) coordinate (W1z); \path (W1z) ++(-20:2cm) coordinate (W0z);
\path (B1) ++(90:0.2cm) coordinate (B1z); \path (W2) ++(90:0.2cm) coordinate (W2z);
\path (B2) ++(90:0.2cm) coordinate (B2z); \path (B2z) ++(160:2.2cm) coordinate (B3z);

\path (W2) ++(-90:0.2cm) coordinate (W2zz); \path (W2zz) ++(210:1cm) coordinate (W2wzz);
\path (B1) ++(-90:0.2cm) coordinate (B1zz); \path (B1zz) ++(30:1.5cm) coordinate (B1ezz);

\draw [->, rounded corners, line width=0.08cm, blue] (W2wzz)--(W2zz)--(B1zz)--(B1ezz);
\draw [->, rounded corners, line width=0.08cm, red] (W0z)--(W1z)--(B1z)--(W2z)--(B2z)--(B3z);

\node at (1.4,3.7) {\color{red}\large$z_i[2m+2]$};
\node at (2.4,2.4) {\color{red}\large$z_i[2m+1]$};
\node at (1.4,1.2) {\color{red}\large$z_i[2m]$};
\node at (5,2.3) {\color{blue}\large$y_k$};
\end{tikzpicture}};

\node (before) at (10,0) {
\begin{tikzpicture}
\draw [line width=\edgewidth] (W1)--(B1); \draw [line width=\edgewidth] (W2)--(B2) ;
\draw [line width=\edgewidth] (B1)--(B1e); \draw [line width=\edgewidth] (B1)--(B1s);
\draw [line width=\edgewidth] (W1)--(W1w); \draw [line width=\edgewidth] (W1)--(W1s);
\draw [line width=\edgewidth] (B2)--(B2e); \draw [line width=\edgewidth] (B2)--(B2s);
\draw [line width=\edgewidth] (W2)--(W2w); \draw [line width=\edgewidth] (W2)--(W2s);

\draw [line width=\edgewidth] (B1-1)--(W2-1); \draw [line width=\edgewidth] (B1-2)--(W2-2);

\path (B2-1) ++(70:1.5cm) coordinate (B2-1-); \draw [line width=\edgewidth] (B2-1)--(B2-1-);
\path (B2-2) ++(70:1.5cm) coordinate (B2-2-); \draw [line width=\edgewidth] (B2-2)--(B2-2-);
\path (W1-1) ++(-110:1.5cm) coordinate (W1-1-); \draw [line width=\edgewidth] (W1-1)--(W1-1-);
\path (W1-2) ++(-110:1.5cm) coordinate (W1-2-); \draw [line width=\edgewidth] (W1-2)--(W1-2-);

\draw [line width=\edgewidth,line cap=round, dash pattern=on 0pt off 2.5\pgflinewidth] (W1+)--(W1-);
\draw [line width=\edgewidth,line cap=round, dash pattern=on 0pt off 2.5\pgflinewidth] (W2+)--(W2-);
\draw [line width=\edgewidth,line cap=round, dash pattern=on 0pt off 2.5\pgflinewidth] (B1+)--(B1-);
\draw [line width=\edgewidth,line cap=round, dash pattern=on 0pt off 2.5\pgflinewidth] (B2+)--(B2-);
\draw [line width=\nodewidth, fill=black] (B1) circle [radius=\noderad] ; \draw [line width=\nodewidth, fill=black] (B2) circle [radius=\noderad] ;
\draw [line width=\nodewidth, fill=black] (B1-1) circle [radius=\noderad] ; \draw [line width=\nodewidth, fill=black] (B1-2) circle [radius=\noderad] ;
\draw [line width=\nodewidth, fill=black] (B2-1) circle [radius=\noderad] ; \draw [line width=\nodewidth, fill=black] (B2-2) circle [radius=\noderad] ;
\draw [line width=\nodewidth, fill=white] (W1) circle [radius=\noderad] ; \draw [line width=\nodewidth, fill=white] (W2) circle [radius=\noderad] ;
\draw [line width=\nodewidth, fill=white] (W1-1) circle [radius=\noderad] ; \draw [line width=\nodewidth, fill=white] (W1-2) circle [radius=\noderad] ;
\draw [line width=\nodewidth, fill=white] (W2-1) circle [radius=\noderad] ; \draw [line width=\nodewidth, fill=white] (W2-2) circle [radius=\noderad] ;
\end{tikzpicture}};

\node (after) at (20,0) {
\begin{tikzpicture}
\draw [line width=\edgewidth] (W1)--(B1); \draw [line width=\edgewidth] (W2)--(B2) ;
\draw [line width=\edgewidth] (B1)--(B1e); \draw [line width=\edgewidth] (B1)--(B1s);
\draw [line width=\edgewidth] (W1)--(W1w); \draw [line width=\edgewidth] (W1)--(W1s);
\draw [line width=\edgewidth] (B2)--(B2e); \draw [line width=\edgewidth] (B2)--(B2s);
\draw [line width=\edgewidth] (W2)--(W2w); \draw [line width=\edgewidth] (W2)--(W2s);

\draw [line width=\edgewidth] (B1-1)--(W2-1); \draw [line width=\edgewidth] (B1-2)--(W2-2);
\path (B2-1) ++(70:1.5cm) coordinate (B2-1-); \draw [line width=\edgewidth] (B2-1)--(B2-1-);
\path (B2-2) ++(70:1.5cm) coordinate (B2-2-); \draw [line width=\edgewidth] (B2-2)--(B2-2-);
\path (W1-1) ++(-110:1.5cm) coordinate (W1-1-); \draw [line width=\edgewidth] (W1-1)--(W1-1-);
\path (W1-2) ++(-110:1.5cm) coordinate (W1-2-); \draw [line width=\edgewidth] (W1-2)--(W1-2-);

\draw [line width=\edgewidth] (B1-1)--(W2); \draw [line width=\edgewidth] (B1-2)--(W2-1); \draw [line width=\edgewidth] (B1)--(W2-2);

\draw [line width=\edgewidth,line cap=round, dash pattern=on 0pt off 2.5\pgflinewidth] (W1+)--(W1-);
\draw [line width=\edgewidth,line cap=round, dash pattern=on 0pt off 2.5\pgflinewidth] (W2+)--(W2-);
\draw [line width=\edgewidth,line cap=round, dash pattern=on 0pt off 2.5\pgflinewidth] (B1+)--(B1-);
\draw [line width=\edgewidth,line cap=round, dash pattern=on 0pt off 2.5\pgflinewidth] (B2+)--(B2-);
\draw [line width=\nodewidth, fill=black] (B1) circle [radius=\noderad] ; \draw [line width=\nodewidth, fill=black] (B2) circle [radius=\noderad] ;
\draw [line width=\nodewidth, fill=black] (B1-1) circle [radius=\noderad] ; \draw [line width=\nodewidth, fill=black] (B1-2) circle [radius=\noderad] ;
\draw [line width=\nodewidth, fill=black] (B2-1) circle [radius=\noderad] ; \draw [line width=\nodewidth, fill=black] (B2-2) circle [radius=\noderad] ;
\draw [line width=\nodewidth, fill=white] (W1) circle [radius=\noderad] ; \draw [line width=\nodewidth, fill=white] (W2) circle [radius=\noderad] ;
\draw [line width=\nodewidth, fill=white] (W1-1) circle [radius=\noderad] ; \draw [line width=\nodewidth, fill=white] (W1-2) circle [radius=\noderad] ;
\draw [line width=\nodewidth, fill=white] (W2-1) circle [radius=\noderad] ; \draw [line width=\nodewidth, fill=white] (W2-2) circle [radius=\noderad] ;
\end{tikzpicture}};

\path (original) ++(0:3.5cm) coordinate (original+); \path (before) ++(180:3.5cm) coordinate (before+);
\draw [->, line width=\edgewidth] (original+)--(before+) node[midway,xshift=-0.5cm,yshift=1.2cm] {\LARGE(zag-1)}
node[midway,xshift=0.5cm,yshift=0.5cm] {\LARGE --\,(zag-3)};

\path (before) ++(0:3.5cm) coordinate (before-); \path (after) ++(180:3.5cm) coordinate (after+);
\draw [->, line width=\edgewidth] (before-)--(after+) node[midway,xshift=0cm,yshift=0.5cm] {\LARGE(zag-4)} ;

\end{tikzpicture}
}
\caption{The operations (zag-1)--(zag-4) at $z_i$ for the case $q_i=|Y_i|-1=2$ and $z_i[2m+1]\not\in Y_i$.}
\label{fig_exzagdeform_p2}
\end{figure}

\begin{Remark}\label{r=1case}
If $r=1$, then we consider deformation parameters $\calX=\{X_1\}$ and $\calY=\{Y_1\}$ in Setting~\ref{def_deformation_exdata},
where~$X_1$ (resp.~$Y_1$) is the set of intersections between a chosen type~I zigzag path~$z$ and $x_1,\dots,x_s$ (resp.\ $y_1,\dots,y_t$).
In particular, $X_1$ (resp.~$Y_1$) coincides with the set of zags (resp.~zigs) of~$z$, and hence they are determined uniquely.
Thus, in this case we may skip the operations~(zig-4) (resp.~(zag-4)), in which case we may also skip (zig-5) (resp.~(zag-5)), since there are no bypasses
(see also Observations~\ref{obs_deformed_part2},~\ref{obs_deformed_part3} and Lemma~\ref{bypass_removable}).
Thus, in this case, the extended deformations coincide with the usual deformations:
\begin{displaymath}
\nu_\calX^\zig(\Gamma,\{z\})=\nu_p^\zig(\Gamma,\{z\})\qquad \text{and} \qquad
\nu_\calY^\zag(\Gamma,\{z\})=\nu_q^\zag(\Gamma,\{z\}),
\end{displaymath}
where $p\coloneqq |X_1|-1=\ell(z)/2-1$ and $q\coloneqq |Y_1|-1=\ell(z)/2-1$.
\end{Remark}

\begin{Remark}\label{rem_def_deform_not_unique}
The non-degenerate dimer model $\overline{\nu}^\zig_\calX(\Gamma, \{z_1,\dots,z_r\})$ is determined uniquely for given deformation data, but in the operation (zig-5), the procedure for removing edges is not unique.
Therefore, the resulting consistent dimer model is not unique, whereas, since the set of slopes of zigzag paths is the same for all possible consistent dimer models (see Proposition~\ref{prop_make_consistent}(2)), the associated PM polygon is the same by Proposition~\ref{char_bound}.
In addition, it is generally believed that all consistent dimer models associated with the same lattice polygon are transformed into each other by the \emph{mutations of dimer models} (see Appendix~\ref{app_mutationdimer}).
Thus, we expect that the extended deformation of a consistent dimer model is determined uniquely up to ``\emph{mutation equivalence}".
(We encounter the same situation for the extended zag-deformation.)
\end{Remark}

We observe that under the operations (zig-1)--(zig-4) (resp.~(zag-1)--(zag-4)), (self-)in\-ter\-sec\-tions of zigzag paths may appear in the universal cover.
We follow the strategy~\cite{BIU} to deal with this as we explain now.

\begin{operation}\label{operation_BIU}
We note the operation given in the proof of \cite[Theorem~1.1]{BIU}, which we use in (zig-5) and (zag-5).
\begin{itemize}\itemsep=0pt
\item[\rm (a)] The dimer model $\overline{\nu}^\zig_\calX(\Gamma, \{z_1,\dots,z_r\})$ (resp.\ $\overline{\nu}^\zag_\calY(\Gamma, \{z_1,\dots,z_r\})$), which is obtained by
applying the operations (zig-1)--(zig-4) (resp.~(zag-1)--(zag-4)) to the reduced consistent dimer model $\Gamma$, sometimes contains zigzag paths having self-intersections in the universal cover.
In this case, we use the operation given in the proof of \cite[Theorem~1.1]{BIU}; that is, we remove all edges at self-intersections (see Figure~\ref{fig_remove_self}).
We note that this operation does not change the slope of the argued zigzag path.
\begin{figure}[h!]\centering
\scalebox{0.5}{
\begin{tikzpicture}
\newcommand{\edgewidth}{0.05cm} 
\newcommand{\nodewidth}{0.05cm} 
\newcommand{\noderad}{0.16} 

\coordinate (B1) at (0,0); \coordinate (B2) at (2,3);
\coordinate (W1) at (0,2); \coordinate (W2) at (2,-1);
\path (W1) ++(135:1.2cm) coordinate (W1a); \path (B1) ++(225:1.2cm) coordinate (B1a);
\path (W2) ++(45:1.2cm) coordinate (W2a); \path (W2) ++(0:1.2cm) coordinate (W2b);
\path (B2) ++(0:1.2cm) coordinate (B2a); \path (B2) ++(315:1.2cm) coordinate (B2b);

\node (original) at (0,0) {
\begin{tikzpicture}
\draw [line width=\edgewidth] (B1)--(B1a); \draw [line width=\edgewidth] (W1)--(W1a);
\draw [line width=\edgewidth] (B1)--(W1); \draw [line width=\edgewidth] (B1)--(W2); \draw [line width=\edgewidth] (B2)--(W1);
\draw [line width=\edgewidth] (W2)--(W2a); \draw [line width=\edgewidth] (W2)--(W2b);
\draw [line width=\edgewidth] (B2)--(B2a); \draw [line width=\edgewidth] (B2)--(B2b);
\draw [line width=\nodewidth, fill=white] (W1) circle [radius=\noderad] ; \draw [line width=\nodewidth, fill=white] (W2) circle [radius=\noderad] ;
\draw [line width=\nodewidth, fill=black] (B1) circle [radius=\noderad] ; \draw [line width=\nodewidth, fill=black] (B2) circle [radius=\noderad] ;
\path (B1) ++(135:0.25cm) coordinate (B1+); \path (B1+) ++(225:1.3cm) coordinate (B1++);
\path (W1) ++(225:0.25cm) coordinate (W1+); \path (W1+) ++(135:1.3cm) coordinate (W1++);
\path (B1) ++(45:0.25cm) coordinate (B1-);
\path (W1) ++(315:0.25cm) coordinate (W1-);
\path (B2) ++(270:0.25cm) coordinate (B2+); \path (B2+) ++(0:1.6cm) coordinate (B2++);
\path (W2) ++(90:0.25cm) coordinate (W2+); \path (W2+) ++(0:1.6cm) coordinate (W2++);
\draw [->, rounded corners, line width=0.08cm, red] (B1++)--(B1+)--(W1-)--(B2+)--(B2++);
\draw [->, rounded corners, line width=0.08cm, red] (W1++)--(W1+)--(B1-)--(W2+)--(W2++);
\end{tikzpicture}};

\node (remove) at (10,0) {
\begin{tikzpicture}
\draw [line width=\edgewidth] (B1)--(B1a); \draw [line width=\edgewidth] (W1)--(W1a);
\draw [line width=\edgewidth] (B1)--(W2); \draw [line width=\edgewidth] (B2)--(W1);
\draw [line width=\edgewidth] (W2)--(W2a); \draw [line width=\edgewidth] (W2)--(W2b);
\draw [line width=\edgewidth] (B2)--(B2a); \draw [line width=\edgewidth] (B2)--(B2b);
\draw [line width=\nodewidth, fill=white] (W1) circle [radius=\noderad] ; \draw [line width=\nodewidth, fill=white] (W2) circle [radius=\noderad] ;
\draw [line width=\nodewidth, fill=black] (B1) circle [radius=\noderad] ; \draw [line width=\nodewidth, fill=black] (B2) circle [radius=\noderad] ;
\path (B2) ++(270:0.3cm) coordinate (B2+); \path (W2) ++(90:0.3cm) coordinate (W2+);
\path (B2+) ++(0:1.6cm) coordinate (B2++); \path (W2+) ++(0:1.6cm) coordinate (W2++);
\path (B1) ++(90:0.3cm) coordinate (B1n); \path (B1n) ++(225:1.5cm) coordinate (B1n+);
\path (W1) ++(270:0.3cm) coordinate (W1s); \path (W1s) ++(135:1.5cm) coordinate (W1s+);
\draw [->, rounded corners, line width=0.08cm, red] (B1n+)--(B1n)--(W2+)--(W2++);
\draw [->, rounded corners, line width=0.08cm, red] (W1s+)--(W1s)--(B2+)--(B2++);
\end{tikzpicture}};

\path (original) ++(0:4cm) coordinate (original+); \path (remove) ++(180:4cm) coordinate (remove+);
\draw [->, line width=\edgewidth] (original+)--(remove+);
\end{tikzpicture}
}
\caption{An example of removing a self-intersection of a zigzag path.}\label{fig_remove_self}
\end{figure}

After this process, there might be a connected component of the resulting bipartite graph that is contained in a simply-connected domain in $\TT$.
In that case, we remove such a~connected component.
We note that this removal does not affect our purpose, because our main concern is the PM polygon which is recovered from the slopes of zigzag paths,
and the slope of the zigzag path corresponding to the argued connected component is trivial.

\item[\rm (b)] On the other hand, the dimer model $\overline{\nu}^\zig_\calX(\Gamma, \{z_1,\dots,z_r\})$ (resp.\ $\overline{\nu}^\zag_\calY(\Gamma, \{z_1,\dots,z_r\})$)
might have a pair of zigzag paths on the universal cover that intersect with each other in the same direction more than once.
In this case, we use another operation given in the proof of \cite[Theorem~1.1]{BIU}, that is,
we choose any such pair of zigzag paths and remove pairs of consecutive intersections of this pair of zigzag paths (see Figure~\ref{fig_remove_pair}).
We note that this operation does not change the slopes of zigzag paths and the resulting bipartite graph is also a dimer model
because $\overline{\nu}^\zig_\calX(\Gamma, \{z_1,\dots,z_r\})$ (resp.\ $\overline{\nu}^\zag_\calY(\Gamma, \{z_1,\dots,z_r\})$) satisfies the strong marriage condition, as we will see in Proposition~\ref{prop_nondegenerate}.
\begin{figure}[h!]\centering
\scalebox{0.5}{
\begin{tikzpicture}
\newcommand{\edgewidth}{0.05cm} 
\newcommand{\nodewidth}{0.05cm} 
\newcommand{\noderad}{0.16} 

\coordinate (B1) at (0,0); \coordinate (B2) at (2,2); \coordinate (B3) at (4,0);
\coordinate (W1) at (0,2); \coordinate (W2) at (2,0); \coordinate (W3) at (4,2);

\path (W1) ++(135:1.2cm) coordinate (W1a); \path (B1) ++(225:1.2cm) coordinate (B1a);
\path (W3) ++(45:1.2cm) coordinate (W3a); \path (B3) ++(315:1.2cm) coordinate (B3a);

\node (original) at (0,0) {
\begin{tikzpicture}
\draw [line width=\edgewidth] (B1)--(W1)--(B2)--(W3)--(B3)--(W2)--(B1); \draw [line width=\edgewidth] (B2)--(W2);
\draw [line width=\edgewidth] (B1)--(B1a); \draw [line width=\edgewidth] (W1)--(W1a);
\draw [line width=\edgewidth] (B3)--(B3a); \draw [line width=\edgewidth] (W3)--(W3a);
\draw [line width=\nodewidth, fill=white] (W1) circle [radius=\noderad] ; \draw [line width=\nodewidth, fill=white] (W2) circle [radius=\noderad] ;
\draw [line width=\nodewidth, fill=white] (W3) circle [radius=\noderad] ;
\draw [line width=\nodewidth, fill=black] (B1) circle [radius=\noderad] ; \draw [line width=\nodewidth, fill=black] (B2) circle [radius=\noderad] ;
\draw [line width=\nodewidth, fill=black] (B3) circle [radius=\noderad] ;

\path (W1) ++(45:0.25cm) coordinate (W1+); \path (W1+) ++(135:1.3cm) coordinate (W1++);
\path (W1) ++(225:0.25cm) coordinate (W1-);
\path (W3) ++(135:0.25cm) coordinate (W3+); \path (W3+) ++(45:1.3cm) coordinate (W3++);
\path (W3) ++(315:0.25cm) coordinate (W3-);
\path (B1) ++(45:0.25cm) coordinate (B1+);
\path (B1) ++(135:0.25cm) coordinate (B1-); \path (B1-) ++(225:1.3cm) coordinate (B1--);
\path (B3) ++(135:0.25cm) coordinate (B3+);
\path (B3) ++(45:0.25cm) coordinate (B3-); \path (B3-) ++(315:1.3cm) coordinate (B3--);
\draw [->, rounded corners, line width=0.08cm, blue] (B1--)--(B1-)--(W1-)--(W3-)--(B3-)--(B3--);
\draw [->, rounded corners, line width=0.08cm, red] (W1++)--(W1+)--(B1+)--(B3+)--(W3+)--(W3++);
\end{tikzpicture}
};

\node (remove) at (10,0) {
\begin{tikzpicture}
\draw [line width=\edgewidth] (W1)--(B2)--(W3); \draw [line width=\edgewidth] (B1)--(W2)--(B3);
\draw [line width=\edgewidth] (B2)--(W2);
\draw [line width=\edgewidth] (B1)--(B1a); \draw [line width=\edgewidth] (W1)--(W1a);
\draw [line width=\edgewidth] (B3)--(B3a); \draw [line width=\edgewidth] (W3)--(W3a);
\draw [line width=\nodewidth, fill=white] (W1) circle [radius=\noderad] ; \draw [line width=\nodewidth, fill=white] (W2) circle [radius=\noderad] ;
\draw [line width=\nodewidth, fill=white] (W3) circle [radius=\noderad] ;
\draw [line width=\nodewidth, fill=black] (B1) circle [radius=\noderad] ; \draw [line width=\nodewidth, fill=black] (B2) circle [radius=\noderad] ;
\draw [line width=\nodewidth, fill=black] (B3) circle [radius=\noderad] ;
\path (B1) ++(315:0.25cm) coordinate (B1-a); \path (B1-a) ++(225:1.3cm) coordinate (B1--a);
\path (B3) ++(225:0.25cm) coordinate (B3-a); \path (B3-a) ++(315:1.3cm) coordinate (B3--a);
\draw [->, rounded corners, line width=0.08cm, red] (W1++)--(W1+)--(W3+)--(W3++);
\draw [->, rounded corners, line width=0.08cm, blue] (B1--a)--(B1-a)--(B3-a)--(B3--a);
\end{tikzpicture}
};

\path (original) ++(0:4cm) coordinate (original+); \path (remove) ++(180:4cm) coordinate (remove+);
\draw [->, line width=\edgewidth] (original+)--(remove+);
\end{tikzpicture}}
\caption{An example of removing a pair of consecutive intersections of zigzag paths.}\label{fig_remove_pair}
\end{figure}
\end{itemize}
\end{operation}

Since the dimer model $\overline{\nu}^\zig_\calX(\Gamma, \{z_1,\dots,z_r\})$ $($resp.\ $\overline{\nu}^\zag_\calY(\Gamma, \{z_1,\dots,z_r\})$$)$ is non-degenerate by Proposition~\ref{prop_nondegenerate}
and since it does not contain a homologically trivial zigzag path (see the proofs of Propositions~\ref{deform_vector}, \ref{zigzag_afterdeform1} and~\ref{zigzag_afterdeform2}),
we can produce a dimer model satisfying the conditions in Definition~\ref{def_consistent}
from $\overline{\nu}^\zig_\calX(\Gamma, \{z_1,\dots,z_r\})$ (resp.\ $\overline{\nu}^\zag_\calY(\Gamma, \{z_1,\dots,z_r\})$)
by iterated application of Operation~\ref{operation_BIU}.
Thus, $\nu^\zig_\calX(\Gamma, \{z_1,\dots,z_r\})$ and $\nu^\zag_\calY(\Gamma, \{z_1,\dots,z_r\})$ are consistent dimer models, but those are not necessarily isoradial even if $\Gamma$ is isoradial (see Example~\ref{ex_not_isoradial}).
Furthermore, since the operation (join) does not change the slopes of zigzag paths, this proves the following proposition.

\begin{Proposition}\label{prop_make_consistent}
Let the notation be the same as Definitions {\rm \ref{def_exdeformation_zig}} and {\rm \ref{def_exdeformation_zag}}.
Then, we have the following.
\begin{itemize}\itemsep=0pt
\item[\rm (1)] The dimer models $\nu^\zig_\calX(\Gamma, \{z_1,\dots,z_r\})$ and $\nu^\zag_\calY(\Gamma, \{z_1,\dots,z_r\})$ are consistent.
\item[\rm (2)] The set of slopes of zigzag paths on $\nu^\zig_\calX(\Gamma, \{z_1,\dots,z_r\})$ $($resp.\ $\nu^\zag_\calY(\Gamma, \{z_1,\dots,z_r\})$$)$
is the same as that of $\overline{\nu}^\zig_\calX(\Gamma, \{z_1,\dots,z_r\})$ $($resp.\ $\overline{\nu}^\zag_\calY(\Gamma, \{z_1,\dots,z_r\})$$)$.
\end{itemize}
\end{Proposition}

\begin{Remark}\label{rem_def_deform}
We note several additional points concerning the definition of the extended deformations.
\begin{itemize}\itemsep=0pt
\item[(1)] The join move does not change the slopes of zigzag paths, and hence it does not affect the associated PM polygon.
Thus, when we are interested in only the PM polygon, we may skip (join).
\item[(2)]
Even if we choose different sets of intersections $X_1^\prime,\dots,X_r^\prime$ (resp.~$Y_1^\prime,\dots,Y_r^\prime$) in Definition~\ref{def_deformation_data}(7),
the PM polygon of the deformed dimer model is the same as that of $\nu^\zig_\calX(\Gamma)$ (resp.~$\nu^\zag_\calY(\Gamma)$), as we will show in Proposition~\ref{other_weight_PMpolygon}.
\item[(3)] If a dimer model $\Gamma$ is hexagonal or rectangular (see Definition~\ref{def_hexagonal_square}),
we may skip the operations (zig-4), (zig-5), (zag-4) and (zag-5) when we define $\nu^\zig_\calX(\Gamma, \{z_1,\dots,z_r\})$ and $\nu^\zag_\calY(\Gamma, \{z_1,\dots,z_r\})$, as we will show in Proposition~\ref{skip_hexagonal_square}.
\end{itemize}
\end{Remark}

\section{Foundations of extended deformations of dimer models}\label{sec_proof}

In this section, we observe the fundamental properties of extended zig-deformations and zag-deformations.
In particular, we will pursue the change of zigzag paths under these deformations
in Sections~\ref{subsec_behave_zigzag} and~\ref{subsec_property_zigzag}.
We refer the reader to Appendix~\ref{app_large_example} for understanding those observation in a concrete example.

Throughout this section, we keep the notation of Sections~\ref{sec_def_deform} and~\ref{sec_def_exdeform} unless otherwise stated.

\subsection{The proof of the non-degeneracy}\label{subsec_nondegenerate_deformation}

In this subsection, we prove the non-degeneracy of the dimer models $\overline{\nu}^\zig_\calX(\Gamma, \{z_1,\dots,z_r\})$ and $\overline{\nu}^\zag_\calY(\Gamma, \{z_1,\dots,z_r\})$.

\begin{Proposition}\label{prop_nondegenerate}
The dimer models $\overline{\nu}^\zig_\calX(\Gamma, \{z_1,\dots,z_r\})$, and $\overline{\nu}^\zag_\calY(\Gamma, \{z_1,\dots,z_r\})$ are non-degenerate.
\end{Proposition}

\begin{proof}
We prove this for $\overline{\nu}^\zig_\calX(\Gamma)=\overline{\nu}^\zig_\calX(\Gamma, \{z_1,\dots,z_r\})$, the other case is similar.
Let $\Gamma^\prime$ be the dimer model obtained by applying the operations (zig-1)--(zig-3) to $\Gamma$.

{\it The first step.}
We recall that the non-degeneracy condition is equivalent to the strong marriage condition; that is, a dimer model has equal numbers of black and white nodes and every proper subset $S$ of the black nodes satisfies the condition that~$S$ is connected to at least $|S|+1$ white nodes.

Suppose that $\Gamma^\prime$ is non-degenerate. Then $\Gamma^\prime$ satisfies the strong marriage condition.
By applying the operation (zig-4), we obtain the dimer model $\overline{\nu}^\zig_\calX(\Gamma, \{z_1,\dots,z_r\})$.
Since (zig-4) is the operation that adds new edges, it also satisfies the strong marriage condition, and hence it is non-degenerate.
Therefore, it is enough to show that $\Gamma^\prime$ is non-degenerate.

{\it The second step.}
We next consider a sub-dimer motel $\Gamma^{\prime \prime}$ of $\Gamma$ satisfying the condition $(*)$ below.
Note that $\Gamma^{\prime\prime}$ is said to be a \emph{sub-dimer model} of $\Gamma$
if the set of the nodes coincide and the set of edges in $\Gamma^{\prime\prime}$ is a subset of edges in $\Gamma$.

Condition $(*)$: For any given edge $e$ in $\Gamma^{\prime\prime}$,
let $z'$ and $z''$ be the two different zigzag paths on~$\Gamma^{\prime\prime}$ containing $e$.
Then either $(*1)$ or $(*2)$ holds:
\begin{itemize}
\setlength{\parskip}{0pt}
\itemsep=0pt
\setlength{\leftskip}{0.3cm}
\item[$(*1)$] Either $[z'] \in \{[z_i],-[z_i]\}$ or $[z''] \in \{[z_i],-[z_i]\}$ holds;
\item[$(*2)$] For any $i$, if $z'$ intersects with $z_i$ in $\Zig(z_i)$ (resp.~$\Zag(z_i)$), then $z''$ intersects with $z_i$ in~$\Zag(z_i)$ (resp.~$\Zig(z_i)$).
\end{itemize}

A sub-dimer model $\Gamma^{\prime \prime}$ of $\Gamma$ satisfying $(*)$ can be constructed using
the algorithm developed in~\cite{Gul,IU2} for proving Theorem~\ref{existence_dimer}.
We adapt it to our situation.

First, let us consider the original dimer model $\Gamma$ and let $E_1,E_2,\dots,E_m$ be all edges of $\Delta_\Gamma$, where the primitive outer normal vector for $E_1$ is the slope $[z_1]=\cdots=[z_r]$.
We assume that these edges are ordered cyclically in the anti-clockwise direction (see Figure~\ref{PMpolygon_normalvec} as a reference).
Also, let $v_j$ be the primitive outer normal vector corresponding to~$E_j$, for $1 \leq j \leq m$.
Let~$a$ be the index such that $v_a=-v_1$ if there exists such an edge among $E_2,\dots,E_m$; that is, $E_a$~is parallel to~$E_1$.
If there is no such edge, then let $E_a= \varnothing$ for simplicity of notation.
We recall that by Proposition~\ref{zigzag_sidepolygon}, for each $E_j\neq\varnothing$ there exist zigzag paths on $\Gamma$ such that the associated slopes coincide with $v_j$, and the set of such zigzag paths is denoted by $\calZ_{v_j}=\calZ_{v_j}(\Gamma)$.
By our assumption, each slope of the zigzag paths in $\calZ_1\coloneqq\calZ_{v_2}\cup\cdots\cup\calZ_{v_{a-1}}$ and $v_1$ are linearly independent.
Thus, the zigzag paths in $\calZ_1$ intersect with a type~I zigzag path $z_i$ precisely once in the universal cover (see Lemma~\ref{slope_linearly_independent}).
By definition of $E_2,\dots,E_{a-1}$, such an intersection is given from the right of $z_i$ to the left of $z_i$,
and hence zigzag paths in $\calZ_1$ intersect with~$z_i$ in~$\Zag(z_i)$.
Similarly, we find that the zigzag paths in $\calZ_2\coloneqq\calZ_{v_{a+1}}\cup\cdots\cup\calZ_{v_m}$ intersect with $z_i$ precisely once in the universal cover, and specifically, they intersect with $z_i$ in $\Zig(z_i)$.

If there are at least two edges between $E_2$ and $E_{a-1}$ (i.e., $a \geq 4$),
then we take two adjacent edges, say, $E_2$ and~$E_3$.
Since $v_2$ and $v_3$ are linearly independent, the zigzag paths $z_2'\in\calZ_{v_2}$ and $z_3'\in\calZ_{v_3}$ intersect at some edge of $\Gamma$.
Clearly, such an intersection $z_2' \cap z_3'$ is neither any edge constituting any zigzag path whose slope is $[z_i]$ nor $-[z_i]$.
Now, remove an edge in $z_2' \cap z_3'$. This operation merges $z_2'$ and $z_3'$, in which case the resulting dimer model stays consistent and the associated PM polygon
changes into the polygon obtained by cutting the corner of the original PM polygon consisting of edges whose outer normal vectors are $[z_2']$ and $[z_3']$
(see \cite[Sections~5 and 6]{Gul} for more details).
Furthermore, since $z_2' \cap z_i \subset \Zag(z_i)$ and $z_3' \cap z_i \subset \Zag(z_i)$ for each~$i$, edges in $z_2' \cap z_3'$ do not share a node with $z_i$ for any $i$.
Thus, $z_i$ is still type I even if we apply this operation and the merged zigzag path intersects with $z_i$ in $\Zag(z_i)$ .
We repeat this procedure until there are no two edges between $E_2$ and $E_{a-1}$.

Similarly, if there are at least two edges between $E_{a+1}$ and $E_m$ (i.e., $m-a \geq 2$), then we do the same procedures as above until there are no two edges between~$E_{a+1}$ and~$E_m$.
After removing all suitable edges from $\Gamma$, we get a consistent dimer model, which is clearly a sub-dimer model of~$\Gamma$,
and we denote this by~$\Gamma_{\sf sub}$.
Since we do not remove edges contained in a zigzag path whose slope is~$\pm[z_i]$ in the above arguments,
the edges~$E_1$ and~$E_a$ (if this is not empty) of $\Delta_\Gamma$ are preserved on $\Delta_{\Gamma_{\sf sub}}$ (and hence we will use the same notation).
Also, the edges $E_2,\dots,E_{a-1}$ (resp.~$E_{a+1},\dots,E_m$) of $\Delta_\Gamma$ are substituted by a single edge in $\Delta_{\Gamma_{\sf sub}}$.
We denote such an edge by $E_2^\prime$ (resp.~$E_m^\prime$).
We note that zigzag paths corresponding to $E_2^\prime$ (resp.~$E_m^\prime$) are obtained by merging the ones corresponding to $E_2,\dots,E_{a-1}$ (resp.~$E_{a+1},\dots,E_m$).
In particular, the PM polygon $\Delta_{\Gamma_{\sf sub}}$ is formed by $E_1$, $E_2^\prime$, $E_a$, $E_m^\prime$, in which case $\Delta_{\Gamma_{\sf sub}}$ is a triangle or a trapezoid.
Then, it follows from the construction of $\Gamma_{\sf sub}$ that
\begin{itemize}\itemsep=0pt
\item[--] the zigzag paths $z_1,\dots,z_r$ on $\Gamma$ are preserved on $\Gamma_{\sf sub}$ and they are type~I;
\item[--] the zigzag paths on $\Gamma$ corresponding to $E_a$ (if this is not empty) are preserved on $\Gamma_{\sf sub}$;
\item[--] the zigzag paths corresponding to $E_2^\prime$ (resp.~$E_m^\prime$) are intersected with $z_i$ in $\Zag(z_i)$ (resp.\ $\Zig(z_i)$)
(see also the argument in the proof of Lemma~\ref{intersect_zigorzag}).
\end{itemize}
From these facts, it is easy to verify that $\Gamma_{\sf sub}$ is a sub-dimer model $\Gamma^{\prime\prime}$ of $\Gamma$ satisfying the condition $(*)$.

{\it The third step.}
Now, we apply the operations (zig-1)--(zig-3) in Definition \ref{def_deformation_zig} to $\Gamma_{\sf sub}$,
which is possible since $z_1,\dots,z_r$ are preserved on~$\Gamma_{\sf sub}$.
We denote the resulting dimer model by~$\Gamma^\prime_{\sf sub}$.
By construction, $\Gamma^\prime$ can be obtained by adding some edges to $\Gamma^\prime_{\sf sub}$. Thus, similar to the discussion in the first step, it is enough to show that $\Gamma^\prime_{\sf sub}$ is non-degenerate to prove the non-degeneracy of~$\Gamma^\prime$.

Thus, we now show that $\Gamma^\prime_{\sf sub}$ is non-degenerate.
To do this, we prove the existence of a~perfect matching that contains a given edge $e$ of $\Gamma^\prime_{\sf sub}$.
We divide the set of edges into four cases~(i)--(iv):
\begin{itemize}\itemsep=0pt
\item[(i)] $e$ is of the form $(b_{i,j-1}[2m-1], w_{i,j}[2m-1])$ for some $1 \leq i \leq r, 1 \leq j \leq p_i+1$ and $1 \leq m \leq n$,
where we let $b_{i,0}[2m-1]=b_i[2m-1]$ and $w_{i,p_i+1}[2m-1]=w_i[2m-1]$;
\item[(ii)] $e$ is of the form $(w_{i,j}[2m-1], b_{i,j}[2m-1])$ for some $1 \leq i \leq r, 1 \leq j \leq p_i$ and $1 \leq m \leq n$;
\item[(iii)] $e$ is of the form $(b_{i,j}[2m-1],w_{i,j}[2m-3])$ for some $1 \leq i \leq r, 1 \leq j \leq p_i$ and $1 \leq m \leq n$;
\item[(iv)] $e$ is of a form other than those in (i)--(iii).
\end{itemize}
Namely, (i) and (ii) are the edges emanating from the process (zig-1), (iii) is an edge added in the process (zig-3), and (iv) is an edge that is invariant between $\Gamma^\prime_{\sf sub}$ and $\Gamma_{\sf sub}$.

Since $\Gamma_{\sf sub}$ is consistent and contains the type I zigzag paths $z_1,\dots,z_r$,
there exist corner perfect matchings $\sfP$ and $\sfP^\prime$ on $\Gamma_{\sf sub}$ that are adjacent and satisfy $\sfP \cap z_i=\Zig(z_i)$ and $\sfP^\prime \cap z_i=\Zag(z_i)$ for $1 \leq i \leq r$ (see Section~\ref{subsec_relation_zigzag_pm}).
Similarly, let~$\sfQ$ be a corner perfect matching on $\Gamma_{\sf sub}$ with $\sfQ \cap z_i = \varnothing$ for $1 \leq i \leq r$.
The existence of such~$\sfQ$ is guaranteed by Lemma~\ref{lem_existence_pm}.
We will use these $\sfP$, $\sfP^\prime$ and $\sfQ$ in order to find a suitable perfect matching on $\Gamma^\prime_{\sf sub}$ containing a given edge $e$.
We divide our discussions into the following cases (i)--(iv) that correspond to the above division of edges.

Case (i): Let \begin{gather}\label{PM_case(i)}
\sfP^{\prime\prime}=\bigg(\sfP \setminus \bigcup_{i=1}^r \Zig(z_i)\bigg) \cup \bigg(\bigcup_{i=1}^r \bigcup_{j=1}^{p_i+1} \bigcup_{m=1}^n(b_{i,j-1}[2m-1],w_{i,j}[2m-1])\bigg);
\end{gather}
see Figure~\ref{deformation_PM_i}. It is easy to see that $\sfP^{\prime\prime}$ is a perfect matching on $\Gamma^\prime_{\sf sub}$ containing $e$ in the case~(i).

Case (ii): Let \begin{gather*}
\sfP^{\prime\prime}=\sfQ \cup \bigg(\bigcup_{i=1}^r \bigcup_{j=1}^{p_i} \bigcup_{m=1}^n(w_{i,j}[2m-1],b_{i,j}[2m-1])\bigg);
\end{gather*}
see Figure~\ref{deformation_PM_ii}. Then, we see that $\sfP^{\prime\prime}$ is a perfect matching on $\Gamma^\prime_{\sf sub}$ containing $e$ in the case~(ii).

Case (iii): Let \begin{displaymath}
\sfP^{\prime\prime}=\sfQ \cup \bigg(\bigcup_{i=1}^r \bigcup_{j=1}^{p_i} \bigcup_{m=1}^n(b_{i,j}[2m-1],w_{i,j}[2m-3])\bigg); \end{displaymath}
see Figure~\ref{deformation_PM_iii}. Then, we see that $\sfP^{\prime\prime}$ is a perfect matching on $\Gamma^\prime_{\sf sub}$ containing~$e$ in the case~(iii).

Case (iv): We note that an edge $e$ in the case (iv) also appears in $\Gamma_{\sf sub}$, since~$e$ is unchanged even if we apply (zig-1)--(zig-3).
Thus, we can regard $e$ as an edge of $\Gamma_{\sf sub}$.
Since $\Gamma_{\sf sub}$ satisfies the condition $(*)$, the zigzag paths $z^\prime$, $z^{\prime\prime}$ on~$\Gamma_{\sf sub}$ that contain $e$ satisfy
either $(*1)$ or $(*2)$.
\begin{itemize}\itemsep=0pt
\item We assume that $z^\prime$ and $z^{\prime\prime}$ satisfy $(*1)$.
\begin{itemize}\itemsep=0pt
\item Let, say, $[z^\prime]=[z_i]$.
Since zigzag paths having the same slopes are obtained as the difference of adjacent corner perfect matchings, either $\sfP$ or $\sfP^\prime$ contains $e$.
If $e \in \sfP$, then we let $\sfP^{\prime\prime}$ be as in~\eqref{PM_case(i)}.
Then, $\sfP^{\prime\prime}$ is a perfect matching on $\Gamma^\prime_{\sf sub}$ containing $e$.
Even if $e \in \sfP^\prime$, we have the same conclusion by letting
\begin{displaymath}
\sfP^{\prime\prime}=\bigg(\sfP^\prime \setminus \bigcup_{i=1}^r \Zag(z_i)\bigg) \cup \bigg(\bigcup_{i=1}^r \bigcup_{j=1}^{p_i+1} \bigcup_{m=1}^n(b_{i,j-1}[2m-1],w_{i,j}[2m-1])\bigg).
\end{displaymath}
\item Let, say, $[z^\prime]=-[z_i]$, in which case $E_a\neq \varnothing$ and $z^\prime$ corresponds to~$E_a$.
Let $\sfQ^\prime$ and $\sfQ^{\prime\prime}$ be the corner perfect matchings on~$\Gamma_{\sf sub}$ whose difference forms $z'$. Let $e \in \sfQ^\prime$.
Since $h(\sfQ^\prime,\sfP_0)$ lies on $E_a$, where $\sfP_0$ is the reference perfect matching, we have $\sfQ^\prime\cap z_i=\varnothing$ for any $i$ by Lemma~\ref{zigzag_lem1}.
Thus, we let
\begin{displaymath}
\sfP^{\prime\prime}=\sfQ^\prime \cup \bigg(\bigcup_{i=1}^r \bigcup_{j=1}^{p_i} \bigcup_{m=1}^n(w_{i,j}[2m-1],b_{i,j}[2m-1])\bigg),
\end{displaymath}
and see that $\sfP^{\prime\prime}$ is a perfect matching on $\Gamma^\prime_{\sf sub}$ containing $e$.
\end{itemize}
\item We assume that $z^\prime$ and $z^{\prime\prime}$ satisfy $(*2)$.

Let, say, $z'$ intersect with each $z_i$ in $\Zig(z_i)$. Let $\sfQ^\prime$ and $\sfQ^{\prime\prime}$ be the corner perfect matchings on $\Gamma_{\sf sub}$ whose difference forms $z'$. Let $e \in \sfQ^\prime$.
As noted above, we see that all zigzag paths in $\Gamma_{\sf sub}$ intersecting with $z_i$ at some zig of $z_i$ have the same slopes.
This implies that $\sfQ^\prime$ contains all zigs of~$z_i$, i.e., $\sfQ^\prime \cap z_i=\Zig(z_i)$. This also means that $\sfQ^\prime=\sfP$.
Hence, we let~$\sfP^{\prime\prime}$ be the same as~\eqref{PM_case(i)} and see that $\sfP^{\prime\prime}$ is a perfect matching containing~$e$.\hfill\qed
\end{itemize}
\renewcommand{\qed}{}
\end{proof}

\begin{figure}[h!]\centering\vspace*{-10mm}
\scalebox{0.55}{
\begin{tikzpicture}
\newcommand{\edgewidth}{0.05cm} 
\newcommand{\nodewidth}{0.05cm} 
\newcommand{\noderad}{0.16} 

\newcommand{\pmwidth}{0.35cm}
\newcommand{\pmcolor}{red}

\coordinate (W1) at (0,1.2); \coordinate (W2) at (0,3.6);
\path (W1) ++(-20:4cm) coordinate (B1); \path (W2) ++(-20:4cm) coordinate (B2);
\path (B1) ++(200:2cm) coordinate (B0); \path (W2) ++(20:2cm) coordinate (W3);

\path (W1) ++(-20:0.8cm) coordinate (B1-1); \path (W1) ++(-20:1.6cm) coordinate (W1-1);
\path (W1) ++(-20:2.4cm) coordinate (B1-2); \path (W1) ++(-20:3.2cm) coordinate (W1-2);
\path (W2) ++(-20:0.8cm) coordinate (B2-1); \path (W2) ++(-20:1.6cm) coordinate (W2-1);
\path (W2) ++(-20:2.4cm) coordinate (B2-2); \path (W2) ++(-20:3.2cm) coordinate (W2-2);

\path (B1) ++(40:1cm) coordinate (B1e); \path (B1) ++(320:1cm) coordinate (B1s); \path (B1) ++(20:0.9cm) coordinate (B1es);
\path (W1) ++(140:1cm) coordinate (W1w); \path (W1) ++(220:1cm) coordinate (W1s); \path (W1) ++(200:0.9cm) coordinate (W1ws);
\path (B2) ++(40:1cm) coordinate (B2e); \path (B2) ++(320:1cm) coordinate (B2s); \path (B2) ++(20:0.9cm) coordinate (B2es);
\path (W2) ++(140:1cm) coordinate (W2w); \path (W2) ++(220:1cm) coordinate (W2s); \path (W2) ++(200:0.9cm) coordinate (W2ws);

\path (W1) ++(150:0.7cm) coordinate (W1+); \path (W2) ++(150:0.7cm) coordinate (W2+);
\path (B1) ++(330:0.7cm) coordinate (B1-); \path (B2) ++(330:0.7cm) coordinate (B2-);

\node (original) at (0,0) {
\begin{tikzpicture}
\draw [\pmcolor, line width=\pmwidth] (B1)--(W1);
\draw [\pmcolor, line width=\pmwidth] (B2)--(W2);
\draw [line width=\edgewidth] (B1)--(W1); \draw [line width=\edgewidth] (B2)--(W1) ; \draw [line width=\edgewidth] (B2)--(W2) ;
\draw [line width=\edgewidth] (B1)--(B0) ; \draw [line width=\edgewidth] (W2)--(W3) ;
\draw [line width=\edgewidth] (B1)--(B1e); \draw [line width=\edgewidth] (B1)--(B1s); \draw [line width=\edgewidth] (B1)--(B1es);
\draw [line width=\edgewidth] (W1)--(W1w); \draw [line width=\edgewidth] (W1)--(W1s); \draw [line width=\edgewidth] (W1)--(W1ws);
\draw [line width=\edgewidth] (B2)--(B2e); \draw [line width=\edgewidth] (B2)--(B2s); \draw [line width=\edgewidth] (B2)--(B2es);
\draw [line width=\edgewidth] (W2)--(W2w); \draw [line width=\edgewidth] (W2)--(W2s); \draw [line width=\edgewidth] (W2)--(W2ws);

\draw [line width=\edgewidth,line cap=round, dash pattern=on 0pt off 2.5\pgflinewidth] (W1+)-- ++(-90:0.5cm);
\draw [line width=\edgewidth,line cap=round, dash pattern=on 0pt off 2.5\pgflinewidth] (W2+)-- ++(-90:0.5cm);
\draw [line width=\edgewidth,line cap=round, dash pattern=on 0pt off 2.5\pgflinewidth] (B1-)-- ++(90:0.5cm);
\draw [line width=\edgewidth,line cap=round, dash pattern=on 0pt off 2.5\pgflinewidth] (B2-)-- ++(90:0.5cm);

\draw [line width=\nodewidth, fill=black] (B1) circle [radius=\noderad] ; \draw [line width=\nodewidth, fill=black] (B2) circle [radius=\noderad] ;
\draw [line width=\nodewidth, fill=white] (W1) circle [radius=\noderad] ; \draw [line width=\nodewidth, fill=white] (W2) circle [radius=\noderad] ;

\end{tikzpicture}};

\node (predeformed) at (12,0) {
\begin{tikzpicture}
\path (B1-1) ++(-70:1.5cm) coordinate (B1-1-); \path (B1-2) ++(-70:1.5cm) coordinate (B1-2-);
\path (W2-1) ++(110:1.5cm) coordinate (W2-1-); \path (W2-2) ++(110:1.5cm) coordinate (W2-2-);

\draw [\pmcolor, line width=\pmwidth] (W1)--(B1-1); \draw [\pmcolor, line width=\pmwidth] (W1-1)--(B1-2); \draw [\pmcolor, line width=\pmwidth] (W1-2)--(B1);
\draw [\pmcolor, line width=\pmwidth] (W2)--(B2-1); \draw [\pmcolor, line width=\pmwidth] (W2-1)--(B2-2); \draw [\pmcolor, line width=\pmwidth] (W2-2)--(B2);
\draw [line width=\edgewidth] (B1)--(W1); \draw [line width=\edgewidth] (B2)--(W2) ;
\draw [line width=\edgewidth] (B1)--(B1e); \draw [line width=\edgewidth] (B1)--(B1s); \draw [line width=\edgewidth] (B1)--(B1es);
\draw [line width=\edgewidth] (W1)--(W1w); \draw [line width=\edgewidth] (W1)--(W1s); \draw [line width=\edgewidth] (W1)--(W1ws);
\draw [line width=\edgewidth] (B2)--(B2e); \draw [line width=\edgewidth] (B2)--(B2s); \draw [line width=\edgewidth] (B2)--(B2es);
\draw [line width=\edgewidth] (W2)--(W2w); \draw [line width=\edgewidth] (W2)--(W2s); \draw [line width=\edgewidth] (W2)--(W2ws);

\draw [line width=\edgewidth,line cap=round, dash pattern=on 0pt off 2.5\pgflinewidth] (W1+)-- ++(-90:0.5cm);
\draw [line width=\edgewidth,line cap=round, dash pattern=on 0pt off 2.5\pgflinewidth] (W2+)-- ++(-90:0.5cm);
\draw [line width=\edgewidth,line cap=round, dash pattern=on 0pt off 2.5\pgflinewidth] (B1-)-- ++(90:0.5cm);
\draw [line width=\edgewidth,line cap=round, dash pattern=on 0pt off 2.5\pgflinewidth] (B2-)-- ++(90:0.5cm);

\draw [line width=\edgewidth] (B2-1)--(W1-1); \draw [line width=\edgewidth] (B2-2)--(W1-2);
\draw [line width=\edgewidth] (B1-1)--(B1-1-); \draw [line width=\edgewidth] (B1-2)--(B1-2-);
\draw [line width=\edgewidth] (W2-1)--(W2-1-); \draw [line width=\edgewidth] (W2-2)--(W2-2-);

\draw [line width=\nodewidth, fill=black] (B1) circle [radius=\noderad] ; \draw [line width=\nodewidth, fill=black] (B2) circle [radius=\noderad] ;
\draw [line width=\nodewidth, fill=black] (B1-1) circle [radius=\noderad] ; \draw [line width=\nodewidth, fill=black] (B1-2) circle [radius=\noderad] ;
\draw [line width=\nodewidth, fill=black] (B2-1) circle [radius=\noderad] ; \draw [line width=\nodewidth, fill=black] (B2-2) circle [radius=\noderad] ;
\draw [line width=\nodewidth, fill=white] (W1) circle [radius=\noderad] ; \draw [line width=\nodewidth, fill=white] (W2) circle [radius=\noderad] ;
\draw [line width=\nodewidth, fill=white] (W1-1) circle [radius=\noderad] ; \draw [line width=\nodewidth, fill=white] (W1-2) circle [radius=\noderad] ;
\draw [line width=\nodewidth, fill=white] (W2-1) circle [radius=\noderad] ; \draw [line width=\nodewidth, fill=white] (W2-2) circle [radius=\noderad] ;
\end{tikzpicture}};

\path (original) ++(0:4cm) coordinate (original+); \path (predeformed) ++(180:4cm) coordinate (predeformed+);
\draw [->, line width=\edgewidth] (original+)--(predeformed+) node[midway,xshift=0cm,yshift=0.5cm] {\Large (zig-1)--(zig-3)} ;

\end{tikzpicture}
}
\caption{The perfect matchings $\sfP$ on $\Gamma_{\sf sub}$ (left) and $\sfP^{\prime\prime}$ on $\Gamma^\prime_{\sf sub}$ (right) for the case (i).}
\label{deformation_PM_i}
\end{figure}

\begin{figure}[h!]\centering
\scalebox{0.55}{
\begin{tikzpicture}
\newcommand{\edgewidth}{0.05cm} 
\newcommand{\nodewidth}{0.05cm} 
\newcommand{\noderad}{0.16} 

\newcommand{\pmwidth}{0.35cm}
\newcommand{\pmcolor}{red}

\node (original) at (0,0) {
\begin{tikzpicture}
\draw [\pmcolor, line width=\pmwidth] (B1)--(B1es); \draw [\pmcolor, line width=\pmwidth] (B2)--(B2es);
\draw [\pmcolor, line width=\pmwidth] (W1)--(W1ws); \draw [\pmcolor, line width=\pmwidth] (W2)--(W2ws);
\draw [line width=\edgewidth] (B1)--(W1); \draw [line width=\edgewidth] (B2)--(W1) ; \draw [line width=\edgewidth] (B2)--(W2) ;
\draw [line width=\edgewidth] (B1)--(B0) ; \draw [line width=\edgewidth] (W2)--(W3) ;
\draw [line width=\edgewidth] (B1)--(B1e); \draw [line width=\edgewidth] (B1)--(B1s); \draw [line width=\edgewidth] (B1)--(B1es);
\draw [line width=\edgewidth] (W1)--(W1w); \draw [line width=\edgewidth] (W1)--(W1s); \draw [line width=\edgewidth] (W1)--(W1ws);
\draw [line width=\edgewidth] (B2)--(B2e); \draw [line width=\edgewidth] (B2)--(B2s); \draw [line width=\edgewidth] (B2)--(B2es);
\draw [line width=\edgewidth] (W2)--(W2w); \draw [line width=\edgewidth] (W2)--(W2s); \draw [line width=\edgewidth] (W2)--(W2ws);

\draw [line width=\edgewidth,line cap=round, dash pattern=on 0pt off 2.5\pgflinewidth] (W1+)-- ++(-90:0.5cm);
\draw [line width=\edgewidth,line cap=round, dash pattern=on 0pt off 2.5\pgflinewidth] (W2+)-- ++(-90:0.5cm);
\draw [line width=\edgewidth,line cap=round, dash pattern=on 0pt off 2.5\pgflinewidth] (B1-)-- ++(90:0.5cm);
\draw [line width=\edgewidth,line cap=round, dash pattern=on 0pt off 2.5\pgflinewidth] (B2-)-- ++(90:0.5cm);

\draw [line width=\nodewidth, fill=black] (B1) circle [radius=\noderad] ; \draw [line width=\nodewidth, fill=black] (B2) circle [radius=\noderad] ;
\draw [line width=\nodewidth, fill=white] (W1) circle [radius=\noderad] ; \draw [line width=\nodewidth, fill=white] (W2) circle [radius=\noderad] ;
\end{tikzpicture}};

\node (predeformed) at (12,0) {
\begin{tikzpicture}
\path (B1-1) ++(-70:1.5cm) coordinate (B1-1-); \path (B1-2) ++(-70:1.5cm) coordinate (B1-2-);
\path (W2-1) ++(110:1.5cm) coordinate (W2-1-); \path (W2-2) ++(110:1.5cm) coordinate (W2-2-);

\draw [\pmcolor, line width=\pmwidth] (B1)--(B1es); \draw [\pmcolor, line width=\pmwidth] (B2)--(B2es);
\draw [\pmcolor, line width=\pmwidth] (W1)--(W1ws); \draw [\pmcolor, line width=\pmwidth] (W2)--(W2ws);
\draw [\pmcolor, line width=\pmwidth] (W1-1)--(B1-1); \draw [\pmcolor, line width=\pmwidth] (W1-2)--(B1-2);
\draw [\pmcolor, line width=\pmwidth] (W2-1)--(B2-1); \draw [\pmcolor, line width=\pmwidth] (W2-2)--(B2-2);
\draw [line width=\edgewidth] (B1)--(W1); \draw [line width=\edgewidth] (B2)--(W2) ;
\draw [line width=\edgewidth] (B1)--(B1e); \draw [line width=\edgewidth] (B1)--(B1s); \draw [line width=\edgewidth] (B1)--(B1es);
\draw [line width=\edgewidth] (W1)--(W1w); \draw [line width=\edgewidth] (W1)--(W1s); \draw [line width=\edgewidth] (W1)--(W1ws);
\draw [line width=\edgewidth] (B2)--(B2e); \draw [line width=\edgewidth] (B2)--(B2s); \draw [line width=\edgewidth] (B2)--(B2es);
\draw [line width=\edgewidth] (W2)--(W2w); \draw [line width=\edgewidth] (W2)--(W2s); \draw [line width=\edgewidth] (W2)--(W2ws);

\draw [line width=\edgewidth,line cap=round, dash pattern=on 0pt off 2.5\pgflinewidth] (W1+)-- ++(-90:0.5cm);
\draw [line width=\edgewidth,line cap=round, dash pattern=on 0pt off 2.5\pgflinewidth] (W2+)-- ++(-90:0.5cm);
\draw [line width=\edgewidth,line cap=round, dash pattern=on 0pt off 2.5\pgflinewidth] (B1-)-- ++(90:0.5cm);
\draw [line width=\edgewidth,line cap=round, dash pattern=on 0pt off 2.5\pgflinewidth] (B2-)-- ++(90:0.5cm);

\draw [line width=\edgewidth] (B2-1)--(W1-1); \draw [line width=\edgewidth] (B2-2)--(W1-2);
\draw [line width=\edgewidth] (B1-1)--(B1-1-); \draw [line width=\edgewidth] (B1-2)--(B1-2-);
\draw [line width=\edgewidth] (W2-1)--(W2-1-); \draw [line width=\edgewidth] (W2-2)--(W2-2-);

\draw [line width=\nodewidth, fill=black] (B1) circle [radius=\noderad] ; \draw [line width=\nodewidth, fill=black] (B2) circle [radius=\noderad] ;
\draw [line width=\nodewidth, fill=black] (B1-1) circle [radius=\noderad] ; \draw [line width=\nodewidth, fill=black] (B1-2) circle [radius=\noderad] ;
\draw [line width=\nodewidth, fill=black] (B2-1) circle [radius=\noderad] ; \draw [line width=\nodewidth, fill=black] (B2-2) circle [radius=\noderad] ;
\draw [line width=\nodewidth, fill=white] (W1) circle [radius=\noderad] ; \draw [line width=\nodewidth, fill=white] (W2) circle [radius=\noderad] ;
\draw [line width=\nodewidth, fill=white] (W1-1) circle [radius=\noderad] ; \draw [line width=\nodewidth, fill=white] (W1-2) circle [radius=\noderad] ;
\draw [line width=\nodewidth, fill=white] (W2-1) circle [radius=\noderad] ; \draw [line width=\nodewidth, fill=white] (W2-2) circle [radius=\noderad] ;
\end{tikzpicture}};

\path (original) ++(0:4cm) coordinate (original+); \path (predeformed) ++(180:4cm) coordinate (predeformed+);
\draw [->, line width=\edgewidth] (original+)--(predeformed+) node[midway,xshift=0cm,yshift=0.5cm] {\Large (zig-1)--(zig-3)} ;

\end{tikzpicture}
}
\caption{The perfect matchings $\sfQ$ on $\Gamma_{\sf sub}$ (left) and $\sfP^{\prime\prime}$ on $\Gamma^\prime_{\sf sub}$ (right) for the case (ii).}\label{deformation_PM_ii}
\end{figure}

\begin{figure}[h!]\centering
\scalebox{0.55}{
\begin{tikzpicture}
\newcommand{\edgewidth}{0.05cm} 
\newcommand{\nodewidth}{0.05cm} 
\newcommand{\noderad}{0.16} 

\newcommand{\pmwidth}{0.35cm}
\newcommand{\pmcolor}{red}

\node (original) at (0,0) {
\begin{tikzpicture}
\draw [\pmcolor, line width=\pmwidth] (B1)--(B1es); \draw [\pmcolor, line width=\pmwidth] (B2)--(B2es);
\draw [\pmcolor, line width=\pmwidth] (W1)--(W1ws); \draw [\pmcolor, line width=\pmwidth] (W2)--(W2ws);
\draw [line width=\edgewidth] (B1)--(W1); \draw [line width=\edgewidth] (B2)--(W1) ; \draw [line width=\edgewidth] (B2)--(W2) ;
\draw [line width=\edgewidth] (B1)--(B0) ; \draw [line width=\edgewidth] (W2)--(W3) ;
\draw [line width=\edgewidth] (B1)--(B1e); \draw [line width=\edgewidth] (B1)--(B1s); \draw [line width=\edgewidth] (B1)--(B1es);
\draw [line width=\edgewidth] (W1)--(W1w); \draw [line width=\edgewidth] (W1)--(W1s); \draw [line width=\edgewidth] (W1)--(W1ws);
\draw [line width=\edgewidth] (B2)--(B2e); \draw [line width=\edgewidth] (B2)--(B2s); \draw [line width=\edgewidth] (B2)--(B2es);
\draw [line width=\edgewidth] (W2)--(W2w); \draw [line width=\edgewidth] (W2)--(W2s); \draw [line width=\edgewidth] (W2)--(W2ws);

\draw [line width=\edgewidth,line cap=round, dash pattern=on 0pt off 2.5\pgflinewidth] (W1+)-- ++(-90:0.5cm);
\draw [line width=\edgewidth,line cap=round, dash pattern=on 0pt off 2.5\pgflinewidth] (W2+)-- ++(-90:0.5cm);
\draw [line width=\edgewidth,line cap=round, dash pattern=on 0pt off 2.5\pgflinewidth] (B1-)-- ++(90:0.5cm);
\draw [line width=\edgewidth,line cap=round, dash pattern=on 0pt off 2.5\pgflinewidth] (B2-)-- ++(90:0.5cm);

\draw [line width=\nodewidth, fill=black] (B1) circle [radius=\noderad] ; \draw [line width=\nodewidth, fill=black] (B2) circle [radius=\noderad] ;
\draw [line width=\nodewidth, fill=white] (W1) circle [radius=\noderad] ; \draw [line width=\nodewidth, fill=white] (W2) circle [radius=\noderad] ;
\end{tikzpicture}};

\node (predeformed) at (12,0) {
\begin{tikzpicture}
\path (B1-1) ++(-70:1.5cm) coordinate (B1-1-); \path (B1-2) ++(-70:1.5cm) coordinate (B1-2-);
\path (W2-1) ++(110:1.5cm) coordinate (W2-1-); \path (W2-2) ++(110:1.5cm) coordinate (W2-2-);

\draw [\pmcolor, line width=\pmwidth] (B1)--(B1es); \draw [\pmcolor, line width=\pmwidth] (B2)--(B2es);
\draw [\pmcolor, line width=\pmwidth] (W1)--(W1ws); \draw [\pmcolor, line width=\pmwidth] (W2)--(W2ws);
\draw [\pmcolor, line width=\pmwidth] (B1-1)--(B1-1-); \draw [\pmcolor, line width=\pmwidth] (B1-2)--(B1-2-);
\draw [\pmcolor, line width=\pmwidth] (B2-1)--(W1-1); \draw [\pmcolor, line width=\pmwidth] (B2-2)--(W1-2);
\draw [\pmcolor, line width=\pmwidth] (W2-1)--(W2-1-); \draw [\pmcolor, line width=\pmwidth] (W2-2)--(W2-2-);
\draw [line width=\edgewidth] (B1)--(W1); \draw [line width=\edgewidth] (B2)--(W2) ;
\draw [line width=\edgewidth] (B1)--(B1e); \draw [line width=\edgewidth] (B1)--(B1s); \draw [line width=\edgewidth] (B1)--(B1es);
\draw [line width=\edgewidth] (W1)--(W1w); \draw [line width=\edgewidth] (W1)--(W1s); \draw [line width=\edgewidth] (W1)--(W1ws);
\draw [line width=\edgewidth] (B2)--(B2e); \draw [line width=\edgewidth] (B2)--(B2s); \draw [line width=\edgewidth] (B2)--(B2es);
\draw [line width=\edgewidth] (W2)--(W2w); \draw [line width=\edgewidth] (W2)--(W2s); \draw [line width=\edgewidth] (W2)--(W2ws);

\draw [line width=\edgewidth,line cap=round, dash pattern=on 0pt off 2.5\pgflinewidth] (W1+)-- ++(-90:0.5cm);
\draw [line width=\edgewidth,line cap=round, dash pattern=on 0pt off 2.5\pgflinewidth] (W2+)-- ++(-90:0.5cm);
\draw [line width=\edgewidth,line cap=round, dash pattern=on 0pt off 2.5\pgflinewidth] (B1-)-- ++(90:0.5cm);
\draw [line width=\edgewidth,line cap=round, dash pattern=on 0pt off 2.5\pgflinewidth] (B2-)-- ++(90:0.5cm);

\draw [line width=\edgewidth] (B2-1)--(W1-1); \draw [line width=\edgewidth] (B2-2)--(W1-2);
\draw [line width=\edgewidth] (B1-1)--(B1-1-); \draw [line width=\edgewidth] (B1-2)--(B1-2-);
\draw [line width=\edgewidth] (W2-1)--(W2-1-); \draw [line width=\edgewidth] (W2-2)--(W2-2-);
\draw [line width=\nodewidth, fill=black] (B1) circle [radius=\noderad] ; \draw [line width=\nodewidth, fill=black] (B2) circle [radius=\noderad] ;
\draw [line width=\nodewidth, fill=black] (B1-1) circle [radius=\noderad] ; \draw [line width=\nodewidth, fill=black] (B1-2) circle [radius=\noderad] ;
\draw [line width=\nodewidth, fill=black] (B2-1) circle [radius=\noderad] ; \draw [line width=\nodewidth, fill=black] (B2-2) circle [radius=\noderad] ;
\draw [line width=\nodewidth, fill=white] (W1) circle [radius=\noderad] ; \draw [line width=\nodewidth, fill=white] (W2) circle [radius=\noderad] ;
\draw [line width=\nodewidth, fill=white] (W1-1) circle [radius=\noderad] ; \draw [line width=\nodewidth, fill=white] (W1-2) circle [radius=\noderad] ;
\draw [line width=\nodewidth, fill=white] (W2-1) circle [radius=\noderad] ; \draw [line width=\nodewidth, fill=white] (W2-2) circle [radius=\noderad] ;
\end{tikzpicture}};

\path (original) ++(0:4cm) coordinate (original+); \path (predeformed) ++(180:4cm) coordinate (predeformed+);
\draw [->, line width=\edgewidth] (original+)--(predeformed+) node[midway,xshift=0cm,yshift=0.5cm] {\Large (zig-1)--(zig-3)} ;

\end{tikzpicture}
}
\caption{The perfect matchings $\sfQ$ on $\Gamma_{\sf sub}$ (left) and $\sfP^{\prime\prime}$ on $\Gamma^\prime_{\sf sub}$ (right) for the case (iii).}
\label{deformation_PM_iii}
\end{figure}

\subsection{Behaviors of zigzag paths after extended deformations}\label{subsec_behave_zigzag}

In this subsection, we study zigzag paths of the deformed dimer models and their slopes.
We mainly discuss the extended zig-deformation, but the same assertions hold for the extended zag-deformation by a similar argument.
We will work with the notation in Definition~\ref{def_deformation_data} and Setting~\ref{def_deformation_exdata}.
We consider the extended deformation $\nu_\calX^\zig(\Gamma,\{z_1,\dots,z_r\})$ of $\Gamma$ (see Definitions~\ref{def_deformation_zig} and~\ref{def_exdeformation_zig}).

First, we observe zigzag paths of $\Gamma$ and fix the notation which we will use throughout this section.

\begin{observation}\label{obs_deformed_part}
Let $z_1,\dots,z_r$ be type I zigzag paths of $\Gamma$ with $[z_1]=\cdots=[z_r]$.
These zigzag paths are ordered cyclically along the subscript $i=1,\dots,r$.
For any $\alpha\in\ZZ$ and $i=1,\dots,r$, let~$\widetilde{z_i}(\alpha)$ be a zigzag path on the universal cover $\widetilde{\Gamma}$ whose projection on $\Gamma$ is~$z_i$.
Each $\widetilde{z_i}(\alpha)$ divides~$\RR^2$ into two parts, and thus it makes sense to consider the left of $\widetilde{z_i}(\alpha)$ and the right of~$\widetilde{z_i}(\alpha)$.
Then, we can write a straight line $\ell_{i,\alpha}^L$ (resp.~$\ell_{i,\alpha}^R$) on the left (resp.\ right) of $\widetilde{z_i}(\alpha)$ such that
the gradient of $\ell_{i,\alpha}^L$ (resp.~$\ell_{i,\alpha}^R$) is $v=[z_i]$ and the nodes contained in the region obtained as the intersection of the right of $\ell_{i,\alpha}^L$ and the left of $\ell_{i,\alpha}^R$ are precisely the nodes located on $\widetilde{z_i}(\alpha)$.
We will call such a region the \emph{$(i,\alpha)$-th deformed part} (see Figure~\ref{def_leftright}).

\begin{figure}[h!]\centering
{\scalebox{0.6}{
\begin{tikzpicture}[sarrow/.style={black, -latex, very thick},tarrow/.style={black, latex-, very thick}]
\newcommand{\edgewidth}{0.05cm} 
\newcommand{\nodewidth}{0.05cm} 
\newcommand{\noderad}{0.16} 

\coordinate (B1) at (0,0); \coordinate (B2) at (3,0); \coordinate (B3) at (6,0);
\coordinate (W1) at (1.5,1.5); \coordinate (W2) at (4.5,1.5); \coordinate (W3) at (7.5,1.5);

\path (B1) ++(135:1.5cm) coordinate (B1w); \path (B1) ++(225:1.2cm) coordinate (B1s); \path (B1) ++(315:1.2cm) coordinate (B1e);
\path (W1) ++(45:1.2cm) coordinate (W1w); \path (W1) ++(135:1.2cm) coordinate (W1s);
\path (B2) ++(225:1.2cm) coordinate (B2s); \path (B2) ++(315:1.2cm) coordinate (B2e);
\path (W2) ++(45:1.2cm) coordinate (W2w); \path (W2) ++(135:1.2cm) coordinate (W2s);
\path (B3) ++(225:1.2cm) coordinate (B3s); \path (B3) ++(315:1.2cm) coordinate (B3e);
\path (W3) ++(315:1.5cm) coordinate (W3n);
\path (W3) ++(45:1.2cm) coordinate (W3w); \path (W3) ++(135:1.2cm) coordinate (W3s);

\path (B1) ++(245:0.5cm) coordinate (B1ss); \path (B1) ++(295:0.5cm) coordinate (B1ee);
\path (W1) ++(65:0.5cm) coordinate (W1ww); \path (W1) ++(115:0.5cm) coordinate (W1ss);
\path (B2) ++(245:0.5cm) coordinate (B2ss); \path (B2) ++(295:0.5cm) coordinate (B2ee);
\path (W2) ++(65:0.5cm) coordinate (W2ww); \path (W2) ++(115:0.5cm) coordinate (W2ss);
\path (B3) ++(245:0.5cm) coordinate (B3ss); \path (B3) ++(295:0.5cm) coordinate (B3ee);
\path (W3) ++(65:0.5cm) coordinate (W3ww); \path (W3) ++(115:0.5cm) coordinate (W3ss);

\filldraw[blue!15, fill=blue!15] (-1.3,2.1)--(9,2.1)--(9,3.5)--(-1.3,3.5);
\filldraw[blue!15, fill=blue!15] (-1.3,-0.6)--(9,-0.6)--(9,-2)--(-1.3,-2);
\draw [line width=\edgewidth] (B1w)--(B1)--(W1)--(B2)--(W2)--(B3)--(W3)--(W3n);
\draw [line width=\edgewidth] (B1s)--(B1)--(B1e); \draw[line width=\edgewidth] (B2s)--(B2)--(B2e); \draw [line width=\edgewidth] (B3s)--(B3)--(B3e);
\draw [line width=\edgewidth] (W1w)--(W1)--(W1s); \draw[line width=\edgewidth] (W2w)--(W2)--(W2s); \draw [line width=\edgewidth] (W3w)--(W3)--(W3s);

\draw [line width=\edgewidth,line cap=round, dash pattern=on 0pt off 2.5\pgflinewidth] (B1ss)--(B1ee);
\draw [line width=\edgewidth,line cap=round, dash pattern=on 0pt off 2.5\pgflinewidth] (W1ww)--(W1ss);
\draw [line width=\edgewidth,line cap=round, dash pattern=on 0pt off 2.5\pgflinewidth] (B2ss)--(B2ee);
\draw [line width=\edgewidth,line cap=round, dash pattern=on 0pt off 2.5\pgflinewidth] (W2ww)--(W2ss);
\draw [line width=\edgewidth,line cap=round, dash pattern=on 0pt off 2.5\pgflinewidth] (B3ss)--(B3ee);
\draw [line width=\edgewidth,line cap=round, dash pattern=on 0pt off 2.5\pgflinewidth] (W3ww)--(W3ss);
\draw [line width=\nodewidth, fill=black] (B1) circle [radius=\noderad] ;
\draw [line width=\nodewidth, fill=black] (B2) circle [radius=\noderad] ;
\draw [line width=\nodewidth, fill=black] (B3) circle [radius=\noderad] ;
\draw [line width=\nodewidth, fill=white] (W1) circle [radius=\noderad] ;
\draw [line width=\nodewidth, fill=white] (W2) circle [radius=\noderad] ;
\draw [line width=\nodewidth, fill=white] (W3) circle [radius=\noderad] ;
\path (B1w) ++(-90:0.2cm) coordinate (B1w+);
\path (B1) ++(-90:0.2cm) coordinate (B1+); \path (W1) ++(-90:0.2cm) coordinate (W1+);
\path (B2) ++(-90:0.2cm) coordinate (B2+); \path (W2) ++(-90:0.2cm) coordinate (W2+);
\path (B3) ++(-90:0.2cm) coordinate (B3+); \path (W3) ++(-90:0.2cm) coordinate (W3+);
\path (W3n) ++(-90:0.2cm) coordinate (W3n+);
\draw [->, rounded corners, line width=0.08cm, red] (B1w+)--(B1+)--(W1+)--(B2+)--(W2+)--(B3+)--(W3+)--(W3n+) ;
\draw[line width=0.05cm, blue] (-1.3,2.1)--(9,2.1); \draw[line width=0.05cm, blue] (-1.3,-0.6)--(9,-0.6);
\node[red] at (9.3,0.35) {\Large$\widetilde{z_i}(\alpha)$} ;
\node[blue] at (9.7,2.1) {\Large $\ell_{i,\alpha}^L$} ; \node[blue] at (9.7,-0.6) {\Large $\ell_{i,\alpha}^R$} ;

\node at (3.8,3) {\Large the left of $\ell_{i,\alpha}^L$} ; \node at (3.8,-1.5) {\Large the right of $\ell_{i,\alpha}^R$} ;
\draw [line width=0.03cm, decorate, decoration={brace, mirror, amplitude=10pt}](-1.6,2.1) -- (-1.6,-0.6) node[black,midway,xshift=-2.5cm,yshift=0.25cm] {\Large the $(i,\alpha)$-th} node[black,midway,xshift=-2cm,yshift=-0.25cm] {\Large deformed part};
\node at (13,0) {};
\end{tikzpicture}
} }
\caption{}\label{def_leftright}
\end{figure}

Also, we call the region obtained as the intersection of the right of $\ell_{i-1,\alpha}^R$ and the left of $\ell_{i,\alpha}^L$ the \emph{$(i,\alpha)$-th irrelevant part}.
Here, we say that the intersection of the right of $\ell_{r,\alpha}^R$ and the left of $\ell_{1,\alpha+1}^L$ is the \emph{$(1,\alpha+1)$-th irrelevant part}.
We remark that sometimes there are no nodes in an irrelevant part. We sometimes omit $\alpha\in\ZZ$ from the notation unless it causes confusion.
We also use these terminologies for~$\Gamma$.
That is, a part of~$\Gamma$ obtained by projecting a deformed (resp.\ irrelevant) part of $\widetilde{\Gamma}$ onto $\Gamma$ is said to be a \emph{deformed} (resp.\ \emph{irrelevant}) \emph{part} of~$\Gamma$.

\begin{figure}[h!]\centering
{\scalebox{0.6}{
\begin{tikzpicture}
\newcommand{\edgewidth}{0.05cm} 
\newcommand{\nodewidth}{0.05cm} 
\newcommand{\noderad}{0.16} 

\filldraw[blue!15, fill=blue!15] (-3.5,-1.8)--(-0.7,-1.8)--(-0.7,9.3)--(-3.5,9.3);
\draw [line width=0.03cm, decorate, decoration={brace, amplitude=10pt}](-3.5,9.5)--(-0.7,9.5) node[black,midway,xshift=-0.5cm,yshift=1.2cm] {the $(i,\alpha)$-th} node[black,midway,xshift=0.2cm,yshift=0.7cm] {irrelevant part};

\draw [line width=0.03cm, decorate, decoration={brace, mirror, amplitude=10pt}](-0.7,-2)--(2.2,-2) node[black,midway,xshift=-0.5cm,yshift=-0.7cm] {the $(i,\alpha)$-th} node[black,midway,xshift=0.2cm,yshift=-1.2cm] {deformed part};

\filldraw[blue!15, fill=blue!15] (2.2,-1.8)--(5,-1.8)--(5,9.3)--(2.2,9.3);
\draw [line width=0.03cm, decorate, decoration={brace, amplitude=10pt}](2.2,9.5)--(5,9.5) node[black,midway,xshift=-0.2cm,yshift=1.2cm] {the $(i+1,\alpha)$-th} node[black,midway,xshift=0.2cm,yshift=0.7cm] {irrelevant part};

\draw [line width=0.03cm, decorate, decoration={brace, mirror, amplitude=10pt}](5,-2)--(7.9,-2) node[black,midway,xshift=-0.2cm,yshift=-0.7cm] {the $(i+1,\alpha)$-th} node[black,midway,xshift=0.2cm,yshift=-1.2cm] {deformed part};

\filldraw[blue!15, fill=blue!15] (7.9,-1.8)--(10.7,-1.8)--(10.7,9.3)--(7.9,9.3);
\draw [line width=0.03cm, decorate, decoration={brace, amplitude=10pt}](7.9,9.5)--(10.7,9.5) node[black,midway,xshift=-0.2cm,yshift=1.2cm] {the $(i+2,\alpha)$-th} node[black,midway,xshift=0.2cm,yshift=0.7cm] {irrelevant part};

\draw [line width=0.03cm, decorate, decoration={brace, mirror, amplitude=10pt}](10.7,-2)--(13.6,-2) node[black,midway,xshift=-0.2cm,yshift=-0.7cm] {the $(i+2,\alpha)$-th} node[black,midway,xshift=0.2cm,yshift=-1.2cm] {deformed part};

\filldraw[blue!15, fill=blue!15] (13.6,-1.8)--(16.4,-1.8)--(16.4,9.3)--(13.6,9.3);
\draw [line width=0.03cm, decorate, decoration={brace, amplitude=10pt}](13.6,9.5)--(16.4,9.5) node[black,midway,xshift=-0.2cm,yshift=1.2cm] {the $(i+3,\alpha)$-th} node[black,midway,xshift=0.2cm,yshift=0.7cm] {irrelevant part};

\draw [line width=\edgewidth,line cap=round, dash pattern=on 0pt off 2.5\pgflinewidth] (-4,3.75)--(-5,3.75);
\draw [line width=\edgewidth,line cap=round, dash pattern=on 0pt off 2.5\pgflinewidth] (16.9,3.75)--(17.9,3.75);

\coordinate (W1) at (0,0); \coordinate (W2) at (0,3); \coordinate (W3) at (0,6);
\coordinate (B1) at (1.5,1.5); \coordinate (B2) at (1.5,4.5); \coordinate (B3) at (1.5,7.5);

\path (W1) ++(135:1.4cm) coordinate (W1w); \path (W1) ++(225:1.4cm) coordinate (W1s); \path (W1) ++(315:1.4cm) coordinate (W1e);
\path (B1) ++(45:1.4cm) coordinate (B1e); \path (B1) ++(315:1.4cm) coordinate (B1s);
\path (W2) ++(135:1.4cm) coordinate (W2w); \path (W2) ++(225:1.4cm) coordinate (W2s);
\path (B2) ++(45:1.4cm) coordinate (B2e); \path (B2) ++(315:1.4cm) coordinate (B2s);
\path (W3) ++(135:1.4cm) coordinate (W3w); \path (W3) ++(225:1.4cm) coordinate (W3s);
\path (B3) ++(45:1.4cm) coordinate (B3e); \path (B3) ++(315:1.4cm) coordinate (B3s); \path (B3) ++(135:1.4cm) coordinate (B3n);

\path (W1) ++(155:0.6cm) coordinate (W1+); \path (W1) ++(205:0.6cm) coordinate (W1-);
\path (W2) ++(155:0.6cm) coordinate (W2+); \path (W2) ++(205:0.6cm) coordinate (W2-);
\path (W3) ++(155:0.6cm) coordinate (W3+); \path (W3) ++(205:0.6cm) coordinate (W3-);
\path (B1) ++(25:0.6cm) coordinate (B1+); \path (B1) ++(335:0.6cm) coordinate (B1-);
\path (B2) ++(25:0.6cm) coordinate (B2+); \path (B2) ++(335:0.6cm) coordinate (B2-);
\path (B3) ++(25:0.6cm) coordinate (B3+); \path (B3) ++(335:0.6cm) coordinate (B3-);
\draw [line width=\edgewidth] (W1)--(B1)--(W2)--(B2)--(W3)--(B3) ;

\draw [line width=\edgewidth] (W1)--(W1w); \draw [line width=\edgewidth] (W1)--(W1s); \draw [line width=\edgewidth] (W1)--(W1e);
\draw [line width=\edgewidth] (B1)--(B1e); \draw [line width=\edgewidth] (B1)--(B1s);
\draw [line width=\edgewidth] (W2)--(W2w); \draw [line width=\edgewidth] (W2)--(W2s);
\draw [line width=\edgewidth] (B2)--(B2e); \draw [line width=\edgewidth] (B2)--(B2s);
\draw [line width=\edgewidth] (W3)--(W3w); \draw [line width=\edgewidth] (W3)--(W3s);
\draw [line width=\edgewidth] (B3)--(B3e); \draw [line width=\edgewidth] (B3)--(B3s); \draw [line width=\edgewidth] (B3)--(B3n);

\draw [line width=\edgewidth,line cap=round, dash pattern=on 0pt off 2.5\pgflinewidth] (W1+)--(W1-);
\draw [line width=\edgewidth,line cap=round, dash pattern=on 0pt off 2.5\pgflinewidth] (W2+)--(W2-);
\draw [line width=\edgewidth,line cap=round, dash pattern=on 0pt off 2.5\pgflinewidth] (W3+)--(W3-);
\draw [line width=\edgewidth,line cap=round, dash pattern=on 0pt off 2.5\pgflinewidth] (B1+)--(B1-);
\draw [line width=\edgewidth,line cap=round, dash pattern=on 0pt off 2.5\pgflinewidth] (B2+)--(B2-);
\draw [line width=\edgewidth,line cap=round, dash pattern=on 0pt off 2.5\pgflinewidth] (B3+)--(B3-);

\path (W1e) ++(-90:0.3cm) coordinate (W1e_dot); \path (W1e) ++(-90:0.9cm) coordinate (W1e_ddot);
\draw [line width=\edgewidth,line cap=round, dash pattern=on 0pt off 2.5\pgflinewidth] (W1e_dot)--(W1e_ddot);
\path (B3n) ++(90:0.3cm) coordinate (B3n_dot); \path (B3n) ++(90:0.9cm) coordinate (B3n_ddot);
\draw [line width=\edgewidth,line cap=round, dash pattern=on 0pt off 2.5\pgflinewidth] (B3n_dot)--(B3n_ddot);

\draw [line width=\nodewidth, fill=black] (B1) circle [radius=\noderad] ;
\draw [line width=\nodewidth, fill=black] (B2) circle [radius=\noderad] ;
\draw [line width=\nodewidth, fill=black] (B3) circle [radius=\noderad] ;
\draw [line width=\nodewidth, fill=white] (W1) circle [radius=\noderad] ;
\draw [line width=\nodewidth, fill=white] (W2) circle [radius=\noderad] ;
\draw [line width=\nodewidth, fill=white] (W3) circle [radius=\noderad] ;
\path (W1e) ++(-90:0.25cm) coordinate (W1ez);
\path (B1) ++(-90:0.25cm) coordinate (B1z); \path (W1) ++(-90:0.25cm) coordinate (W1z);
\path (B2) ++(-90:0.25cm) coordinate (B2z); \path (W2) ++(-90:0.25cm) coordinate (W2z);
\path (B3) ++(-90:0.25cm) coordinate (B3z); \path (W3) ++(-90:0.25cm) coordinate (W3z);
\path (B3) ++(135:1.8cm) coordinate (B3nn); \path (B3nn) ++(-90:0.25cm) coordinate (B3nz);
\draw [->, rounded corners, line width=0.08cm, red] (W1ez)--(W1z)--(B1z)--(W2z)--(B2z)--(W3z)--(B3z)--(B3nz) ;
\path (B3nz) ++(-90:1cm) coordinate (zi);\node[red] at (zi) {$\widetilde{z_i}(\alpha)$} ;

\coordinate (W1) at (5.7,0); \coordinate (W2) at (5.7,3); \coordinate (W3) at (5.7,6);
\coordinate (B1) at (7.2,1.5); \coordinate (B2) at (7.2,4.5); \coordinate (B3) at (7.2,7.5);

\path (W1) ++(135:1.4cm) coordinate (W1w); \path (W1) ++(225:1.4cm) coordinate (W1s); \path (W1) ++(315:1.4cm) coordinate (W1e);
\path (B1) ++(45:1.4cm) coordinate (B1e); \path (B1) ++(315:1.4cm) coordinate (B1s);
\path (W2) ++(135:1.4cm) coordinate (W2w); \path (W2) ++(225:1.4cm) coordinate (W2s);
\path (B2) ++(45:1.4cm) coordinate (B2e); \path (B2) ++(315:1.4cm) coordinate (B2s);
\path (W3) ++(135:1.4cm) coordinate (W3w); \path (W3) ++(225:1.4cm) coordinate (W3s);
\path (B3) ++(45:1.4cm) coordinate (B3e); \path (B3) ++(315:1.4cm) coordinate (B3s); \path (B3) ++(135:1.4cm) coordinate (B3n);

\path (W1) ++(155:0.6cm) coordinate (W1+); \path (W1) ++(205:0.6cm) coordinate (W1-);
\path (W2) ++(155:0.6cm) coordinate (W2+); \path (W2) ++(205:0.6cm) coordinate (W2-);
\path (W3) ++(155:0.6cm) coordinate (W3+); \path (W3) ++(205:0.6cm) coordinate (W3-);
\path (B1) ++(25:0.6cm) coordinate (B1+); \path (B1) ++(335:0.6cm) coordinate (B1-);
\path (B2) ++(25:0.6cm) coordinate (B2+); \path (B2) ++(335:0.6cm) coordinate (B2-);
\path (B3) ++(25:0.6cm) coordinate (B3+); \path (B3) ++(335:0.6cm) coordinate (B3-);
\draw [line width=\edgewidth] (W1)--(B1)--(W2)--(B2)--(W3)--(B3) ;

\draw [line width=\edgewidth] (W1)--(W1w); \draw [line width=\edgewidth] (W1)--(W1s); \draw [line width=\edgewidth] (W1)--(W1e);
\draw [line width=\edgewidth] (B1)--(B1e); \draw [line width=\edgewidth] (B1)--(B1s);
\draw [line width=\edgewidth] (W2)--(W2w); \draw [line width=\edgewidth] (W2)--(W2s);
\draw [line width=\edgewidth] (B2)--(B2e); \draw [line width=\edgewidth] (B2)--(B2s);
\draw [line width=\edgewidth] (W3)--(W3w); \draw [line width=\edgewidth] (W3)--(W3s);
\draw [line width=\edgewidth] (B3)--(B3e); \draw [line width=\edgewidth] (B3)--(B3s); \draw [line width=\edgewidth] (B3)--(B3n);

\draw [line width=\edgewidth,line cap=round, dash pattern=on 0pt off 2.5\pgflinewidth] (W1+)--(W1-);
\draw [line width=\edgewidth,line cap=round, dash pattern=on 0pt off 2.5\pgflinewidth] (W2+)--(W2-);
\draw [line width=\edgewidth,line cap=round, dash pattern=on 0pt off 2.5\pgflinewidth] (W3+)--(W3-);
\draw [line width=\edgewidth,line cap=round, dash pattern=on 0pt off 2.5\pgflinewidth] (B1+)--(B1-);
\draw [line width=\edgewidth,line cap=round, dash pattern=on 0pt off 2.5\pgflinewidth] (B2+)--(B2-);
\draw [line width=\edgewidth,line cap=round, dash pattern=on 0pt off 2.5\pgflinewidth] (B3+)--(B3-);

\path (W1e) ++(-90:0.3cm) coordinate (W1e_dot); \path (W1e) ++(-90:0.9cm) coordinate (W1e_ddot);
\draw [line width=\edgewidth,line cap=round, dash pattern=on 0pt off 2.5\pgflinewidth] (W1e_dot)--(W1e_ddot);
\path (B3n) ++(90:0.3cm) coordinate (B3n_dot); \path (B3n) ++(90:0.9cm) coordinate (B3n_ddot);
\draw [line width=\edgewidth,line cap=round, dash pattern=on 0pt off 2.5\pgflinewidth] (B3n_dot)--(B3n_ddot);

\draw [line width=\nodewidth, fill=black] (B1) circle [radius=\noderad] ;
\draw [line width=\nodewidth, fill=black] (B2) circle [radius=\noderad] ;
\draw [line width=\nodewidth, fill=black] (B3) circle [radius=\noderad] ;
\draw [line width=\nodewidth, fill=white] (W1) circle [radius=\noderad] ;
\draw [line width=\nodewidth, fill=white] (W2) circle [radius=\noderad] ;
\draw [line width=\nodewidth, fill=white] (W3) circle [radius=\noderad] ;
\path (W1e) ++(-90:0.25cm) coordinate (W1ez);
\path (B1) ++(-90:0.25cm) coordinate (B1z); \path (W1) ++(-90:0.25cm) coordinate (W1z);
\path (B2) ++(-90:0.25cm) coordinate (B2z); \path (W2) ++(-90:0.25cm) coordinate (W2z);
\path (B3) ++(-90:0.25cm) coordinate (B3z); \path (W3) ++(-90:0.25cm) coordinate (W3z);
\path (B3) ++(135:1.8cm) coordinate (B3nn); \path (B3nn) ++(-90:0.25cm) coordinate (B3nz);
\draw [->, rounded corners, line width=0.08cm, red] (W1ez)--(W1z)--(B1z)--(W2z)--(B2z)--(W3z)--(B3z)--(B3nz) ;
\path (B3nz) ++(-90:1cm) coordinate (zi);\node[red] at (zi) {$\widetilde{z}_{i+1}(\alpha)$} ;

\coordinate (W1) at (11.4,0); \coordinate (W2) at (11.4,3); \coordinate (W3) at (11.4,6);
\coordinate (B1) at (12.9,1.5); \coordinate (B2) at (12.9,4.5); \coordinate (B3) at (12.9,7.5);

\path (W1) ++(135:1.4cm) coordinate (W1w); \path (W1) ++(225:1.4cm) coordinate (W1s); \path (W1) ++(315:1.4cm) coordinate (W1e);
\path (B1) ++(45:1.4cm) coordinate (B1e); \path (B1) ++(315:1.4cm) coordinate (B1s);
\path (W2) ++(135:1.4cm) coordinate (W2w); \path (W2) ++(225:1.4cm) coordinate (W2s);
\path (B2) ++(45:1.4cm) coordinate (B2e); \path (B2) ++(315:1.4cm) coordinate (B2s);
\path (W3) ++(135:1.4cm) coordinate (W3w); \path (W3) ++(225:1.4cm) coordinate (W3s);
\path (B3) ++(45:1.4cm) coordinate (B3e); \path (B3) ++(315:1.4cm) coordinate (B3s); \path (B3) ++(135:1.4cm) coordinate (B3n);

\path (W1) ++(155:0.6cm) coordinate (W1+); \path (W1) ++(205:0.6cm) coordinate (W1-);
\path (W2) ++(155:0.6cm) coordinate (W2+); \path (W2) ++(205:0.6cm) coordinate (W2-);
\path (W3) ++(155:0.6cm) coordinate (W3+); \path (W3) ++(205:0.6cm) coordinate (W3-);
\path (B1) ++(25:0.6cm) coordinate (B1+); \path (B1) ++(335:0.6cm) coordinate (B1-);
\path (B2) ++(25:0.6cm) coordinate (B2+); \path (B2) ++(335:0.6cm) coordinate (B2-);
\path (B3) ++(25:0.6cm) coordinate (B3+); \path (B3) ++(335:0.6cm) coordinate (B3-);
\draw [line width=\edgewidth] (W1)--(B1)--(W2)--(B2)--(W3)--(B3) ;

\draw [line width=\edgewidth] (W1)--(W1w); \draw [line width=\edgewidth] (W1)--(W1s); \draw [line width=\edgewidth] (W1)--(W1e);
\draw [line width=\edgewidth] (B1)--(B1e); \draw [line width=\edgewidth] (B1)--(B1s);
\draw [line width=\edgewidth] (W2)--(W2w); \draw [line width=\edgewidth] (W2)--(W2s);
\draw [line width=\edgewidth] (B2)--(B2e); \draw [line width=\edgewidth] (B2)--(B2s);
\draw [line width=\edgewidth] (W3)--(W3w); \draw [line width=\edgewidth] (W3)--(W3s);
\draw [line width=\edgewidth] (B3)--(B3e); \draw [line width=\edgewidth] (B3)--(B3s); \draw [line width=\edgewidth] (B3)--(B3n);

\draw [line width=\edgewidth,line cap=round, dash pattern=on 0pt off 2.5\pgflinewidth] (W1+)--(W1-);
\draw [line width=\edgewidth,line cap=round, dash pattern=on 0pt off 2.5\pgflinewidth] (W2+)--(W2-);
\draw [line width=\edgewidth,line cap=round, dash pattern=on 0pt off 2.5\pgflinewidth] (W3+)--(W3-);
\draw [line width=\edgewidth,line cap=round, dash pattern=on 0pt off 2.5\pgflinewidth] (B1+)--(B1-);
\draw [line width=\edgewidth,line cap=round, dash pattern=on 0pt off 2.5\pgflinewidth] (B2+)--(B2-);
\draw [line width=\edgewidth,line cap=round, dash pattern=on 0pt off 2.5\pgflinewidth] (B3+)--(B3-);

\path (W1e) ++(-90:0.3cm) coordinate (W1e_dot); \path (W1e) ++(-90:0.9cm) coordinate (W1e_ddot);
\draw [line width=\edgewidth,line cap=round, dash pattern=on 0pt off 2.5\pgflinewidth] (W1e_dot)--(W1e_ddot);
\path (B3n) ++(90:0.3cm) coordinate (B3n_dot); \path (B3n) ++(90:0.9cm) coordinate (B3n_ddot);
\draw [line width=\edgewidth,line cap=round, dash pattern=on 0pt off 2.5\pgflinewidth] (B3n_dot)--(B3n_ddot);

\draw [line width=\nodewidth, fill=black] (B1) circle [radius=\noderad] ;
\draw [line width=\nodewidth, fill=black] (B2) circle [radius=\noderad] ;
\draw [line width=\nodewidth, fill=black] (B3) circle [radius=\noderad] ;
\draw [line width=\nodewidth, fill=white] (W1) circle [radius=\noderad] ;
\draw [line width=\nodewidth, fill=white] (W2) circle [radius=\noderad] ;
\draw [line width=\nodewidth, fill=white] (W3) circle [radius=\noderad] ;
\path (W1e) ++(-90:0.25cm) coordinate (W1ez);
\path (B1) ++(-90:0.25cm) coordinate (B1z); \path (W1) ++(-90:0.25cm) coordinate (W1z);
\path (B2) ++(-90:0.25cm) coordinate (B2z); \path (W2) ++(-90:0.25cm) coordinate (W2z);
\path (B3) ++(-90:0.25cm) coordinate (B3z); \path (W3) ++(-90:0.25cm) coordinate (W3z);
\path (B3) ++(135:1.8cm) coordinate (B3nn); \path (B3nn) ++(-90:0.25cm) coordinate (B3nz);
\draw [->, rounded corners, line width=0.08cm, red] (W1ez)--(W1z)--(B1z)--(W2z)--(B2z)--(W3z)--(B3z)--(B3nz) ;
\path (B3nz) ++(-90:1cm) coordinate (zi); \node[red] at (zi) {$\widetilde{z}_{i+2}(\alpha)$} ;
\end{tikzpicture}
} }
\caption{}
\label{fig_deformed_irrelevant}
\end{figure}

By the condition of Definition~\ref{def_properly}(3), $z_1,\dots,z_r$ do not have a common node, and therefore the irrelevant parts do not overlap each other.
Since the operations (zig-1)--(zig-4) (or (zag-1)--(zag-4)) are local operations on each deformed part, any irrelevant part will be unchanged even if we apply these operations.
Thus, we continue to use these terminologies ``deformed parts’’ and ``irrelevant parts’’.
We then consider a zigzag path $w$ satisfying the following properties:
\begin{itemize}\itemsep=0pt
\item[(a)] If $[z_i]$ and $[w]$ are linearly independent, then by Lemma~\ref{slope_linearly_independent}, $\widetilde{w}$ intersects with $\widetilde{z_i}(\alpha)$ precisely once, and so does any $\widetilde{z_i}(\alpha)$ with $i=1,\dots,r$ and $\alpha\in\ZZ$.
In particular, all intersections are zigs of $w$ or zags of $w$ by Lemma~\ref{intersect_zigorzag2}.
If $\widetilde{w}$ intersects with $\widetilde{z_i}(\alpha)$ at a zig (resp.\ zag) of $\widetilde{z_i}(\alpha)$, then we easily see that $\widetilde{w}$ crosses the $(i,\alpha)$-th deformed part in the direction from the $(i,\alpha)$-th (resp.~$(i+1,\alpha)$-th) irrelevant part to the $(i+1,\alpha)$-th (resp.~$(i,\alpha)$-th) irrelevant part.

\item[(b)] If $[z_i]$ and $[w]$ are linearly dependent, then by Lemma~\ref{slope_linearly_independent}, $\widetilde{w}$ and $\widetilde{z_i}(\alpha)$ do not intersect for any $i=1,\dots,r$ and $\alpha\in\ZZ$.
This is equivalent to the condition that $\widetilde{w}$ is contained in some irrelevant part.
In this case, $w$ is unchanged even if we apply the extended deformations because (zig-1)--(zig-4) (or (zag-1)--(zag-4)) are operations on the deformed parts, and~(zig-5) (or~(zag-5)) does not affect $w$ by Lemma~\ref{bypass_removable} below.
\end{itemize}
\end{observation}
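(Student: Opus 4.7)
The plan is to justify each piece of this observation by assembling the preceding lemmas and by making carefully the topological arguments about the universal cover. The foundational point is that each type I zigzag path $\widetilde{z_i}(\alpha)$ is non-self-intersecting (Definition~\ref{def_zigzag_type}), and hence divides $\RR^2$ into two connected components; the lines $\ell_{i,\alpha}^L, \ell_{i,\alpha}^R$ are then chosen as parallel lines with slope $v$ sufficiently close to $\widetilde{z_i}(\alpha)$ so as to bound a strip containing exactly the nodes of $\widetilde{z_i}(\alpha)$. The non-overlap of different irrelevant parts is a direct consequence of Definition~\ref{def_properly}(3): zigzag paths sharing the slope $v$ have no common node, so the deformed strips for distinct $(i,\alpha)$ may be chosen pairwise disjoint.

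For claim (a), the linear independence of $[z_i]$ and $[w]$ together with Lemma~\ref{slope_linearly_independent} guarantees that $\widetilde{w}$ and each $\widetilde{z_i}(\alpha)$ share precisely one edge. Applying Lemma~\ref{intersect_zigorzag2} to the full collection $\{z_1,\dots,z_r\}$ then forces all such intersections to sit either all in $\Zig(w)$ or all in $\Zag(w)$. The direction-of-crossing assertion will be extracted from the local bipartite picture: at a zig of $\widetilde{z_i}(\alpha)$ the black endpoint lies to the left of $\widetilde{z_i}(\alpha)$ and the white one to the right, and the defining turning rule for zigzag paths on $\widetilde{w}$ then determines which of the two adjacent irrelevant parts $\widetilde{w}$ enters from and exits to; the zag case is symmetric.

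For claim (b), since both slopes are primitive, linear dependence means $[w]=\pm[z_i]$. Lemma~\ref{slope_linearly_independent} then precludes any intersection between $\widetilde{w}$ and any $\widetilde{z_i}(\alpha)$; as $\widetilde{w}$ is a connected curve avoiding all of the separating curves $\widetilde{z_i}(\alpha)$, it must lie in a single connected component of the complement, i.e.\ in one irrelevant part. The final statement, that $w$ is unaltered by the deformation, then follows because the operations (zig-1)--(zig-4) are performed only inside deformed parts, while (zig-5) only removes bypass edges inserted by (zig-4) and the forthcoming Lemma~\ref{bypass_removable} shows this removal does not touch edges of $w$. The main delicate point in the whole argument is verifying carefully the direction-of-crossing claim in (a): one must track, using the colouring conventions and the definition of zig/zag, that the correspondence between ``intersection edge is a zig of $\widetilde{z_i}(\alpha)$'' and ``$\widetilde{w}$ passes from the $(i,\alpha)$-th to the $(i+1,\alpha)$-th irrelevant part'' is consistent for every intersection, which is implicit in the figures but needs a brief case analysis of the local configuration.
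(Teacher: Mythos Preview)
Your proposal is essentially correct and follows the same route as the paper: the observation is not given a separate proof in the text, but carries inline justifications citing Definition~\ref{def_properly}(3), Lemma~\ref{slope_linearly_independent}, Lemma~\ref{intersect_zigorzag2}, and Lemma~\ref{bypass_removable}, exactly as you do. Your expansion of the ``we easily see'' step for the direction-of-crossing in (a) via the local bipartite/turning-rule picture is appropriate and in the spirit of what the paper leaves implicit.

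One small inaccuracy to fix in (b): you write that ``(zig-5) only removes bypass edges inserted by (zig-4)''. That is not quite what Lemma~\ref{bypass_removable} says; the lemma allows (zig-5) to remove either bypasses \emph{or} edges in irrelevant parts that are intersections between pairs of the $x_1,\dots,x_s$. Your conclusion is still correct, because $w$ in case (b) has slope $\pm[z_i]$ and hence cannot be any of the $x_j$ (whose slopes are linearly independent from $[z_i]$), so no edge of $w$ is of either type; but your stated reason should be adjusted to match the actual content of the lemma.
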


In the remainder of this subsection, we discuss the behavior of zigzag paths after applying the operations (zig-1)--(zig-4).
(We can apply the same arguments for the case of (zag-1)--(zag-4).)

\begin{observation}\label{obs_deformed_part2}
We consider a zigzag path $y_k$ on a consistent dimer model $\Gamma$ intersecting with $z_i$ at a zig of $z_i$.
Let $\widetilde{z}_i$ and $\widetilde{y}_k$ be zigzag paths on $\widetilde{\Gamma}$ projecting onto $z_i$ and $y_k$, respectively.
By Observation~\ref{obs_deformed_part}(a), $\widetilde{y}_k$ crosses the $i$-th deformed part in the direction from the $i$-th irrelevant part to the $(i+1)$-th irrelevant part, and $\widetilde{y}_k$ intersects with $\widetilde{z_i}$ precisely once.
We suppose that the zig $\widetilde{z_i}[2m-1]$ of $\widetilde{z_i}$ is such an intersection.
In this case, $\widetilde{z_i}[2m-1]$ is also a zag of $\widetilde{y}_k$; thus we may write it as~$\widetilde{y_k}[2m]$.

Now, we apply the operations (zig-1)--(zig-3) to~$\Gamma$, and we denote the resulting dimer model by~$\Gamma^\prime$ and its universal cover by~$\widetilde{\Gamma}^\prime$.
Then, some new nodes are inserted in $\widetilde{z_i}[2m-1]=\widetilde{y_k}[2m]$, and zigzag paths $\widetilde{z}_{i,1},\dots,\widetilde{z}_{i,p_i}$ on $\widetilde{\Gamma}^\prime$, which project onto zigzag paths $z_{i,1},\dots,z_{i,p_i}$ on~$\Gamma^\prime$, respectively, appear in the $i$-th deformed part.

\begin{figure}[h!]\centering
\begin{tikzpicture}
\node at (-0.5,0)
{\scalebox{0.7}{
\begin{tikzpicture}
\newcommand{\edgewidth}{0.05cm} 
\newcommand{\nodewidth}{0.05cm} 
\newcommand{\noderad}{0.16} 

\coordinate (W1) at (0,0); \coordinate (B1) at (2.5,0);
\path (W1) ++(135:1.5cm) coordinate (W1a); \path (W1) ++(150:1.5cm) coordinate (W1a-); \path (W1) ++(225:1.5cm) coordinate (W1b);
\path (B1) ++(45:1.5cm) coordinate (B1a); \path (B1) ++(330:1.5cm) coordinate (B1a-); \path (B1) ++(315:1.5cm) coordinate (B1b);

\draw [line width=\edgewidth] (W1)--(B1);
\draw [line width=\edgewidth] (W1)--(W1a); \draw [line width=\edgewidth] (W1)--(W1a-); \draw [line width=\edgewidth] (W1)--(W1b);
\draw [line width=\edgewidth] (B1)--(B1a); \draw [line width=\edgewidth] (B1)--(B1a-); \draw [line width=\edgewidth] (B1)--(B1b);
\draw [line width=\edgewidth,line cap=round, dash pattern=on 0pt off 2.5\pgflinewidth] (-1,0.25)--(-1,-0.45);
\draw [line width=\edgewidth,line cap=round, dash pattern=on 0pt off 2.5\pgflinewidth] (3.5,0.45)--(3.5,-0.25);

\draw [line width=\nodewidth, fill=white] (W1) circle [radius=\noderad] ;
\draw [line width=\nodewidth, fill=black] (B1) circle [radius=\noderad] ;

\path (W1) ++(270:0.3cm) coordinate (W1-); \path (W1) ++(90:0.3cm) coordinate (W1+);
\path (B1) ++(270:0.3cm) coordinate (B1-); \path (B1) ++(90:0.3cm) coordinate (B1+);
\path (W1-) ++(225:1.7cm) coordinate (W1--); \path (W1+) ++(135:1.7cm) coordinate (W1++);
\path (B1-) ++(315:1.7cm) coordinate (B1--); \path (B1+) ++(45:1.7cm) coordinate (B1++);

\draw [->, rounded corners, line width=0.08cm, red] (B1--)--(B1-)--(W1+)--(W1++) ;
\draw [->, rounded corners, line width=0.08cm, blue] (W1--)--(W1-)--(B1+)--(B1++) ;

\node[blue] at (3,1.3) {\Large$\widetilde{y}_k$};
\node[red] at (-0.5,1.3) {\Large$\widetilde{z}_i$};
\end{tikzpicture}
} };

\draw [->,line width=0.035cm] (2.2,0)--(4.8,0);
\node at (3.5,0.35) {\small(zig-1)--(zig-3)};

\node at (8.5,0)
{\scalebox{0.7}{
\begin{tikzpicture}
\newcommand{\edgewidth}{0.05cm} 
\newcommand{\nodewidth}{0.05cm} 
\newcommand{\noderad}{0.16} 

\coordinate (W1) at (0,0); \coordinate (B1) at (6,0);
\path (W1) ++(135:1.5cm) coordinate (W1a); \path (W1) ++(150:1.5cm) coordinate (W1a-); \path (W1) ++(225:1.5cm) coordinate (W1b);
\path (B1) ++(45:1.5cm) coordinate (B1a); \path (B1) ++(330:1.5cm) coordinate (B1a-); \path (B1) ++(315:1.5cm) coordinate (B1b);

\coordinate (W2) at (2,0); \coordinate (W3) at (5,0);
\coordinate (B2) at (1,0); \coordinate (B3) at (4,0);

\path (W2) ++(120:1.5cm) coordinate (W2+); \path (W3) ++(120:1.5cm) coordinate (W3+);
\path (B2) ++(300:1.5cm) coordinate (B2+); \path (B3) ++(300:1.5cm) coordinate (B3+);

\draw [line width=\edgewidth] (W1)--(2.5,0); \draw [line width=\edgewidth] (3.5,0)--(B1);
\draw [line width=\edgewidth,line cap=round, dash pattern=on 0pt off 2.5\pgflinewidth] (2.6,0)--(3.4,0);
\draw [line width=\edgewidth] (W1)--(W1a-); \draw [line width=\edgewidth] (W1)--(W1b);
\draw [line width=\edgewidth] (B1)--(B1a); \draw [line width=\edgewidth] (B1)--(B1a-);
\draw [line width=\edgewidth,line cap=round, dash pattern=on 0pt off 2.5\pgflinewidth] (-1,0.25)--(-1,-0.45);
\draw [line width=\edgewidth,line cap=round, dash pattern=on 0pt off 2.5\pgflinewidth] (7,0.45)--(7,-0.25);

\draw [line width=\edgewidth] (B2)--(B2+); \draw [line width=\edgewidth] (B3)--(B3+);
\draw [line width=\edgewidth] (W2)--(W2+); \draw [line width=\edgewidth] (W3)--(W3+);

\draw [line width=\nodewidth, fill=white] (W1) circle [radius=\noderad] ;
\draw [line width=\nodewidth, fill=black] (B1) circle [radius=\noderad] ;

\draw [line width=\nodewidth, fill=white] (W2) circle [radius=\noderad] ; \draw [line width=\nodewidth, fill=white] (W3) circle [radius=\noderad] ;
\draw [line width=\nodewidth, fill=black] (B2) circle [radius=\noderad] ; \draw [line width=\nodewidth, fill=black] (B3) circle [radius=\noderad] ;

\path (W1) ++(270:0.3cm) coordinate (W1-); \path (W1) ++(90:0.3cm) coordinate (W1+);
\path (B1) ++(270:0.3cm) coordinate (B1-); \path (B1) ++(90:0.3cm) coordinate (B1+);
\path (W1-) ++(225:1.7cm) coordinate (W1--); \path (W1+) ++(135:1.7cm) coordinate (W1++);
\path (B1-) ++(315:1.7cm) coordinate (B1--); \path (B1+) ++(45:1.7cm) coordinate (B1++);
\draw [->, rounded corners, line width=0.08cm, blue] (W1--)--(W1-)--(2,-0.35)--(4,0.35)--(B1+)--(B1++) ;

\path (B2) ++(30:0.25cm) coordinate (B2z); \path (W2) ++(160:0.3cm) coordinate (W2z);
\path (B2z) ++(300:2cm) coordinate (B2zz); \path (W2z) ++(120:1.5cm) coordinate (W2zz);
\draw [<-, rounded corners, line width=0.08cm, red] (B2zz)--(B2z)--(W2z)--(W2zz);

\path (B3) ++(30:0.25cm) coordinate (B3z); \path (W3) ++(160:0.3cm) coordinate (W3z);
\path (B3z) ++(300:2cm) coordinate (B3zz); \path (W3z) ++(120:1.5cm) coordinate (W3zz);
\draw [<-, rounded corners, line width=0.08cm, red] (B3zz)--(B3z)--(W3z)--(W3zz);
\node[blue] at (6.4,1.3) {\Large$\widetilde{y}^\prime_k$};
\node[red] at (2.8,-1.3) {\large$\widetilde{z}_{i,1}$}; \node[red] at (5.8,-1.3) {\large$\widetilde{z}_{i,p_i}$};
\end{tikzpicture}
} };
\end{tikzpicture}
\caption{}\label{fig_observation_zigzag1}
\end{figure}
We consider the zigzag path $\widetilde{y}^\prime_k$ on $\widetilde{\Gamma}^\prime$ passing through $\widetilde{y_k}[2m-1]$ as a zig.
That is, $\widetilde{y}^\prime_k$ starts from $\widetilde{y_k}[2m-1]$, crosses through zigzag paths $\widetilde{z}_{i,1},\dots,\widetilde{z}_{i,p_i}$ in the $i$-th deformed part, and arrives at $\widetilde{y_k}[2m+1]$ (see the right of Figure~\ref{fig_observation_zigzag1}).
In particular, it crosses the $i$-th deformed part in the direction from the $i$-th irrelevant part to the $(i+1)$-th irrelevant part.
Although $\widetilde{y}^\prime_k$ looks different from $\widetilde{y}_k$ in the deformed parts, it connects the zigs $\widetilde{y_k}[2m-1]$ and $\widetilde{y_k}[2m+1]$ of $\widetilde{y}_k$ in the $i$-th deformed part for all $i$, and thus $\widetilde{y}^\prime_k$ shares the same nodes and edges as $\widetilde{y}_k$ in any irrelevant part.
We conclude that $\widetilde{y}^\prime_k$ coincides with $\widetilde{y}_k$ in all irrelevant parts, and behaves as depicted on the right-hand side of Figure~\ref{fig_observation_zigzag1} in each deformed part.
Also, we see that bypasses inserted in the operation (zig-4) do not affect the behavior of $\widetilde{y}^\prime_k$, because $\widetilde{y}^\prime_k$ never passes through bypasses.
\end{observation}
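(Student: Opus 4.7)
The plan is a direct verification by tracing $\widetilde{y}^\prime_k$ through $\widetilde{\Gamma}^\prime$ step by step, applying the defining local rules of a zigzag path (maximum right turn at a white node, maximum left turn at a black node) at each encountered node. I would organize the argument around the decomposition of $\widetilde{\Gamma}^\prime$ coming from Observation~\ref{obs_deformed_part}: deformed parts (modified by (zig-1)--(zig-3)) alternate with irrelevant parts (which contain no nodes of any $z_i$ and are untouched by those operations).

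First I would prove that $\widetilde{y}^\prime_k$ coincides with $\widetilde{y_k}$ on every irrelevant part, i.e. claims (C)--(D). Since (zig-1)--(zig-3) only modify the deformed parts, every edge strictly inside an irrelevant part and the cyclic order of edges at every node strictly inside an irrelevant part are identical in $\widetilde{\Gamma}$ and $\widetilde{\Gamma}^\prime$. Hence, once one shows that both $\widetilde{y}^\prime_k$ and $\widetilde{y_k}$ enter and exit each irrelevant part through matching edges, the max-turn rules applied to identical local data force the two paths to coincide on that irrelevant part. The matching of entry and exit edges reduces to the analysis at the two boundary nodes $w_i[2m-1]$ and $b_i[2m-1]$ of the intersection edge, treated next.

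The core of the argument is the traversal of a single deformed part. Write $e \coloneqq \widetilde{z_i}[2m-1] = \widetilde{y_k}[2m]$. By Lemma~\ref{intersect_zigorzag2}, all intersections of $y_k$ with $z_1,\dots,z_r$ are zigs of $y_k$; in particular neither $\widetilde{y_k}[2m-1]$ nor $\widetilde{y_k}[2m+1]$ is a zag of any $z_{i^\prime}$, so both survive (zig-2). Operation (zig-1) replaces $e$ by a chain of split edges without disturbing the cyclic order of the other edges at $w_i[2m-1]$ and $b_i[2m-1]$, placing the new edges $w_i[2m-1]-b_{i,p_i}[2m-1]$ and $b_i[2m-1]-w_{i,1}[2m-1]$ in precisely the cyclic slots formerly occupied by $e$. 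Hence the maximum right turn from $\widetilde{y_k}[2m-1]$ at $w_i[2m-1]$ selects the split edge $w_i[2m-1]-b_{i,p_i}[2m-1]$, pinning the first edge of $\widetilde{y}^\prime_k$ inside the deformed part, and the mirror computation at $b_i[2m-1]$ shows that whatever edge of $\widetilde{y}^\prime_k$ arrives at $b_i[2m-1]$ along the split zig, its max-left continuation is $\widetilde{y_k}[2m+1]$. Iterating the local rules at the alternating $2$-valent black nodes $b_{i,j}[2m\pm1]$ (whose continuation is unique) and the $3$-valent white nodes $w_{i,j}[2m\pm1]$ (whose third edge is the (zig-3) diagonal), together with the fact that (zig-3) inserts each diagonal in the cyclic slot previously occupied by the removed zag of $z_i$, shows inductively that $\widetilde{y}^\prime_k$ picks up exactly one edge of each $\widetilde{z}_{i,j}$ and eventually emerges at $b_i[2m-1]$ along $b_i[2m-1]-w_{i,1}[2m-1]$, then continues as $\widetilde{y_k}[2m+1]$. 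This proves (A) and (B).

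Finally for (E), each bypass added in (zig-4) connects a white node in $\{w_i[2m-1], w_{i,1}[2m-1],\dots,w_{i,p_i-1}[2m-1]\}$ to a black node in $\{b_i[2m+1], b_{i,1}[2m+1],\dots,b_{i,p_i}[2m+1]\}$, and is added only when the corresponding zag $z_i[2m]$ is not in $X_i$; by the preceding trace, the max-right turn at every white node visited by $\widetilde{y}^\prime_k$ has already been pinned to a split edge or a (zig-3) diagonal, so the bypasses fall in slots that $\widetilde{y}^\prime_k$ never turns into. The main obstacle in the proof is the verification that the diagonal edges added in (zig-3) (and later the bypasses in (zig-4)) genuinely occupy the specific cyclic slots I described at each newly created white node; this requires carefully unwinding the geometric convention of Definition~\ref{def_deformation_zig} and invoking the properly ordered condition (Definition~\ref{def_properly}(4)) for $\Gamma$ to transport the cyclic order consistently from $\widetilde{\Gamma}$ to $\widetilde{\Gamma}^\prime$.
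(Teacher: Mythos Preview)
Your approach is essentially the same as the paper's: the paper presents this as a self-evident observation justified by tracing the zigzag path through the local picture (Figure~\ref{fig_observation_zigzag1}), and you are supplying a more careful node-by-node verification of that trace using the max-turn rules. The decomposition into irrelevant parts versus deformed parts, the use of Lemma~\ref{intersect_zigorzag2} to guarantee that the adjacent zigs $\widetilde{y_k}[2m\pm1]$ survive (zig-2), and the final remark that bypasses sit in cyclic slots the path never turns into all match the paper's reasoning, only spelled out in greater detail.

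One small correction: after (zig-3) the interior black nodes $b_{i,j}[2m-1]$ are not $2$-valent but $3$-valent, since (zig-3) attaches the edge $w_{i,j}[2m-3]\,$--$\,b_{i,j}[2m-1]$ in addition to the two split edges. This does not break your argument, but the step ``continuation is unique'' must be replaced by the same max-left-turn check you already perform at the white nodes: the (zig-3) edge at $b_{i,j}[2m-1]$ occupies the cyclic slot of the removed zag $z_i[2m-2]$, so the max-left turn coming in along $w_{i,j+1}[2m-1]\,$--$\,b_{i,j}[2m-1]$ still selects the next split edge $b_{i,j}[2m-1]\,$--$\,w_{i,j}[2m-1]$ rather than the diagonal. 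With that adjustment your trace goes through exactly as in the paper.
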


\begin{observation}
\label{obs_deformed_part3}
We consider a zigzag path $x_j$ on $\Gamma$ intersecting with $z_i$ at a zag of $z_i$.
Let $\widetilde{x}_j$ be a zigzag path on $\widetilde{\Gamma}$ projecting onto $x_j$.
By Observation~\ref{obs_deformed_part}(a), $\widetilde{x}_j$ crosses the $i$-th deformed part in the direction from the $(i+1)$-th irrelevant part to the $i$-th irrelevant part, and $\widetilde{x}_j$ intersects with $\widetilde{z_i}$ precisely once.
We suppose that the zag $\widetilde{z_i}[2m]$ of $\widetilde{z_i}$ is such an intersection.
In this case, $\widetilde{z_i}[2m]$ is also a zig of $\widetilde{x}_j$, and thus we may write it as $\widetilde{x_j}[2m+1]$.
We then apply the operations (zig-1)--(zig-4) to $\Gamma$, and we have the dimer model $\overline{\nu}_\calX^\zig(\Gamma)=\overline{\nu}_\calX^\zig(\Gamma,\{z_1,\dots,z_r\})$.

If $z_i[2m]$, which is the projection of $\widetilde{z_i}[2m]=\widetilde{x_j}[2m+1]$ on $\Gamma$, is not contained in $X_i$, then bypasses are inserted.
We consider the zigzag path $\widetilde{x}^\prime_j$ on the universal cover of $\overline{\nu}_\calX^\zig(\Gamma)$ passing through $\widetilde{x_j}[2m]$ as a zag of $\widetilde{x}^\prime_j$.
That is, $\widetilde{x}^\prime_j$ starts from $\widetilde{x_j}[2m]$, behaves as depicted on the right-hand side of Figure~\ref{fig_observation_zigzag2}, and arrives at $\widetilde{x_j}[2m+2]$.
In particular, $\widetilde{x}^\prime_j$ crosses the $i$-th deformed part in the direction from the $(i+1)$-th irrelevant part to the $i$-th irrelevant part,
and behaves in the same manner as $\widetilde{x}_j$ in each irrelevant part.

\begin{figure}[h!]\centering\vspace*{-31mm}
\begin{tikzpicture}
\newcommand{\edgewidth}{0.05cm} 
\newcommand{\nodewidth}{0.05cm} 
\newcommand{\noderad}{0.16} 
\coordinate (W1) at (0,1.2); \coordinate (W2) at (0,3.6);
\path (W1) ++(0:4cm) coordinate (B1); \path (W2) ++(0:4cm) coordinate (B2);
\path (B1) ++(225:2cm) coordinate (B0); \path (W2) ++(45:2cm) coordinate (W3);

\path (W1) ++(-20:0.8cm) coordinate (B1-1); \path (W1) ++(-20:1.6cm) coordinate (W1-1);
\path (W1) ++(-20:2.4cm) coordinate (B1-2); \path (W1) ++(-20:3.2cm) coordinate (W1-2);
\path (W2) ++(-20:0.8cm) coordinate (B2-1); \path (W2) ++(-20:1.6cm) coordinate (W2-1);
\path (W2) ++(-20:2.4cm) coordinate (B2-2); \path (W2) ++(-20:3.2cm) coordinate (W2-2);

\path (B1) ++(30:1cm) coordinate (B1e); \path (B1) ++(330:1cm) coordinate (B1s);
\path (W1) ++(150:1cm) coordinate (W1w); \path (W1) ++(210:1cm) coordinate (W1s);
\path (B2) ++(30:1cm) coordinate (B2e); \path (B2) ++(330:1cm) coordinate (B2s);
\path (W2) ++(150:1cm) coordinate (W2w); \path (W2) ++(210:1cm) coordinate (W2s);

\path (W1) ++(160:0.7cm) coordinate (W1+); \path (W1) ++(200:0.7cm) coordinate (W1-);
\path (W2) ++(160:0.7cm) coordinate (W2+); \path (W2) ++(200:0.7cm) coordinate (W2-);
\path (B1) ++(20:0.7cm) coordinate (B1+); \path (B1) ++(340:0.7cm) coordinate (B1-);
\path (B2) ++(20:0.7cm) coordinate (B2+); \path (B2) ++(340:0.7cm) coordinate (B2-);

\node (original) at (0,0)
{\scalebox{0.65}{
\begin{tikzpicture}
\draw [line width=\edgewidth] (B1)--(W1); \draw [line width=\edgewidth] (B2)--(W1) ; \draw [line width=\edgewidth] (B2)--(W2) ;
\draw [line width=\edgewidth] (B1)--(B0) ; \draw [line width=\edgewidth] (W2)--(W3) ;
\draw [line width=\edgewidth] (B1)--(B1e); \draw [line width=\edgewidth] (B1)--(B1s);
\draw [line width=\edgewidth] (W1)--(W1w); \draw [line width=\edgewidth] (W1)--(W1s);
\draw [line width=\edgewidth] (B2)--(B2e); \draw [line width=\edgewidth] (B2)--(B2s);
\draw [line width=\edgewidth] (W2)--(W2w); \draw [line width=\edgewidth] (W2)--(W2s);

\draw [line width=\edgewidth,line cap=round, dash pattern=on 0pt off 2.5\pgflinewidth] (W1+)--(W1-);
\draw [line width=\edgewidth,line cap=round, dash pattern=on 0pt off 2.5\pgflinewidth] (W2+)--(W2-);
\draw [line width=\edgewidth,line cap=round, dash pattern=on 0pt off 2.5\pgflinewidth] (B1+)--(B1-);
\draw [line width=\edgewidth,line cap=round, dash pattern=on 0pt off 2.5\pgflinewidth] (B2+)--(B2-);
\draw [line width=\nodewidth, fill=black] (B1) circle [radius=\noderad] ; \draw [line width=\nodewidth, fill=black] (B2) circle [radius=\noderad] ;
\draw [line width=\nodewidth, fill=white] (W1) circle [radius=\noderad] ; \draw [line width=\nodewidth, fill=white] (W2) circle [radius=\noderad] ;

\path (W1) ++(90:0.2cm) coordinate (W1z); \path (W2) ++(90:0.2cm) coordinate (W2z);
\path (B1) ++(90:0.2cm) coordinate (B1z); \path (B2) ++(90:0.2cm) coordinate (B2z);
\path (B1z) ++(225:2cm) coordinate (B0z); \path (W2z) ++(45:2cm) coordinate (W3z);

\path (B2) ++(-90:0.2cm) coordinate (B2zz); \path (B2zz) ++(330:1cm) coordinate (B2szz);
\path (W1) ++(-90:0.2cm) coordinate (W1zz); \path (W1zz) ++(150:1.5cm) coordinate (W1wzz);
\draw [->, rounded corners, line width=0.08cm, blue] (B2szz)--(B2zz)--(W1zz)--(W1wzz);
\draw [->, rounded corners, line width=0.08cm, red] (B0z)--(B1z)--(W1z)--(B2z)--(W2z)--(W3z);

\node at (0.3,4.8) {\color{red}\Large$\widetilde{z_i}$};
\node at (-1.2,2.3) {\color{blue}\Large$\widetilde{x_j}$};
\end{tikzpicture}
}};

\draw [->,line width=0.035cm] (2.6,0)--(5.2,0);
\node at (3.9,0.35) {\small(zig-1)--(zig-4)};

\node (after) at (8.9,0)
{\scalebox{0.65}{
\begin{tikzpicture}
\coordinate (WW1) at (0,1.2); \coordinate (WW2) at (0,3.6);
\path (WW1) ++(0:8cm) coordinate (BB1); \path (WW2) ++(0:8cm) coordinate (BB2);
\path (BB1) ++(200:2cm) coordinate (BB0); \path (WW2) ++(20:2cm) coordinate (WW3);

\path (WW1) ++(0:0.8cm) coordinate (BB1-1); \path (WW1) ++(0:1.6cm) coordinate (WW1-1);
\path (WW1) ++(0:2.4cm) coordinate (BB1-2);
\path (WW2) ++(0:0.8cm) coordinate (BB2-1); \path (WW2) ++(0:1.6cm) coordinate (WW2-1);
\path (WW2) ++(0:2.4cm) coordinate (BB2-2);

\path (WW1) ++(0:5.6cm) coordinate (WW1-2); \path (WW1) ++(0:6.4cm) coordinate (BB1-3);
\path (WW1) ++(0:7.2cm) coordinate (WW1-3);
\path (WW2) ++(0:5.6cm) coordinate (WW2-2); \path (WW2) ++(0:6.4cm) coordinate (BB2-3);
\path (WW2) ++(0:7.2cm) coordinate (WW2-3);

\path (BB1) ++(30:1cm) coordinate (BB1e); \path (BB1) ++(330:1cm) coordinate (BB1s);
\path (WW1) ++(150:1cm) coordinate (WW1w); \path (WW1) ++(210:1cm) coordinate (WW1s);
\path (BB2) ++(30:1cm) coordinate (BB2e); \path (BB2) ++(330:1cm) coordinate (BB2s);
\path (WW2) ++(150:1cm) coordinate (WW2w); \path (WW2) ++(210:1cm) coordinate (WW2s);

\path (WW1) ++(160:0.7cm) coordinate (WW1+); \path (WW1) ++(200:0.7cm) coordinate (WW1-);
\path (WW2) ++(160:0.7cm) coordinate (WW2+); \path (WW2) ++(200:0.7cm) coordinate (WW2-);
\path (BB1) ++(20:0.7cm) coordinate (BB1+); \path (BB1) ++(340:0.7cm) coordinate (BB1-);
\path (BB2) ++(20:0.7cm) coordinate (BB2+); \path (BB2) ++(340:0.7cm) coordinate (BB2-);

\draw [line width=\edgewidth] (WW1)-- ++(0:3.2cm); \draw [line width=\edgewidth] (BB1)-- ++(180:3.2cm);
\draw [line width=\edgewidth] (WW2)-- ++(0:3.2cm); \draw [line width=\edgewidth] (BB2)-- ++(180:3.2cm);

\path (WW1) ++(0:3.5cm) coordinate (dot_start1);
\draw [line width=\edgewidth,line cap=round, dash pattern=on 0pt off 2.5\pgflinewidth] (dot_start1)-- ++(0:1.1cm);
\path (WW2) ++(0:3.5cm) coordinate (dot_start2);
\draw [line width=\edgewidth,line cap=round, dash pattern=on 0pt off 2.5\pgflinewidth] (dot_start2)-- ++(0:1.1cm);

\draw [line width=\edgewidth] (BB1)--(BB1e); \draw [line width=\edgewidth] (BB1)--(BB1s);
\draw [line width=\edgewidth] (WW1)--(WW1w); \draw [line width=\edgewidth] (WW1)--(WW1s);
\draw [line width=\edgewidth] (BB2)--(BB2e); \draw [line width=\edgewidth] (BB2)--(BB2s);
\draw [line width=\edgewidth] (WW2)--(WW2w); \draw [line width=\edgewidth] (WW2)--(WW2s);

\draw [line width=\edgewidth,line cap=round, dash pattern=on 0pt off 2.5\pgflinewidth] (WW1+)--(WW1-);
\draw [line width=\edgewidth,line cap=round, dash pattern=on 0pt off 2.5\pgflinewidth] (WW2+)--(WW2-);
\draw [line width=\edgewidth,line cap=round, dash pattern=on 0pt off 2.5\pgflinewidth] (BB1+)--(BB1-);
\draw [line width=\edgewidth,line cap=round, dash pattern=on 0pt off 2.5\pgflinewidth] (BB2+)--(BB2-);

\path (BB1-1) ++(-70:1.5cm) coordinate (BB1-1-); \draw [line width=\edgewidth] (BB1-1)--(BB1-1-);
\path (BB1-2) ++(-70:1.5cm) coordinate (BB1-2-); \draw [line width=\edgewidth] (BB1-2)--(BB1-2-);
\path (WW2-1) ++(110:1.5cm) coordinate (WW2-1-); \draw [line width=\edgewidth] (WW2-1)--(WW2-1-);
\draw [line width=\edgewidth] (BB2-1)--(WW1-1);
\draw [line width=\edgewidth] (BB2-2)-- ++(-70:1.3cm);

\path (BB1-3) ++(-70:1.5cm) coordinate (BB1-3-); \draw [line width=\edgewidth] (BB1-3)--(BB1-3-);
\path (WW2-2) ++(110:1.5cm) coordinate (WW2-2-); \draw [line width=\edgewidth] (WW2-2)--(WW2-2-);
\path (WW2-3) ++(110:1.5cm) coordinate (WW2-3-); \draw [line width=\edgewidth] (WW2-3)--(WW2-3-);
\draw [line width=\edgewidth] (BB2-3)--(WW1-3);
\draw [line width=\edgewidth] (WW1-2)-- ++(110:1.3cm);

\draw [line width=\edgewidth] (BB2-1)--(WW1); \draw [line width=\edgewidth] (BB2-2)--(WW1-1); 
\draw [line width=\edgewidth] (WW1-3)--(BB2); \draw [line width=\edgewidth] (WW1-2)--(BB2-3);

\draw [line width=\nodewidth, fill=black] (BB1) circle [radius=\noderad] ; \draw [line width=\nodewidth, fill=black] (BB2) circle [radius=\noderad] ;
\draw [line width=\nodewidth, fill=black] (BB1-1) circle [radius=\noderad] ; \draw [line width=\nodewidth, fill=black] (BB1-2) circle [radius=\noderad] ;
\draw [line width=\nodewidth, fill=black] (BB1-3) circle [radius=\noderad] ;
\draw [line width=\nodewidth, fill=black] (BB2-1) circle [radius=\noderad] ; \draw [line width=\nodewidth, fill=black] (BB2-2) circle [radius=\noderad] ;
\draw [line width=\nodewidth, fill=black] (BB2-3) circle [radius=\noderad] ;
\draw [line width=\nodewidth, fill=white] (WW1) circle [radius=\noderad] ; \draw [line width=\nodewidth, fill=white] (WW2) circle [radius=\noderad] ;
\draw [line width=\nodewidth, fill=white] (WW1-1) circle [radius=\noderad] ; \draw [line width=\nodewidth, fill=white] (WW1-2) circle [radius=\noderad] ;
\draw [line width=\nodewidth, fill=white] (WW1-3) circle [radius=\noderad] ;
\draw [line width=\nodewidth, fill=white] (WW2-1) circle [radius=\noderad] ; \draw [line width=\nodewidth, fill=white] (WW2-2) circle [radius=\noderad] ;
\draw [line width=\nodewidth, fill=white] (WW2-3) circle [radius=\noderad] ;

\path (BB2) ++(-90:0.35cm) coordinate (BB2z); \path (BB2z) ++(330:1cm) coordinate (BB2sz);
\path (WW1-3) ++(-90:0.35cm) coordinate (WW1-3z);
\path (BB2-3) ++(-90:0.35cm) coordinate (BB2-3z); \path (WW1-2) ++(-90:0.35cm) coordinate (WW1-2z);
\draw [->, rounded corners, line width=0.08cm, blue] (BB2sz)--(BB2z)--(WW1-3z)--(BB2-3z)--(WW1-2z)-- ++(110:1.8cm);

\path (WW1) ++(-90:0.35cm) coordinate (WW1z);
\path (BB2-1) ++(-90:0.35cm) coordinate (BB2-1z); \path (BB2-2) ++(-90:0.35cm) coordinate (BB2-2z);
\path (WW1-1) ++(-90:0.35cm) coordinate (WW1-1z);
\path (WW1z) ++(150:1.35cm) coordinate (WW1wz);
\draw [<-, rounded corners, line width=0.08cm, blue] (WW1wz)--(WW1z)--(BB2-1z)--(WW1-1z)--(BB2-2z)-- ++(-70:1.8cm);

\path (WW1-1) ++(-30:0.25cm) coordinate (WW1-1z); \path (BB2-1) ++(-30:0.25cm) coordinate (BB2-1z);
\path (WW2-1) ++(-30:0.25cm) coordinate (WW2-1z); \path (BB1-1) ++(-30:0.25cm) coordinate (BB1-1z);
\path (WW2-1z) ++(110:1.8cm) coordinate (WW2-1zz); \path (BB1-1z) ++(290:1.7cm) coordinate (BB1-1zz);
\draw [->, rounded corners, line width=0.08cm, red] (WW2-1zz)--(WW2-1z)--(BB2-1z)--(WW1-1z)--(BB1-1z)--(BB1-1zz);

\path (WW1-3) ++(-30:0.25cm) coordinate (WW1-3z); \path (BB2-3) ++(-30:0.25cm) coordinate (BB2-3z);
\path (WW2-3) ++(-30:0.25cm) coordinate (WW2-3z); \path (BB1-3) ++(-30:0.25cm) coordinate (BB1-3z);
\path (WW2-3z) ++(110:1.8cm) coordinate (WW2-3zz); \path (BB1-3z) ++(290:1.7cm) coordinate (BB1-3zz);
\draw [->, rounded corners, line width=0.08cm, red] (WW2-3zz)--(WW2-3z)--(BB2-3z)--(WW1-3z)--(BB1-3z)--(BB1-3zz);

\node at (-1.3,2) {\color{blue}\Large$\widetilde{x}_j^\prime$};
\node[red] at (2.1,-0.3) {\large$\widetilde{z}_{i,1}$}; \node[red] at (7.8,-0.3) {\large$\widetilde{z}_{i,p_i}$};
\end{tikzpicture}
}};
\end{tikzpicture}
\caption{The case where the projection of $\widetilde{z_i}[2m]=\widetilde{x_j}[2m+1]$ on $\Gamma$ is not contained in $X_i$.}\label{fig_observation_zigzag2}
\end{figure}

\begin{figure}[h!]\centering
\begin{tikzpicture}
\newcommand{\edgewidth}{0.05cm} 
\newcommand{\nodewidth}{0.05cm} 
\newcommand{\noderad}{0.16} 

\node (original) at (0,0)
{\scalebox{0.65}{
\begin{tikzpicture}
\draw [line width=\edgewidth] (B1)--(W1); \draw [line width=\edgewidth] (B2)--(W1) ; \draw [line width=\edgewidth] (B2)--(W2) ;
\draw [line width=\edgewidth] (B1)--(B0) ; \draw [line width=\edgewidth] (W2)--(W3) ;
\draw [line width=\edgewidth] (B1)--(B1e); \draw [line width=\edgewidth] (B1)--(B1s);
\draw [line width=\edgewidth] (W1)--(W1w); \draw [line width=\edgewidth] (W1)--(W1s);
\draw [line width=\edgewidth] (B2)--(B2e); \draw [line width=\edgewidth] (B2)--(B2s);
\draw [line width=\edgewidth] (W2)--(W2w); \draw [line width=\edgewidth] (W2)--(W2s);

\draw [line width=\edgewidth,line cap=round, dash pattern=on 0pt off 2.5\pgflinewidth] (W1+)--(W1-);
\draw [line width=\edgewidth,line cap=round, dash pattern=on 0pt off 2.5\pgflinewidth] (W2+)--(W2-);
\draw [line width=\edgewidth,line cap=round, dash pattern=on 0pt off 2.5\pgflinewidth] (B1+)--(B1-);
\draw [line width=\edgewidth,line cap=round, dash pattern=on 0pt off 2.5\pgflinewidth] (B2+)--(B2-);
\draw [line width=\nodewidth, fill=black] (B1) circle [radius=\noderad] ; \draw [line width=\nodewidth, fill=black] (B2) circle [radius=\noderad] ;
\draw [line width=\nodewidth, fill=white] (W1) circle [radius=\noderad] ; \draw [line width=\nodewidth, fill=white] (W2) circle [radius=\noderad] ;

\path (W1) ++(90:0.2cm) coordinate (W1z); \path (W2) ++(90:0.2cm) coordinate (W2z);
\path (B1) ++(90:0.2cm) coordinate (B1z); \path (B2) ++(90:0.2cm) coordinate (B2z);
\path (B1z) ++(225:2cm) coordinate (B0z); \path (W2z) ++(45:2cm) coordinate (W3z);

\path (B2) ++(-90:0.2cm) coordinate (B2zz); \path (B2zz) ++(330:1cm) coordinate (B2szz);
\path (W1) ++(-90:0.2cm) coordinate (W1zz); \path (W1zz) ++(150:1.5cm) coordinate (W1wzz);
\draw [->, rounded corners, line width=0.08cm, blue] (B2szz)--(B2zz)--(W1zz)--(W1wzz);
\draw [->, rounded corners, line width=0.08cm, red] (B0z)--(B1z)--(W1z)--(B2z)--(W2z)--(W3z);

\node at (0.3,4.8) {\color{red}\Large$\widetilde{z_i}$};
\node at (-1.2,2.3) {\color{blue}\Large$\widetilde{x_j}$};
\end{tikzpicture}
}};

\draw [->,line width=0.035cm] (2.6,0)--(5.2,0);
\node at (3.9,0.35) {\small(zig-1)--(zig-4)};

\node (after) at (8.9,0)
{\scalebox{0.65}{
\begin{tikzpicture}
\draw [line width=\edgewidth] (WW1)-- ++(0:3.2cm); \draw [line width=\edgewidth] (BB1)-- ++(180:3.2cm);
\draw [line width=\edgewidth] (WW2)-- ++(0:3.2cm); \draw [line width=\edgewidth] (BB2)-- ++(180:3.2cm);

\path (WW1) ++(0:3.5cm) coordinate (dot_start1);
\draw [line width=\edgewidth,line cap=round, dash pattern=on 0pt off 2.5\pgflinewidth] (dot_start1)-- ++(0:1.1cm);
\path (WW2) ++(0:3.5cm) coordinate (dot_start2);
\draw [line width=\edgewidth,line cap=round, dash pattern=on 0pt off 2.5\pgflinewidth] (dot_start2)-- ++(0:1.1cm);

\draw [line width=\edgewidth] (BB1)--(BB1e); \draw [line width=\edgewidth] (BB1)--(BB1s);
\draw [line width=\edgewidth] (WW1)--(WW1w); \draw [line width=\edgewidth] (WW1)--(WW1s);
\draw [line width=\edgewidth] (BB2)--(BB2e); \draw [line width=\edgewidth] (BB2)--(BB2s);
\draw [line width=\edgewidth] (WW2)--(WW2w); \draw [line width=\edgewidth] (WW2)--(WW2s);

\draw [line width=\edgewidth,line cap=round, dash pattern=on 0pt off 2.5\pgflinewidth] (WW1+)--(WW1-);
\draw [line width=\edgewidth,line cap=round, dash pattern=on 0pt off 2.5\pgflinewidth] (WW2+)--(WW2-);
\draw [line width=\edgewidth,line cap=round, dash pattern=on 0pt off 2.5\pgflinewidth] (BB1+)--(BB1-);
\draw [line width=\edgewidth,line cap=round, dash pattern=on 0pt off 2.5\pgflinewidth] (BB2+)--(BB2-);

\path (BB1-1) ++(-70:1.5cm) coordinate (BB1-1-); \draw [line width=\edgewidth] (BB1-1)--(BB1-1-);
\path (BB1-2) ++(-70:1.5cm) coordinate (BB1-2-); \draw [line width=\edgewidth] (BB1-2)--(BB1-2-);
\path (WW2-1) ++(110:1.5cm) coordinate (WW2-1-); \draw [line width=\edgewidth] (WW2-1)--(WW2-1-);
\draw [line width=\edgewidth] (BB2-1)--(WW1-1);
\draw [line width=\edgewidth] (BB2-2)-- ++(-70:1.3cm);

\path (BB1-3) ++(-70:1.5cm) coordinate (BB1-3-); \draw [line width=\edgewidth] (BB1-3)--(BB1-3-);
\path (WW2-2) ++(110:1.5cm) coordinate (WW2-2-); \draw [line width=\edgewidth] (WW2-2)--(WW2-2-);
\path (WW2-3) ++(110:1.5cm) coordinate (WW2-3-); \draw [line width=\edgewidth] (WW2-3)--(WW2-3-);
\draw [line width=\edgewidth] (BB2-3)--(WW1-3);
\draw [line width=\edgewidth] (WW1-2)-- ++(110:1.3cm);

\draw [line width=\nodewidth, fill=black] (BB1) circle [radius=\noderad] ; \draw [line width=\nodewidth, fill=black] (BB2) circle [radius=\noderad] ;
\draw [line width=\nodewidth, fill=black] (BB1-1) circle [radius=\noderad] ; \draw [line width=\nodewidth, fill=black] (BB1-2) circle [radius=\noderad] ;
\draw [line width=\nodewidth, fill=black] (BB1-3) circle [radius=\noderad] ;
\draw [line width=\nodewidth, fill=black] (BB2-1) circle [radius=\noderad] ; \draw [line width=\nodewidth, fill=black] (BB2-2) circle [radius=\noderad] ;
\draw [line width=\nodewidth, fill=black] (BB2-3) circle [radius=\noderad] ;
\draw [line width=\nodewidth, fill=white] (WW1) circle [radius=\noderad] ; \draw [line width=\nodewidth, fill=white] (WW2) circle [radius=\noderad] ;
\draw [line width=\nodewidth, fill=white] (WW1-1) circle [radius=\noderad] ; \draw [line width=\nodewidth, fill=white] (WW1-2) circle [radius=\noderad] ;
\draw [line width=\nodewidth, fill=white] (WW1-3) circle [radius=\noderad] ;
\draw [line width=\nodewidth, fill=white] (WW2-1) circle [radius=\noderad] ; \draw [line width=\nodewidth, fill=white] (WW2-2) circle [radius=\noderad] ;
\draw [line width=\nodewidth, fill=white] (WW2-3) circle [radius=\noderad] ;

\path (BB2) ++(-110:0.28cm) coordinate (BB2z); \path (BB2z) ++(330:1.2cm) coordinate (BB2zz);
\path (WW2-3) ++(-110:0.28cm) coordinate (WW2-3z); \path (WW2-3z) ++(110:2cm) coordinate (WW2-3zz);
\draw [->, rounded corners, line width=0.08cm, blue] (BB2zz)--(BB2z)--(WW2-3z)--(WW2-3zz);

\path (WW1-1) ++(-30:0.25cm) coordinate (WW1-1z); \path (BB2-1) ++(-30:0.25cm) coordinate (BB2-1z);
\path (WW2-1) ++(-30:0.25cm) coordinate (WW2-1z); \path (BB1-1) ++(-30:0.25cm) coordinate (BB1-1z);
\path (WW2-1z) ++(110:1.8cm) coordinate (WW2-1zz); \path (BB1-1z) ++(290:1.7cm) coordinate (BB1-1zz);
\draw [->, rounded corners, line width=0.08cm, red] (WW2-1zz)--(WW2-1z)--(BB2-1z)--(WW1-1z)--(BB1-1z)--(BB1-1zz);

\path (WW1-3) ++(-30:0.25cm) coordinate (WW1-3z); \path (BB2-3) ++(-30:0.25cm) coordinate (BB2-3z);
\path (WW2-3) ++(-30:0.25cm) coordinate (WW2-3z); \path (BB1-3) ++(-30:0.25cm) coordinate (BB1-3z);
\path (WW2-3z) ++(110:1.8cm) coordinate (WW2-3zz); \path (BB1-3z) ++(290:1.7cm) coordinate (BB1-3zz);
\draw [->, rounded corners, line width=0.08cm, red] (WW2-3zz)--(WW2-3z)--(BB2-3z)--(WW1-3z)--(BB1-3z)--(BB1-3zz);

\node at (6,4.8) {\color{blue}\Large$\widetilde{x}_j^\prime$};
\node[red] at (2.1,-0.3) {\large$\widetilde{z}_{i,1}$}; \node[red] at (7.8,-0.3) {\large$\widetilde{z}_{i,p_i}$};
\end{tikzpicture}
}};
\end{tikzpicture}
\caption{The case where the projection of $\widetilde{z_i}[2m]=\widetilde{x_j}[2m+1]$ on $\Gamma$ is contained in $X_i$.}
\label{fig_observation_zigzag3}
\end{figure}

If $z_i[2m]$ is contained in $X_i$, then no bypasses are inserted.
We again consider the zigzag path $\widetilde{x}^\prime_j$ on the universal cover of $\overline{\nu}_\calX^\zig(\Gamma)$ passing through $\widetilde{x_j}[2m]$ as a zag of $\widetilde{x}^\prime_j$ (e.g., see Figure~\ref{fig_observation_zigzag3}).
Unlike in the previous case, after passing through $\widetilde{x_j}[2m]$, $\widetilde{x}^\prime_j$ goes to the edge $\big\{\widetilde{b}_i[2m+1],\widetilde{w}_{i,1}[2m+1]\big\}$.
(Here, we denote the edge whose endpoints are a~black node~$b$ and a~white node~$w$ by~$\{b,w\}$.)

If $z_i[2m+2]\in X_i$, in which case bypasses are not inserted, then $\widetilde{x}^\prime_j$ goes to the edge $\big\{\widetilde{w}_{i,1}[2m+1], \widetilde{b}_{i,1}[2m+3]\big\}$.

On the other hand, if $z_i[2m+2]\not\in X_i$, in which case we insert bypasses, then $\widetilde{x}^\prime_j$ goes to the edge $\big\{\widetilde{w}_{i,1}[2m+1], \widetilde{b}_i[2m+3]\big\}$.
In such a way, $\widetilde{x}^\prime_j$ crosses the $i$-th deformed part in the direction from the $(i+1)$-th irrelevant part to the $i$-th irrelevant part.
More precisely, if we assume that~$\widetilde{x}^\prime_j$ goes through the edge $\{\widetilde{b}_{i,s}[2m-1], \widetilde{w}_{i,s+1}[2m-1]\}$ in the $i$-th deformed part,
then $\widetilde{x}^\prime_j$ behaves as follows:
\begin{itemize}\itemsep=0pt
\item[(1)] if $z_i[2m]\in X_i$, in which case bypasses are not inserted, then $\widetilde{x}^\prime_j$ goes through the edge $\big\{\widetilde{w}_{i,s+1}[2m-1],\widetilde{b}_{i,s+1}[2m+1]\big\}$ and then $\big\{\widetilde{b}_{i,s+1}[2m+1], \widetilde{w}_{i,s+2}[2m+1]\big\}$,
\item[(2)] if $z_i[2m]\not\in X_i$, in which case we insert bypasses, then $\widetilde{x}^\prime_j$ goes through the edge $\big\{\widetilde{w}_{i,s+1}[2m-1], \widetilde{b}_{i,s}[2m+1]\big\}$ and then $\big\{\widetilde{b}_{i,s}[2m+1], \widetilde{w}_{i,s+1}[2m+1]\big\}$,
\end{itemize}
where $m=1,\dots,n$ and $s=0,\dots,p_i-1$ with $\widetilde{b}_{i,0}[-]=\widetilde{b}_i[-]$ and $\widetilde{w}_{i,p_i+1}[-]=\widetilde{w}_i[-]$.
When we consider $\widetilde{x}^\prime_j$ in the $i$-th deformed part, we encounter case (1) $|X_i|$ times and case (2) $\ell(z_i)/2-|X_i|$ times.
Since $p_i=|X_i|-1$, we see that $\widetilde{x}^\prime_j$ goes out the $i$-th deformed part from $\widetilde{w}_i[2m-1+2n]=\widetilde{w}_i[2m-1+\ell(z_i)]$,
and then it goes into the $i$-th irrelevant part.
Thus, $\widetilde{x}^\prime_j$ behaves in the same manner as the shift of $\widetilde{x}_j$ in the $i$-th irrelevant part.
For example, if we consider the type I zigzag path $z_i$ with $\ell(z_i)=8$, and the zig-deformation parameter $X_i$ with $|X_i|=3$ and $z_i[2m+4]\not\in X_i$, then the $i$-th deformed part will change as shown in Figure~\ref{fig_observation_zigzag4}.

\begin{figure}[h!]\centering
\scalebox{0.6}{
\begin{tikzpicture}
\newcommand{\edgewidth}{0.05cm} 
\newcommand{\nodewidth}{0.05cm} 
\newcommand{\noderad}{0.17} 

\node at (0,0){
\begin{tikzpicture}
\foreach \blackname/\x/\y in {3/4.5/0, 6/4.5/2, 9/4.5/4, 12/4.5/6, 15/4.5/8}
{\coordinate (B\blackname) at (\x,\y);}
\foreach \whitename/\x/\y in {1/0/0, 4/0/2, 7/0/4, 10/0/6, 13/0/8}
{\coordinate (W\whitename) at (\x,\y);}

\foreach \t/\h in {1/3,4/6,7/9,10/12,13/15}{\draw [line width=\edgewidth] (W\t)--(B\h); } 
\foreach \t/\h in {1/6,4/9,7/12,10/15}{\draw [line width=\edgewidth] (W\t)--(B\h); }
\foreach \t in {3}{\draw [line width=\edgewidth] (B\t)-- ++(210:1.2cm); }
\foreach \t in {13}{\draw [line width=\edgewidth] (W\t)-- ++(30:1.2cm); }

\foreach \t in {1,4,7,10,13}{\draw [line width=\edgewidth] (W\t)-- ++(150:1cm); \draw [line width=\edgewidth] (W\t)-- ++(210:1cm); }
\foreach \t in {1,4,7,10,13}{\path (W\t) ++ (165:0.7cm) coordinate (W\t+); \draw [line width=\edgewidth,line cap=round, dash pattern=on 0pt off 2.5\pgflinewidth] (W\t+)-- ++(-90:0.38cm);}
\foreach \t in {3,6,9,12,15}{\draw [line width=\edgewidth] (B\t)-- ++(30:1cm); \draw [line width=\edgewidth] (B\t)-- ++(-30:1cm); }
\foreach \t in {3,6,9,12,15}{\path (B\t) ++ (15:0.7cm) coordinate (B\t+); \draw [line width=\edgewidth,line cap=round, dash pattern=on 0pt off 2.5\pgflinewidth] (B\t+)-- ++(-90:0.38cm);}

\foreach \n in {3,6,9,12,15} {\draw [line width=\nodewidth, fill=black] (B\n) circle [radius=\noderad];}
\foreach \n in {1,4,7,10,13} {\draw [line width=\nodewidth, fill=white] (W\n) circle [radius=\noderad];}

\path (B6) ++(-90:0.2cm) coordinate (B6_z); \path (W1) ++(-90:0.2cm) coordinate (W1_z);
\path (B6_z) ++(-30:1.2cm) coordinate (B6_zz); \path (W1_z) ++(150:1.2cm) coordinate (W1_zz);
\draw [->, rounded corners, line width=0.09cm, blue] (B6_zz)--(B6_z)--(W1_z)--(W1_zz);

\path (W13) ++(-90:0.2cm) coordinate (W13_z); \path (W13_z) ++(150:1.2cm) coordinate (W13_zz+);
\path (W13_z) ++(30:1.2cm) coordinate (W13_zz-);
\draw [->, rounded corners, line width=0.09cm, blue] (W13_zz-)--(W13_z)--(W13_zz+);

\path (W1_zz) ++(180:0.3cm) node[blue] {\large $\widetilde{x}_j$};
\path (W13_zz+) ++(180:0.6cm) node[blue] {\large $\widetilde{x}_j(1)$};

\node[red] at (2.25,-0.3) {$\widetilde{z}_i[2m-1]$}; \node[red] at (2.25,1.3) {$\widetilde{z}_i[2m]$};
\node[red] at (2.25,2.3) {$\widetilde{z}_i[2m+1]$}; \node[red] at (2.25,8.3) {$\widetilde{z}_i[2m+7]$};
\path (W1) ++(-90:0.86cm) node {$\widetilde{w}_i[2m-1]$}; \path (W13) ++(90:0.86cm) node {$\widetilde{w}_i[2m+7]$};
\path (B3) ++(-90:0.86cm) node {$\widetilde{b}_i[2m-1]$}; \path (B15) ++(90:0.86cm) node {$\widetilde{b}_i[2m+7]$};
\end{tikzpicture}};

\draw [->,line width=0.05cm] (4.5,0)--(9.7,0);
\node at (7.1,0.5) {\LARGE(zig-1)--(zig-4)};

\node at (15,0){
\begin{tikzpicture}
\foreach \blackname/\x/\y in {1/1.5/0, 2/4.5/0, 3/7.5/0, 4/1.5/2, 5/4.5/2, 6/7.5/2, 7/1.5/4, 8/4.5/4, 9/7.5/4, 10/1.5/6, 11/4.5/6, 12/7.5/6, 13/1.5/8, 14/4.5/8, 15/7.5/8}
{\coordinate (B\blackname) at (\x,\y);}
\foreach \whitename/\x/\y in {1/0/0, 2/3/0, 3/6/0, 4/0/2, 5/3/2, 6/6/2, 7/0/4, 8/3/4, 9/6/4, 10/0/6, 11/3/6, 12/6/6, 13/0/8, 14/3/8, 15/6/8}
{\coordinate (W\whitename) at (\x,\y);}

\foreach \t/\h in {1/3,4/6,7/9,10/12,13/15}{\draw [line width=\edgewidth] (W\t)--(B\h); } 
\foreach \t/\h in {2/4,3/5,5/7,6/8,8/10,9/11,11/13,12/14}{\draw [line width=\edgewidth] (W\t)--(B\h); }
\foreach \t/\h in {7/10,8/11,9/12}{\draw [line width=\edgewidth] (W\t)--(B\h); } 

\foreach \t in {1,2}{\draw [line width=\edgewidth] (B\t)-- ++(-50:1.2cm); }
\foreach \t in {14,15}{\draw [line width=\edgewidth] (W\t)-- ++(130:1.2cm); }

\foreach \t in {1,4,7,10,13}{\draw [line width=\edgewidth] (W\t)-- ++(150:1cm); \draw [line width=\edgewidth] (W\t)-- ++(210:1cm); }
\foreach \t in {1,4,7,10,13}{\path (W\t) ++ (165:0.7cm) coordinate (W\t+); \draw [line width=\edgewidth,line cap=round, dash pattern=on 0pt off 2.5\pgflinewidth] (W\t+)-- ++(-90:0.38cm);}
\foreach \t in {3,6,9,12,15}{\draw [line width=\edgewidth] (B\t)-- ++(30:1cm); \draw [line width=\edgewidth] (B\t)-- ++(-30:1cm); }
\foreach \t in {3,6,9,12,15}{\path (B\t) ++ (15:0.7cm) coordinate (B\t+); \draw [line width=\edgewidth,line cap=round, dash pattern=on 0pt off 2.5\pgflinewidth] (B\t+)-- ++(-90:0.38cm);}

\foreach \n in {1,2,3,4,5,6,7,8,9,10,11,12,13,14,15} {\draw [line width=\nodewidth, fill=black] (B\n) circle [radius=\noderad];}
\foreach \n in {1,2,3,4,5,6,7,8,9,10,11,12,13,14,15} {\draw [line width=\nodewidth, fill=white] (W\n) circle [radius=\noderad];}

\foreach \t in {6,8,11,13}{\path (B\t) ++(-90:0.2cm) coordinate (B\t_z); }
\foreach \t in {6,8,11,13}{\path (W\t) ++(-90:0.2cm) coordinate (W\t_z); }
\path (B6_z) ++(-30:1.2cm) coordinate (B6_zz); \path (W13_z) ++(150:1.2cm) coordinate (W13_zz);
\draw [->, rounded corners, line width=0.09cm, blue] (B6_zz)--(B6_z)--(W6_z)--(B8_z)--(W8_z)--(B11_z)--(W11_z)--(B13_z)--(W13_z)--(W13_zz);

\path (W13_zz) ++(180:0.3cm) node[blue] {\large $\widetilde{x}_j^\prime$};

\path (W1) ++(-90:0.86cm) node {$\widetilde{w}_i[2m-1]$}; \path (W13) ++(90:0.86cm) node {$\widetilde{w}_i[2m+7]$};
\path (B3) ++(-90:0.86cm) node {$\widetilde{b}_i[2m-1]$}; \path (B15) ++(90:0.86cm) node {$\widetilde{b}_i[2m+7]$};
\end{tikzpicture}};

\end{tikzpicture}
}
\caption{An example of the behavior of $\widetilde{x}^\prime_j$ in the $i$-th deformed part.}
\label{fig_observation_zigzag4}
\end{figure}
\end{observation}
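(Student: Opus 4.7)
The plan is to verify the described dynamics of $\widetilde{x}^\prime_j$ by a local analysis at each black and white node that the path visits, using (a) the explicit description of $\overline{\nu}^{\zig}_{\calX}(\Gamma)$ produced by operations (zig-1)--(zig-4), (b) the zigzag rule (max right at a white node, max left at a black node), and (c) the fact that all four operations respect the planar embedding, so the cyclic order of edges at each preserved node changes only locally and predictably.

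I would begin with the initial step. The edge $\widetilde{x_j}[2m]$ survives the deformation, since it is distinct from any edge of $\widetilde{z_i}$, and its black endpoint $\widetilde{b}_i[2m]=\widetilde{b}_i[2m+1]$ is preserved. At this node the operations have three effects: (zig-2) deletes $\widetilde{z_i}[2m]$; (zig-1) replaces $\widetilde{z_i}[2m+1]$ by the new edge $\{\widetilde{b}_i[2m+1],\widetilde{w}_{i,1}[2m+1]\}$ along the same planar direction; and if $z_i[2m]\notin X_i$, then (zig-4) inserts a bypass $\{\widetilde{b}_i[2m+1],\widetilde{w}_{i,1}[2m-1]\}$ occupying the angular sector vacated by the removed zag. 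Reading off the max-left turn from $\widetilde{x_j}[2m]$ then yields the bypass when $z_i[2m]\notin X_i$, and the split-zig edge to $\widetilde{w}_{i,1}[2m+1]$ when $z_i[2m]\in X_i$, as claimed.

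Next, in the latter case I would set up an induction on the zig level. Suppose $\widetilde{x}^\prime_j$ enters the zig $\widetilde{z_i}[2m-1]$ along the edge $\{\widetilde{b}_{i,s}[2m-1],\widetilde{w}_{i,s+1}[2m-1]\}$. The edges incident to $\widetilde{w}_{i,s+1}[2m-1]$ in $\overline{\nu}^{\zig}_{\calX}(\Gamma)$ are the two split-zig neighbours, the (zig-3) edge $\{\widetilde{w}_{i,s+1}[2m-1],\widetilde{b}_{i,s+1}[2m+1]\}$ when $s<p_i$, and the bypass $\{\widetilde{w}_{i,s+1}[2m-1],\widetilde{b}_{i,s}[2m+1]\}$ when $z_i[2m]\notin X_i$. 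By construction the (zig-3) edge and the bypass sit on opposite sides of the split zig, so the max-right turn selects the bypass when it exists and the (zig-3) edge otherwise, and the subsequent max-left turn at the resulting black node is forced along the split zig $\widetilde{z_i}[2m+1]$. This reproduces the two-step transitions (1) and (2). A counting argument then closes the induction: among the $n-1$ zags of $\widetilde{z_i}$ traversed after the initial step, exactly $p_i=|X_i|-1$ lie in $X_i$ and produce case (1), each raising the subscript $s$ by $1$, while the remaining ones produce case (2) and leave $s$ fixed; starting from $s=0$, the induction terminates at $s=p_i$, so the exit white node of the $i$-th deformed part is $\widetilde{w}_{i,p_i+1}[2m-1+2n]=\widetilde{w}_i[2m-1+2n]$, beyond which the irrelevant part is unchanged and $\widetilde{x}^\prime_j$ coincides with a translate of $\widetilde{x_j}$. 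The main technical obstacle will be the careful bookkeeping of planar cyclic orders at every node so that the zigzag rule is applied correctly: one must verify that each (zig-3) edge and each missing-zag bypass occupy the correct angular positions relative to the adjacent split-zig edges, which is precisely what makes the max-right rule prefer the bypass exactly when the corresponding zag is missing from $X_i$.
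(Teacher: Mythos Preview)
Your proposal is correct and follows essentially the same approach as the paper. The paper presents this as an Observation whose justification is woven into the text and the accompanying figures: it states the two local transition rules (1) and (2), and then closes with the same counting (``we encounter the case of (1) $|X_i|$ times and the case of (2) $\ell(z_i)/2-|X_i|$ times; since $p_i=|X_i|-1$, \ldots''). Your write-up simply makes explicit the planar cyclic-order reasoning at each node that the paper leaves to the pictures, and your count of $p_i$ case-(1) steps among the $n-1$ remaining zags matches the paper's count once one notes that the entry zag $z_i[2m]\in X_i$ has already been consumed.
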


\subsection{Properties of zigzag paths on deformed dimer models}\label{subsec_property_zigzag}

In this subsection, we study the slopes of zigzag paths on deformed dimer models.
In particular, we can describe them in terms of zigzag paths of the original dimer model, and this description plays a crucial role when discussing the relationship with the combinatorial mutations of the associated polygons.

\begin{Proposition}\label{deform_vector}
The path $z_{i,j}$ given in {\rm(zig-3)} of Definition~{\rm \ref{def_deformation_zig}} $($resp.\ {\rm(zag-3)} of Definition~{\rm \ref{def_deformation_zag})} is a type I zigzag path $z_{i,j}$ of $\nu^\zig_\calX(\Gamma)$ $($resp.~$\nu^\zag_\calY(\Gamma)$$)$ with $\ell(z_{i,j})=\ell(z_i)$.
Moreover, $z_{i,j}$ does not have any self-intersections on the universal cover, and satisfies $[z_{i,j}]=-[z_i]=-v$, and hence it is not homologically trivial.
\end{Proposition}

\begin{proof}
We consider the case of $\nu^\zig_\calX(\Gamma)$. The case of $\nu^\zag_\calY(\Gamma)$ is similar.

First, $z_{i,j}$ is a zigzag path of $\overline{\nu}^\zig_\calX(\Gamma)$ by definition.
We see that zigzag paths of $\overline{\nu}^\zig_\calX(\Gamma)$ intersecting with $\widetilde{z}_{i,j}$ take either the form of $\widetilde{y}_k^\prime$ given in Observation~\ref{obs_deformed_part2} or $\widetilde{x}_j^\prime$ given in Observation~\ref{obs_deformed_part3}.
In particular, these intersect with $\widetilde{z}_{i,j}$ precisely once in each deformed part, and
$\widetilde{y}_k^\prime$ crosses the $i$-th deformed part in the direction from the $i$-th irrelevant part to the $(i+1)$-th irrelevant part, and $\widetilde{x}_j^\prime$ crosses the $i$-th deformed part in the direction from the $(i+1)$-th irrelevant part to the $i$-th irrelevant part for all $i$.
Since other zigzag paths do not intersect with $\widetilde{z}_{i,j}$, we see that $z_{i,j}$ is a type I zigzag path of $\overline{\nu}^\zig_\calX(\Gamma)$.
Also, $\widetilde{z}_{i,j}$ does not have any self-intersections by definition.
By these properties, the edges constituting $z_{i,j}$ are not removed by the operation (zig-5).
In addition, since $z_{i,j}$ contains no $2$-valent nodes, the operation (join) does not affect $z_{i,j}$.
Therefore, we see that $z_{i,j}$ is a type I zigzag path of $\nu^\zig_\calX(\Gamma)$, and the remaining assertions follow from the definition of $z_{i,j}$.
\end{proof}

\begin{Proposition}\label{zigzag_afterdeform1}
The paths $y_1,\dots, y_t$ $($resp.\ the paths $x_1,\dots, x_s)$ are zigzag paths of $\Gamma$ intersecting with a chosen type~I zigzag path~$z$
at some zigs $($resp.\ zags$)$ of $z$. We have the following.
\begin{itemize}\itemsep=0pt
\item[\rm (1)] For any zigzag path $y_k$ of $\Gamma$, there exists a unique zigzag path $y_k^\prime$ of $\nu^\zig_\calX(\Gamma)$ without self-intersections
on the universal cover and satisfying $[y_k]=[y_k^\prime]$ where $k=1,\dots,t$.
\item[\rm (2)] For any zigzag path $x_j$ of $\Gamma$, there exists a unique zigzag path $x_j^\prime$ of $\nu^\zag_\calY(\Gamma)$ without self-intersections
on the universal cover and satisfying $[x_j]=[x_j^\prime]$ where $j=1,\dots,s$.
\end{itemize}
\end{Proposition}

\begin{proof}We consider the case of $\nu^\zig_\calX(\Gamma)$. The case of $\nu^\zag_\calY(\Gamma)$ is similar.

We use the same notation used in Observation~\ref{obs_deformed_part2}.
In particular, we consider the zigzag path~$\widetilde{y}^\prime_k$ of~$\widetilde{\Gamma}^\prime$ which coincides with $\widetilde{y}_k$ in all irrelevant parts, and behaves as depicted on the right-hand side of Figure~\ref{fig_observation_zigzag1} in each deformed part,
and therefore does not have any self-intersections.
Since bypasses inserted in the operation~(zig-4) do not affect the behavior of $\widetilde{y}^\prime_k$, we can extend $\widetilde{y}^\prime_k$ to a zigzag path of the universal cover of $\overline{\nu}^\zig_\calX(\Gamma)$.
By projecting $\widetilde{y}^\prime_k$ onto~$\overline{\nu}^\zig_\calX(\Gamma)$, we have the zigzag path $y_k^\prime$ of $\overline{\nu}^\zig_\calX(\Gamma)$.
By the construction given in Observation~\ref{obs_deformed_part2}, we see that $[y_k]=[y_k^\prime]$.
Since~$\widetilde{y}^\prime_k$ coincides with $\widetilde{y}_k$ in all irrelevant parts and $\Gamma$ is consistent, it does not behave pathologically in irrelevant parts as it infringes on the consistency condition.
Furthermore, $\widetilde{y}^\prime_k$ intersects with zigzag paths of the forms $\widetilde{z}_{i,j}$ and $\widetilde{x}^\prime_j$ (see Observation~\ref{obs_deformed_part3}) in some deformed parts, but they do not intersect with each other in the same direction more than once.
Therefore, the edges constituting~$y_k^\prime$ are not removed by the operation~(zig-5), and hence~$y_k^\prime$ is not changed by~(zig-5).
In addition, the operation (join) does not change the slopes.
Thus, we naturally extend this zigzag path $y_k^\prime$ as the path of $\nu^\zig_\calX(\Gamma)$, which is determined uniquely and satisfies $[y_k]=[y_k^\prime]$ by construction.
\end{proof}

By the proof of Propositions~\ref{deform_vector} and \ref{zigzag_afterdeform1}, the zigzag paths $\widetilde{z}_{i,j}$
and $\widetilde{y}^\prime_k$ do not have any self-intersections,
and do not intersect with other zigzag paths in the same direction more than once.
Furthermore, the intersections between $\widetilde{x}^\prime_j$ and $\widetilde{z}_{i,j}$ or $\widetilde{y}^\prime_k$ are not bypasses
(see Observations~\ref{obs_deformed_part2} and \ref{obs_deformed_part3}). This proves the following lemma.

\begin{Lemma}\label{bypass_removable}
We have the following.
\begin{itemize}\itemsep=0pt
\item[\rm (1)] The edges removed by the operation {\rm(zig-5)} are a part of the bypasses added in {\rm(zig-4)} or edges appearing in some irrelevant parts that are intersections between pairs of zigzag paths $x_1,\dots,x_s$.
\item[\rm (2)] The edges removed by the operation {\rm(zag-5)} are a part of the bypasses added in {\rm(zag-4)} or edges appearing in some irrelevant parts that are intersections between pairs of zigzag paths $y_1,\dots,y_t$.
\end{itemize}
\end{Lemma}

Before showing the next proposition, we introduce some notation.

\begin{setting}\label{setting_PMs_z}
Let $z$ be a zigzag path on a consistent dimer model $\Gamma$.
We recall that corner perfect matchings are ordered in the anti-clockwise direction along the vertices of~$\Delta_\Gamma$ (see Section~\ref{subsection_PM}).
Let~$\sfP$,~$\sfP^\prime$ be adjacent corner perfect matchings on~$\Gamma$ such that the difference of~$\sfP$ and~$\sfP^\prime$ contains~$z$ (see Proposition~\ref{zigzag_sidepolygon}).
We assume that~$\sfP$,~$\sfP^\prime$ are ordered with this order, in which case $\sfP\cap z=\Zig(z)$ and $\sfP^\prime\cap z=\Zag(z)$.
Then, we set $\sfP_z\coloneqq \sfP$ and $\sfP^\prime_z\coloneqq (\sfP{\setminus}\Zig(z))\cup\Zag(z)$.
By Proposition~\ref{char_bound}, $\sfP_z$, $\sfP^\prime_z$ are boundary perfect matchings corresponding to certain lattice points on the edge of $\Delta_\Gamma$ whose outer normal vector is~$[z]$,
and we see that the difference of~$\sfP_z$ and~$\sfP^\prime_z$, namely $\sfP_z\cup\sfP^\prime_z{\setminus}\sfP_z\cap\sfP^\prime_z$, forms~$z$.
Thus, by this construction, $h(\sfP^\prime_z,\sfP_z)\in\ZZ^2$ is a primitive lattice element with $\langle[z],h(\sfP^\prime_z,\sfP_z)\rangle=0$.
\end{setting}

\begin{Proposition}\label{zigzag_afterdeform2}
Let $h(\sfP^\prime_z,\sfP_z)$ be a primitive lattice element as above.
We have the following.
\begin{itemize}\itemsep=0pt
\item[\rm (1)] For any zigzag path $x_j$ of $\Gamma$,
there exists a unique zigzag path $x_j^\prime$ of $\nu^\zig_\calX(\Gamma)$ such that for every $j=1,\dots,s$
\begin{displaymath}
[x^{\prime}_j]=[x_j]+\langle [x_j],h(\sfP_z^\prime,\sfP_z)\rangle[z].
\end{displaymath}
\item[\rm (2)] For any zigzag path $y_k$ of $\Gamma$, there exists a unique zigzag path $y_k^\prime$ of $\nu^\zag_\calY(\Gamma)$ such that for every $k=1,\dots,t$
\begin{displaymath}
[y^{\prime}_k]=[y_k]+\langle [y_k],h(\sfP_z^\prime,\sfP_z)\rangle[z].
\end{displaymath}
\end{itemize}
\end{Proposition}

\begin{proof}We consider the case of $\nu^\zig_\calX(\Gamma)$. The case of $\nu^\zag_\calY(\Gamma)$ is similar.

We recall that $|x_j\cap z|=|x_j\cap z_i|$ for all $i=1,\dots,r$, and that this number is denoted by $m_j$ (see Definition~\ref{def_deformation_data}).
We first show that
\begin{equation}\label{desired_eq1}
m_j=\langle [x_j],h(\sfP_z^\prime,\sfP_z)\rangle
\end{equation}
for $j=1,\dots,s$. Let $p_{x_j}$ be the path of the quiver $Q_\Gamma$ going along the left side of~$x_j$ (see Observation~\ref{height_zigzag}).
In particular, considering $p_{x_j}$ as an element in $\rmH_1(\TT)$, we have $[p_{x_j}]=[x_j]$.
By our assumption, the intersections~$x_j\cap z$ are contained in $\Zag(z)=\sfP^\prime\cap z$.
Thus, $p_{x_j}$ crosses~$z$ at a~zig of~$z$.
Since $\sfP\cap z=\Zig(z)$, every time $p_{x_j}$ crosses $z$, the height function $\sfh_{\sfP^\prime,\sfP}$ increases by~$1$.
Since $m_j=|x_j\cap z|$, we have the equation~(\ref{desired_eq1}).

Then, we show that for each $x_j$ where $j=1,\dots,s$, there exists a zigzag path $x^{\prime}_j$ on $\overline{\nu}_\calX^\zig(\Gamma)=\overline{\nu}_\calX^\zig(\Gamma,\{z_1,\dots,z_r\})$ such that
\begin{equation}\label{desired_eq2}
[x^{\prime}_j]=[x_j]+m_j[z].
\end{equation}
We divide $x_j$ into sub-zigzag paths $x_j^{(1)},\dots,x_j^{(m_j)}$. By definition, $x_j^{(1)}$ intersects with $z_r$ at a~zag of~$z_r$.
We denote this zag by $z_r[2m]\coloneqq x_j^{(1)}\cap z_r$, in which case the white (resp.\ black) node that is the end point of $z_r[2m]$ is denoted by $w_r[2m-1]$ (resp.~$b_r[2m+1]$).
By considering the universal cover $\widetilde{\Gamma}$, we naturally define $\widetilde{x}_j$, $\widetilde{x}_j^{(1)}$, $\widetilde{z}_r$, $\widetilde{z}_r[2m]$, $\widetilde{w}_r[2m-1]$, $\widetilde{b}_r[2m+1]$, etc.
Also, we assume that~$\widetilde{z}_r$ is contained in the~$(r,0)$-th deformed part.
Then, $\widetilde{x}_j$ crosses the $(r,0)$-th deformed part in the direction from the $(r+1,0)$-th irrelevant part to the $(r,0)$-th irrelevant part,
in which case the entrance of the $(r,0)$-th deformed part is $\widetilde{b}_r[2m+1]$ and the exit is $\widetilde{w}_r[2m-1]$.

In what follows, we use the same notation used in Observation~\ref{obs_deformed_part3}.
In particular, we pay attention to the zigzag path $\widetilde{x}_j^\prime$ of the universal cover $\overline{\nu}_\calX^\zig(\Gamma)^\sim$ of $\overline{\nu}_\calX^\zig(\Gamma)$, which behaves as follows:
\begin{itemize}\itemsep=0pt
\item[$(A_r)$] If $z_r[2m]\not\in X_r$, then $\widetilde{x}_j^\prime$ goes into the $(r,0)$-th deformed part of $\overline{\nu}_\calX^\zig(\Gamma)^\sim$ from $\widetilde{b}_r[2m+1]$ and goes out from $\widetilde{w}_r[2m-1]$ (see also Figure~\ref{fig_observation_zigzag2}).
After crossing the $(r,0)$-th deformed part, it goes into the $(r,0)$-th irrelevant part, and it behaves in the same manner as $\widetilde{x}_j$ in that part.
\item[$(B_r)$] If $z_r[2m]\in X_r$, then $\widetilde{x}_j^\prime$ goes into the $(r,0)$-th deformed part of $\overline{\nu}_\calX^\zig(\Gamma)^\sim$ from $\widetilde{b}_r[2m+1]$ and goes out from $\widetilde{w}_r[2m-1+2n]=\widetilde{w}_r[2m-1+\ell(z)]$ (see also Figures~\ref{fig_observation_zigzag3} and \ref{fig_observation_zigzag4}).
After crossing the $(r,0)$-th deformed part, it goes into the $(r,0)$-th irrelevant part, and it behaves in the same manner as the shift of $\widetilde{x}_j$, which we denote as $\widetilde{x}_j(1)$, in that part.
\end{itemize}
Then, $\widetilde{x}_j^\prime$ goes into the $(r-1,0)$-th deformed part of $\overline{\nu}_\calX^\zig(\Gamma)^\sim$.
We let $\widetilde{z}_{r-1}[2m^\prime]\coloneqq \widetilde{x}_j\cap \widetilde{z}_{r-1}$ and $\widetilde{z}_{r-1}[2m^{\prime\prime}]\coloneqq \widetilde{x}_j(1)\cap \widetilde{z}_{r-1}$.
We note that on the dimer model $\Gamma$ we have $z_{r-1}[2m^\prime]=z_{r-1}[2m^{\prime\prime}]=x_j^{(1)}\cap z_{r-1}$ by definition.
\begin{itemize}\itemsep=0pt
\item If $z_r[2m]\in X_r$ in the above argument, then $z_{r-1}[2m^\prime]\not\in X_{r-1}$ by the definition of~$X_r$ and~$X_{r-1}$.
(Furthermore, $x_j^{(1)}\cap z_i\not\in X_i$ for any $i\neq r$.)
In this case, $\widetilde{x}_j^\prime$ crosses the $(r-1,0)$-th deformed part of $\overline{\nu}_\calX^\zig(\Gamma)^\sim$ in the same manner as $(A_r)$ above.
Then, it behaves in the same manner as $\widetilde{x}_j(1)$ in the $(r-1,0)$-th irrelevant part.
\item Let $z_r[2m]\not\in X_r$ in the above argument.
\begin{itemize}\itemsep=0pt
\item If $z_{r-1}[2m^\prime]\not\in X_{r-1}$, then $\widetilde{x}_j^\prime$ crosses the $(r-1,0)$-th deformed part of $\overline{\nu}_\calX^\zig(\Gamma)^\sim$ in the same way as~$(A_r)$.
Then, it behaves in the same manner as $\widetilde{x}_j$ in the $(r-1,0)$-th irrelevant part.
\item If $z_{r-1}[2m^\prime]\in X_{r-1}$, then $\widetilde{x}_j^\prime$ crosses the $(r-1,0)$-th deformed part of $\overline{\nu}_\calX^\zig(\Gamma)^\sim$ in the same way as~$(B_r)$.
Then, it behaves in the same manner as $\widetilde{x}_j(1)$ in the~$(r-1,0)$-th irrelevant part.
\end{itemize}
\end{itemize}
Repeating these inductive arguments, we see that $\widetilde{x}^\prime_j$ crosses the $(i,0)$-th deformed part of $\overline{\nu}_\calX^\zig(\Gamma)^\sim$ for $i=r,r-1,\dots,1$ in this order, and goes into the $(1,0)$-th irrelevant part.
In any case, $\widetilde{x}^\prime_j$ behaves in the same manner as $\widetilde{x}_j(1)$ in this irrelevant part.

Then, $\widetilde{x}^\prime_j$ goes into the $(r,-1)$-th deformed part, in which case we consider the sub-zigzag path $x_j^{(2)}$ of~$\Gamma$
and the intersection between $\widetilde{z}_r(-1)$ and $\widetilde{x}_j^{(2)}$ on~$\widetilde{\Gamma}$.
By the same arguments as above, we see that $\widetilde{x}^\prime_j$ crosses the $(i,-1)$-th deformed part of $\overline{\nu}_\calX^\zig(\Gamma)^\sim$ for $i=r,r-1,\dots,1$ in this order.
After that, it goes into $(1,-1)$-th irrelevant part and behaves in the same manner as~$\widetilde{x}_j(2)$ in this irrelevant part.

Repeating these arguments, we finally see that $\widetilde{x}^\prime_j$ crosses the $(i,-m_j+1)$-th deformed part of $\overline{\nu}_\calX^\zig(\Gamma)^\sim$ for $i=r,r-1,\dots,1$ in this order,
and behaves in the same manner as $\widetilde{x}_j(m_j)$ in the $(1,-m_j+1)$-th irrelevant part.
Then, $\widetilde{x}^\prime_j$ goes into the $(r,-m_j)$-th deformed part of $\overline{\nu}_\calX^\zig(\Gamma)^\sim$, in which case
we denote the black node that is the entrance of this deformed part by~$B$.
Since $m_j=|x_j\cap z_i|$, the projection of $\widetilde{x}_j\cap \widetilde{z}_i(-m_j)$ on~$\Gamma$ coincides with $z_i[2m]$, which is the starting edge of our arguments.
Thus, $B$ coincides with $\widetilde{b}_r[2m+1]$ if they are projected onto $\overline{\nu}_\calX^\zig(\Gamma)$,
which means we can follow all edges of the zigzag path $x^\prime_j$ of $\overline{\nu}_\calX^\zig(\Gamma)$.
By these arguments, we see that the slope of~$\widetilde{x}^\prime_j$ changes $[z]$ in each deformed part, and thus we have $[x^{\prime}_j]=[x_j]+m_j[z]$.

Finally, we apply the operations (zig-5) and (join) to $\overline{\nu}_\calX^\zig(\Gamma)$.
Then, we obtain the deformed dimer model $\nu_\calX^\zig(\Gamma)$ and the zigzag path on it having the same slope as $\widetilde{x}^\prime_j$.
This zigzag path is determined uniquely by the construction, and we use the same notation for this zigzag path by abuse of the notation.
Since (zig-5) and (join) do not change the slopes of zigzag paths, we have~(\ref{desired_eq2}).
\end{proof}

Since the slopes of zigzag paths $\nu^\zig_\calX(\Gamma)$ and $\nu^\zag_\calY(\Gamma$) do not depend on
the choice of $X_1,\dots,X_r$ (resp.\ $Y_1,\dots,Y_r$) by Propositions~\ref{deform_vector}, \ref{zigzag_afterdeform1} and \ref{zigzag_afterdeform2},
we obtain the proposition below.
However, we remark that the deformed dimer model depends on the choice of zig-deformation parameters; thus $\nu^\zig_\calX(\Gamma,\{z_1,\dots,z_r\})\not\cong\nu^\zig_{\calX^\prime}(\Gamma,\{z_1,\dots,z_r\})$ in general (the zag version is similar).

\begin{Proposition}\label{other_weight_PMpolygon}
For zig-deformation parameter $\calX^\prime\coloneqq\{X_1^\prime,\dots,X_r^\prime\}$ and
zag-deformation parameter $\calY^\prime\coloneqq\{Y_1^\prime,\dots,Y_r^\prime\}$ different from $\calX$ and $\calY$, we have
\begin{displaymath}
\Delta_{\nu^\zig_\calX(\Gamma,\{z_1,\dots,z_r\})}=\Delta_{\nu^\zig_{\calX^\prime}(\Gamma,\{z_1,\dots,z_r\})}\qquad\text{and}\qquad \Delta_{\nu^\zag_\calY(\Gamma,\{z_1,\dots,z_r\})}=\Delta_{\nu^\zag_{\calY^\prime}(\Gamma,\{z_1,\dots,z_r\})}.
\end{displaymath}
\end{Proposition}

\subsection[Remarks on the extended deformations of hexagonal and rectangular dimer models]{Remarks on the extended deformations of hexagonal\\ and rectangular dimer models}\label{sec_remark_deform}

As we mentioned in Remark~\ref{rem_def_deform}, we can skip the operations (zig-4) and (zig-5) (resp.~(zag-4) and~(zag-5))
in the case of hexagonal and rectangular dimer models (see Definition~\ref{def_hexagonal_square}), as we will see below.
We note that hexagonal and rectangular dimer models are isoradial.
These dimer models have been studied in several papers, with the following results being particularly noteworthy.

\begin{Proposition}[{e.g., \cite{IN,Nak_semisteady,UY}}]\label{char_hexagonal_square}
Let $\Gamma$ be a consistent dimer model. Then, we have the following.
\begin{itemize}\itemsep=0pt
\item[\rm (1)] $\Gamma$ is a hexagonal dimer model if and only if the PM polygon $\Delta_\Gamma$ is a triangle.
\item[\rm (2)] If $\Gamma$ is a rectangular dimer model, then the PM polygon $\Delta_\Gamma$ is a parallelogram.
\end{itemize}
\end{Proposition}

For these nice classes of dimer models, we may skip the operations (zig-4) and (zig-5) (or (zag-4) and (zag-5)) when we apply the extended deformation.

\begin{Proposition}\label{skip_hexagonal_square}
Let $\Gamma$ be a hexagonal or rectangular dimer model.
Then, the extended deformation $\nu_\calX^\zig(\Gamma,\{z_1,\dots,z_r\})$ is defined by the operations {\rm(zig-1)--(zig-3)} and {\rm(join)}.
Similarly, the extended deformation $\nu_\calY^\zag(\Gamma,\{z_1,\dots,z_r\})$ is defined by the operations {\rm(zag-1)--(zag-3)} and {\rm(join)}.

In particular, the extended deformations coincide with the usual deformations:
\begin{gather*}
\nu_\calX^\zig(\Gamma,\{z_1,\dots,z_r\})=\nu_\bfp^\zig(\Gamma,\{z_1,\dots,z_r\})\qquad \text{and} \\
\nu_\calY^\zag(\Gamma,\{z_1,\dots,z_r\})=\nu_\bfq^\zag(\Gamma,\{z_1,\dots,z_r\}).
\end{gather*}
\end{Proposition}

\begin{proof}We will prove the case of the extended zig-deformation. The case of the extended zag-deformation is similar.

The zigzag paths $z_1,\dots,z_r$ have the same slope, and this slope is the outer normal vector of an edge of the PM polygon $\Delta_\Gamma$ by Proposition~\ref{zigzag_sidepolygon}.
\begin{itemize}\itemsep=0pt
\item[\rm (1)] Let $\Gamma$ be a hexagonal dimer model.
Then, $\Delta_\Gamma$ is a triangle by Proposition~\ref{char_hexagonal_square}(1).
Let $e_1$, $e_2$, $e_3$ be the edges of $\Delta_\Gamma$ ordered cyclically in the anti-clockwise direction.
We may assume that the slopes of $z_1,\dots,z_r$ are the outer normal vector of~$e_1$.
We then consider the zigzag paths $x_1,\dots,x_s$ (resp.\ $y_1,\dots,y_t$) intersecting with $z_i$ at zags (resp.\ zigs) of~$z_i$.
Since $\Gamma$ is isoradial, it is properly ordered.
Thus, by Proposition~\ref{zigzag_sidepolygon} we see that $[x_1]=\cdots=[x_s]$ (resp.\ $[y_1]=\cdots=[y_t]$), and $[x_j]$ (resp.~$[y_k]$) is the outer normal vector of $e_2$ (resp.\ of~$e_3$).

\item[\rm (2)] Let $\Gamma$ be a rectangular dimer model.
Then, $\Delta_\Gamma$ is a parallelogram by Proposition~\ref{char_hexagonal_square}(2).
Let $e_1$, $e_2$, $e_3$, $e_4$ be the edges of $\Delta_\Gamma$ ordered cyclically in the anti-clockwise direction.
In particular, $\{e_1,e_3\}$ and $\{e_2,e_4\}$ are pairs of edges that are parallel.
We may assume that the slopes of $z_1,\dots,z_r$ are the outer normal vector of $e_1$, in which case
the zigzag paths having the slope $-[z_i]$ correspond to~$e_3$.
Then, in a similar way as above, we have the zigzag paths $x_1,\dots,x_s$ (resp.\ $y_1,\dots,y_t$) such that $[x_j]$ (resp.~$[y_k]$) is the outer normal vector of~$e_2$ (resp.\ $e_4$).
\end{itemize}
In both cases, we see that any pair of zigzag paths in $x_1,\dots,x_s$ (resp.\ $y_1,\dots,y_t$) does not have intersections on the universal cover by Lemma~\ref{slope_linearly_independent} because~$\Gamma$ is isoradial.

Next, we consider $\overline{\nu}_\calX^\zig(\Gamma,\{z_1,\dots,z_r\})$ for the case of $r\neq1$ (see Remark~\ref{r=1case} for the case of $r=1$).
By Lemma~\ref{bypass_removable} and the fact that there is no intersection between $x_1,\dots,x_s$, we see that the edges removed by (zig-5) are bypasses added in (zig-4).
Furthermore, by Observations~\ref{obs_deformed_part2} and \ref{obs_deformed_part3} any zigzag path passing through a bypass takes the form $\widetilde{x}_j^\prime$.
Since the slopes of $x_1,\dots,x_s$ are all the same in our situation, those of $x_1^\prime, \dots, x_s^\prime$ are all the same (see Proposition~\ref{zigzag_afterdeform2}).
Thus, any bypass on the universal cover of $\overline{\nu}_\calX^\zig(\Gamma,\{z_1,\dots,z_r\})$ is either
\begin{itemize}\itemsep=0pt
\item[\rm (i)] a self-intersection of a zigzag path $\widetilde{x}_j^\prime$, or
\item[\rm (ii)] the intersection of a pair of zigzag paths $\widetilde{x}_j^\prime$, $\widetilde{x}_{j^\prime}^\prime$ with $[x_j^\prime]=[x_{j^\prime}^\prime]$.
\end{itemize}
We also see that an edge which is either~(i) or~(ii) is certainly a bypass, because of Observation~\ref{obs_deformed_part3} and the fact that such an intersection can not appear in the irrelevant part.
Moreover, if a bypass is the intersection of zigzag paths $\widetilde{x}_j^\prime$ and $\widetilde{x}_{j^\prime}^\prime$, then they have another intersection because $[x_j]=[x_{j^\prime}]$, and such an intersection is also a bypass.
Thus, if there exists a bypass that can not be removed by (zig-5), the consistency condition is prevented.
Therefore, we can remove all bypasses added in (zig-4) by using (zig-5), and hence we may skip these operations.
\end{proof}

\section[Combinatorial mutations of the PM polygon are realized\\ by extended deformations]{Combinatorial mutations of the PM polygon are realized\\ by extended deformations}
\label{mutation_vs_deformation}

Throughout this section, we still keep the notation of Sections~\ref{sec_def_deform} and~\ref{sec_def_exdeform} unless otherwise stated.
In this section, we show that the combinatorial mutation of the PM polygon of a consistent dimer model coincides with the PM polygon of the deformed dimer model (see Theorem~\ref{mutation=deformation}).
First, we observe the relationship between the deformation data (see Definition~\ref{def_deformation_data}) and the mutation data (see Definition~\ref{def_mutation_data}).

\begin{setting}
\label{mutation_deformation_setting}
Let $\Gamma$ be a reduced consistent dimer model, and $\Delta_\Gamma$ be the PM polygon of $\Gamma$.
We take a type I zigzag path $z$ of $\Gamma$ with $v\coloneqq [z]\in\ZZ^2$. Then, by Proposition~\ref{zigzag_sidepolygon}
there is an edge $E$ of $\Delta_\Gamma$ whose outer normal vector is $v$.
Since $\Delta_\Gamma$ is determined up to translation, there is ambiguity concerning the position of the origin.
Thus, we fix the origin ${\bf 0}$ for $\Delta_\Gamma$ so that ${\bf 0}\in\Delta_\Gamma$.
Let $w\coloneqq -v$, and consider
\begin{gather*}
h_{\max}= h_{\max}(\Delta_\Gamma,w)\coloneqq{\max}\{\langle w,u\rangle \,|\, u\in \Delta_\Gamma\},
\\
h_{\min}= h_{\min}(\Delta_\Gamma,w)\coloneqq{\min}\{\langle w,u\rangle \,|\, u\in \Delta_\Gamma\}.
\end{gather*}
Now, we let $r\coloneqq-h_{\min}$ and assume that $r\le\big|\calZ_v^\rmI(\Gamma)\big|$.
Since the length of the line segments of~$E$ is~$|E\cap N|-1$ and this is equal to $|\calZ_v(\Gamma)|$ by Proposition~\ref{zigzag_sidepolygon}, we have
\begin{displaymath}
-h_{\min}=r\le|\calZ_v^\rmI(\Gamma)|\le|\calZ_v(\Gamma)|=|E\cap N|-1.
\end{displaymath}
Thus $\Delta_\Gamma$ admits the combinatorial mutation with respect to~$w$ (see Remark~\ref{rem_def_mutation}).
Let $\ell(z)\coloneqq2n$. Then, by Lemma~\ref{zigzag_lem1} we have
\begin{align*}
n=\ell(z)/2&=|\sfP\cap z|+\langle h(\sfP,\sfP_i),w\rangle =|\sfP\cap z|+\langle h(\sfP,\sfP_0)-h(\sfP_i,\sfP_0),w\rangle\\
&=|\sfP\cap z|+\langle h(\sfP,\sfP_0),w\rangle-\langle h(\sfP_i,\sfP_0),w\rangle,
\end{align*}
where $\sfP$ is a perfect matching on $\Gamma$, $\sfP_0$ is the reference perfect matching, and $\sfP_i\in\PM_{\max}(z)$.
Since $h(\sfP_i,\sfP_0)$ is a lattice point on $E$ by Lemma~\ref{number_zigzag_corner}, we have $\langle h(\sfP_i,\sfP_0),w\rangle=h_{\min}$.
If $\sfP\in\PM_{\min}(z)$, then $|\sfP\cap z|=0$ by Lemma~\ref{lem_existence_pm}, and this means that $\langle h(\sfP,\sfP_0),w\rangle=h_{\max}$.
Thus, we have $n=h_{\max}-h_{\min}=\width(\Delta_\Gamma,w)$.

We show how the correspondence between mutation data and deformation data in Table~\ref{mutation_deformation_data}.

\begin{table}[h!]\centering
\begin{tabular}{|c|c|}
\hline
Mutation data&Deformation data \\ \hline
$w$&$-v$ \\ \hline
$h_{\min}$&$-r$\\ \hline
$h_{\max}$&$h$ \\ \hline
$\width(\Delta_\Gamma,w)$&$n$ \\ \hline
\end{tabular}

\caption{Relationships between the mutation data and the deformation data.}\label{mutation_deformation_data}
\end{table}

Using the integers $r$, $h$, we take type I zigzag paths $z_1,\dots,z_r$ and the zig-deformation (resp.\ zag-deformation) parameter~$\calX$ (resp.~$\calY$) with respect to $z_1,\dots,z_r$ as in Setting~\ref{def_deformation_exdata}.
We then have the deformed consistent dimer models $\nu^\zig_\calX(\Gamma)=\nu^\zig_\calX(\Gamma, \{z_1,\dots,z_r\})$
and $\nu^\zag_\calY(\Gamma)=\nu^\zag_\calY(\Gamma, \{z_1,\dots,z_r\})$.

We determine the origin of the PM polygons $\Delta_{\nu^\zig_\calX(\Gamma)}$ and $\Delta_{\nu^\zag_\calY(\Gamma)}$ as follows.
First, there are zigzag paths $y_1^\prime,\dots,y_t^\prime$ on $\nu^\zig_\calX(\Gamma)$ whose slope respectively corresponds to the one of zigzag paths $y_1,\dots, y_t$ on $\Gamma$ by Proposition~\ref{zigzag_afterdeform1}.
Then, we put $\Delta_{\nu^\zig_\calX(\Gamma)}$ on $\Delta_\Gamma$ so that the edges corresponding to $y_1^\prime,\dots,y_t^\prime$ respectively coincide with the edges of $\Delta_\Gamma$ corresponding to $y_1,\dots,y_t$.
We determine the origin for $\Delta_{\nu^\zig_\calX(\Gamma)}$ so that it is in the same position as the origin for $\Delta_\Gamma$.
Considering the zigzag paths on $\nu^\zag_\calY(\Gamma)$ obtained from the zigzag paths $x_1,\dots,x_s$ on $\Gamma$,
we can also determine the origin for $\Delta_{\nu^\zag_\calY(\Gamma)}$.
\end{setting}

\begin{Remark}
\label{rem_typeI_isoradial}
In Setting~\ref{mutation_deformation_setting}, we assumed that $r\le|\calZ_v^\rmI(\Gamma)|$ for defining the deformation data.
As we mentioned in Remark~\ref{rem_typeI_to_typeII}, even if the number of type I zigzag paths is insufficient,
we can sometimes change a type II zigzag path into a type I zigzag path without changing the PM polygon by using mutations of dimer models (see Appendix~\ref{app_mutationdimer}).
Moreover, it is known that for a given lattice polygon $P$ there exists an isoradial dimer model giving $P$ as the PM polygon by~\cite{Gul}, in which case all zigzag paths are type I (see Definition~\ref{def_isoradial}), and hence $\big|\calZ_v^\rmI(\Gamma)\big|=|\calZ_v(\Gamma)|$.
Thus, if $-h_{\min}\le|E\cap N|-1$ we can find a certain isoradial dimer model $\Gamma$ satisfying $-h_{\min}=r\le|\calZ_v^{\rmI}(\Gamma)|=|E\cap N|-1$.
\end{Remark}

For any edge $E$ of $\Delta_\Gamma$ as in Setting~\ref{mutation_deformation_setting}, we take a primitive lattice element $u_E\in N$ such that $\langle w,u_E\rangle=0$.
Here, there are two choices of~$u_E$ and we fix~$u_E$ as follows.
We recall the primitive lattice element $h(\sfP^\prime_z,\sfP_z)$ given in Settings~\ref{setting_PMs_z}, which satisfies $\langle[z],h(\sfP^\prime_z,\sfP_z)\rangle=0$.
We let $u_E\coloneqq h(\sfP^\prime_z,\sfP_z)$, and hence $\langle w,u_E\rangle=\langle-[z],u_E\rangle=0$.
We set the line segment $F$ as $F\coloneqq \operatorname{conv}\{{\bf 0},u_E\}$.
Using this with Setting~\ref{mutation_deformation_setting} (and also Table~\ref{mutation_deformation_data}),
our main theorem can be stated as follows.

\begin{Theorem}\label{mutation=deformation}
Let $\Gamma$ be a reduced consistent dimer model with ${\bf 0}\in\Delta_\Gamma$.
Then, we have
\begin{gather*}
\mutation_{w}(\Delta_\Gamma,F)=\Delta_{\nu^\zig_\calX(\Gamma,\{z_1,\dots,z_r\})},\\
\mutation_{w}(\Delta_\Gamma,-F)=\Delta_{\nu^\zag_\calY(\Gamma,\{z_1,\dots,z_r\})}.
\end{gather*}
\end{Theorem}

\begin{proof}We will prove the first equation. The other one follows from a similar argument.

First, we show that
\begin{displaymath}
\varphi(\Delta_\Gamma^*)=\Delta_{\nu^\zig_\calX(\Gamma,\{z_1,\dots,z_r\})}^*,
\end{displaymath}
where $\varphi=\varphi_{w,F}$ is the map given in (\ref{mutation_M}).

Let $E_1\coloneqq E, E_2,\dots,E_m$ be the edges of $\Delta_\Gamma$ ordered cyclically in the anti-clockwise direction.
As we mentioned in Setting~\ref{mutation_deformation_setting}, we suppose that ${\bf 0}\in\Delta_\Gamma$.
Let $w_1\coloneqq w,w_2, \dots, w_m$ be inner normal vectors corresponding to $E_1,\dots,E_m$, respectively (see Figure~\ref{PMpolygon_normalvec}).
Also, we let $v_i=-w_i$ for $i=1,\dots,r$, which are the outer normal vectors corresponding to $E_i$.
We then consider $u\in\Delta_\Gamma$ such that $\langle w_1,u\rangle=h_{\max}(\Delta_\Gamma,w_1)=h$; that is, we consider $w_{h_{\max}}(\Delta_\Gamma)$, which is either a~vertex or an edge of $\Delta_\Gamma$.
If $w_{h_{\max}}(\Delta_\Gamma)$ is an edge, we easily see that it is parallel to $E$, in which case we may write $E_a\coloneqq w_{h_{\max}}(\Delta_\Gamma)$ for some $1<a<m$.
If $w_{h_{\max}}(\Delta_\Gamma)$ is a vertex, we set the edges intersecting at $w_{h_{\max}}(\Delta_\Gamma)$ as $E_{a-1}$, $E_{a+1}$ and set $E_a=\varnothing$ where $1<a<m$.
Recall that by Proposition~\ref{zigzag_sidepolygon}, for each $E_i\neq\varnothing$ there exist zigzag paths on $\Gamma$ such that the slopes coincide with $v_i$,
and the set of such zigzag paths is denoted by $\calZ_{v_i}=\calZ_{v_i}(\Gamma)$.

\begin{figure}[h!]\centering
\scalebox{0.9}{
\begin{tikzpicture}[sarrow/.style={-latex, very thick}]
\foreach \name/\r in {0/2,1/2,2/2,3/2,4/2,5/2,6/2,7/2}{\coordinate (v\name) at (22.5+45*\name:\r cm);}
\draw [line width=0.05cm] (v0)--(v1)--(v2)--(v3); \draw [line width=0.05cm] (v4)--(v5)--(v6)--(v7);

\path (v3) ++(-90:0.3cm) coordinate (v3-); \path (v4) ++(90:0.3cm) coordinate (v4+);
\draw [line width=0.05cm, dotted] (v3-)--(v4+);
\path (v0) ++(-90:0.3cm) coordinate (v0-); \path (v7) ++(90:0.3cm) coordinate (v7+);
\draw [line width=0.05cm, dotted] (v0-)--(v7+);

\foreach \name in {1,2,3,5,6,7}{\coordinate (vec\name) at (45*\name:1.8 cm);}
\foreach \name in {1,2,3,5,6,7}{\draw[red, sarrow, line width=0.05cm] (vec\name)-- ++(180+45*\name:0.7cm);}

\path (vec1) ++(-90:0.4cm) node[red] {\footnotesize$w_m$};
\path (vec2) ++(-90:0.85cm) node[red] {\footnotesize$w_1=w$};
\path (vec3) ++(-90:0.4cm) node[red] {\footnotesize$w_2$};
\path (vec5) ++(90:0.5cm) node[red] {\footnotesize$w_{a-1}$};
\path (vec6) ++(90:0.85cm) node[red] {\footnotesize$w_a$};
\path (vec7) ++(90:0.5cm) node[red] {\footnotesize$w_{a+1}$};
\filldraw [blue] (0,0) circle [radius=0.05cm];
\draw[blue, sarrow, line width=0.05cm] (0,0)-- ++(180:0.7cm) node[inner sep=0.5pt, circle, midway, xshift=0cm, yshift=0.2cm, blue] {\footnotesize$u_E$};
\node at (0,-0.25) {\footnotesize${\bf 0}$};
\end{tikzpicture}
}
\caption{The PM polygon $\Delta_\Gamma$ and its inner normal vectors (the case where the origin is contained in the strict interior of $\Delta_\Gamma$).}
\label{PMpolygon_normalvec}
\end{figure}

First, we consider the edge $E_1$ and zigzag paths in $\calZ_{v_1}=\calZ_{(-w)}$.
By definition, we have $\langle -v_1,u_E\rangle=0$ and $\{z_1,\dots,z_r\}\subseteq\calZ_{(-w)}^\rmI\subseteq \calZ_{(-w)}$.
If $|\calZ_{(-w)}|>r$, then there exists a zigzag path in $\calZ_{(-w)}$ that is not in $\{z_1,\dots,z_r\}$.
If $E_a\neq\varnothing$, we have zigzag paths in $\calZ_{v_a}$.
Since $E_1$ and $E_a$ are parallel, $v_1$ and $v_a$ are linearly dependent, and hence $\langle v_a,u_E\rangle=0$.
Then, we see that zigzag paths in $\calZ_{v_a}$ do not intersect with any type I zigzag path $z$ satisfying $[z]=v_1$ in the universal cover (see Lemma~\ref{slope_linearly_independent}).

Next, we consider the edges $E_2,\dots,E_{a-1}$ of $\Delta_\Gamma$ and zigzag paths in $\calZ_1\coloneqq\calZ_{v_2}\cup\cdots\cup\calZ_{v_{a-1}}$.
We see that $v_i$ with $i=2,\dots,a-1$ satisfies $\langle -v_i,u_E\rangle<0$ by a choice of the edges $E_2,\dots,E_{a-1}$.
By the same argument as in the proof of Proposition~\ref{prop_nondegenerate}, we see that
the zigzag paths in $\calZ_1$ intersect with a type I zigzag path $z$ satisfying $[z]=v_1$ precisely once in the universal cover (see Lemma~\ref{slope_linearly_independent}); specifically, they intersect with $z$ in $\Zag(z)$.
Thus, we have $\calZ_1=\{x_1,\dots,x_s\}$, and for each $i=2,\dots ,a-1$ the vector $v_i$ satisfies $v_i=[x_j]$ for some $j=1,\dots,s$.

Next, we consider the edges $E_{a+1},\dots,E_m$ of $\Delta_\Gamma$ and zigzag paths in $\calZ_2\coloneqq\calZ_{v_{a+1}}\cup\cdots\allowbreak\cup\calZ_{v_m}$.
We see that~$v_i$ with $i=a+1,\dots,m$ satisfies $\langle -v_i,u_E\rangle>0$ by the choice of the edges $E_{a+1},\dots,E_m$.
By a similar argument as above, we see that the zigzag paths in $\calZ_2$ intersect with a type I zigzag path $z$ satisfying $[z]=v_1$ precisely once in the universal cover; specifically, they intersect with $z$ in $\Zig(z)$.
Thus, we have $\calZ_2=\{y_1,\dots,y_t\}$, and for each $i=a+1,\dots,m$ the vector $v_i$ satisfies $v_i=[y_k]$ for some $k=1,\dots,t$.

Collectively, we see that any zigzag path of $\Gamma$ takes one of the following forms:
\begin{itemize}\itemsep=0pt
\item $z_1,\dots,z_r$,
\item $z_1^\prime,\dots,z_{r^\prime}^\prime$ contained in $\calZ_{(-w)}{\setminus}\{z_1,\dots,z_r\}$ for $w=-v_1$ with $\langle w,u_E\rangle=0$ if $|\calZ_{(-w)}|>r$,
\item $z_1^{\prime\prime},\dots,z_{r^{\prime\prime}}^{\prime\prime}$ contained in $\calZ_w$ for $w=-v_1$ with $\langle w,u_E\rangle=0$ if $E_a\neq\varnothing$,
\item $x_j$ where $j=1,\dots,s$, in which case it satisfies $\langle-[x_j],u_E\rangle<0$,
\item $y_k$ where $k=1,\dots,t$, in which case it satisfies $\langle-[y_k],u_E\rangle>0$.
\end{itemize}
The slopes of these zigzag paths give the supporting hyperplanes of $\Delta_\Gamma$ by Proposition~\ref{zigzag_sidepolygon}. More precisely, if $\Delta_\Gamma$ contains the origin ${\bf 0}$ as an interior lattice point, then
\begin{displaymath}
H_{-[\zeta],\ge -k_\zeta}=\{u\in N_\RR\,|\,\langle-[\zeta],u\rangle\ge -k_\zeta\}
\end{displaymath}
is the supporting hyperplane of $\Delta_\Gamma$ for any zigzag path $\zeta$ of $\Gamma$ and a certain positive integer $k_\zeta$.
If the origin ${\bf 0}$ lies on the boundary of $\Delta_\Gamma$, $k_\zeta$ is replaced by $0$ for the zigzag paths corresponding to the edges that contain ${\bf 0}$.
By Proposition~\ref{prop_dual_polytope} and its proof, $\Delta_\Gamma^*$ can be written as $\Delta_\Gamma^*=Q+C$, where $Q$ is a polygon and $C$ is a polyhedral cone.
Since the set of the slopes of zigzag paths of $\Gamma$ coincides with $\{v_1,\dots,v_m\}$ if we identify the same slopes,
we see that the set $\{u_1,\dots,u_p,u_1^\prime,\dots,u_q^\prime\}$, which generates $Q$ and $C$ in the proof of Proposition~\ref{prop_dual_polytope},
is given by $\{w_1,\dots,w_m\}$ in our situation.
In what follows, we assume that ${\bf 0}$ is contained in the strict interior of $\Delta_\Gamma$, in which case $\Delta_\Gamma^*=Q$ and
$Q=\operatorname{conv}\big(\big\{\frac{1}{k_1}w_1,\dots,\frac{1}{k_m}w_m\big\}\big)$ for some positive integers $k_i$,
giving the supporting hyperplanes $H_{w_i,\ge-k_i}$ of $\Delta_\Gamma$ for $i=1,\dots,m$.
We remark that $\frac{1}{k_a}w_a$ appears in the above generating set if $E_a\neq\varnothing$.
Since $\langle w_i,u_E\rangle\ge0$ for $i=1,a, a+1,\dots, m$ and $\langle w_i,u_E\rangle<0$ for $i=2,\dots, a-1$,
we see that
\begin{gather}
\varphi(\Delta_\Gamma^*)=\operatorname{conv}\bigg(\bigg\{\frac{1}{k_1}w_1,\frac{1}{k_2}w_2^\prime,\dots,\frac{1}{k_{a-1}}w_{a-1}^\prime,
\frac{1}{k_a}w_a \ \text{(if $E_a\neq\varnothing$)}, \nonumber\\
\hphantom{\varphi(\Delta_\Gamma^*)=\operatorname{conv}\bigg(\bigg\{}{}
\frac{1}{k_{a+1}}w_{a+1}, \dots,\frac{1}{k_m}w_m\bigg\}\bigg),\label{generating_set_dual}
\end{gather}
where $w_i^\prime\coloneqq w_i-\langle w_i,u_E\rangle w$ for $i=2,\dots,a-1$.
We also note that when $E_a=\varnothing$, we can take the positive integer $k_a$ so that the line $\{u\in N_\RR\,|\,\langle v_1,u\rangle= -k_a\}$, which is parallel to $E_1$, passes through the vertex of $\Delta_\Gamma$ that is the intersection of $E_{a-1}$ and $E_{a+1}$.
By the choice of $k_a$, we have $\langle \frac{1}{k_a}v_1,u\rangle\ge -1$ for any $u\in \Delta_\Gamma$; thus $\frac{1}{k_a}v_1=\frac{1}{k_a}(-w_1)\in\Delta_\Gamma^*$ and hence $\frac{1}{k_a}v_1=\frac{1}{k_a}(-w_1)\in\varphi(\Delta_\Gamma^*)$.

We then consider the deformed dimer model $\nu^\zig_\calX(\Gamma)=\nu^\zig_\calX(\Gamma,\{z_1,\dots,z_r\})$.
By Observation~\ref{obs_deformed_part}, the lift of any zigzag path of the form $z_i^\prime$ or $z_i^{\prime\prime}$ on the universal cover is contained in some irrelevant part, and hence it does not change even if we apply the extended deformation.
Also, by Proposition~\ref{zigzag_afterdeform2}, we have the zigzag paths $x_1^\prime,\dots,x_s^\prime$ on $\nu^\zig_\calX(\Gamma)$ satisfying
\begin{displaymath}
-[x^{\prime}_j]=-[x_j]-\langle [x_j],h(\sfP_z^\prime,\sfP_z)\rangle[z]=w_i-\langle w_i,u_E\rangle w
\end{displaymath}
for $j=1,\dots,s$ and some $i=2,\dots,a-1$.
Furthermore, by Proposition~\ref{zigzag_afterdeform1}, we have the zigzag paths $y_1^\prime,\dots,y_t^\prime$ on $\nu^\zig_\calX(\Gamma)$ satisfying
\begin{displaymath}
-[y_k^\prime]=-[y_k]=w_i
\end{displaymath}
for $k=1,\dots,t$ and some $i=a+1,\dots,m$.
Thus, we see that the zigzag paths $x_j, y_k$ vary as they satisfy the condition (\ref{mutation_M}) when we apply the extended deformation $\nu^\zig_\calX$ to $\Gamma$.
In addition, we have the zigzag path of the form $z_{i,j}$ defined in (zig-3).
Thus, the zigzag paths on the consistent dimer model $\nu^\zig_\calX(\Gamma)$ are
\begin{gather*}
\{z_i^\prime\}_{1\le i\le r^\prime}\quad \text{(if $|\calZ_{(-w)}|>r$)},\qquad \{z_i^{\prime\prime}\}_{1\le i\le r^{\prime\prime}}\quad \text{(if $E_a\neq\varnothing$)},\\
\{x_j^\prime\}_{1\le j\le s},\qquad \{y_k^\prime\}_{1\le k\le t},\qquad \text{and} \qquad \{z_{i,j}\}_{\substack{1\le i\le r \\ 1\le j\le p_i}}.
\end{gather*}
By the description of their slopes and Proposition~\ref{zigzag_sidepolygon}, we see that the inner normal vectors of~$\Delta_{\nu^\zig_\calX(\Gamma)}$ are
\begin{displaymath}
\{w_1,w_2^\prime,\dots,w_{a-1}^\prime,w_a=-w_1, w_{a+1}, \dots,w_m\},
\end{displaymath}
and these vectors give the supporting hyperplanes of $\Delta_{\nu^\zig_\calX(\Gamma)}$ just like for $\Delta_\Gamma$.
Here, $w_1$ appears in the above set if $|\calZ_{(-w_1)}|=|\calZ_{v_1}|>r$,
but we always have $\frac{1}{k_1}w_1\in \Delta_{\nu^\zig_\calX(\Gamma)}^*$ by the same argument as we used for showing $\frac{1}{k_a}(-w_1)\in\Delta_\Gamma^*$ above, whereas $w_a$ certainly appears since $[z_{i,j}]=-[z_i]=-w_1=w_a$ (see Lemma~\ref{deform_vector}). Thus, we have
\begin{displaymath}
\Delta_{\nu^\zig_\calX(\Gamma)}^*=\operatorname{conv}\bigg(\bigg\{\frac{1}{k_1}w_1,\frac{1}{k_2}w_2^\prime,\dots,\frac{1}{k_{a-1}}w_{a-1}^\prime,
\frac{1}{k_a}w_a, \frac{1}{k_{a+1}}w_{a+1}, \dots,\frac{1}{k_m}w_m\bigg\}\bigg).
\end{displaymath}
By the description (\ref{generating_set_dual}) and the fact that $\frac{1}{k_1}w_1$ and $\frac{1}{k_a}w_a=\frac{1}{k_a}(-w_1)$ are contained in both $\varphi(\Delta_\Gamma^*)$ and $\Delta_{\nu^\zig_\calX(\Gamma)}^*$, we see that $\varphi(\Delta_\Gamma^*)=\Delta_{\nu^\zig_\calX(\Gamma)}^*$.

The case where the origin ${\bf 0}$ lies on the boundary of $\Delta_\Gamma$ can be proved by a similar argument if we consider the hyperplane $\{u\in N_\RR\,|\,\langle-[\zeta],u\rangle\ge 0\}$ instead of $\{u\in N_\RR\,|\,\langle-[\zeta],u\rangle\ge -k_\zeta\}$ for the zigzag paths corresponding to the edges that contain ${\bf 0}$, in which case $-[\zeta]$ will be a~generator of a polyhedral cone~$C$.

By Propositions~\ref{prop_dual_polytope}(ii) and \ref{prop_mutation_dual_polytope}, we conclude that
$\mutation_{w}(\Delta_\Gamma,F)=\Delta_{\nu^\zig_\calX(\Gamma)}$.
\end{proof}

Since $\mutation_w(\Delta_\Gamma,F)\cong\mutation_w(\Delta_\Gamma,-F)$ (see Remark~\ref{rem_def_mutation}), we immediately have the following.

\begin{Corollary}
\label{cor_unimodular_deformedDM}
We have
\begin{displaymath}
\Delta_{\nu^\zig_\calX(\Gamma, \{z_1,\dots,z_r\})}\cong\Delta_{\nu^\zag_\calY(\Gamma, \{z_1,\dots,z_r\})}.
\end{displaymath}
That is, they are $\GL(2,\ZZ)$-equivalent.
\end{Corollary}

We next show that the extended zig-deformation and zag-deformation are mutually inverse operations on the level of the associated PM polygon
as in Corollary~\ref{cor_mut_inverse_on_polygon} below.
However, it is not true on the level of dimer models as we saw in Example~\ref{zigzag_counterEX}.

\begin{setting}
\label{setting_inverse}
Let $\nu^\zig_\calX(\Gamma,\{z_1,\dots,z_r\})$ be the reduced consistent dimer model.
We consider the following deformation data for $\nu^\zig_\calX(\Gamma,\{z_1,\dots,z_r\})$.
Let $z_{i,j}$ be a type I zigzag path which satisfies $[z_{i,j}]=-v\eqqcolon w$ (see Proposition~\ref{deform_vector}).
Since $\{z_{i,j}\}_{\substack{1\le i\le r \\ 1\le j\le p_i}}$ is the subset of type I zigzag paths, we have $|\calZ_{w}^\rmI(\nu^\zig_\calX(\Gamma))|\ge\sum_{i=1}^rp_i=h$.
We take a subset $\{z_1^\prime,\dots,z_h^\prime\}$ of type~I zigzag paths of~$\nu^\zig_\calX(\Gamma)$, and perform the same procedure as in Setting~\ref{def_deformation_exdata}.
Then we have the zag-deformation parameter $\calY^\prime=\{Y_1^\prime,\dots,Y_h^\prime\}$ of the weight $\bfq^\prime=(q_1^\prime,\dots,q_h^\prime)$ with $\sum_{i=1}^hq_i^\prime=r$.
We can use the same arguments for the case of $\nu^\zag_\calY(\Gamma,\{z_1,\dots,z_r\})$.
Specifically, we take a subset $\{z_1^{\prime\prime},\dots,z_h^{\prime\prime}\}$ of type I zigzag paths on $\nu^\zag_\calY(\Gamma)$ and have the zig-deformation parameter $\calX^\prime=\{X_1^\prime,\dots,X_h^\prime\}$ of the weight $\bfp^\prime=(p_1^\prime,\dots,p_h^\prime)$ with $\sum_{i=1}^hp_i^\prime=r$.
\end{setting}

\begin{Corollary}\label{cor_mut_inverse_on_polygon}
Let the notation be the same as in Setting~{\rm \ref{setting_inverse}}. Then, we have
\begin{displaymath}
\Delta_{\nu^\zag_{\calY^\prime}\left(\nu^\zig_\calX(\Gamma, \{z_i\}_{i=1}^r),\{z_\ell^\prime\}_{\ell=1}^h\right)}=\Delta_\Gamma \qquad \text{and} \qquad
\Delta_{\nu^\zig_{\calX^\prime}\left(\nu^\zag_\calY(\Gamma, \{z_i\}_{i=1}^r),\{z_\ell^{\prime\prime}\}_{\ell=1}^h\right)}=\Delta_\Gamma.
\end{displaymath}
\end{Corollary}

\begin{proof}
This follows from Proposition~\ref{properties_mutation_polygon}(1) and Theorem~\ref{mutation=deformation}.
\end{proof}

As we mentioned in Section~\ref{sec_intro}, the combinatorial mutation of Fano polygons is important from the viewpoint of mirror symmetry and the classification of Fano manifolds.
To study the combinatorial mutation of Fano polygons using extended deformations of dimer models, we assume that the polygons $\Delta_\Gamma$, $\Delta_{\nu^\zig_\calX(\Gamma, \{z_1,\dots,z_r\})}$ and $\Delta_{\nu^\zag_\calY(\Gamma, \{z_1,\dots,z_r\})}$ contain the origin in their strict interiors.
Then, we have the following corollary.

\begin{Corollary}\label{cor_Fano_deformedDM}
Let the notation be the same as above. Then, we see that~$\Delta_\Gamma$ is Fano if and only if $\Delta_{\nu^\zig_\calX(\Gamma, \{z_1,\dots,z_r\})}$ $($resp.\ $\Delta_{\nu^\zag_\calY(\Gamma, \{z_1,\dots,z_r\})}$$)$ is Fano.
\end{Corollary}

\begin{proof}This follows from Proposition~\ref{properties_mutation_polygon}(2) and Theorem~\ref{mutation=deformation}.
\end{proof}

Note that since any lattice polygon can be obtained as the PM polygon of a reduced consistent dimer model by Theorem~\ref{existence_dimer},
we can discuss the combinatorial mutation of a polygon in terms of the extended zig-deformations and zag-deformation by the results shown in this section.

\appendix
\section{Mutations of dimer models}\label{app_mutationdimer}

In this section, we introduce another operation, called the \emph{mutation of dimer models}.
From the viewpoint of physics, dimer models and their mutations correspond to quiver gauge theories and Seiberg duality.
The mutation of dimer models can be defined for each quadrangle face of a~dimer model, and the operation called \emph{spider move} (e.g., \cite{Boc_abc, GK}), which is the inverse operation shown in Figure~\ref{fig_spider}, is the main component used in defining this mutation.

\begin{figure}[h!]\centering
{\scalebox{0.9}{
\begin{tikzpicture}

\node at (3.3,0.3) {spider move} ;
\draw[<->, line width=0.03cm] (2.1,0)--(4.5,0);

\node (mutate_a) at (0,0)
{\scalebox{0.7}{
\begin{tikzpicture}
\coordinate (B1) at (-1,0); \coordinate (B2) at (1,0);
\coordinate (W1) at (-2,0); \coordinate (W2) at (0,-2); \coordinate (W3) at (2,0);
\coordinate (W4) at (0,2);

\draw[line width=0.05cm] (W1)--(B1); \draw[line width=0.05cm] (W2)--(B1);
\draw[line width=0.05cm] (W4)--(B1);
\draw[line width=0.05cm] (W2)--(B2); \draw[line width=0.05cm] (W3)--(B2);
\draw[line width=0.05cm] (W4)--(B2);

\filldraw [line width=0.05cm, fill=black] (B1) circle [radius=0.16] ; \filldraw [line width=0.05cm, fill=black] (B2) circle [radius=0.16] ;

\draw [line width=0.05cm, fill=white] (W1) circle [radius=0.16] ; \draw [line width=0.05cm, fill=white] (W2) circle [radius=0.16] ;
\draw [line width=0.05cm, fill=white] (W3) circle [radius=0.16] ; \draw [line width=0.05cm, fill=white] (W4) circle [radius=0.16] ;
\end{tikzpicture} }} ;

\node (mutate_b) at (6.5,0)
{\scalebox{0.7}{
\begin{tikzpicture}

\coordinate (B1) at (0,1); \coordinate (B2) at (0,-1);
\coordinate (W1) at (-2,0); \coordinate (W2) at (0,-2); \coordinate (W3) at (2,0);
\coordinate (W4) at (0,2);

\draw[line width=0.05cm] (W1)--(B1); \draw[line width=0.05cm] (W3)--(B1);
\draw[line width=0.05cm] (W4)--(B1);
\draw[line width=0.05cm] (W1)--(B2); \draw[line width=0.05cm] (W2)--(B2);
\draw[line width=0.05cm] (W3)--(B2);

\filldraw [line width=0.05cm, fill=black] (B1) circle [radius=0.16] ; \filldraw [line width=0.05cm, fill=black] (B2) circle [radius=0.16] ;

\draw [line width=0.05cm, fill=white] (W1) circle [radius=0.16] ; \draw [line width=0.05cm, fill=white] (W2) circle [radius=0.16] ;
\draw [line width=0.05cm, fill=white] (W3) circle [radius=0.16] ; \draw [line width=0.05cm, fill=white] (W4) circle [radius=0.16] ;
\end{tikzpicture} }} ;

\end{tikzpicture}
}}
\caption{}\label{fig_spider}
\end{figure}

We note that there are two types of the spider move (and hence the mutation) depending on the color of the two interior nodes.

\begin{Definition}[mutation of dimer models]\label{def_mutation}
Let $\Gamma$ be a dimer model. We pick a quadrangle face $f\in\Gamma_2$.
Then, the \emph{mutation} of $\Gamma$ at $f$, denoted by $\mu_f(\Gamma)$, is the operation consisting of the following procedures:
\begin{itemize}\itemsep=0pt
\item[(I)] If there exist black nodes on the boundary of $f$ that are not $3$-valent,
we apply split moves to those nodes and make them $3$-valent as shown in Figure~\ref{fig_mutation1}.
\item[(II)] We apply the spider move to $f$ (see Figure~\ref{fig_spider}).
\item[(III)] If the resulting dimer model contains $2$-valent nodes, we remove them by applying the join moves.
\end{itemize}
\end{Definition}

\begin{figure}[h!]\centering
{\scalebox{0.9}{
\begin{tikzpicture}

\node at (3.1,0.3) {split move} ;
\draw[->, line width=0.03cm] (2.1,0)--(4.3,0);

\node (mutate_a) at (0,0)
{\scalebox{0.7}{
\begin{tikzpicture}
\node at (0,0) {{\Large$f$}} ;
\coordinate (B1) at (-1,0); \coordinate (B2) at (1,0);
\coordinate (W1) at (0,-2);
\coordinate (W2) at (0,2);
\draw[line width=0.05cm] (-2,0)--(B1);
\draw[line width=0.05cm] (W1)--(B1);
\draw[line width=0.05cm] (W2)--(B1);
\draw[line width=0.05cm] (W1)--(B2);
\draw[line width=0.05cm] (B2)--(2,0.5); \draw[line width=0.05cm] (B2)--(2,-0.5);
\draw[line width=0.05cm] (W2)--(B2);
\filldraw [line width=0.05cm, fill=black] (B1) circle [radius=0.16] ; \filldraw [line width=0.05cm, fill=black] (B2) circle [radius=0.16] ;
\draw [line width=0.05cm, fill=white] (W1) circle [radius=0.16] ;
\draw [line width=0.05cm, fill=white] (W2) circle [radius=0.16] ;
\end{tikzpicture} }} ;

\node (mutate_b) at (7,0)
{\scalebox{0.7}{
\begin{tikzpicture}
\node at (0,0) {{\Large$f$}} ;
\coordinate (B1) at (-1,0); \coordinate (B2) at (1,0);
\coordinate (W1) at (0,-2);
\coordinate (W2) at (0,2);

\coordinate (W3) at (2,0);
\coordinate (B3) at (3,0);
\draw[line width=0.05cm] (-2,0)--(B1);
\draw[line width=0.05cm] (W1)--(B1); \draw[line width=0.05cm] (W2)--(B1);
\draw[line width=0.05cm] (W1)--(B2); \draw[line width=0.05cm] (W2)--(B2);

\draw[line width=0.05cm] (B3)--(4,0.5); \draw[line width=0.05cm] (B3)--(4,-0.5);
\draw[line width=0.05cm] (B2)--(W3); \draw[line width=0.05cm] (B3)--(W3);

\filldraw [line width=0.05cm, fill=black] (B1) circle [radius=0.16] ; \filldraw [line width=0.05cm, fill=black] (B2) circle [radius=0.16] ;
\filldraw [line width=0.05cm, fill=black] (B3) circle [radius=0.16] ;
\draw [line width=0.05cm, fill=white] (W1) circle [radius=0.16] ;\draw [line width=0.05cm, fill=white] (W2) circle [radius=0.16] ;
\draw [line width=0.05cm, fill=white] (W3) circle [radius=0.16] ;
\end{tikzpicture} }} ;

\end{tikzpicture}
}}
\caption{}\label{fig_mutation1}
\end{figure}

Applying a mutation at a quadrangle face, we obtain the new dimer model from a given one, although the mutation sometimes induces an isomorphic one.
We also remark that the mutation is an involutive operation; that is, $\mu_f(\mu_f(\Gamma))=\Gamma$ holds.
We say that dimer models $\Gamma$ and $\Gamma^\prime$ are \emph{mutation-equivalent} if they are transformed into
each other by repeating the mutation of dimer models.
Moreover, we also see that the join, split and spider moves do not change the slopes of zigzag paths and preserve conditions in Definition~\ref{def_properly}. Thus we have the following.

\begin{Proposition}\label{mutation_preserve_toric}
A mutation of dimer models turns consistent dimer models into consistent dimer models associated with the same lattice polygon.
\end{Proposition}

Note that it has been conjectured that all consistent dimer models associated with the same lattice polygon are mutation-equivalent.
This conjecture is still open in general (see \cite[pp.~396--397]{Boc_abc}).
We note that partial answers were given in several papers (e.g., \cite{Boc_toricNCCR,GK,HS,Nak}).

\begin{Remark}A mutation of a dimer model is also defined as the dual of the \emph{mutation of a quiver with potential} (= \emph{QP}) in the sense of~\cite{DWZ} (see also \cite[Section~7.2]{Boc_toricNCCR}, \cite[Section~4]{Nak}).
Although we can consider the mutation of a QP for any vertex of the quiver having no loops and 2-cycles, the resulting QP is not necessarily the dual of a dimer model.
To make the resulting quiver the dual of a dimer model, we need to assume that the mutated vertex has two incoming (equivalently, two outgoing) arrows, which is equivalent to assuming that the face of a dimer model corresponding to such a vertex is a quadrangle.
\end{Remark}

In the theory of (extended) deformations of dimer models, type~I zigzag paths are important.
We can use mutations to change a type~II zigzag path into type~I as in the example below.

\begin{Example}\label{mutation_typeII_to_I}
We consider the following dimer model $\Gamma$ (the image on the left). Since the face~$0$ is a quadrangle, we can apply the mutation and obtain the dimer model $\mu_0(\Gamma)$ as follows.

\begin{center}
\newcommand{\edgewidth}{0.055cm}
\newcommand{\nodewidth}{0.055cm}
\newcommand{\noderad}{0.18} 
\newcommand{\arrowwidth}{0.07cm}
\begin{tikzpicture}
\node (DM1) at (0,0)
{\scalebox{0.55}{
\begin{tikzpicture}
\coordinate (W1) at (4,1); \coordinate (W2) at (1,2); \coordinate (W3) at (3,3); \coordinate (W4) at (0,4);
\coordinate (B1) at (0,1); \coordinate (B2) at (3,2); \coordinate (B3) at (1,3); \coordinate (B4) at (4,4);
\draw[line width=\edgewidth] (-0.5,0) rectangle (4.5,5);
\draw[line width=\edgewidth] (B1)--(W2); \draw[line width=\edgewidth] (B2)--(W1); \draw[line width=\edgewidth] (B2)--(W2);
\draw[line width=\edgewidth] (B2)--(W3); \draw[line width=\edgewidth] (B3)--(W2); \draw[line width=\edgewidth] (B3)--(W3);
\draw[line width=\edgewidth] (B3)--(W4); \draw[line width=\edgewidth] (B4)--(W3);
\draw[line width=\edgewidth] (W2)--(-0.5,3.5); \draw[line width=\edgewidth] (B4)--(4.5,3.5);
\draw[line width=\edgewidth] (B1)--(0,0); \draw[line width=\edgewidth] (B1)--(-0.5,1);
\draw[line width=\edgewidth] (W1)--(4,0); \draw[line width=\edgewidth] (W1)--(4.5,1);
\draw[line width=\edgewidth] (B4)--(4.5,4); \draw[line width=\edgewidth] (B4)--(4,5);
\draw[line width=\edgewidth] (W4)--(-0.5,4); \draw[line width=\edgewidth] (W4)--(0,5);
\draw [line width=\nodewidth, fill=black] (B1) circle [radius=\noderad] ;
\draw [line width=\nodewidth, fill=black] (B2) circle [radius=\noderad] ;
\draw [line width=\nodewidth, fill=black] (B3) circle [radius=\noderad] ;
\draw [line width=\nodewidth, fill=black] (B4) circle [radius=\noderad] ;
\draw [line width=\nodewidth, fill=white] (W1) circle [radius=\noderad] ;
\draw [line width=\nodewidth, fill=white] (W2) circle [radius=\noderad] ;
\draw [line width=\nodewidth, fill=white] (W3) circle [radius=\noderad] ;
\draw [line width=\nodewidth, fill=white] (W4) circle [radius=\noderad] ;

\node at (2,2.5) {\Large$0$};
\end{tikzpicture}
} };

\node (DM2) at (6,0)
{\scalebox{0.55}{
\begin{tikzpicture}
\coordinate (W1) at (4,1); \coordinate (W2) at (1,2); \coordinate (W3) at (1,4);
\coordinate (B1) at (1,1); \coordinate (B2) at (2.5,2); \coordinate (B3) at (4,4);
\draw[line width=\edgewidth] (0,0) rectangle (5,5);
\draw[line width=\edgewidth] (B1)--(W2); \draw[line width=\edgewidth] (B2)--(W1); \draw[line width=\edgewidth] (B2)--(W2);
\draw[line width=\edgewidth] (B3)--(W1); \draw[line width=\edgewidth] (B2)--(W3); \draw[line width=\edgewidth] (B3)--(W3);
\draw[line width=\edgewidth] (W2)--(0,3); \draw[line width=\edgewidth] (B3)--(5,3);
\draw[line width=\edgewidth] (B1)--(1,0); \draw[line width=\edgewidth] (B1)--(0,1);
\draw[line width=\edgewidth] (W1)--(4,0); \draw[line width=\edgewidth] (W1)--(5,1);
\draw[line width=\edgewidth] (B3)--(5,4); \draw[line width=\edgewidth] (B3)--(4,5);
\draw[line width=\edgewidth] (W3)--(0,4); \draw[line width=\edgewidth] (W3)--(1,5);
\draw [line width=\nodewidth, fill=black] (B1) circle [radius=\noderad] ;
\draw [line width=\nodewidth, fill=black] (B2) circle [radius=\noderad] ;
\draw [line width=\nodewidth, fill=black] (B3) circle [radius=\noderad] ;
\draw [line width=\nodewidth, fill=white] (W1) circle [radius=\noderad] ;
\draw [line width=\nodewidth, fill=white] (W2) circle [radius=\noderad] ;
\draw [line width=\nodewidth, fill=white] (W3) circle [radius=\noderad] ;

\node at (3,3) {\Large$0$};
\end{tikzpicture}
} };

\path (DM1) ++(0:2cm) coordinate (DM1+); \path (DM2) ++(180:2cm) coordinate (DM2+);
\draw[->, line width=0.03cm] (DM1+)--(DM2+) node[midway,xshift=0cm,yshift=0.3cm] {\small $\mu_0$};
\end{tikzpicture}
\end{center}

We can see that the zigzag path on $\Gamma$ whose slope is $(-1,1)$ or $(1,-1)$ is type II.
On the other hand, we see that $\mu_0(\Gamma)$ is isoradial, and hence all zigzag paths are type I.
\end{Example}

\section{Large examples}\label{app_large_example}

As we mentioned in Remark~\ref{r=1case} and Proposition~\ref{skip_hexagonal_square},
we sometimes skip the operations (zig-4) and (zig-5) (resp.~(zag-4) and~(zag-5)) when we define the extended deformation $\nu^\zig_\calX(\Gamma,\{z_1,\allowbreak\dots,\allowbreak z_r\})$ (resp.~$\nu^\zag_\calY(\Gamma,\{z_1,\dots, z_r\})$) of a consistent dimer model $\Gamma$.
However, as the following example shows, (zig-4) and (zig-5) (resp.~(zag-4) and~(zag-5)) are indispensable to define extended deformations
compatible with combinatorial mutations of polygons, as shown in Theorem~\ref{mutation=deformation}.
For example, we often encounter such a situation when we consider a consistent dimer model whose PM polygon is relatively large.

\begin{Example}
We consider the lattice polygon $P$ shown on the left-hand side of Figure~\ref{ex_large_polygon}.
We assume that the double circle stands for the origin ${\bf 0}$.
We consider the edge $E$ whose primitive inner normal vector is $w=(0,-1)$ with $h_{\min}=-3$ and $h_{\max}=1$.
We take $u_E=(-1,0)$, which satisfies $\langle w,u_E\rangle=0$, and consider the line segment $F=\operatorname{conv}\{{\bf 0},u_E\}$.
Then, the combinatorial mutation $\mut_w(P,F)$ of the polygon $P$ is as shown on the right-hand side of Figure~\ref{ex_large_polygon}.

\begin{figure}[h!]\centering
\begin{tikzpicture}

\node at (0,0)
{\scalebox{0.45}{
\begin{tikzpicture}
\coordinate (v0) at (0,0); \coordinate (v1) at (2,-1); \coordinate (v2) at (2,3); \coordinate (v3) at (-2,3); \coordinate (v4) at (-3,2);
\coordinate (v5) at (-3,1); \coordinate (v6) at (-2,-1);

\draw [step=1, gray] (-4.3,-2.3) grid (3.3,4.3);
\foreach \h/\t in {1/2,2/3,3/4,4/5,5/6,6/1}
{\draw [line width=0.08cm] (v\h)--(v\t) ;}

\draw [line width=0.03cm] (v0) circle [radius=0.25] ;
\foreach \vertex in {0,1,2,3,4,5,6}
{\draw [line width=0.05cm, fill=black] (v\vertex) circle [radius=0.1] ; }
\end{tikzpicture}
}};

\draw[->,line width=0.03cm] (2.5,0)--(4.5,0);
\node at (3.5,0.5) {$\mut_w(-,F)$};

\node at (7,0)
{\scalebox{0.45}{
\begin{tikzpicture}
\coordinate (v0) at (0,0); \coordinate (v1) at (2,-1); \coordinate (v2) at (2,3); \coordinate (v3) at (1,3); \coordinate (v4) at (-1,2);
\coordinate (v5) at (-2,1); \coordinate (v6) at (-3,-1);

\draw [step=1, gray] (-4.3,-2.3) grid (3.3,4.3);
\foreach \h/\t in {1/2,2/3,3/4,4/5,5/6,6/1}
{\draw [line width=0.08cm] (v\h)--(v\t) ;}

\draw [line width=0.03cm] (v0) circle [radius=0.25] ;
\foreach \vertex in {0,1,2,3,4,5,6}
{\draw [line width=0.05cm, fill=black] (v\vertex) circle [radius=0.1] ; }
\end{tikzpicture}
}};
\end{tikzpicture}
\caption{The lattice polygons $P$ and $\mut_w(P,F)$ for $w=(0,-1)$ and $F=\mathrm{conv}\{{\bf 0},(-1,0)\}$.}
\label{ex_large_polygon}
\end{figure}
Next, we consider the dimer model $\Gamma$ shown in Figure~\ref{ex_large_dimer1}.
The zigzag paths on this dimer model $\Gamma$ are depicted in Figure~\ref{ex_large_zigzag}.
One can check that $\Gamma$ is consistent, and hence the PM polygon $\Delta_\Gamma$ coincides with $P$ by Proposition~\ref{zigzag_sidepolygon}.

\newcommand{\basicdimerEx}{
\foreach \blackname/\x/\y in
{1/1/0,2/1/2,3/1/4,4/1/6,5/1/8,6/3/0,7/3/2,8/3/4,9/3/6,10/3/8,11/5/0,12/5/2,13/5/4,14/5/6,15/5/8,
16/7/0,17/7/2,18/7/4,19/7/6,20/7/8,21/9/0,22/9/2,23/9/4,24/9/6,25/9/8}
{\coordinate (B\blackname) at (\x,\y);}
\foreach \whitename/\x/\y in
{1/0/1,2/0/3,3/0/5,4/0/7,5/2/1,6/2/3,7/2/5,8/2/7,9/4/1,10/4/3,11/4/5,12/4/7,13/6/1,14/6/3,15/6/5,16/6/7,
17/8/1,18/8/3,19/8/5,20/8/7,21/10/1,22/10/3,23/10/5,24/10/7}
{\coordinate (W\whitename) at (\x,\y);}

\draw[line width=\edgewidth] (0,0) rectangle (10,8);

\foreach \w/\b in {1/1,1/2,2/2,2/3,3/3,3/4,4/4,4/5,5/1,5/2,5/6,5/7,6/2,6/3,6/7,6/8,7/3,7/4,7/8,7/9,8/4,8/5,8/9,8/10}{\draw [line width=\edgewidth] (W\w)--(B\b);}
\foreach \w/\b in {9/6,9/11,9/12,10/7,10/12,10/13,11/8,11/9,11/13,11/14,12/9,12/10,12/14,12/15}{\draw [line width=\edgewidth] (W\w)--(B\b);}
\foreach \w/\b in {13/11,13/12,13/16,13/17,14/12,14/13,14/17,14/18,15/13,15/14,15/18,15/19,16/14,16/15,16/19,16/20}{\draw [line width=\edgewidth] (W\w)--(B\b);}
\foreach \w/\b in {17/16,17/17,17/21,17/22,18/17,18/18,18/22,18/23,19/18,19/19,19/23,19/24,20/19,20/20,20/24,21/21,21/22,22/22,22/23,23/23,23/24,24/24,24/25}{\draw [line width=\edgewidth] (W\w)--(B\b);}

\foreach \n in {1,2,3,4,5,6,7,8,9,10,11,12,13,14,15,16,17,18,19,20,21,22,23,24,25} {\draw [line width=\nodewidth, fill=black] (B\n) circle [radius=\noderad];}
\foreach \n in {1,2,3,4,5,6,7,8,9,10,11,12,13,14,15,16,17,18,19,20,21,22,23,24} {\draw [line width=\nodewidth, fill=white] (W\n) circle [radius=\noderad];}
}

\begin{figure}[h!]\centering
\scalebox{0.5}{
\begin{tikzpicture}
\newcommand{\edgewidth}{0.05cm} 
\newcommand{\nodewidth}{0.05cm} 
\newcommand{\noderad}{0.17} 
\basicdimerEx
\end{tikzpicture}
}
\caption{A consistent dimer model $\Gamma$ whose PM polygon coincides with $P$.}
\label{ex_large_dimer1}
\end{figure}

\begin{figure}[h!]\centering
\begin{tikzpicture}
\newcommand{\edgewidth}{0.05cm} 
\newcommand{\nodewidth}{0.05cm} 
\newcommand{\noderad}{0.17} 

\node at (0,0) {
\scalebox{0.4}{
\begin{tikzpicture}
\basicdimerEx
\newcommand{\zzwidth}{0.2cm} 
\newcommand{\zzcolor}{red} 
\draw[->, line width=\zzwidth, rounded corners, color=\zzcolor] (B1)--(W1)--(B2)--(W2)--(B3)--(W3)--(B4)--(W4)--(B5);
\draw[->, line width=\zzwidth, rounded corners, color=\zzcolor] (B6)--(W5)--(B7)--(W6)--(B8)--(W7)--(B9)--(W8)--(B10);
\draw[->, line width=\zzwidth, rounded corners, color=\zzcolor] (B11)--(W9)--(B12)--(W10)--(B13)--(W11)--(B14)--(W12)--(B15);
\draw[->, line width=\zzwidth, rounded corners, color=\zzcolor] (B16)--(W13)--(B17)--(W14)--(B18)--(W15)--(B19)--(W16)--(B20);
\node[red] at (0.8,5) {\Huge $z_1$}; \node[red] at (2.8,5) {\Huge $z_2$};
\node[red] at (4.8,5) {\Huge $z_3$}; \node[red] at (6.8,5) {\Huge $z_4$};
\end{tikzpicture}
}};

\node at (5,0) {
\scalebox{0.4}{
\begin{tikzpicture}
\basicdimerEx
\newcommand{\zzwidth}{0.2cm} 
\newcommand{\zzcolor}{red} 
\draw[->, line width=\zzwidth, rounded corners, color=\zzcolor, dotted] (B5)--(W8)--(B4)--(W7)--(B3)--(W6)--(B2)--(W5)--(B1);
\draw[->, line width=\zzwidth, rounded corners, color=\zzcolor, dotted] (B15)--(W16)--(B14)--(W15)--(B13)--(W14)--(B12)--(W13)--(B11);
\draw[->, line width=\zzwidth, rounded corners, color=\zzcolor, dotted] (B20)--(W20)--(B19)--(W19)--(B18)--(W18)--(B17)--(W17)--(B16);
\draw[->, line width=\zzwidth, rounded corners, color=\zzcolor, dotted] (B25)--(W24)--(B24)--(W23)--(B23)--(W22)--(B22)--(W21)--(B21);
\node[red] at (1,5) {\Huge $z_1^\prime$};
\node[red] at (5,5) {\Huge $z_2^\prime$}; \node[red] at (7,5) {\Huge $z_3^\prime$}; \node[red] at (9,5) {\Huge $z_4^\prime$};
\end{tikzpicture}
}};

\node at (10,0) {
\scalebox{0.4}{
\begin{tikzpicture}
\basicdimerEx
\newcommand{\zzwidth}{0.2cm} 
\newcommand{\zzcolor}{blue!50} 
\draw[->, line width=\zzwidth, rounded corners, color=\zzcolor] (W1)--(B1)--(W5)--(B6)--(W9)--(B11)--(W13)--(B16)--(W17)--(B21)--(W21);
\draw[->, line width=\zzwidth, rounded corners, color=\zzcolor] (W2)--(B2)--(W6)--(B7)--(W10)--(B12)--(W14)--(B17)--(W18)--(B22)--(W22);
\draw[->, line width=\zzwidth, rounded corners, color=\zzcolor] (W3)--(B3)--(W7)--(B8)--(W11)--(B13)--(W15)--(B18)--(W19)--(B23)--(W23);
\draw[->, line width=\zzwidth, rounded corners, color=\zzcolor] (W4)--(B4)--(W8)--(B9)--(W12)--(B14)--(W16)--(B19)--(W20)--(B24)--(W24);
\node[blue!50] at (5,6.8) {\Huge $y_1$}; \node[blue!50] at (5,4.8) {\Huge $y_2$};
\node[blue!50] at (5,2.8) {\Huge $y_3$}; \node[blue!50] at (5,0.8) {\Huge $y_4$};
\end{tikzpicture}
}};

\node at (0,-4) {
\scalebox{0.4}{
\begin{tikzpicture}
\basicdimerEx
\newcommand{\zzwidth}{0.2cm} 
\newcommand{\zzcolor}{blue} 
\draw[->, line width=\zzwidth, rounded corners, color=\zzcolor] (B21)--(W17)--(B22)--(W18)--(B23)--(W19)--(B24)--(W20)--(B20)--(W16)--(B15)--(W12)--(B10)--(W8)--(B5)--(W4);
\draw[->, line width=\zzwidth, rounded corners, color=\zzcolor] (W24)--(B25);
\node[blue] at (5,7.2) {\Huge $x_1$};
\end{tikzpicture}
}};

\node at (5,-4) {
\scalebox{0.4}{
\begin{tikzpicture}
\basicdimerEx
\newcommand{\zzwidth}{0.2cm} 
\newcommand{\zzcolor}{orange} 
\draw[->, line width=\zzwidth, rounded corners, color=\zzcolor] (W23)--(B24)--(W19)--(B19)--(W15)--(B14)--(W11)--(B9)--(W7)--(B4)--(W3);
\node[orange] at (5,5.2) {\Huge $x_2$};
\end{tikzpicture}
}};

\node at (10,-4) {
\scalebox{0.4}{
\begin{tikzpicture}
\basicdimerEx
\newcommand{\zzwidth}{0.2cm} 
\newcommand{\zzcolor}{green} 
\draw[->, line width=\zzwidth, rounded corners, color=\zzcolor] (W21)--(B22)--(W17)--(B17)--(W13)--(B12)--(W9)--(B6);
\draw[->, line width=\zzwidth, rounded corners, color=\zzcolor] (B10)--(W12)--(B9)--(W11)--(B8)--(W6)--(B3)--(W2);
\draw[->, line width=\zzwidth, rounded corners, color=\zzcolor] (W22)--(B23)--(W18)--(B18)--(W14)--(B13)--(W10)--(B7)--(W5)--(B2)--(W1);
\node[teal] at (5,1.2) {\Huge $x_3$}; \node at (4.5,3.9) {\Huge $\checkmark$};
\end{tikzpicture}
}};

\end{tikzpicture}
\caption{The zigzag paths of $\Gamma$.}\label{ex_large_zigzag}
\end{figure}

We now consider the extended zig-deformation of $\Gamma$ that realizes $\mut_w(P,F)$ as the PM polygon (see Theorem~\ref{mutation=deformation}).
To do this, we first fix the deformation data (see Definition~\ref{def_deformation_data}) as follows.
First, the zigzag path $z_i$ on $\Gamma$ is type I with $\ell(z_i)=8$, and its slope is $[z_i]=(0,1)=-w$ for $i=1,\dots,4$.
Let $r\coloneqq -h_{\min}=3$ and $h\coloneqq h_{\max}=1$ (see Table~\ref{mutation_deformation_data}).
These satisfy $r=3<\big|\calZ^\rmI_{(-w)}(\Gamma)\big|=4$ and $r+h=\ell(z_i)/2=4$.
We take the set of type I zigzag paths $\{z_1,z_2,z_3\}$, and consider the extended zig-deformation $\nu^\zig_\calX(\Gamma,\{z_1,z_2,z_3\})$ of $\Gamma$ at $\{z_1,z_2,z_3\}$ with respect to $\calX$,
where $\calX$ is the zig-deformation parameter (see Setting~\ref{def_deformation_exdata}) defined as follows.
To define $\calX$, we focus on $x_1$, $x_2$, $x_3$ shown in Figure~\ref{ex_large_zigzag}, which are the zigzag paths intersecting with~$z_i$ at some zags of~$z_i$.
They satisfy $m_1\coloneqq|x_1\cap z_i|=1$, $m_2\coloneqq|x_2\cap z_i|=1$, and $m_3\coloneqq|x_3\cap z_i|=2$ for any $i$;
thus we consider the set of sub-zigzag paths $\big\{x_1=x_1^{(1)}, x_2=x_2^{(1)}, x_3^{(1)}, x_3^{(2)}\big\}$.
Here, we fix the intersection of $z_3$ and $x_3$ marked by $\checkmark$ in Figure~\ref{ex_large_zigzag} as the starting edge of $x_3^{(1)}$.
We then set the zig-deformation parameter $\calX\coloneqq\{X_1,X_2,X_3\}$ with respect to $\{z_1,z_2,z_3\}$,
where $X_1=\big\{x_1\cap z_1=x_1^{(1)}\cap z_1\big\}$, $X_2=\big\{x_2\cap z_2=x_2^{(1)}\cap z_2\big\}$ and $X_3=\big\{x_3^{(1)}\cap z_3, x_3^{(2)}\cap z_3\big\}$,
and hence the weight of $\calX$ is $\bfp=(0,0,1)$.

Using these deformation data, we apply the operations (zig-1)--(zig-3) to $\Gamma$.
This gives us the dimer model shown in the left of Figure~\ref{ex_large_dimer2}.
Here, we can easily see that the slopes of zigzag paths on this dimer model do not correspond bijectively to the primitive side segments of
$\mut_w(P,F)$, and thus we can not obtain Theorem~\ref{mutation=deformation} without the operations~(zig-4) and~\mbox{(zig-5)}. We therefore apply~(zig-4)~-- that is, we insert some bypasses.
Then we have the dimer model $\overline{\nu}^\zig_\calX(\Gamma)=\overline{\nu}^\zig_\calX(\Gamma, \{z_1,z_2,z_3\})$, as shown in the right of Figure~\ref{ex_large_dimer2}.
Some zigzag paths on~$\overline{\nu}^\zig_\calX(\Gamma)$ are given in Figure~\ref{ex_large_dimer3_zigzag}.
For example, the zigzag path $x_3$ on $\Gamma$ behaves as Figure~\ref{fig_observation_zigzag2} in
$\big\{x_3^{(1)}\cap z_i,\allowbreak x_3^{(2)}\cap z_i \,|\, i=1,2\big\}$ and behaves as Figure~\ref{fig_observation_zigzag3} in $X_3=\big\{x_3^{(1)}\cap z_3, x_3^{(2)}\cap z_3\big\}$
by our choice of $\calX$, and hence we obtain the zigzag path $x_3^\prime$ on $\overline{\nu}^\zig_\calX(\Gamma)$ as in Figure~\ref{ex_large_dimer3_zigzag}.
By Proposition~\ref{prop_nondegenerate}, this dimer model is non-degenerate, but it is not consistent.
Indeed, we can see that some nodes appearing on the zigzag path $x^\prime_3$ shown in Figure~\ref{ex_large_dimer3_zigzag} are not properly ordered (see Definition~\ref{def_properly}(4)).

\begin{figure}[h!]\centering
\begin{tikzpicture}
\node at (0,0) {
\scalebox{0.5}{
\begin{tikzpicture}
\newcommand{\edgewidth}{0.05cm} 
\newcommand{\nodewidth}{0.05cm} 
\newcommand{\noderad}{0.17} 
\foreach \blackname/\x/\y in
{1/1/0,2/1/2,3/1/4,4/1/6,5/1/8,6/3/0,7/3/2,8/3/4,9/3/6,10/3/8,
11/7/0,12/7/2,13/7/4,14/7/6,15/7/8,
16/9/0,17/9/2,18/9/4,19/9/6,20/9/8,21/11/0,22/11/2,23/11/4,24/11/6,25/11/8}
{\coordinate (B\blackname) at (\x,\y);}
\foreach \whitename/\x/\y in
{1/0/1,2/0/3,3/0/5,4/0/7,5/2/1,6/2/3,7/2/5,8/2/7,9/4/1,10/4/3,11/4/5,12/4/7,
13/8/1,14/8/3,15/8/5,16/8/7, 17/10/1,18/10/3,19/10/5,20/10/7,21/12/1,22/12/3,23/12/5,24/12/7}
{\coordinate (W\whitename) at (\x,\y);}

\draw[line width=\edgewidth] (0,0) rectangle (12,8);

\foreach \w/\b in {1/1,2/2,3/3,4/4,5/1,5/2,5/6,6/2,6/3,6/7,7/3,7/4,7/8,8/4,8/5,8/9}{\draw [line width=\edgewidth] (W\w)--(B\b);}
\foreach \w/\b in {9/6,9/11,10/7,10/12,11/8,11/9,11/13,12/9,12/10,12/14}{\draw [line width=\edgewidth] (W\w)--(B\b);}
\foreach \w/\b in {13/11,13/12,13/16,13/17,14/12,14/13,14/17,14/18,15/13,15/14,15/18,15/19,16/14,16/15,16/19,16/20}{\draw [line width=\edgewidth] (W\w)--(B\b);}
\foreach \w/\b in {17/16,17/17,17/21,17/22,18/17,18/18,18/22,18/23,19/18,19/19,19/23,19/24,20/19,20/20,20/24,21/21,21/22,22/22,22/23,23/23,23/24,24/24,24/25}{\draw [line width=\edgewidth] (W\w)--(B\b);}

\foreach \n in {1,2,3,4,5,6,7,8,9,10,11,12,13,14,15,16,17,18,19,20,21,22,23,24,25} {\draw [line width=\nodewidth, fill=black] (B\n) circle [radius=\noderad];}
\foreach \n in {1,2,3,4,5,6,7,8,9,10,11,12,13,14,15,16,17,18,19,20,21,22,23,24} {\draw [line width=\nodewidth, fill=white] (W\n) circle [radius=\noderad];}

\path (W9) ++(-18:1cm) coordinate (B30); \path (W9) ++(-18:2cm) coordinate (W30);
\path (W10) ++(-18:1cm) coordinate (B31); \path (W10) ++(-18:2cm) coordinate (W31);
\path (W11) ++(-18:1cm) coordinate (B32); \path (W11) ++(-18:2cm) coordinate (W32);
\path (W12) ++(-18:1cm) coordinate (B33); \path (W12) ++(-18:2cm) coordinate (W33);

\foreach \w/\b in {30/31,31/32,32/33}{\draw [line width=\edgewidth] (W\w)--(B\b);}
\draw [line width=\edgewidth] (B30)--(5.3,0); \draw [line width=\edgewidth] (W33)--(5.3,8);

\foreach \n in {30,31,32,33}{\draw [line width=\nodewidth, fill=black] (B\n) circle [radius=\noderad];}
\foreach \n in {30,31,32,33}{\draw [line width=\nodewidth, fill=white] (W\n) circle [radius=\noderad];}
\end{tikzpicture}
}};

\node at (8,0) {
\scalebox{0.5}{
\begin{tikzpicture}
\newcommand{\edgewidth}{0.05cm} 
\newcommand{\nodewidth}{0.05cm} 
\newcommand{\noderad}{0.17} 

\foreach \blackname/\x/\y in
{1/1/0,2/1/2,3/1/4,4/1/6,5/1/8,6/3/0,7/3/2,8/3/4,9/3/6,10/3/8,
11/7/0,12/7/2,13/7/4,14/7/6,15/7/8,
16/9/0,17/9/2,18/9/4,19/9/6,20/9/8,21/11/0,22/11/2,23/11/4,24/11/6,25/11/8}
{\coordinate (B\blackname) at (\x,\y);}
\foreach \whitename/\x/\y in
{1/0/1,2/0/3,3/0/5,4/0/7,5/2/1,6/2/3,7/2/5,8/2/7,9/4/1,10/4/3,11/4/5,12/4/7,
13/8/1,14/8/3,15/8/5,16/8/7, 17/10/1,18/10/3,19/10/5,20/10/7,21/12/1,22/12/3,23/12/5,24/12/7}
{\coordinate (W\whitename) at (\x,\y);}

\draw[line width=\edgewidth] (0,0) rectangle (12,8);

\foreach \w/\b in {1/1,2/2,3/3,4/4,5/1,5/2,5/6,6/2,6/3,6/7,7/3,7/4,7/8,8/4,8/5,8/9}{\draw [line width=\edgewidth] (W\w)--(B\b);}
\foreach \w/\b in {9/6,9/11,10/7,10/12,11/8,11/9,11/13,12/9,12/10,12/14}{\draw [line width=\edgewidth] (W\w)--(B\b);}
\foreach \w/\b in {13/11,13/12,13/16,13/17,14/12,14/13,14/17,14/18,15/13,15/14,15/18,15/19,16/14,16/15,16/19,16/20}{\draw [line width=\edgewidth] (W\w)--(B\b);}
\foreach \w/\b in {17/16,17/17,17/21,17/22,18/17,18/18,18/22,18/23,19/18,19/19,19/23,19/24,20/19,20/20,20/24,21/21,21/22,22/22,22/23,23/23,23/24,24/24,24/25}{\draw [line width=\edgewidth] (W\w)--(B\b);}

\path (W9) ++(-18:1cm) coordinate (B30); \path (W9) ++(-18:2cm) coordinate (W30);
\path (W10) ++(-18:1cm) coordinate (B31); \path (W10) ++(-18:2cm) coordinate (W31);
\path (W11) ++(-18:1cm) coordinate (B32); \path (W11) ++(-18:2cm) coordinate (W32);
\path (W12) ++(-18:1cm) coordinate (B33); \path (W12) ++(-18:2cm) coordinate (W33);

\draw [line width=\edgewidth] (W1)--(B2); \draw [line width=\edgewidth] (W2)--(B3); \draw [line width=\edgewidth] (W3)--(B4);
\draw [line width=\edgewidth] (W5)--(B7); \draw [line width=\edgewidth] (W6)--(B8); \draw [line width=\edgewidth] (W8)--(B10);
\draw [line width=\edgewidth] (W11)--(B33); \draw [line width=\edgewidth] (W32)--(B14); \draw [line width=\edgewidth] (W33)--(B15);
\draw [line width=\edgewidth] (W12)--(4.6,8); \draw [line width=\edgewidth] (4.6,0)--(B30);

\foreach \n in {1,2,3,4,5,6,7,8,9,10,11,12,13,14,15,16,17,18,19,20,21,22,23,24,25} {\draw [line width=\nodewidth, fill=black] (B\n) circle [radius=\noderad];}
\foreach \n in {1,2,3,4,5,6,7,8,9,10,11,12,13,14,15,16,17,18,19,20,21,22,23,24} {\draw [line width=\nodewidth, fill=white] (W\n) circle [radius=\noderad];}

\foreach \w/\b in {30/31,31/32,32/33}{\draw [line width=\edgewidth] (W\w)--(B\b);}
\draw [line width=\edgewidth] (B30)--(5.3,0); \draw [line width=\edgewidth] (W33)--(5.3,8);

\foreach \n in {30,31,32,33}{\draw [line width=\nodewidth, fill=black] (B\n) circle [radius=\noderad];}
\foreach \n in {30,31,32,33}{\draw [line width=\nodewidth, fill=white] (W\n) circle [radius=\noderad];}
\end{tikzpicture}
}};

\end{tikzpicture}

\caption{The dimer model obtained by applying (zig-1)--(zig-3) to $\Gamma$ (left), and the dimer model $\overline{\nu}^\zig_\calX(\Gamma, \{z_1,z_2,z_3\})$ obtained by applying (zig-1)--(zig-4) to $\Gamma$ (right).}\label{ex_large_dimer2}
\end{figure}

\newcommand{\deformeddimerEx}{
\foreach \blackname/\x/\y in
{1/1/0,2/1/2,3/1/4,4/1/6,5/1/8,6/3/0,7/3/2,8/3/4,9/3/6,10/3/8,
11/7/0,12/7/2,13/7/4,14/7/6,15/7/8,
16/9/0,17/9/2,18/9/4,19/9/6,20/9/8,21/11/0,22/11/2,23/11/4,24/11/6,25/11/8}
{\coordinate (B\blackname) at (\x,\y);}
\foreach \whitename/\x/\y in
{1/0/1,2/0/3,3/0/5,4/0/7,5/2/1,6/2/3,7/2/5,8/2/7,9/4/1,10/4/3,11/4/5,12/4/7,
13/8/1,14/8/3,15/8/5,16/8/7, 17/10/1,18/10/3,19/10/5,20/10/7,21/12/1,22/12/3,23/12/5,24/12/7}
{\coordinate (W\whitename) at (\x,\y);}

\draw[line width=\edgewidth] (0,0) rectangle (12,8);

\foreach \w/\b in {1/1,2/2,3/3,4/4,5/1,5/2,5/6,6/2,6/3,6/7,7/3,7/4,7/8,8/4,8/5,8/9}{\draw [line width=\edgewidth] (W\w)--(B\b);}
\foreach \w/\b in {9/6,9/11,10/7,10/12,11/8,11/9,11/13,12/9,12/10,12/14}{\draw [line width=\edgewidth] (W\w)--(B\b);}
\foreach \w/\b in {13/11,13/12,13/16,13/17,14/12,14/13,14/17,14/18,15/13,15/14,15/18,15/19,16/14,16/15,16/19,16/20}{\draw [line width=\edgewidth] (W\w)--(B\b);}
\foreach \w/\b in {17/16,17/17,17/21,17/22,18/17,18/18,18/22,18/23,19/18,19/19,19/23,19/24,20/19,20/20,20/24,21/21,21/22,22/22,22/23,23/23,23/24,24/24,24/25}{\draw [line width=\edgewidth] (W\w)--(B\b);}

\path (W9) ++(-18:1cm) coordinate (B30); \path (W9) ++(-18:2cm) coordinate (W30);
\path (W10) ++(-18:1cm) coordinate (B31); \path (W10) ++(-18:2cm) coordinate (W31);
\path (W11) ++(-18:1cm) coordinate (B32); \path (W11) ++(-18:2cm) coordinate (W32);
\path (W12) ++(-18:1cm) coordinate (B33); \path (W12) ++(-18:2cm) coordinate (W33);

\draw [line width=\edgewidth] (W1)--(B2); \draw [line width=\edgewidth] (W2)--(B3); \draw [line width=\edgewidth] (W3)--(B4);
\draw [line width=\edgewidth] (W5)--(B7); \draw [line width=\edgewidth] (W6)--(B8); \draw [line width=\edgewidth] (W8)--(B10);
\draw [line width=\edgewidth] (W11)--(B33); \draw [line width=\edgewidth] (W32)--(B14); \draw [line width=\edgewidth] (W33)--(B15);
\draw [line width=\edgewidth] (W12)--(4.6,8); \draw [line width=\edgewidth] (4.6,0)--(B30);

\foreach \n in {1,2,3,4,5,6,7,8,9,10,11,12,13,14,15,16,17,18,19,20,21,22,23,24,25} {\draw [line width=\nodewidth, fill=black] (B\n) circle [radius=\noderad];}
\foreach \n in {1,2,3,4,5,6,7,8,9,10,11,12,13,14,15,16,17,18,19,20,21,22,23,24} {\draw [line width=\nodewidth, fill=white] (W\n) circle [radius=\noderad];}

\foreach \w/\b in {30/31,31/32,32/33}{\draw [line width=\edgewidth] (W\w)--(B\b);}
\draw [line width=\edgewidth] (B30)--(5.3,0); \draw [line width=\edgewidth] (W33)--(5.3,8);

\foreach \n in {30,31,32,33}{\draw [line width=\nodewidth, fill=black] (B\n) circle [radius=\noderad];}
\foreach \n in {30,31,32,33}{\draw [line width=\nodewidth, fill=white] (W\n) circle [radius=\noderad];}
}

\begin{figure}[h!]\centering
\begin{tikzpicture}
\newcommand{\edgewidth}{0.05cm} 
\newcommand{\nodewidth}{0.05cm} 
\newcommand{\noderad}{0.17} 

\node at (0,0) {
\scalebox{0.38}{
\begin{tikzpicture}
\deformeddimerEx

\newcommand{\zzwidth}{0.2cm} 
\newcommand{\zzcolor}{blue} 
\draw[->, line width=\zzwidth, rounded corners, color=\zzcolor] (W24)--(B25);
\draw[->, line width=\zzwidth, rounded corners, color=\zzcolor] (B21)--(W17)--(B22)--(W18)--(B23)--(W19)--(B24)--(W20)--(B20)--(W16)--(B15)--(W33)--(5.3,8);
\draw[->, line width=\zzwidth, rounded corners, color=\zzcolor] (5.3,0)--(B30)--(4.6,0);
\draw[->, line width=\zzwidth, rounded corners, color=\zzcolor] (4.6,8)--(W12)--(B10)--(W8)--(B5);
\draw[->, line width=\zzwidth, rounded corners, color=\zzcolor] (B1)--(W1)--(B2)--(W2)--(B3)--(W3)--(B4)--(W4);
\node[blue] at (7,7) {\Huge $x_1^\prime$};
\end{tikzpicture}
}};

\node at (5.5,0) {
\scalebox{0.38}{
\begin{tikzpicture}
\deformeddimerEx
\newcommand{\zzwidth}{0.2cm} 
\newcommand{\zzcolor}{orange} 
\draw[->, line width=\zzwidth, rounded corners, color=\zzcolor] (W23)--(B24)--(W19)--(B19)--(W15)--(B14)--(W32)--(B33)--(W11)--(B9)--(W8)--(B10);
\draw[->, line width=\zzwidth, rounded corners, color=\zzcolor] (B6)--(W5)--(B7)--(W6)--(B8)--(W7)--(B4)--(W3);
\node[orange] at (7,5) {\Huge $x_2^\prime$};
\end{tikzpicture}
}};

\node at (11,0) {
\scalebox{0.38}{
\begin{tikzpicture}
\deformeddimerEx
\newcommand{\zzwidth}{0.2cm} 
\newcommand{\zzcolor}{green} 
\draw[->, line width=\zzwidth, rounded corners, color=\zzcolor] (W21)--(B22)--(W17)--(B17)--(W13)--(B12)--(W31)--(B32)--(W11)--(B33)--(W12)--(4.6,8);
\draw[->, line width=\zzwidth, rounded corners, color=\zzcolor] (B10)--(W12)--(B9)--(W11)--(B8)--(W6)--(B3)--(W2);
\draw[->, line width=\zzwidth, rounded corners, color=\zzcolor] (W22)--(B23)--(W18)--(B18)--(W14)--(B13)--(W32)--(B14)--(W33)--(B15);
\draw[->, line width=\zzwidth, rounded corners, color=\zzcolor] (B11)--(W30)--(B31)--(W10)--(B7)--(W5)--(B2)--(W1);
\draw[->, line width=\zzwidth, rounded corners, color=\zzcolor] (4.6,0)--(B30)--(W9)--(B6);
\node[teal] at (9,3) {\Huge $x_3^\prime$};
\end{tikzpicture}
}};

\end{tikzpicture}
\caption{The zigzag paths $x_1^\prime$, $x_2^\prime$, $x_3^\prime$ of $\overline{\nu}^\zig_\calX(\Gamma, \{z_1,z_2,z_3\})$.}
\label{ex_large_dimer3_zigzag}
\end{figure}

By Lemma~\ref{bypass_removable}, some edges constituting the zigzag paths $x^\prime_1$, $x^\prime_2$, $x^\prime_3$ shown in Figure~\ref{ex_large_dimer3_zigzag} might be removed by the operation (zig-5).
Thus, paying attention to these zigzag paths, we apply~(zig-5) to $\overline{\nu}^\zig_\calX(\Gamma)$ and make it consistent.
Namely, we remove pairs of edges (1)--(5) shown in the type~A (left) of Figure~\ref{ex_large_dimer3}, which are the intersections of pairs of zigzag paths on the universal cover that intersect with each other in the same direction more than once, from $\overline{\nu}^\zig_\calX(\Gamma)$ with this order. Then we have the dimer model shown in the left of Figure~\ref{ex_large_dimer4}. Applying (join), we finally have the dimer model $\nu^\zig_\calX(\Gamma, \{z_1,z_2,z_3\})$ shown in the right of Fi\-gu\-re~\ref{ex_large_dimer4}.\looseness=1

There are also other ways to remove edges. For example, if we remove pairs of edges (1)--(5) shown in the type B (right) of Fi\-gu\-re~\ref{ex_large_dimer3},
then we have the dimer model shown in the left of Figure~\ref{ex_large_dimer5},
and applying (join) we have the dimer model $\nu^\zig_\calX(\Gamma, \{z_1,z_2,z_3\})$ shown in the right of Figure~\ref{ex_large_dimer5}.

\begin{figure}[h!]\centering
\begin{tikzpicture}
\newcommand{\edgewidth}{0.05cm} 
\newcommand{\nodewidth}{0.05cm} 
\newcommand{\noderad}{0.17} 

\node at (0,0)
{\scalebox{0.5}{
\begin{tikzpicture}
\deformeddimerEx
\draw[line width=\edgewidth] (W5)--(B7) node[fill=white,inner sep=0.5pt, circle, midway,xshift=0cm,yshift=0cm] {$(1)$};
\draw [line width=\nodewidth, fill=white] (W5) circle [radius=\noderad];
\draw[line width=\edgewidth] (W11)--(B33) node[fill=white,inner sep=0.5pt, circle, midway,xshift=0cm,yshift=0cm] {$(1)$};
\draw [line width=\nodewidth, fill=white] (W11) circle [radius=\noderad];
\draw[line width=\edgewidth] (W6)--(B8) node[fill=white,inner sep=0.5pt, circle, midway,xshift=0cm,yshift=0cm] {$(2)$};
\draw [line width=\nodewidth, fill=white] (W6) circle [radius=\noderad];
\draw[line width=\edgewidth] (W32)--(B14) node[fill=white,inner sep=0.5pt, circle, midway,xshift=0cm,yshift=0cm] {$(2)$};
\draw [line width=\nodewidth, fill=white] (W32) circle [radius=\noderad];
\draw[line width=\edgewidth] (W12)--(B10) node[fill=white,inner sep=0.5pt, circle, midway,xshift=0cm,yshift=0cm] {$(3)$};
\draw [line width=\nodewidth, fill=white] (W12) circle [radius=\noderad];
\draw[line width=\edgewidth] (W12)--(4.6,8) node[fill=white,inner sep=0.5pt, circle, midway,xshift=0cm,yshift=0cm] {$(3)$};
\draw [line width=\nodewidth, fill=white] (W12) circle [radius=\noderad];
\draw[line width=\edgewidth] (W33)--(B15) node[fill=white,inner sep=0.5pt, circle, midway,xshift=0cm,yshift=0cm] {$(4)$};
\draw [line width=\nodewidth, fill=white] (W33) circle [radius=\noderad];
\draw[line width=\edgewidth] (W19)--(B24) node[fill=white,inner sep=0.5pt, circle, midway,xshift=0cm,yshift=0cm] {$(4)$};
\draw [line width=\nodewidth, fill=white] (W19) circle [radius=\noderad];
\draw[line width=\edgewidth] (W1)--(B2) node[fill=white,inner sep=0.5pt, circle, midway,xshift=0cm,yshift=0cm] {$(5)$};
\draw [line width=\nodewidth, fill=white] (W1) circle [radius=\noderad];
\draw[line width=\edgewidth] (W8)--(B10) node[fill=white,inner sep=0.5pt, circle, midway,xshift=0cm,yshift=0cm] {$(5)$};
\draw [line width=\nodewidth, fill=white] (W8) circle [radius=\noderad];
\end{tikzpicture}
}};

\node at (8,0)
{\scalebox{0.5}{
\begin{tikzpicture}
\deformeddimerEx
\draw[line width=\edgewidth] (W5)--(B7) node[fill=white,inner sep=0.5pt, circle, midway,xshift=0cm,yshift=0cm] {$(1)$};
\draw [line width=\nodewidth, fill=white] (W5) circle [radius=\noderad];
\draw[line width=\edgewidth] (W11)--(B33) node[fill=white,inner sep=0.5pt, circle, midway,xshift=0cm,yshift=0cm] {$(1)$};
\draw [line width=\nodewidth, fill=white] (W11) circle [radius=\noderad];
\draw[line width=\edgewidth] (W6)--(B8) node[fill=white,inner sep=0.5pt, circle, midway,xshift=0cm,yshift=0cm] {$(2)$};
\draw [line width=\nodewidth, fill=white] (W6) circle [radius=\noderad];
\draw[line width=\edgewidth] (W32)--(B14) node[fill=white,inner sep=0.5pt, circle, midway,xshift=0cm,yshift=0cm] {$(2)$};
\draw [line width=\nodewidth, fill=white] (W32) circle [radius=\noderad];
\draw[line width=\edgewidth] (W12)--(4.6,8) node[fill=white,inner sep=0.5pt, circle, midway,xshift=0cm,yshift=0cm] {$(3)$};
\draw [line width=\nodewidth, fill=white] (W12) circle [radius=\noderad];
\draw[line width=\edgewidth] (W18)--(B23) node[fill=white,inner sep=0.5pt, circle, midway,xshift=0cm,yshift=0cm] {$(3)$};
\draw [line width=\nodewidth, fill=white] (W18) circle [radius=\noderad];
\draw[line width=\edgewidth] (W33)--(B15) node[fill=white,inner sep=0.5pt, circle, midway,xshift=0cm,yshift=0cm] {$(4)$};
\draw [line width=\nodewidth, fill=white] (W33) circle [radius=\noderad];
\draw[line width=\edgewidth] (W19)--(B24) node[fill=white,inner sep=0.5pt, circle, midway,xshift=0cm,yshift=0cm] {$(4)$};
\draw [line width=\nodewidth, fill=white] (W19) circle [radius=\noderad];
\draw[line width=\edgewidth] (W8)--(B10) node[fill=white,inner sep=0.5pt, circle, midway,xshift=0cm,yshift=0cm] {$(5)$};
\draw [line width=\nodewidth, fill=white] (W8) circle [radius=\noderad];
\draw[line width=\edgewidth] (W17)--(B22) node[fill=white,inner sep=0.5pt, circle, midway,xshift=0cm,yshift=0cm] {$(5)$};
\draw [line width=\nodewidth, fill=white] (W17) circle [radius=\noderad];
\end{tikzpicture}
}};

\node at (0,-2.5) {Type A}; \node at (8,-2.5) {Type B};

\end{tikzpicture}
\caption{The two ways to remove edges from $\overline{\nu}^\zig_\calX(\Gamma, \{z_1,z_2,z_3\})$ by (zig-5).}\label{ex_large_dimer3}
\end{figure}

\begin{figure}[h!]\centering
\begin{tikzpicture}
\newcommand{\edgewidth}{0.05cm} 
\newcommand{\nodewidth}{0.05cm} 
\newcommand{\noderad}{0.17} 

\node at (0,0)
{\scalebox{0.5}{
\begin{tikzpicture}
\foreach \blackname/\x/\y in
{1/1/0,2/1/2,3/1/4,4/1/6,5/1/8,6/3/0,7/3/2,8/3/4,9/3/6,10/3/8,
11/7/0,12/7/2,13/7/4,14/7/6,15/7/8,
16/9/0,17/9/2,18/9/4,19/9/6,20/9/8,21/11/0,22/11/2,23/11/4,24/11/6,25/11/8}
{\coordinate (B\blackname) at (\x,\y);}
\foreach \whitename/\x/\y in
{1/0/1,2/0/3,3/0/5,4/0/7,5/2/1,6/2/3,7/2/5,8/2/7,9/4/1,10/4/3,11/4/5,12/4/7,
13/8/1,14/8/3,15/8/5,16/8/7, 17/10/1,18/10/3,19/10/5,20/10/7,21/12/1,22/12/3,23/12/5,24/12/7}
{\coordinate (W\whitename) at (\x,\y);}

\draw[line width=\edgewidth] (0,0) rectangle (12,8);
\foreach \w/\b in {1/1,2/2,3/3,4/4,5/1,5/2,5/6,6/2,6/3,6/7,7/3,7/4,7/8,8/4,8/5,8/9}{\draw [line width=\edgewidth] (W\w)--(B\b);}
\foreach \w/\b in {9/6,9/11,10/7,10/12,11/8,11/9,11/13,12/9,12/14}{\draw [line width=\edgewidth] (W\w)--(B\b);}
\foreach \w/\b in {13/11,13/12,13/16,13/17,14/12,14/13,14/17,14/18,15/13,15/14,15/18,15/19,16/14,16/15,16/19,16/20}{\draw [line width=\edgewidth] (W\w)--(B\b);}
\foreach \w/\b in {17/16,17/17,17/21,17/22,18/17,18/18,18/22,18/23,19/18,19/19,19/23,20/19,20/20,20/24,21/21,21/22,22/22,22/23,23/23,23/24,24/24,24/25}{\draw [line width=\edgewidth] (W\w)--(B\b);}
\path (W9) ++(-18:1cm) coordinate (B30); \path (W9) ++(-18:2cm) coordinate (W30);
\path (W10) ++(-18:1cm) coordinate (B31); \path (W10) ++(-18:2cm) coordinate (W31);
\path (W11) ++(-18:1cm) coordinate (B32); \path (W11) ++(-18:2cm) coordinate (W32);
\path (W12) ++(-18:1cm) coordinate (B33); \path (W12) ++(-18:2cm) coordinate (W33);

\draw [line width=\edgewidth] (W2)--(B3); \draw [line width=\edgewidth] (W3)--(B4);

\foreach \n in {1,2,3,4,5,6,7,8,9,10,11,12,13,14,15,16,17,18,19,20,21,22,23,24,25} {\draw [line width=\nodewidth, fill=black] (B\n) circle [radius=\noderad];}
\foreach \n in {1,2,3,4,5,6,7,8,9,10,11,12,13,14,15,16,17,18,19,20,21,22,23,24} {\draw [line width=\nodewidth, fill=white] (W\n) circle [radius=\noderad];}

\foreach \w/\b in {30/31,31/32,32/33}{\draw [line width=\edgewidth] (W\w)--(B\b);}
\draw [line width=\edgewidth] (B30)--(5.3,0); \draw [line width=\edgewidth] (W33)--(5.3,8);

\foreach \n in {30,31,32,33}{\draw [line width=\nodewidth, fill=black] (B\n) circle [radius=\noderad];}
\foreach \n in {30,31,32,33}{\draw [line width=\nodewidth, fill=white] (W\n) circle [radius=\noderad];}
\end{tikzpicture}
}};

\node at (9,0)
{\scalebox{0.5}{
\begin{tikzpicture}
\foreach \blackname/\x/\y in
{1/1/0,2/1/2,3/1/4,4/1/6,5/1/8,6/3/0,7/3/2,8/3/4,9/3/6,10/3/8,
11/5/0,12/5/2,13/5/4,14/5/6,15/5/8,
16/7/0,17/7/2,18/7/4,19/7/6,20/7/8,21/9/0,22/9/2,23/9/4,24/9/6,25/9/8}
{\coordinate (B\blackname) at (\x,\y);}
\foreach \whitename/\x/\y in
{1/0/1,2/0/3,3/0/5,4/0/7,5/2/1,6/2/3,7/2/5,8/2/7,9/4/1,10/4/3,11/4/5,12/4/7,
13/6/1,14/6/3,15/6/5,16/6/7, 17/8/1,18/8/3,19/8/5,20/8/7,21/10/1,22/10/3,23/10/5,24/10/7}
{\coordinate (W\whitename) at (\x,\y);}

\draw[line width=\edgewidth] (0,0) rectangle (10,8);
\foreach \w/\b in {1/1,2/2,2/3,3/3,3/4,4/4,5/1,5/2,6/2,6/3,7/3,7/4,8/4,8/5}{\draw [line width=\edgewidth] (W\w)--(B\b);}
\foreach \w/\b in {5/6,6/7,7/8,7/9,8/9,9/6,9/7,10/7,10/8,11/8,11/9,12/9,12/10}{\draw [line width=\edgewidth] (W\w)--(B\b);}
\foreach \w/\b in {9/11,10/12,11/13,12/14}{\draw [line width=\edgewidth] (W\w)--(B\b);}
\foreach \w/\b in {13/11,13/12,13/16,13/17,14/12,14/13,14/17,14/18,15/13,15/14,15/18,15/19,16/14,16/15,16/19,16/20}{\draw [line width=\edgewidth] (W\w)--(B\b);}
\foreach \w/\b in {17/16,17/17,17/21,17/22,18/17,18/18,18/22,18/23,19/18,19/19,19/23,20/19,20/20,20/24,21/21,21/22,22/22,22/23,23/23,23/24,24/24,24/25}{\draw [line width=\edgewidth] (W\w)--(B\b);}

\foreach \n in {1,2,3,4,5,6,7,8,9,10,11,12,13,14,15,16,17,18,19,20,21,22,23,24,25} {\draw [line width=\nodewidth, fill=black] (B\n) circle [radius=\noderad];}
\foreach \n in {1,2,3,4,5,6,7,8,9,10,11,12,13,14,15,16,17,18,19,20,21,22,23,24} {\draw [line width=\nodewidth, fill=white] (W\n) circle [radius=\noderad];}
\end{tikzpicture}
}};

\draw[->,line width=0.03cm] (3.5,0)--(6,0);
\node at (4.75,0.35) {(join)};

\end{tikzpicture}
\caption{The dimer model $\nu^\zig_\calX(\Gamma, \{z_1,z_2,z_3\})$ obtained from the type A of Figure~\ref{ex_large_dimer3}.}
\label{ex_large_dimer4}
\end{figure}

\begin{figure}[h!]\centering
\begin{tikzpicture}
\newcommand{\edgewidth}{0.05cm} 
\newcommand{\nodewidth}{0.05cm} 
\newcommand{\noderad}{0.17} 

\node at (0,0)
{\scalebox{0.5}{
\begin{tikzpicture}
\foreach \blackname/\x/\y in
{1/1/0,2/1/2,3/1/4,4/1/6,5/1/8,6/3/0,7/3/2,8/3/4,9/3/6,10/3/8,
11/7/0,12/7/2,13/7/4,14/7/6,15/7/8,
16/9/0,17/9/2,18/9/4,19/9/6,20/9/8,21/11/0,22/11/2,23/11/4,24/11/6,25/11/8}
{\coordinate (B\blackname) at (\x,\y);}
\foreach \whitename/\x/\y in
{1/0/1,2/0/3,3/0/5,4/0/7,5/2/1,6/2/3,7/2/5,8/2/7,9/4/1,10/4/3,11/4/5,12/4/7,
13/8/1,14/8/3,15/8/5,16/8/7, 17/10/1,18/10/3,19/10/5,20/10/7,21/12/1,22/12/3,23/12/5,24/12/7}
{\coordinate (W\whitename) at (\x,\y);}
\draw[line width=\edgewidth] (0,0) rectangle (12,8);
\foreach \w/\b in {1/1,2/2,3/3,4/4,5/1,5/2,5/6,6/2,6/3,6/7,7/3,7/4,7/8,8/4,8/5,8/9}{\draw [line width=\edgewidth] (W\w)--(B\b);}
\foreach \w/\b in {9/6,9/11,10/7,10/12,11/8,11/9,11/13,12/9,12/14}{\draw [line width=\edgewidth] (W\w)--(B\b);}
\foreach \w/\b in {13/11,13/12,13/16,13/17,14/12,14/13,14/17,14/18,15/13,15/14,15/18,15/19,16/14,16/15,16/19,16/20}{\draw [line width=\edgewidth] (W\w)--(B\b);}
\foreach \w/\b in {17/16,17/17,17/21,18/17,18/18,18/22,19/18,19/19,19/23,20/19,20/20,20/24,21/21,21/22,22/22,22/23,23/23,23/24,24/24,24/25}{\draw [line width=\edgewidth] (W\w)--(B\b);}
\path (W9) ++(-18:1cm) coordinate (B30); \path (W9) ++(-18:2cm) coordinate (W30);
\path (W10) ++(-18:1cm) coordinate (B31); \path (W10) ++(-18:2cm) coordinate (W31);
\path (W11) ++(-18:1cm) coordinate (B32); \path (W11) ++(-18:2cm) coordinate (W32);
\path (W12) ++(-18:1cm) coordinate (B33); \path (W12) ++(-18:2cm) coordinate (W33);

\draw [line width=\edgewidth] (W1)--(B2); \draw [line width=\edgewidth] (W2)--(B3); \draw [line width=\edgewidth] (W3)--(B4);
\draw [line width=\edgewidth] (W12)--(B10);

\foreach \n in {1,2,3,4,5,6,7,8,9,10,11,12,13,14,15,16,17,18,19,20,21,22,23,24,25} {\draw [line width=\nodewidth, fill=black] (B\n) circle [radius=\noderad];}
\foreach \n in {1,2,3,4,5,6,7,8,9,10,11,12,13,14,15,16,17,18,19,20,21,22,23,24} {\draw [line width=\nodewidth, fill=white] (W\n) circle [radius=\noderad];}

\foreach \w/\b in {30/31,31/32,32/33}{\draw [line width=\edgewidth] (W\w)--(B\b);}
\draw [line width=\edgewidth] (B30)--(5.3,0); \draw [line width=\edgewidth] (W33)--(5.3,8);

\foreach \n in {30,31,32,33}{\draw [line width=\nodewidth, fill=black] (B\n) circle [radius=\noderad];}
\foreach \n in {30,31,32,33}{\draw [line width=\nodewidth, fill=white] (W\n) circle [radius=\noderad];}
\end{tikzpicture}
}};

\node at (9,0)
{\scalebox{0.5}{
\begin{tikzpicture}
\foreach \blackname/\x/\y in
{1/1/0,2/1/2,3/1/4,4/1/6,5/1/8,6/3/6, 7/5/0,8/5/2,9/5/4,10/5/6,11/5/8,
12/7/0,13/7/2,14/7/4,15/7/6,16/7/8,
17/9/0,18/9/2,19/9/4,20/9/6,21/9/8,22/11/0,23/11/2,24/11/4,25/11/6,26/11/8}
{\coordinate (B\blackname) at (\x,\y);}
\foreach \whitename/\x/\y in
{1/0/1,2/0/3,3/0/5,4/0/7,5/2/1,6/2/3,7/2/5,8/2/7,9/4/7,10/6/1,11/6/3,12/6/5,13/6/7,
14/8/1,15/8/3,16/8/5,17/8/7, 18/10/1,19/10/3,20/10/5,21/10/7,22/12/1,23/12/3,24/12/5,25/12/7}
{\coordinate (W\whitename) at (\x,\y);}
\draw[line width=\edgewidth] (0,0) rectangle (12,8);
\foreach \w/\b in {1/1,1/2,2/2,2/3,3/3,3/4,4/4,5/1,5/2,5/7,6/2,6/3,6/8,7/3,7/4,7/6,7/9,8/4,8/5,8/6,
9/6,9/10,9/11,10/7,10/8,10/12,11/8,11/9,11/13,12/9,12/10,12/14,13/10,13/11,13/15,
14/12,14/13,14/17,14/18,15/13,15/14,15/18,15/19,16/14,16/15,16/19,16/20,17/15,17/16,17/20,17/21,
18/17,18/18,18/22,19/18,19/19,19/23,20/19,20/20,20/24,21/20,21/21,21/25,
22/22,22/23,23/23,23/24,24/24,24/25,25/25,25/26}{\draw [line width=\edgewidth] (W\w)--(B\b);}

\foreach \n in {1,2,3,4,5,6,7,8,9,10,11,12,13,14,15,16,17,18,19,20,21,22,23,24,25,26} {\draw [line width=\nodewidth, fill=black] (B\n) circle [radius=\noderad];}
\foreach \n in {1,2,3,4,5,6,7,8,9,10,11,12,13,14,15,16,17,18,19,20,21,22,23,24,25} {\draw [line width=\nodewidth, fill=white] (W\n) circle [radius=\noderad];}

\end{tikzpicture}
}};

\draw[->,line width=0.03cm] (3.5,0)--(5.5,0);
\node at (4.5,0.35) {(join)};

\end{tikzpicture}
\caption{The dimer model $\nu^\zig_\calX(\Gamma, \{z_1,z_2,z_3\})$ obtained from the type B of Figure~\ref{ex_large_dimer3}.}
\label{ex_large_dimer5}
\end{figure}

Let $\Gamma_A$ (resp.~$\Gamma_B$) be the deformed dimer model shown in the right of Figure~\ref{ex_large_dimer4} (resp.\ Fi\-gu\-re~\ref{ex_large_dimer5}).
We can check that $\Gamma_A$ and $\Gamma_B$ are not isomorphic, but they are mutation-equivalent.
Indeed, by applying the mutations of $\Gamma_B$ at the faces $1,\dots,10$ in this order (see Figure~\ref{ex_mutation_equiv_AB}), we recover $\Gamma_A$.
Here, we recall that a mutation of a dimer model can be defined at any quadrangle face.
Although some faces indexed by $\{1,\dots,10\}$ are not quadrangle, such faces will become quadrangles during the process of this series of mutations.

The PM polygons of the dimer models $\Gamma_A$ and $\Gamma_B$ are the same (see Proposition~\ref{mutation_preserve_toric}),
and it coincides with the lattice polygon shown on the right-hand side of Figure~\ref{ex_large_polygon} by Theorem~\ref{mutation=deformation}.\looseness=-1

\begin{figure}[h!]\centering
\begin{tikzpicture}
\newcommand{\edgewidth}{0.05cm} 
\newcommand{\nodewidth}{0.05cm} 
\newcommand{\noderad}{0.17} 

\node at (0,0)
{\scalebox{0.45}{
\begin{tikzpicture}
\foreach \blackname/\x/\y in
{1/1/0,2/1/2,3/1/4,4/1/6,5/1/8,6/3/6, 7/5/0,8/5/2,9/5/4,10/5/6,11/5/8,
12/7/0,13/7/2,14/7/4,15/7/6,16/7/8,
17/9/0,18/9/2,19/9/4,20/9/6,21/9/8,22/11/0,23/11/2,24/11/4,25/11/6,26/11/8}
{\coordinate (B\blackname) at (\x,\y);}
\foreach \whitename/\x/\y in
{1/0/1,2/0/3,3/0/5,4/0/7,5/2/1,6/2/3,7/2/5,8/2/7,9/4/7,10/6/1,11/6/3,12/6/5,13/6/7,
14/8/1,15/8/3,16/8/5,17/8/7, 18/10/1,19/10/3,20/10/5,21/10/7,22/12/1,23/12/3,24/12/5,25/12/7}
{\coordinate (W\whitename) at (\x,\y);}
\draw[line width=\edgewidth] (0,0) rectangle (12,8);
\foreach \w/\b in {1/1,1/2,2/2,2/3,3/3,3/4,4/4,5/1,5/2,5/7,6/2,6/3,6/8,7/3,7/4,7/6,7/9,8/4,8/5,8/6,
9/6,9/10,9/11,10/7,10/8,10/12,11/8,11/9,11/13,12/9,12/10,12/14,13/10,13/11,13/15,
14/12,14/13,14/17,14/18,15/13,15/14,15/18,15/19,16/14,16/15,16/19,16/20,17/15,17/16,17/20,17/21,
18/17,18/18,18/22,19/18,19/19,19/23,20/19,20/20,20/24,21/20,21/21,21/25,
22/22,22/23,23/23,23/24,24/24,24/25,25/25,25/26}{\draw [line width=\edgewidth] (W\w)--(B\b);}

\foreach \n in {1,2,3,4,5,6,7,8,9,10,11,12,13,14,15,16,17,18,19,20,21,22,23,24,25,26} {\draw [line width=\nodewidth, fill=black] (B\n) circle [radius=\noderad];}
\foreach \n in {1,2,3,4,5,6,7,8,9,10,11,12,13,14,15,16,17,18,19,20,21,22,23,24,25} {\draw [line width=\nodewidth, fill=white] (W\n) circle [radius=\noderad];}

\node at (1,1) {\LARGE $1$}; \node at (3,7.5) {\LARGE $2$}; \node at (4,5.5) {\LARGE $3$}; \node at (6.5,3.5) {\LARGE $4$};
\node at (8,2) {\LARGE $5$}; \node at (9,3) {\LARGE $6$}; \node at (5,7) {\LARGE $7$}; \node at (6.5,5.5) {\LARGE $8$};
\node at (8,4) {\LARGE $9$}; \node at (9,5) {\LARGE $10$};
\end{tikzpicture}
}};

\node at (9,0)
{\scalebox{0.45}{
\begin{tikzpicture}
\foreach \blackname/\x/\y in
{1/1/0,2/1/2,3/1/4,4/1/6,5/1/8,6/3/0,7/3/2,8/3/4,9/3/6,10/3/8,
11/5/0,12/5/2,13/5/4,14/5/6,15/5/8,
16/7/0,17/7/2,18/7/4,19/7/6,20/7/8,21/9/0,22/9/2,23/9/4,24/9/6,25/9/8}
{\coordinate (B\blackname) at (\x,\y);}
\foreach \whitename/\x/\y in
{1/0/1,2/0/3,3/0/5,4/0/7,5/2/1,6/2/3,7/2/5,8/2/7,9/4/1,10/4/3,11/4/5,12/4/7,
13/6/1,14/6/3,15/6/5,16/6/7, 17/8/1,18/8/3,19/8/5,20/8/7,21/10/1,22/10/3,23/10/5,24/10/7}
{\coordinate (W\whitename) at (\x,\y);}

\draw[line width=\edgewidth] (0,0) rectangle (10,8);
\foreach \w/\b in {1/1,2/2,2/3,3/3,3/4,4/4,5/1,5/2,6/2,6/3,7/3,7/4,8/4,8/5}{\draw [line width=\edgewidth] (W\w)--(B\b);}
\foreach \w/\b in {5/6,6/7,7/8,7/9,8/9,9/6,9/7,10/7,10/8,11/8,11/9,12/9,12/10}{\draw [line width=\edgewidth] (W\w)--(B\b);}
\foreach \w/\b in {9/11,10/12,11/13,12/14}{\draw [line width=\edgewidth] (W\w)--(B\b);}
\foreach \w/\b in {13/11,13/12,13/16,13/17,14/12,14/13,14/17,14/18,15/13,15/14,15/18,15/19,16/14,16/15,16/19,16/20}{\draw [line width=\edgewidth] (W\w)--(B\b);}
\foreach \w/\b in {17/16,17/17,17/21,17/22,18/17,18/18,18/22,18/23,19/18,19/19,19/23,20/19,20/20,20/24,21/21,21/22,22/22,22/23,23/23,23/24,24/24,24/25}{\draw [line width=\edgewidth] (W\w)--(B\b);}
\foreach \n in {1,2,3,4,5,6,7,8,9,10,11,12,13,14,15,16,17,18,19,20,21,22,23,24,25} {\draw [line width=\nodewidth, fill=black] (B\n) circle [radius=\noderad];}
\foreach \n in {1,2,3,4,5,6,7,8,9,10,11,12,13,14,15,16,17,18,19,20,21,22,23,24} {\draw [line width=\nodewidth, fill=white] (W\n) circle [radius=\noderad];}
\end{tikzpicture}
}};

\draw[->,line width=0.03cm] (3.2,0)--(6.3,0);
\node at (4.75,0.35) {\footnotesize The mutation $\mu_f$};
\node at (4.75,-0.35) {\footnotesize where $f=1,\dots,10$};

\end{tikzpicture}

\caption{The mutations of $\Gamma_B$ (left) at the faces $1,\dots,10$ induce $\Gamma_A$ (right).}\label{ex_mutation_equiv_AB}
\end{figure}
\end{Example}

\subsection*{Acknowledgements}
The authors would like to thank Alexander Kasprzyk for valuable lectures and discussions on the combinatorial mutation of Fano polygons.
The authors would also like to thank the anonymous referees for their numerous valuable comments and suggestions.
The first author is supported by JSPS Grant-in-Aid for Scientific Research (C) 20K03513.
The second author was supported by World Premier International Research Center Initiative (WPI initiative), MEXT, Japan, and is supported by JSPS Grant-in-Aid for Early-Career Scientists 20K14279.

\addcontentsline{toc}{section}{References}
\LastPageEnding

\end{document}